\definecolor{darkgreen}{RGB}{45, 119, 75}
\newtheorem{theorem}{Theorem}[section]
\newtheorem{corollary}[theorem]{Corollary}
\newtheorem{lemma}[theorem]{Lemma}
\newtheorem{proposition}[theorem]{Proposition}
\newtheorem{remark}[theorem]{Remark}
\newtheorem{definition}[theorem]{Definition}
\newtheorem{example}{Example}[section]
\numberwithin{equation}{section}
\begin{document}

\title[Lipschitz spaces]{On Lipschitz spaces  in the Dunkl setting - \\
semigroup approach }

\subjclass[2020]{{primary: 44A20, 42B20, 42B25, 47B38, 33C52, 39A70}}
\keywords{rational Dunkl theory, root systems, generalized translations, Poisson semigroup,  Lipschitz functions, Bessel potentials}

\author[Jacek Dziubański]{Jacek Dziubański}
\author[Agnieszka Hejna]{Agnieszka Hejna}
\begin{abstract} 
Let $\{P_t\}_{t>0}$ be the Dunkl-Poisson semigroup associated with a root system $R\subset \mathbb R^N$ and a multiplicity function $k\geq 0$. Analogously to the classical theory, we say that a bounded measurable function $f$ defined on $\mathbb R^N$ belongs to the inhomogeneous Lipschitz space $\Lambda_k^\beta$, $\beta>0$, if 
$$\sup_{t>0} t^{m-\beta} \Big\|\frac{d^m}{dt^m} P_tf\Big\|_{L^\infty}<\infty,$$ 
where $m=[\beta]+1$. We prove that the spaces $\Lambda^\beta_k$ coincide with the classical Lipschitz spaces. 

\end{abstract}

\address{Jacek Dziubański, Uniwersytet Wroc\l awski,
Instytut Matematyczny,
Pl. Grunwaldzki 2,
50-384 Wroc\l aw,
Poland}
\email{jdziuban@math.uni.wroc.pl}

\address{Agnieszka Hejna, Uniwersytet Wroc\l awski,
Instytut Matematyczny,
Pl. Grunwaldzki 2,
50-384 Wroc\l aw,
Poland 
\&
Department of Mathematics,
Rutgers University,
Piscataway, NJ 08854-8019, USA}
\email{hejna@math.uni.wroc.pl}

\maketitle

\section{Introduction}

{    For $0<\beta<1$, the the classical inhomogeneous Lipschitz space $\Lambda^\beta(\mathbb R^N)$ on the Euclidean space $\mathbb R^N$ is defined as 
 \begin{equation}\label{eq:Lip_class}
     \Lambda^\beta(\mathbb R^N)=\Big\{ f: \;f: \mathbb R^N\to \mathbb C, \ \|f\|_{L^\infty} +\sup_{\mathbf{x}\ne \mathbf{x}'}\frac{|f(\mathbf x)-f(\mathbf x')|}{\|\mathbf x-\mathbf x'\|^\beta}=:\| f\|_{\Lambda^\beta (\mathbb R^N)}<\infty\Big\}.
 \end{equation}
 In Taibleson \cite{Taibleson1}, the author used the Poisson integral (semigroup) 
 \begin{equation}
     \label{eq:Poisson_int}
     f(\mathbf x,t)=c_N^{-1}\int_{\mathbb{R}^N} f(\mathbf{y})\frac{t}{(t^2+\|\mathbf x-\mathbf{y}\|^2)^{(N+1)/2}}\, d\mathbf{y} 
 \end{equation} 
 to study properties of the Lipschitz spaces on $\Lambda^\beta(\mathbb R^N)$ (see also \cite{Stein_singular}). To be more precise, for  any fixed positive integer $m$,  the norm $\| f\|_{\Lambda^\beta (\mathbb R^N)}$ is equivalent to 
 \begin{equation}
     \label{eq:equiv_cond_1}
\|f\|_{L^\infty}+     \sup_{t>0} t^{m-\beta}\Big\|\frac{d^m}{dt^m} f(\mathbf{x},t)\Big\|_{L^\infty}, 
 \end{equation}
see \cite[Theorems 3 and 4]{Taibleson1} and \cite[Chapter V, Proposition 7 and Lemma 5]{Stein_singular} . 

The theorem gives rise to extend the notion of the Lipschitz spaces for all positive parameters $\beta>0$.  So,  for  $\beta>0$, let $m$ be the smallest integer bigger than $\beta$. We say that $f\in\Lambda^{\beta}(\mathbb R^N)$, if \eqref{eq:equiv_cond_1} is finite (then \eqref{eq:equiv_cond_1} is taken as the norm in the space). 
Equivalently one can consider the heat semigroup $\{e^{t\Delta}\}_{t \geq 0}$ and take 
\begin{equation}
    {\| f\|_{L^\infty} +  \sup_{t>0} t^{m-\beta/2}\Big\|\frac{d^m}{d t^m } e^{t\Delta} f\Big\|_{L^\infty}} 
\end{equation}
as the norm in the Lipschitz space $\Lambda^\beta(\mathbb R^N)$ (see \cite[Theorem 7]{Taibleson1}). 

It turns out (see \cite[Theorem 4]{Taibleson1}) that if $0<\beta<2$, then the norm \eqref{eq:equiv_cond_1} (with any fixed $m\geq 2$) is equivalent to 
\begin{equation}
    \label{eq:Zygmund_cond} 
    \|f\|_{L^\infty} +
  \sup_{\mathbf x\in\mathbb R^N} \sup_{0\ne  \mathbf y \in \mathbb R^N } \frac{\| f(\mathbf x+\mathbf y)+f(\mathbf x-\mathbf y)- 2f(\mathbf x)\|}{\| \mathbf y\|^\beta} .     
\end{equation}
{ Such  spaces can be thought as inhomogeneous  generalized   Zygmund classes. }

Further,  if $\beta>1$, then $f\in \Lambda^\beta(\mathbb R^N)$ if and only if $f\in L^\infty$ and $\partial_j f\in \Lambda^{\beta-1}(\mathbb R^N)$ for $j=1,2,...,N$ (see e.g. \cite[Chapter V, Proposition 9]{Stein_singular}).   Moreover, 
\begin{equation}
    \label{eq:step-less}
\|f\|_{\Lambda^\beta(\mathbb R^N)} \sim \| f\|_{L^\infty} +\sum_{j=1}^N \| \partial_j f\|_{\Lambda^{\beta-1}(\mathbb R^N)}. 
\end{equation}

Additionally, the Bessel potential 
$(I-\Delta)^{\gamma/2}$  is an isomorphism of the space $\Lambda^{\beta}(\mathbb R^N)$ onto $\Lambda^{\beta+\gamma}(\mathbb R^N)$ (see \cite[Theorem 5]{Taibleson1}). 

The aim of this paper is to study the inhomogeneous Lipschitz spaces in the Dunkl setting. On the Euclidean space $\mathbb R^N$ equipped with  a root system $R$ and a 
 multiplicity function $k\geq 0$, we consider the Dunkl Laplace operator 
$$ \Delta_k=\sum_{j=1}^N D_j^2,$$
where 
$$D_j f(\mathbf{x})={\partial_{j }f(\mathbf{x})} +\sum_{\alpha\in R} \frac{k(\alpha)}{2} \langle \alpha,e_j\rangle \frac{f(\mathbf{x})-f(\sigma_\alpha (\mathbf{x}))}{\langle \alpha,\mathbf{x}\rangle} $$ are the Dunkl  operators.  The operator $\Delta_k$ generates a contraction  semigroup $H_t=e^{t\Delta_k}$ on $L^p(dw)$, $1\leq p\leq \infty$ (strongly continuous for $1\leq p<\infty)$,   where 
 $$ dw(\mathbf{x})= \prod_{\alpha\in R} |\langle \alpha,\mathbf{x}\rangle|^{k(\alpha)}\, d\mathbf{x}.$$ 
 The semigroup has  a unique   extension to a uniformly bounded holomorphic  semigroup on any sector $\boldsymbol{\Delta}_\delta=\{ z\in \mathbb C: |\text{arg}\, z|<\delta\}$, $0<\delta<\pi/2$. Let 

\begin{equation}\label{eq:H_sub}
    {{P}}_t:={\Gamma(1/2)^{-1}}\int_0^{\infty}e^{-u}{H}_{t^2/(4u)}\frac{du}{\sqrt{u}} 
\end{equation}
be the subordinate (Dunkl-Poisson) semigroup. We say that a measurable function $f$ defined on $\mathbb R^N$ belongs to the inhomogeneous Lipschitz space $\Lambda_k^\beta$, if 
\begin{equation}\label{eq:norm_L_Dunkl}
    \|  f\|_{{\Lambda}^{\beta}_k} :=\| f\|_{L^\infty} + \sup_{t>0} t^{m-\beta} \Big\| \frac{d^m}{dt^m} P_tf\Big\|_{L^\infty} <\infty, 
\end{equation}
where $m$ is the smallest positive integer bigger than $\beta$. 
We are in a position to state our main result, which is obtained at the very end of the paper. 
\begin{theorem}\label{teo:main_main} For any $\beta>0$, the Lipschitz space $\Lambda_k^\beta$ coincides with the classical Lipschitz space $\Lambda^\beta(\mathbb R^N)$ and the corresponding norms~\eqref{eq:norm_L_Dunkl} and \eqref{eq:equiv_cond_1}  are equivalent. 
\end{theorem}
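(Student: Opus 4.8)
We plan to compare the Dunkl–Poisson seminorm \eqref{eq:norm_L_Dunkl} with the classical one \eqref{eq:equiv_cond_1} by passing through the heat semigroup $\{H_t\}$, whose kernel $h_t(\mathbf x,\mathbf y)$ obeys sharp Gaussian-type bounds in the rational Dunkl setting. The first step is soft: manipulating the subordination formula \eqref{eq:H_sub} and its inverse — the same computation that produces the heat-semigroup description of the Euclidean $\Lambda^\beta$, legitimate here because $\Delta_k$ generates a bounded holomorphic semigroup — shows that
\[
\|f\|_{L^\infty}+\sup_{t>0}t^{m-\beta}\Big\|\tfrac{d^m}{dt^m}P_tf\Big\|_{L^\infty}\ \sim\ \|f\|_{L^\infty}+\sup_{t>0}t^{\,m-\beta/2}\Big\|\tfrac{d^m}{dt^m}H_tf\Big\|_{L^\infty},
\]
with $m$ the smallest integer exceeding $\beta$. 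It therefore suffices to identify the right-hand side with $\|f\|_{\Lambda^\beta(\mathbb R^N)}$, and we prove the two inclusions separately.

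\textbf{$\Lambda^\beta(\mathbb R^N)\subseteq\Lambda_k^\beta$.} Two facts about the heat kernel do the work. First, $\partial_t^{\,m}H_t$ annihilates polynomials of degree $<2m$, because $\partial_t^{\,m}H_tq=\Delta_k^{\,m}H_tq=H_t(\Delta_k^{\,m}q)$ and $\Delta_k$ lowers polynomial degree by $2$. Second, for every prescribed $M$ one has
\[
\big|\partial_t^{\,m}h_t(\mathbf x,\mathbf y)\big|\ \lesssim\ t^{-m}\,w\big(B(\mathbf x,\sqrt t)\big)^{-1}\Big(1+\tfrac{\|\mathbf x-\mathbf y\|}{\sqrt t}\Big)^{-M}\exp\!\Big(-c\,\tfrac{d(\mathbf x,\mathbf y)^2}{t}\Big),
\]
where $d(\mathbf x,\mathbf y)=\min_{\sigma\in G}\|\mathbf x-\sigma\mathbf y\|$ is the orbit distance; the decisive feature is the polynomial decay in the \emph{full} Euclidean distance $\|\mathbf x-\mathbf y\|$, not merely in $d(\mathbf x,\mathbf y)$, which is precisely what controls the part of the kernel concentrated near the nontrivial reflections $\sigma\mathbf x$, $\sigma\neq e$. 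Together these give $\int|\partial_t^{\,m}h_t(\mathbf x,\mathbf y)|\,\|\mathbf x-\mathbf y\|^{\beta}\,dw(\mathbf y)\lesssim t^{\beta/2-m}$; hence, subtracting the degree-$[\beta]$ Euclidean Taylor polynomial $T_{\mathbf x}f$ of $f$ at $\mathbf x$ (well defined because $f\in\Lambda^\beta(\mathbb R^N)$ is $C^{[\beta]}$ with bounded derivatives) and using the standard remainder estimate $|f(\mathbf y)-T_{\mathbf x}f(\mathbf y)|\lesssim\|f\|_{\Lambda^\beta}\big(\|\mathbf x-\mathbf y\|^{\beta}+\|\mathbf x-\mathbf y\|^{[\beta]}\big)$ (splitting the $\mathbf y$-integral near and far from $\mathbf x$), one gets $\|\partial_t^{\,m}H_tf\|_{L^\infty}\lesssim\|f\|_{\Lambda^\beta}\,t^{\beta/2-m}$.

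\textbf{$\Lambda_k^\beta\subseteq\Lambda^\beta(\mathbb R^N)$ — the hard direction.} Suppose $A:=\|f\|_{L^\infty}+\sup_{t>0} t^{\,m-\beta/2}\|\partial_t^{\,m}H_tf\|_{L^\infty}<\infty$. One first replaces $f$ by the continuous function $\lim_{t\to0^+}H_tf$ (the limit exists uniformly by integrating the derivative bounds, and equals $f$ because $H_tf\to f$ in $\mathcal D'$). The regularity of $f$ is then read off by Taibleson's reconstruction: for $0<\beta<1$ one bounds $f(\mathbf x)-f(\mathbf x')$ by moving along a path that lifts to height $\delta\sim\|\mathbf x-\mathbf x'\|^2$, travels horizontally, and descends — the vertical legs contributing $\lesssim\int_0^\delta\|\partial_tH_tf\|_{L^\infty}\,dt\lesssim A\,\delta^{\beta/2}$ and the horizontal one $\lesssim\|\partial_{\mathbf x}H_\delta f\|_{L^\infty}\,\|\mathbf x-\mathbf x'\|$; for general $\beta$ one uses instead the $m$-th difference characterization $\|f\|_{L^\infty}+\sup_{\mathbf h\neq0}\|\mathbf h\|^{-\beta}\|\Delta_{\mathbf h}^{m}f\|_{L^\infty}$ of $\Lambda^\beta(\mathbb R^N)$ and runs the analogous bookkeeping. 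In every case the crucial ingredient is a family of spatial-derivative bounds
\[
\big\|\partial_{\mathbf x}^{\mu}\,\partial_t^{\,m}H_tf\big\|_{L^\infty}\ \lesssim\ A\,t^{\beta/2-|\mu|/2-m},\qquad \mu\ \text{a multi-index},
\]
which in the Euclidean theory are automatic from harmonicity of the Poisson extension but here cannot be obtained this way, since $\partial_t^2H_tf=-\Delta_kH_tf$ involves the \emph{nonlocal} part of $\Delta_k$, whose coefficients are singular along the walls, so there is no interior elliptic theory for $\partial_t^2+\Delta_k$ to invoke. We instead read them off the kernel: $\partial_t^{\,m}H_tf=H_{t/2}\big(g_{t/2}\big)$ with $g_u:=\partial_s^{\,m}H_sf\big|_{s=u}$ and $\|g_u\|_{L^\infty}\lesssim A\,u^{\beta/2-m}$, whence $\|\partial_{\mathbf x}^{\mu}\partial_t^{\,m}H_tf\|_{L^\infty}\le\|\partial_{\mathbf x}^{\mu}h_{t/2}(\mathbf x,\cdot)\|_{L^1(dw)}\,\|g_{t/2}\|_{L^\infty}$, together with the space-derivative kernel bound $\|\partial_{\mathbf x}^{\mu}h_{t/2}(\mathbf x,\cdot)\|_{L^1(dw)}\lesssim t^{-|\mu|/2}$. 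Feeding these into Taibleson's scheme gives $\|f\|_{\Lambda^\beta(\mathbb R^N)}\lesssim A$; the ranges $\|\mathbf x-\mathbf x'\|\ge1$ and $\|\mathbf h\|\ge1$ are absorbed into $\|f\|_{L^\infty}\le A$. (One could instead hope to run an induction on $[\beta]$ via \eqref{eq:step-less}, but the natural step-down operator $\partial_j$ does not commute with $P_t$ in the Dunkl setting, so the direct argument is cleaner.)

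We expect the principal obstacle to be the spatial-derivative estimate just described: it is exactly where the non-locality of $\Delta_k$ and the singularities at the walls obstruct the classical Poisson-extension argument, and it must be recovered from the sharp Dunkl heat-kernel bounds — in particular from their polynomial decay in the \emph{Euclidean} distance, which is also the structural reason the Dunkl and classical Lipschitz scales can coincide at all, despite the orbit-spreading of the Dunkl generalized translations.
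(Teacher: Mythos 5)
Your overall route is genuinely different from the paper's (which proves the base case $0<\beta<1$ directly from Poisson kernel bounds, then climbs the scale by the Dunkl derivatives $D_j$ in one direction and by Bessel potentials in the other, and finally fills in integer $\beta$ by interpolation), but as written it has two real gaps. The first and most serious is your ``soft'' opening step, the equivalence
$\sup_t t^{m-\beta}\|\partial_t^m P_tf\|_{L^\infty}\sim\sup_t t^{m-\beta/2}\|\partial_t^m H_tf\|_{L^\infty}$.
The inclusion from heat to Poisson does follow by differentiating the subordination formula \eqref{eq:H_sub} under the integral (this is the paper's Lemma~\ref{lem:phi_sub}). But the converse --- recovering the heat estimates from the Poisson ones --- cannot be obtained by ``inverting'' subordination: there is no positive kernel $\mu_t$ with $e^{t\Delta_k}=\int_0^\infty e^{-s\sqrt{-\Delta_k}}\,d\mu_t(s)$ (the classical real-variable inversion passes through the wave group, not the Poisson semigroup), and on $L^\infty$ one cannot simply invoke a multiplier theorem. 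This direction is precisely the content of the paper's Theorem~\ref{theo:Lamba=Lambda}, whose proof occupies all of Section~\ref{sec:holomorphic}: one writes $\lambda^n e^{t\lambda}=t^{-n/2}\,m(t\lambda)\cdot(-\sqrt{-\lambda})^n e^{-\sqrt{t}\sqrt{-\lambda}}$ with $m(\lambda)=(\sqrt{-\lambda})^n e^{\lambda+\sqrt{-\lambda}}$ and shows $\|m(t\mathcal A)\|\leq C_n$ uniformly in $t$ via the $H^\infty_a$ functional calculus (Proposition~\ref{prop:crucial}), after first regularizing with $\mathcal A-\omega I$ to make $0$ a resolvent point. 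Asserting this equivalence as a routine manipulation skips what is arguably the main abstract input of the whole paper.

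The second gap is the integer case of $\beta$ in your forward inclusion. You subtract the degree-$[\beta]$ Taylor polynomial of $f$ at $\mathbf x$, justified by ``$f\in\Lambda^\beta(\mathbb R^N)$ is $C^{[\beta]}$''; this is false when $\beta\in\mathbb N$, where $\Lambda^\beta$ is a Zygmund-type class ($\Lambda^1$ contains non-differentiable functions), so the pointwise remainder bound $|f(\mathbf y)-T_{\mathbf x}f(\mathbf y)|\lesssim\|\mathbf x-\mathbf y\|^\beta$ is unavailable. You would need either a symmetrized (second-difference) cancellation argument compatible with the non-symmetric kernel $\partial_t^mh_t(\mathbf x,\cdot)\,dw$, or --- as the paper does --- to prove the non-integer cases first and capture integer $\beta$ by the $K$-method interpolation of Theorem~\ref{teo:interpolation}. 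The rest of your plan is sound: the vanishing of $\partial_t^mH_t$ on polynomials of degree $<2m$, the near/far splitting against the Gaussian bounds of Theorem~\ref{teo:heat_new}, and the spatial-derivative estimates $\|\partial_{\mathbf x}^{\mu}\partial_t^mH_tf\|_{L^\infty}\lesssim A\,t^{\beta/2-|\mu|/2-m}$ obtained from $\partial_t^mH_tf=H_{t/2}(g_{t/2})$ all check out (the paper's Lemmas~\ref{lem:1} and~\ref{lem:3} are the Poisson-side analogues of exactly this device), and your observation that the Euclidean decay $(1+\|\mathbf x-\mathbf y\|/\sqrt t)^{-M}$ of the Dunkl heat kernel, rather than decay in the orbit distance alone, is the structural reason the two Lipschitz scales coincide is correct and is also what drives the paper's Theorem~\ref{teo:Lambda}. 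If you repair the two gaps above, your argument would bypass the Bessel-potential and $D_j$-induction machinery of Theorems~\ref{teo:first_inclusion} and~\ref{teo:inclusion_2}, at the cost of heavier direct kernel bookkeeping in the Taibleson reconstruction for large $\beta$.
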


The proof of Theorem \ref{teo:main_main} goes by   a systematic study of properties of the $\Lambda_k^\beta$ spaces which are analogue to properties  of the classical ones. Firstly, in Theorem \ref{teo:Lip_Dunk_easy}, we state that the Bessel-type potential $(I-\Delta_k)^{\gamma/2 \, *}$ is an isomorphisms of $\Lambda_k^\beta$ onto $\Lambda_k^{\beta+\gamma}$, and, for $\beta>1$, $f\in\Lambda_k^\beta$ if and only if  $f\in L^\infty$ and $D_j^*f\in \Lambda_k^{\beta-1}$, $j=1,2,...,N$.  Next, using differential properties of the Dunkl heat and the Dunkl Poisson integral kernels, we prove that for $0<\beta<1$, the spaces $\Lambda_k^\beta$ and the classical Lipschitz space $\Lambda^\beta(\mathbb R^N)$ coincide (see Theorem \ref{teo:Lambda}).   The results stated above, combined with the properties of the Bessel potential kernels, allow us to verify that  $\Lambda^\beta_k=\Lambda^\beta(\mathbb R^N)$  for $\beta>0$, $\beta\notin \mathbb Z$ (see Theorems \ref{teo:first_inclusion} and \ref{teo:inclusion_2}). Finally, we use an interpolation argument to prove the equality of the spaces for any positive integer $\beta$.

    We want to note that some theorems  can be obtained by the use of an abstract semigroup theory.  Such an approach to interpolation spaces associated with powers of generators of strongly continuous semigroups can be found e.g. in \cite{Butzer},  \cite{Triebel}, and references therein. For the reader, who is not familiar with the theory and for completeness of the paper, we present all the abstract results that are used  in proving the theorems. 
    
   We remark that our  approach to the interpolation differs a little  from that described in \cite{Butzer, Triebel}, because we interpolate between the spaces which are defined by means of actions on semigroups which are not strongly continuous on the Banach spaces under consideration. 
    So, motivated by the approach to the classical Lipschitz spaces, which is based on the action of  either the Poisson or the heat semigroup on $L^\infty$-functions,  in  Part~\ref{part1} we consider an analytic strongly continuous semigroup $\mathcal{T}_t=e^{t\mathcal{A}}$ on a Banach space $X$,  which is uniformly bounded  in a sector around the positive axis and the dual semigroup $\{\mathcal{T}_t^*\}_{t \geq 0}$ acting on  the dual space $X^*$, which is not necessarily strongly continuous in general. For such a semigroup   we define $\Lambda^\beta_{\mathcal A}$ spaces as follows. 
    \begin{definition}\label{def:Lambda}
    For $\beta>0$, let $m$ be the smallest positive integer such that $m>\beta$ . We say that $x^*\in X^*$ belongs to $\Lambda^{\beta}_{\mathcal{A}}$, if 
\begin{equation}\label{eq:alpha_decay}\Big\| \frac{d^m}{dt^m} \mathcal{T}_t^* x^*\Big\|_{X^*}\leq C t^{\beta-m}.
\end{equation} 
We equip the space $\Lambda_{\mathcal{A}}^\beta$ with the norm 
\begin{equation}
    \| x^*\|_{\Lambda_{\mathcal{A}}^\beta}:=\| x^*\|_{X^*}+\sup_{t>0} t^{m-\beta}  \Big\| \frac{d^m}{dt^m} \mathcal{T}_t^* x^*\Big\|_{X^*}.
\end{equation} 
\end{definition}
It is easy to prove that  
 $(\Lambda_{\mathcal{A}}^\beta,  \| \cdot\|_{\Lambda_{\mathcal{A}}^\beta})$ is a Banach space. 
Further, 
we define the subordinate semigroup 
$\{{\mathcal{P}}_t\}_{t \geq 0}$ according to the formula
\begin{equation}\label{eq:P_sub}
    {\mathcal{P}}_t:={\Gamma(1/2)^{-1}}\int_0^{\infty}e^{-u}\mathcal{T}_{t^2/(4u)}\frac{du}{\sqrt{u}} 
\end{equation}
acting on the Banach space $X$.
We denote by $-\sqrt{-{\mathcal{A}}}$ the infinitesimal generator of $\{\mathcal{P}_t\}_{t \geq 0}$. The semigroup $\{\mathcal{P}_t\}_{t \geq 0}$ is strongly continuous and has a unique extension to a uniformly bounded holomorphic semigroup in  the same sector. Again, we consider the dual semigroup $\{\mathcal{P}_t^*\}_{t \geq 0}$ and define  the related $\Lambda_{-\sqrt{-{\mathcal{A}}}}^\beta$ spaces, namely for $\beta>0$ and $m$ as above, we say that $x^*\in X^*$ belongs to $\Lambda_{-\sqrt{-{\mathcal{A}}}}^\beta$ if 
\begin{equation}\label{eq:alpha_root} \Big\|\frac{d^m}{dt^m} {\mathcal{P}}_t^*x^*\Big\|_{X^*} \leq Ct^{\beta-m}.
\end{equation}
Then, as in the previous case,  we set 
$$ \|x^*\|_{\Lambda_{-\sqrt{-{\mathcal{A}}}}^\beta} =\|x^*\|_{X^*} +\sup_{t>0} t^{m-\beta}  \Big\| \frac{d^m}{dt^m} \mathcal{P}_t^* x^*\Big\|_{X^*}.$$

We are now in a position to state the first result which is obtained by means of holomorphic functional calculi. 

\begin{theorem}\label{theo:Lamba=Lambda}
    For $\beta>0$, the spaces $\Lambda_{{\mathcal{A}}}^\beta$ and $\Lambda_{-\sqrt{-{\mathcal{A}}}}^{2\beta}$  coincide. Moreover, there is $C>1$ such that for all $x^* \in X^*$, we have
    \begin{equation}\label{eq:Lambda=Lambda}
        C^{-1} \| x^*\|_{\Lambda^\beta_{\mathcal A}} \leq \| x^*\|_{\Lambda^{2\beta}_{-\sqrt{-\mathcal A}}}\leq C  \| x^*\|_{\Lambda^\beta_{\mathcal A}}.
    \end{equation} 
\end{theorem}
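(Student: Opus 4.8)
The plan is to exploit the subordination formula \eqref{eq:P_sub} together with holomorphic functional calculus for the sectorial operator $-\mathcal A$. The key observation is that, at the level of functional calculus, the semigroups are related by $\mathcal P_t = e^{-t\sqrt{-\mathcal A}}$ and $\mathcal T_s = e^{s\mathcal A}$, so $\frac{d^m}{dt^m}\mathcal P_t$ and $\frac{d^m}{ds^m}\mathcal T_s$ are both given by bounded holomorphic functions of the generator applied via the functional calculus. Concretely, one writes $\frac{d^m}{dt^m}\mathcal P_t^* = (-1)^m(\sqrt{-\mathcal A})^m e^{-t\sqrt{-\mathcal A}}$ acting on $X^*$ (understood in the dual/weak-$*$ sense, since the dual semigroups need not be strongly continuous), and similarly $\frac{d^n}{ds^n}\mathcal T_s^* = \mathcal A^n e^{s\mathcal A}$. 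The goal is then to show that the family of norms $\sup_t t^{m-2\beta}\|\tfrac{d^m}{dt^m}\mathcal P_t^* x^*\|$ and $\sup_s s^{n-\beta}\|\tfrac{d^n}{ds^n}\mathcal T_s^* x^*\|$ (with $m$ the least integer $>2\beta$, $n$ the least integer $>\beta$) are comparable.

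First I would establish the direction $\|x^*\|_{\Lambda^{2\beta}_{-\sqrt{-\mathcal A}}}\lesssim \|x^*\|_{\Lambda^\beta_{\mathcal A}}$. For this, I would express $\frac{d^m}{dt^m}\mathcal P_t^*$ in terms of the heat-type semigroup $\mathcal T_s^*$ by differentiating \eqref{eq:P_sub} under the integral sign: differentiating $e^{-u}\mathcal T^*_{t^2/(4u)}u^{-1/2}$ in $t$ produces factors that, after the substitution $s = t^2/(4u)$, can be organized so that $\frac{d^m}{dt^m}\mathcal P_t^* x^*$ is an integral of $\frac{d^n}{ds^n}\mathcal T_s^* x^*$ against an explicit kernel. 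Then the hypothesis $\|\frac{d^n}{ds^n}\mathcal T_s^* x^*\|\le C s^{\beta - n}$ is inserted and one checks that the resulting integral is $O(t^{2\beta - m})$; this reduces to a convergent beta-type integral which is finite precisely because of the mismatch bookkeeping between $m$, $n$, $\beta$, and $2\beta$. A cleaner alternative is to use the formula $(-\mathcal A)^{\beta} = c_\beta\int_0^\infty (I - \mathcal T_s)^{\lceil\beta\rceil}\,\frac{ds}{s^{1+\beta}}$-type representations, or directly the identity $e^{-t\sqrt{-\mathcal A}} = c\int_0^\infty e^{-t^2/(4u)} u^{-1/2} e^{u\mathcal A}\,\frac{du}{u}$ rewritten via functional calculus, and to bound $(\sqrt{-\mathcal A})^m e^{-t\sqrt{-\mathcal A}}$ by factoring it as $(\sqrt{-\mathcal A})^m e^{-\frac t2\sqrt{-\mathcal A}}\cdot e^{-\frac t2\sqrt{-\mathcal A}}$ and using uniform boundedness of the holomorphic semigroup for the first factor together with the subordination-derived bound for the second. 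I would carry this out by the functional-calculus route to keep the constants uniform and to avoid re-deriving analyticity bounds by hand.

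For the reverse direction $\|x^*\|_{\Lambda^\beta_{\mathcal A}}\lesssim \|x^*\|_{\Lambda^{2\beta}_{-\sqrt{-\mathcal A}}}$, the natural tool is the converse subordination: $\mathcal A$ is, up to sign, the square of $-\sqrt{-\mathcal A}$, so $\mathcal A^n e^{s\mathcal A} = (-\sqrt{-\mathcal A})^{2n} e^{-\sqrt s\,(-\sqrt{-\mathcal A})}$ at the level of functional calculus. Writing $B = -\sqrt{-\mathcal A}$, one then needs to control $\|B^{2n} e^{\sqrt s\, B}\,x^*\|$ by $\sup_r r^{m-2\beta}\|B^m e^{rB}x^*\|$ (note $B$ generates $\{\mathcal P_r\}$, and $\mathcal P_r^* x^* = e^{rB^*}x^*$ in the weak-$*$ sense). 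This is again a matter of factoring $B^{2n}e^{\sqrt s B} = B^{2n - m}e^{\frac12\sqrt s\,B}\cdot B^m e^{\frac12\sqrt s\,B}$ when $2n \ge m$ (using analyticity to absorb $B^{2n-m}e^{\frac12\sqrt s B}$ as $O(s^{-(2n-m)/2})$), and inserting the hypothesis $\|B^m e^{\frac12\sqrt s B}x^*\| \lesssim s^{(2\beta - m)/2}$; the powers of $s$ combine to exactly $s^{n - \beta}$, and analogous handling applies when $2n < m$ by instead integrating $B^m$ down to $B^{2n}$ against a semigroup kernel. Throughout, the weak-$*$ formulation has to be tracked carefully: identities like $\frac{d^m}{dt^m}\mathcal P_t^* x^* = (-1)^m (B^*)^m \mathcal P_t^* x^*$ should be read as holding in $X^*$ because $\mathcal P_t^* x^*$ lies in $\mathrm{dom}((B^*)^m)$ for $t>0$ by analyticity of the pre-adjoint semigroup, and the functional-calculus identities should be justified on the predual $X$ first and then dualized.

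The main obstacle I anticipate is not any single estimate but the lack of strong continuity of the dual semigroups $\{\mathcal T_t^*\}$ and $\{\mathcal P_t^*\}$: the standard interpolation/functional-calculus machinery in \cite{Butzer, Triebel} is phrased for strongly continuous semigroups, so I will need to verify that the relevant operator identities (subordination, the factorizations $B^{2n}e^{\sqrt s B} = \dots$, and the representation of fractional powers) still hold on $X^*$, e.g.\ by working with the weak-$*$ topology, using that $X^*$ with the dual semigroup is weak-$*$ continuous and that the generators are still sectorial with the same angle. Once that technical point is settled, the rest is the bookkeeping of exponents $s^{\alpha}$ in the convergent integrals, which is routine. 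I would therefore begin by carefully setting up the duality framework and the functional calculus for $-\mathcal A$ and $B = -\sqrt{-\mathcal A}$ on $X^*$, state the two factorization lemmas, and then prove the two inequalities in \eqref{eq:Lambda=Lambda} by the scheme above.
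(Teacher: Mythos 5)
Your first inclusion, $\Lambda_{\mathcal A}^{\beta}\subseteq\Lambda_{-\sqrt{-\mathcal A}}^{2\beta}$, is essentially the paper's argument: one differentiates the subordination formula \eqref{eq:P_sub} under the integral sign and reorganizes (the paper does this via integration by parts in $u$, using $\frac{d}{dt}[g(t^2/(4u))]=-\frac{2u}{t}\frac{d}{du}[g(t^2/(4u))]$, which yields the clean identity $\psi^{(2n)}(t)=(-1)^n\int_0^\infty e^{-u}\phi^{(n)}(t^2/(4u))\,du/\sqrt u$ in Lemma \ref{lem:phi_sub}); inserting $\|\phi^{(n)}(s)\|\le s^{\beta-n}\|x^*\|_{\Lambda^\beta_{\mathcal A}}$ then gives a convergent beta-type integral and the bound $Ct^{2\beta-2n}$. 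That half of your plan is sound.

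The reverse inclusion, however, rests on a false identity. You write ``$\mathcal A^n e^{s\mathcal A}=(-\sqrt{-\mathcal A})^{2n}e^{-\sqrt s(-\sqrt{-\mathcal A})}$,'' i.e.\ with $B=-\sqrt{-\mathcal A}$ you identify $\mathcal T_s=e^{s\mathcal A}$ with $\mathcal P_{\sqrt s}=e^{\sqrt s B}$. While $\mathcal A=-B^2$ holds at the level of generators, the exponentials do not match: $e^{s\lambda}$ and $e^{-\sqrt s\sqrt{-\lambda}}$ are different functions of $\lambda$ (for $\mathcal A=\Delta$ on $\mathbb R^N$ these are the Gaussian and the Cauchy--Poisson multipliers $e^{-s\|\xi\|^2}$ versus $e^{-\sqrt s\|\xi\|}$). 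Consequently your factorization $B^{2n}e^{\sqrt sB}=B^{2n-m}e^{\frac12\sqrt sB}\cdot B^m e^{\frac12\sqrt sB}$ only re-estimates derivatives of $\mathcal P_r^*x^*$ at $r=\sqrt s$ --- which is (a reformulation of) the hypothesis via Proposition \ref{pro:any_m} --- and never produces a bound on $\frac{d^n}{ds^n}\mathcal T_s^*x^*$; the correct power $s^{\beta-n}$ appears only because it is attached to the wrong operator. The missing ingredient is the multiplier factorization the paper uses in Proposition \ref{prop:crucial}: write
\begin{equation*}
\lambda^n e^{s\lambda}=(-1)^n\Bigl[(\sqrt{-\lambda})^n e^{s\lambda+\sqrt s\sqrt{-\lambda}}\Bigr]\cdot\Bigl[(-\sqrt{-\lambda})^n e^{-\sqrt s\sqrt{-\lambda}}\Bigr]=(-1)^n\,s^{-n/2}\,m(s\lambda)\cdot\Bigl[(-\sqrt{-\lambda})^n e^{-\sqrt s\sqrt{-\lambda}}\Bigr]
\end{equation*}
with $m(\lambda)=(\sqrt{-\lambda})^n e^{\lambda+\sqrt{-\lambda}}\in H_a^\infty$, so that $\frac{d^n}{ds^n}\mathcal T_s^*x^*=\pm s^{-n/2}\,m(s\mathcal A)^*\,\zeta^{(n)}(\sqrt s)$ with $\|m(s\mathcal A)\|$ bounded uniformly in $s$; combined with $\|\zeta^{(n)}(\sqrt s)\|\le(\sqrt s)^{2\beta-n}\|x^*\|_{\Lambda^{2\beta}_{-\sqrt{-\mathcal A}}}$ this gives $s^{-n/2}(\sqrt s)^{2\beta-n}=s^{\beta-n}$. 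You would also need to address that the functional calculus requires $0\in\rho(\mathcal A)$, which the paper handles by passing to $\mathcal A_\omega=\mathcal A-\omega I$ and letting $\omega\to0$. Your attention to the weak-$*$ duality issues is appropriate, but it does not repair the incorrect operator identity at the core of this half of the argument.
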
 
In continuation, for $\gamma>0$,  we consider the  Bessel type potentials 
$$ ((I-{\mathcal{A}})^{-\gamma})^*, \quad ( (I+\sqrt{-{\mathcal{A}}})^{-\gamma})^*$$
(see \eqref{eq:Bessel} and \eqref{eq:Bessel*}). 
\begin{theorem}\label{theo:Bessel}
     {Let $\beta, \gamma>0$. }The operator $((I-{\mathcal{A}})^{-\gamma})^*$ is an isomorphism of $\Lambda_{\mathcal{A}}^\beta$ onto $\Lambda_{{\mathcal{A}}}^{\gamma+\beta}$.
\end{theorem}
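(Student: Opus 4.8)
The plan is to realize $((I-\mathcal A)^{-\gamma})^*$ concretely via the subordination-type formula for negative powers of $I-\mathcal A$, namely
\begin{equation*}
(I-\mathcal A)^{-\gamma}=\frac{1}{\Gamma(\gamma)}\int_0^\infty s^{\gamma-1}e^{-s}\mathcal T_s\,ds,
\end{equation*}
which converges in operator norm on $X$ because $\{\mathcal T_s\}$ is uniformly bounded. Dualizing gives $((I-\mathcal A)^{-\gamma})^*=\frac{1}{\Gamma(\gamma)}\int_0^\infty s^{\gamma-1}e^{-s}\mathcal T_s^*\,ds$ as a bounded operator on $X^*$ (in the weak-$*$ sense, which is all we need since our spaces are defined through the dual semigroup acting on $X^*$). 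The group law $\mathcal T_t^*\mathcal T_s^*=\mathcal T_{t+s}^*$ then lets us commute $\mathcal T_t^*$ past the potential and differentiate under the integral sign.

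First I would prove boundedness $((I-\mathcal A)^{-\gamma})^*:\Lambda_{\mathcal A}^\beta\to\Lambda_{\mathcal A}^{\gamma+\beta}$. Fix $x^*\in\Lambda_{\mathcal A}^\beta$, let $m=[\beta]+1$ and $m'=[\gamma+\beta]+1$, and write $y^*=((I-\mathcal A)^{-\gamma})^*x^*$. For the $L^\infty$ ($X^*$-norm) part, uniform boundedness of $\mathcal T_s^*$ gives $\|y^*\|_{X^*}\lesssim\|x^*\|_{X^*}$. For the main seminorm, I would estimate $\frac{d^{m'}}{dt^{m'}}\mathcal T_t^* y^*$ by splitting the $s$-integral at $s=t$: writing $\mathcal T_{t+s}^*=\mathcal T_{t/2}^*\mathcal T_{t/2+s}^*$ and moving as many $t$-derivatives as convenient onto each factor, one uses the hypothesis $\|\frac{d^m}{dt^m}\mathcal T_t^* x^*\|\lesssim t^{\beta-m}$ together with the analyticity bound $\|\frac{d^j}{dt^j}\mathcal T_t^*\|\lesssim t^{-j}$ (valid on the sector, hence on the positive axis) to obtain the two regimes. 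The bookkeeping is the routine part: after integrating $s^{\gamma-1}e^{-s}$ against $t$-powers one recovers exactly $t^{\gamma+\beta-m'}$, possibly up to harmless constants and the $e^{-s}$ decay that makes the large-$s$ tail integrable. A small technical point is that $m'-m\in\{[\gamma],[\gamma]+1\}$ need not match $\gamma$, so one has to be slightly careful about how many derivatives land on the potential integral versus on the semigroup; this is handled by always taking $m$ derivatives "inside" (against the hypothesis) and the remaining $m'-m$ derivatives against $\frac{d^{\,m'-m}}{dt^{\,m'-m}}\mathcal T_{t/2}^*$ using the analyticity estimate.

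Next I would exhibit the inverse. Formally $(I-\mathcal A)^{\gamma}=(I-\mathcal A)^{n}(I-\mathcal A)^{\gamma-n}$ for an integer $n>\gamma$, and $(I-\mathcal A)^{\gamma-n}=(I-\mathcal A)^{-(n-\gamma)}$ is again given by the convergent subordination integral above, while $(I-\mathcal A)^n$ is a polynomial in $\mathcal A$. On the dual side, $\mathcal A^*$ acts on $\mathcal T_t^*x^*$ by $\frac{d}{dt}$, so applying $((I-\mathcal A)^n)^*$ maps $\Lambda_{\mathcal A}^{\gamma+\beta}$ into $\Lambda_{\mathcal A}^{\gamma+\beta-2n}$... more precisely, since differentiating $\mathcal T_t^*$ once lowers the order by one, $((I-\mathcal A)^n)^*$ maps $\Lambda_{\mathcal A}^{\gamma+\beta}\to\Lambda_{\mathcal A}^{\gamma+\beta-2n}$ is not quite what I want; rather I should combine the two factors and check directly that $((I-\mathcal A)^{\gamma})^*:\Lambda_{\mathcal A}^{\gamma+\beta}\to\Lambda_{\mathcal A}^{\beta}$ is bounded by the same split-the-integral estimate (now with the roles of $m$ and $m'$ reversed and an extra $n$ derivatives falling on the analytic semigroup), and that the two composites are the identity on the respective spaces. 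The identities $((I-\mathcal A)^{-\gamma})^*\circ((I-\mathcal A)^{\gamma})^*=\mathrm{id}$ follow on a dense subspace from the corresponding operator identities on $X$ (functional calculus for the sectorial operator $-\mathcal A$) and extend by the boundedness just proved.

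I expect the main obstacle to be the mismatch between $m$, $m'$ and the non-integer shift $\gamma$: one must choose, for each fixed total number $m'$ of $t$-derivatives, how to distribute them between the potential kernel and the analytic semigroup so that the hypothesis on $x^*$ is applied with exactly $m$ derivatives and the residual derivatives are absorbed by the sectorial analyticity bound, and then verify the resulting $s$-integrals converge and produce the correct power $t^{\gamma+\beta-m'}$. Everything else — the density arguments, the identification of the inverse via holomorphic functional calculus, and the $X^*$-norm bound — is standard once the subordination representation is in place.
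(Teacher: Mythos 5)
Your overall architecture (subordination integral for the potential, split of the $s$-integral at $s=t$ for boundedness, then an identification of the inverse) matches the paper's in its first half, but the specific derivative bookkeeping you commit to has a genuine gap. You insist on applying the hypothesis with exactly $m=[\beta]+1$ derivatives and absorbing the remaining $m'-m$ derivatives by the analyticity bound on the factor $\mathcal T_{t/2}^*$. In the regime $s>t$ this yields
\begin{equation*}
\Big\|\tfrac{d^{m'}}{dt^{m'}}\mathcal T_{t+s}^*x^*\Big\|_{X^*}\lesssim t^{-(m'-m)}(t/2+s)^{\beta-m},
\end{equation*}
and after integrating $s^{\gamma-1}e^{-s}$ over $s>t$ you get $t^{-(m'-m)}$ whenever $\beta+\gamma>m$ (e.g.\ $\beta=1/2$, $\gamma=2$, so $m=1$, $m'=3$: you obtain $t^{-2}$ where $t^{-1/2}$ is required). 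The fix is exactly Proposition~\ref{pro:any_m}: the hypothesis $x^*\in\Lambda^\beta_{\mathcal A}$ may be applied with \emph{any} number $n>\beta$ of derivatives, so one takes $n=m'>\beta+\gamma$ and bounds the full derivative by $(t+s)^{\beta-m'}$ at the single time $t+s$; then both regimes of the split give $t^{\beta+\gamma-m'}$ since $\beta+\gamma-m'<0$ makes the tail integral converge to the right power. This is precisely how the paper runs the estimate, writing $(({\mathcal A}\mathcal T_{(t+s)/n})^*)^n x^*$ and using \eqref{eq:time_deriv}.

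For invertibility your route is genuinely different from the paper's and heavier than it needs to be. You propose to construct $((I-\mathcal A)^{\gamma})^*$ as a bounded map $\Lambda^{\gamma+\beta}_{\mathcal A}\to\Lambda^{\beta}_{\mathcal A}$ and verify the composite identities; this is workable (the integer-power factor is handled by iterating Theorem~\ref{theo:A_Lambda}, which says $\mathcal A^*$ in the mild sense lowers the index by exactly one — note your own ``$\gamma+\beta-2n$'' is off), but the ``agree on a dense subspace and extend by boundedness'' step is shaky: $X^*$ and the $\Lambda^\beta_{\mathcal A}$ spaces are dual-type spaces with no convenient norm-dense subspace, so the identities must instead be checked weakly against test vectors $\mathcal T_t x$, $x\in X$. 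The paper avoids all of this: having proved forward boundedness, it invokes the closed graph theorem, so only bijectivity of $\mathcal J^{\gamma*}$ is needed; by the semigroup law \eqref{eq:composition} this reduces to $\gamma=1$, where $\mathcal J^{1*}=R(1:\mathcal A)^*$ is injective because $\mathfrak D(\mathcal A)$ is dense in $X$, and surjective because for $x^*\in\Lambda^{\beta+1}_{\mathcal A}$ the element $y^*=x^*-\mathcal A^*x^*$ (mild sense) lies in $\Lambda^\beta_{\mathcal A}$ by Theorem~\ref{theo:A_Lambda} and satisfies $R(1:\mathcal A)^*y^*=x^*$ upon testing against $\mathcal T_t x$. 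If you repair the derivative distribution as above and replace the density argument by weak testing, your proof goes through, but the closed-graph reduction is the cleaner path.
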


The theorem applied to the semigroup  $\{{\mathcal{P}}_t\}_{t>0}$ gives the following corollary. 

\begin{corollary}
    {Let $\beta, \gamma>0$. }The operator  $ ((I+\sqrt{-{\mathcal{A}}})^{-\gamma})^*$ is an isomorphism of  $\Lambda_{-\sqrt{-{\mathcal{A}}}}^\beta$ onto $\Lambda_{-\sqrt{-{\mathcal{A}}}}^{\beta+\gamma}$.
\end{corollary}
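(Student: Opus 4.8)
The plan is to obtain the corollary by applying Theorem~\ref{theo:Bessel} with the role of the pair $(\mathcal{A},\{\mathcal{T}_t\}_{t\ge 0})$ played by $(\mathcal{B},\{\mathcal{P}_t\}_{t\ge 0})$, where $\mathcal{B}:=-\sqrt{-\mathcal{A}}$ is the infinitesimal generator of the subordinate semigroup~\eqref{eq:P_sub}. The point is that, as recalled just before Theorem~\ref{theo:Lamba=Lambda}, $\{\mathcal{P}_t\}_{t\ge 0}$ is a strongly continuous semigroup on $X$ that extends to a uniformly bounded holomorphic semigroup on a sector around the positive axis; hence $\mathcal{B}$, the semigroup $\{\mathcal{P}_t\}_{t\ge 0}=\{e^{t\mathcal{B}}\}_{t\ge 0}$ on $X$, and the dual semigroup $\{\mathcal{P}_t^*\}_{t\ge 0}$ on $X^*$ (which need not be strongly continuous) satisfy verbatim all the standing assumptions imposed in Part~\ref{part1} on $(\mathcal{A},\{\mathcal{T}_t\}_{t\ge 0},\{\mathcal{T}_t^*\}_{t\ge 0})$.

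First I would record the identification of the objects involved. Writing Definition~\ref{def:Lambda} out for $\mathcal{B}$ in place of $\mathcal{A}$ turns the decay condition~\eqref{eq:alpha_decay} into~\eqref{eq:alpha_root}, so that $\Lambda^\beta_{\mathcal{B}}=\Lambda^\beta_{-\sqrt{-\mathcal{A}}}$ with identical norms, for every $\beta>0$; and the Bessel-type potential attached to $\mathcal{B}$ via~\eqref{eq:Bessel} is $((I-\mathcal{B})^{-\gamma})^*=((I+\sqrt{-\mathcal{A}})^{-\gamma})^*$, i.e.\ exactly the operator~\eqref{eq:Bessel*} appearing in the statement. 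With these identifications, Theorem~\ref{theo:Bessel} applied to $\mathcal{B}$ asserts precisely that $((I+\sqrt{-\mathcal{A}})^{-\gamma})^*$ is an isomorphism of $\Lambda^\beta_{-\sqrt{-\mathcal{A}}}$ onto $\Lambda^{\beta+\gamma}_{-\sqrt{-\mathcal{A}}}$, which is the claim; so no work remains beyond this translation.

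The only delicate point---and it is a very mild one---is to be sure that Theorem~\ref{theo:Bessel} is genuinely applicable to $\mathcal{B}$, i.e.\ that its proof uses nothing about $\mathcal{A}$ beyond analyticity, strong continuity of the underlying semigroup on $X$, and its uniform boundedness in a sector (and, where the proof invokes the semigroup subordinate to $\{\mathcal{T}_t\}$, the fact that for $\mathcal{B}$ this is the semigroup subordinate to $\{\mathcal{P}_t\}$, which inherits the same three properties). Since Theorem~\ref{theo:Bessel} is formulated and established at exactly this level of generality, this check is routine, and the corollary follows with no further computation.
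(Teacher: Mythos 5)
Your proposal is correct and is exactly the paper's argument: the corollary is obtained by applying Theorem~\ref{theo:Bessel} to the subordinate semigroup $\{\mathcal{P}_t\}_{t\geq 0}$ with generator $-\sqrt{-\mathcal{A}}$, which (as the paper notes) is strongly continuous and extends to a uniformly bounded holomorphic semigroup in a sector, so all the standing hypotheses are met. The identifications you record ($\Lambda^\beta_{\mathcal{B}}=\Lambda^\beta_{-\sqrt{-\mathcal{A}}}$ and $((I-\mathcal{B})^{-\gamma})^*=((I+\sqrt{-\mathcal{A}})^{-\gamma})^*$) are precisely the translation the authors intend.
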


    In order to prove Theorem \ref{theo:Bessel}, we first establish the following relation of the spaces $\Lambda_{{\mathcal{A}}}^{\beta+1}$ with $\Lambda_{{\mathcal{A}}}^\beta$ and the action of ${\mathcal{A}}^*$.  

Fix $x^*\in X^*$. We say that $y^*={\mathcal{A}}^*x^*\in X^*$ in the  mild sense, if for all  $x\in X$ and all $t>0$,  
$$  \langle y^*,\mathcal{T}_tx\rangle=\langle x^*, {\mathcal{A}}\mathcal{T}_tx\rangle.$$
    
\begin{theorem}\label{theo:A_Lambda}
    Assume that $\beta>0$, $x^*\in X^*$. Then $x^*\in \Lambda_{\mathcal{A}}^{\beta+1}$ if and only if ${\mathcal{A}}^* x^*\in \Lambda_{\mathcal{A}}^\beta$, 
    where the action of ${\mathcal{A}}^*$ on $x^*$ is understood in the mild sense.  Moreover, the norms 
    $$ \| x^*\|_{\Lambda_{\mathcal{A}}^{\beta+1}} \quad \text{and} \quad \| x^*\|_{X^*} +\| {\mathcal{A}}^*x^*\|_{\Lambda_{\mathcal{A}}^\beta}$$
    are equivalent. 
\end{theorem}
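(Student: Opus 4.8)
The plan is to work entirely on the semigroup side and exploit the identity $\frac{d}{dt}\mathcal T_t = \mathcal A\mathcal T_t = \mathcal T_t\mathcal A$ (valid on all of $X$ by analyticity, since $\mathcal T_t X\subset \mathrm{Dom}(\mathcal A^n)$ for every $n$), dualized. First I would make precise that the mild-sense equation $\langle \mathcal A^*x^*,\mathcal T_t x\rangle = \langle x^*,\mathcal A\mathcal T_t x\rangle$, if it holds for a single $t_0>0$, propagates to all $t>0$ and moreover determines a bounded functional: indeed $\mathcal A\mathcal T_t = \mathcal T_{t/2}\mathcal A\mathcal T_{t/2}$, and $\|\mathcal A\mathcal T_s\|_{X\to X}\lesssim s^{-1}$ by analyticity, so the putative $y^*=\mathcal A^*x^*$ is automatically in $X^*$ once it exists as a bounded linear functional. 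The core computation is then the operator identity, for $x\in X$ and $t>0$,
\begin{equation}\label{eq:key_comm}
\frac{d^m}{dt^m}\,\mathcal T_t^* x^* \;=\; \Bigl(\tfrac{d^{m-1}}{dt^{m-1}}\mathcal T_t^*\Bigr)(\mathcal A^* x^*)\qquad\text{in the mild sense,}
\end{equation}
obtained by testing against $x\in X$, moving one derivative inside as $\mathcal A\mathcal T_t$, and using the mild relation; here $m=[\beta+1]+1=[\beta]+2$, so $m-1=[\beta]+1$ is exactly the integer attached to the exponent $\beta$.

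For the forward implication, suppose $x^*\in\Lambda_{\mathcal A}^{\beta+1}$. I would show $\mathcal A^*x^*$ exists in the mild sense by writing, for $x\in X$,
$$
\langle x^*,\mathcal A\mathcal T_t x\rangle \;=\; \langle x^*,\tfrac{d}{dt}\mathcal T_t x\rangle \;=\; \Bigl\langle \tfrac{d}{dt}\mathcal T_{t}^* x^*,\,x\Bigr\rangle,
$$
and then checking that $\frac{d}{dt}\mathcal T_t^* x^*$ has a limit in $X^*$ as $t\to 0^+$ in the appropriate weak sense — this uses the decay $\|\frac{d^{m}}{dt^{m}}\mathcal T_t^* x^*\|\lesssim t^{\beta+1-m}=t^{\beta-(m-1)}$ together with the fundamental theorem of calculus applied $(m-2)$ times to pass from the top derivative back to the first, the relevant exponents $\beta+1-j$ being positive for $j\le m-1=[\beta]+1$. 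Having defined $y^*=\mathcal A^* x^*$, identity \eqref{eq:key_comm} gives $\frac{d^{m-1}}{dt^{m-1}}\mathcal T_t^* y^* = \frac{d^m}{dt^m}\mathcal T_t^* x^*$, whence $\|\frac{d^{m-1}}{dt^{m-1}}\mathcal T_t^* y^*\|\lesssim t^{\beta-(m-1)}$, i.e. $y^*\in\Lambda_{\mathcal A}^\beta$, with the norm estimate $\|y^*\|_{\Lambda_{\mathcal A}^\beta}\lesssim \|x^*\|_{\Lambda_{\mathcal A}^{\beta+1}}$ once we also bound $\|y^*\|_{X^*}$ (again via integrating the derivative bound down to small $t$).

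For the converse, assume $x^*\in X^*$ and $y^*=\mathcal A^*x^*\in\Lambda_{\mathcal A}^\beta$. Again by \eqref{eq:key_comm}, $\frac{d^m}{dt^m}\mathcal T_t^*x^* = \frac{d^{m-1}}{dt^{m-1}}\mathcal T_t^* y^*$, and the right-hand side is $O(t^{\beta-(m-1)})=O(t^{(\beta+1)-m})$ by hypothesis, so $x^*\in\Lambda_{\mathcal A}^{\beta+1}$ with $\|x^*\|_{\Lambda_{\mathcal A}^{\beta+1}}\lesssim \|x^*\|_{X^*}+\|y^*\|_{\Lambda_{\mathcal A}^\beta}$, which together with the forward bound yields the claimed norm equivalence. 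The main obstacle I anticipate is the rigorous handling of the endpoint $t\to0^+$: since $\{\mathcal T_t^*\}$ need not be strongly continuous on $X^*$, one cannot simply say $\mathcal T_t^*x^*\to x^*$, and the existence of $\mathcal A^*x^*$ as a bona fide element of $X^*$ (rather than merely a densely defined form) must be extracted purely from the quantitative decay of the higher $t$-derivatives via iterated integration — this is where the precise bookkeeping of which exponents $\beta+1-j$ remain positive, and a weak-$*$ (or pointwise-on-$X$) compactness/Cauchy argument, will be needed.
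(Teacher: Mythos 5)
Your plan follows essentially the same route as the paper's proof: the commutation identity shifting one $t$-derivative onto $\mathcal{A}^*x^*$ (tested against $\mathcal{T}_s x$ and using strong continuity of $\{\mathcal{T}_t\}_{t\ge0}$ on $X$ to recover the $X^*$-norm) gives both implications, and the existence of $\mathcal{A}^*x^*$ is extracted exactly as you anticipate — the paper uses a Taylor expansion of $v(t)=\frac{d}{dt}\mathcal{T}_t^*x^*$ at $t_0=1$ with integral remainder, whose norm-Cauchy behaviour as $t\to0^+$ follows from $\int_0^1 s^{m-2}s^{(\beta+1)-m}\,ds<\infty$, so no weak-$*$ compactness is needed. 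The approach and the key estimates match; only the bookkeeping details remain to be written out.
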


We finish Part~\ref{part1} of the paper with the following theorem, which will play a crucial role in the  completing the proof of Theorem \ref{teo:main_main}. For the Banach spaces $\Lambda_{\mathcal A}^{\beta_0}$ and $\Lambda_{\mathcal A}^{\beta_1}$ and $0<\theta<1$, let $(\Lambda_{\mathcal A}^{\beta_0},\Lambda_{\mathcal A}^{\beta_1})_\theta$ and $\|x^*\|_\theta$ denote the intermediate interpolation space and the interpolation norm obtained by the $K$-method of Peetre (see Section \ref{sec:interpolation} for details).
\begin{theorem}[{cf. \cite[Section 2.7, for the classical Lipschitz spaces]{Triebel}}]\label{teo:interpolation}
    For $0<\beta_0<\beta_1$ and $0<\theta<1$, let $\beta=(1-\theta)\beta_0+\theta \beta_1$. Then 
    \begin{equation}
        (\Lambda_{\mathcal A}^{\beta_0}, \Lambda_{\mathcal A}^{\beta_1})_\theta=\Lambda_{\mathcal A}^\beta  
    \end{equation}
    and there is a constant $C>1$ such that for all $x^* \in X^{*}$ we have
    \begin{equation}
        C^{-1}\| x^*\|_{\Lambda_{\mathcal A}^\beta} \leq \| x^*\|_\theta \leq C \| x^*\|_{\Lambda_{\mathcal A}^\beta}.
    \end{equation}
\end{theorem}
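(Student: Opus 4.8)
The plan is to prove Theorem~\ref{teo:interpolation} by the standard pair of inclusions $(\Lambda_{\mathcal A}^{\beta_0},\Lambda_{\mathcal A}^{\beta_1})_\theta\hookrightarrow \Lambda_{\mathcal A}^\beta$ and $\Lambda_{\mathcal A}^\beta\hookrightarrow(\Lambda_{\mathcal A}^{\beta_0},\Lambda_{\mathcal A}^{\beta_1})_\theta$, working throughout with the functional $\omega_m(t,x^*):=t^m\|\tfrac{d^m}{dt^m}\mathcal T_t^*x^*\|_{X^*}$ (for a suitable single integer $m>\beta_1$), which by Theorem~\ref{theo:Bessel} and the semigroup properties can be used to compute all three norms $\|\cdot\|_{\Lambda_{\mathcal A}^{\beta_i}}$ up to equivalence, since raising $m$ does not change the space. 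The key analytic inputs are: (i) the analyticity of $\{\mathcal T_t\}$ in a sector, which gives the Cauchy-type bound $\|\tfrac{d^{m+1}}{dt^{m+1}}\mathcal T_t^*x^*\|\cleq t^{-1}\sup_{s\sim t}\|\tfrac{d^m}{ds^m}\mathcal T_s^*x^*\|$ and, in the other direction, reconstruction of lower derivatives from higher ones by integration; and (ii) the semigroup identity $\tfrac{d^m}{dt^m}\mathcal T_{t}^* = \big(\tfrac{d^m}{ds^m}\mathcal T_s^*\big)\mathcal T_{t-s}^*$, which lets one trade decay at one scale for decay at another.

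For the inclusion $(\Lambda_{\mathcal A}^{\beta_0},\Lambda_{\mathcal A}^{\beta_1})_\theta\hookrightarrow\Lambda_{\mathcal A}^\beta$, I would take $x^*$ in the interpolation space, and for each $t>0$ choose a near-optimal decomposition $x^*=x_0^*+x_1^*$ with $\|x_0^*\|_{\Lambda^{\beta_0}}+t^{\beta_1-\beta_0}\|x_1^*\|_{\Lambda^{\beta_1}}\cleq K(t^{\beta_1-\beta_0},x^*)$ (choosing the parameter $s=t^{\beta_1-\beta_0}$ in the $K$-functional). Applying $\tfrac{d^m}{dt^m}\mathcal T_t^*$ with $m>\beta_1$ to each piece and using $\|\tfrac{d^m}{dt^m}\mathcal T_t^*x_i^*\|\cleq t^{\beta_i-m}\|x_i^*\|_{\Lambda^{\beta_i}}$ gives $t^{m-\beta}\|\tfrac{d^m}{dt^m}\mathcal T_t^*x^*\|\cleq t^{-\theta(\beta_1-\beta_0)}K(t^{\beta_1-\beta_0},x^*)$, and a change of variables identifies the supremum in $t$ with (a constant multiple of) $\|x^*\|_\theta$ via the equivalent description of the $K$-norm as $\sup_{s>0}s^{-\theta}K(s,x^*)$, up to also handling the $L^\infty$ (i.e. $\|\cdot\|_{X^*}$) term, which is immediate since $X^*\supset\Lambda^{\beta_0}$ continuously.

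For the reverse inclusion, given $x^*\in\Lambda_{\mathcal A}^\beta$ I would, for each $s>0$, split $x^*$ using the semigroup at a well-chosen time: write $x^*=(x^*-\mathcal T_r^*x^*\cdot(\text{correction}))+\dots$ — more precisely, build $x_1^*$ from a finite Taylor-type remainder of $u\mapsto\mathcal T_u^*x^*$ so that $x_1^*\in\Lambda^{\beta_1}$ with controlled norm while $x_0^*=x^*-x_1^*\in\Lambda^{\beta_0}$, choosing $r$ as a power of $s$; estimating $K(s^{\beta_1-\beta_0},x^*)\le\|x_0^*\|_{\Lambda^{\beta_0}}+s^{\beta_1-\beta_0}\|x_1^*\|_{\Lambda^{\beta_1}}\cleq s^{\theta(\beta_1-\beta_0)}\|x^*\|_{\Lambda^\beta}$ then yields $\sup_s s^{-\theta}K(s,x^*)\cleq\|x^*\|_{\Lambda^\beta}$. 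The explicit construction of $x_1^*$ is cleanest if one represents it via $\mathcal A^*$: using Theorem~\ref{theo:A_Lambda} one can realize the gain of smoothness as an integral of $\tfrac{d^m}{du^m}\mathcal T_u^*x^*$ in $u$ over $(0,r)$ against powers of $u$, which automatically lands in $\Lambda^{\beta_1}$ and whose complement sits in $\Lambda^{\beta_0}$.

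The main obstacle I anticipate is that the semigroups act on the \emph{dual} space $X^*$ and need not be strongly continuous there, so one cannot freely use density arguments, Bochner integrals of $\mathcal T_t^*x^*$ in the strong topology, or the generator on its natural domain; every manipulation (differentiation under the integral, the Taylor remainder construction, the identity $\tfrac{d^m}{dt^m}\mathcal T_t^*=\mathcal T_{t-s}^*\tfrac{d^m}{ds^m}\mathcal T_s^*$) must be justified weak-$*$, pairing against a fixed $x\in X$ and using only the strong continuity and analyticity of $\{\mathcal T_t\}$ on $X$ itself, exactly in the spirit of the remark preceding Definition~\ref{def:Lambda}. A secondary technical point is bookkeeping the equivalence of the $\Lambda^{\beta_i}$-norms under changing the integer $m$ uniformly, so that a single $m$ (and hence a single functional $\omega_m$) governs all the estimates; this is where Theorem~\ref{theo:Bessel} (or a direct Cauchy-estimate argument) does the work. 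Once these justifications are in place, the remaining computations are the routine change-of-variables identities relating $\sup_t t^{m-\beta}\omega_0(t)$ to $\sup_s s^{-\theta}K(s,\cdot)$.
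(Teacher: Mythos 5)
Your plan follows essentially the same route as the paper: the inclusion $(\Lambda_{\mathcal A}^{\beta_0},\Lambda_{\mathcal A}^{\beta_1})_\theta\subseteq\Lambda_{\mathcal A}^\beta$ via a near-optimal decomposition at the parameter $t^{\beta_1-\beta_0}$ followed by the $m$-th derivative bounds, and the reverse inclusion via a Taylor expansion of $v(s)=\mathcal T_s^*x^*$ at $\tau=t^{1/(\beta_1-\beta_0)}$, with all manipulations justified by pairing against $\mathcal T_s x$. One correction to your last paragraph: the roles of the two pieces are the opposite of what you describe --- the Taylor polynomial $\sum_{\ell<m}\frac{1}{\ell!}v^{(\ell)}(\tau)(-\tau)^\ell$ is the $\Lambda_{\mathcal A}^{\beta_1}$-piece (norm $\lesssim t^{-(1-\theta)}\|x^*\|_{\Lambda_{\mathcal A}^\beta}$), while the integral remainder $\int_\tau^0\frac{(-s)^{m-1}}{(m-1)!}v^{(m)}(s)\,ds$ is the $\Lambda_{\mathcal A}^{\beta_0}$-piece (norm $\lesssim t^{\theta}\|x^*\|_{\Lambda_{\mathcal A}^\beta}$); the remainder cannot land in $\Lambda_{\mathcal A}^{\beta_1}$, since it only inherits the $\Lambda_{\mathcal A}^{\beta}$-decay of $x^*$. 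A small technical point worth adding: when bounding $\|x_0^*\|_{X^*}$ one needs $\int_0^\tau s^{m-1}s^{\beta-m}\,ds<\infty$, which fails when $m=\beta$, so the paper first replaces $\beta$ by a slightly smaller non-integer $\beta-\varepsilon$ before integrating.
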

 
In Part~\ref{part2}, we apply the results of Part~\ref{part1} together with properties of the Dunkl heat and the Dunkl Poisson kernels for studying  inhomogeneous Lipschitz spaces associated with the Dunkl operators on the Euclidean space $\mathbb R^N$. 
 
} 
\part{Semigroup approach to \texorpdfstring{$\Lambda^\beta$}{Lambda}-spaces}
\label{part1}

\section{Analytic semigroups of linear operators}
\subsection{Analytic semigroups.} In the present section we collect facts concerning holomorphic (analytic) semigroups of operators on Banach spaces (see e.g., \cite{Davies}, \cite{Pazy}). 

Let $\{\mathcal{T}_t\}_{t \geq 0}$  be a  strongly continuous semigroup of linear operators on a Banach space ${(X,\|\cdot\|)}$, which, for certain $0<\delta<\pi/2$,  has an extension to a uniformly bounded  holomorphic semigroup $\{\mathcal{T}_z\}_{z\in {\boldsymbol \Delta}_\delta}$ in  the  sector 
$$ {\boldsymbol \Delta}_\delta=\{z\in\mathbb C: |\text{arg}\, z|<\delta\}$$ around the positive axis. 

Let ${(X^*,\|\cdot\|_{X^*})}$ be the dual space to { the Banach space $(X, \|\cdot\|)$} and let $\mathcal{T}_t^*\in \mathcal L(X^*)$ denote the dual operators to $\mathcal T_t$.  Then $\{\mathcal{T}_t^*\}_{t \geq 0}$ has a unique extension to a uniformly bounded  holomorphic semigroup (which is in general not strongly continuous). It follows from the theory of analytic semigroups that if   $({\mathcal{A}},\mathfrak D({\mathcal{A}}))$ is the infinitesimal generator of $\{\mathcal{T}_t\}_{t \geq 0}$, then 
\begin{enumerate}[(1)]
    \item $\mathcal{T}_t(X)\subseteq \bigcap_{n \in \mathbb{N}} \mathfrak D({\mathcal{A}}^n)$ for all $t>0$, 
    \item the functions 
    $(0,\infty)\ni t\mapsto \mathcal{T}_t\in \mathcal L(X)$, \ $  (0,\infty)\ni t\mapsto \mathcal{T}_t^*\in \mathcal L(X^*) $  are differentiable (even holomorphic in ${\boldsymbol \Delta}_\delta$) and 
    \begin{equation} \frac{d}{dt} \mathcal{T}_t={\mathcal{A}}\mathcal{T}_t, \quad  \frac{d}{dt} \mathcal{T}^*_t= ({\mathcal{A}}\mathcal{T}_t)^*, 
    \end{equation}
    \begin{equation} \| {\mathcal{A}}\mathcal{T}_t\|= \| ({\mathcal{A}}\mathcal{T}_t)^*\|_{X^*}\leq Ct^{-1},
    \end{equation}
    \begin{equation}\label{eq:dt=A} {\mathcal{A}}\mathcal{T}_{t+s} = \mathcal{T}_t{\mathcal{A}}\mathcal{T}_s, \quad ({\mathcal{A}}\mathcal{T}_{t+s})^*=\mathcal{T}_t^*({\mathcal{A}}\mathcal{T}_s)^*.
    \end{equation}
    In particular, if $t=t_1+t_2+...+t_n$, $t_j>0$, then 
    \begin{equation}\label{eq:time_deriv}
    \frac{d^n}{dt^n}\mathcal{T}_t={\mathcal{A}}^n\mathcal{T}_t={\mathcal{A}}\mathcal{T}_{t_1}{\mathcal{A}}\mathcal{T}_{t_2}...{\mathcal{A}}\mathcal{T}_{t_n}, \quad \frac{d^n}{dt^n}\mathcal{T}^*_t=({\mathcal{A}}^n\mathcal{T}_t)^*=({\mathcal{A}}\mathcal{T}_{t_1})^*({\mathcal{A}}\mathcal{T}_{t_2})^*...({\mathcal{A}}\mathcal{T}_{t_n})^*.
    \end{equation}
\end{enumerate} 
Consequently, there is $C>0$ such that for all $n \in \mathbb{N}$ and $t>0$ we have
\begin{equation}\label{eq:trivial}
    \Big\|\frac{d^n}{dt^n}\mathcal{T}_t\Big\| \leq C^n t^{-n}, \quad \Big\|\frac{d^n}{dt^n}\mathcal{T}^*_t\Big\|_{X^*} \leq C^n t^{-n}. 
\end{equation}

By $\rho ({\mathcal{A}})$, we denote the resolvent set of ${\mathcal{A}}$, that is, the set of all $\lambda\in\mathbb C$ such that the operator $\lambda I -{\mathcal{A}}: \mathfrak D({\mathcal{A}})\to X$ is one to one and onto, and its inverse, denoted by $R(\lambda:{\mathcal{A}})$, is bounded on $X$. The resolvent set $\rho({\mathcal{A}})$ is open and the mapping $\rho({\mathcal{A}})\ni\lambda \mapsto R(\lambda:{\mathcal{A}})\in \mathcal L(X)$ is holomorphic, so is $R(\lambda:{\mathcal{A}})^*$.  Moreover, for all $\lambda, \mu \in \rho (\mathcal{A})$, 
\begin{equation}\begin{split}\label{eq:res_prop1} 
& R(\lambda:{\mathcal{A}})-R(\mu:{\mathcal{A}})=(\mu-\lambda)R(\lambda:{\mathcal{A}})R(\mu:{\mathcal{A}}), \quad \frac{d}{d\lambda} R(\lambda:{\mathcal{A}})=-R(\lambda:{\mathcal{A}})^2,\\
& R(\lambda:{\mathcal{A}})^*-R(\mu:{\mathcal{A}})^*=(\mu-\lambda)R(\lambda:{\mathcal{A}})^*R(\mu:{\mathcal{A}})^*, \quad \frac{d}{d\lambda} R(\lambda:{\mathcal{A}})^*=-R(\lambda:{\mathcal{A}})^{*2}.
\end{split}\end{equation}

If $\{\mathcal{T}_t\}_{t \geq 0}$ is a holomorphic semigroup, uniformly bounded in ${\boldsymbol \Delta}_\delta$, then 
\begin{equation}\label{eq:sihma_del} \Sigma_\delta :=\Big\{\lambda \in \mathbb{C}: |\text{\rm arg}\, \lambda|<\frac{\pi}{2} +\delta\Big\} \subseteq \rho ({\mathcal{A}}),
\end{equation} 
\begin{equation}\label{eq:R_lambda} \| R(\lambda :{\mathcal{A}})\|\leq \frac{C'}{|\lambda|}, \quad \text{\rm for } \lambda\in \Sigma_\delta. 
\end{equation}

\subsection{$\Lambda^\beta_{\mathcal A}$-spaces - basic properties}\label{Lambda_spaces}
Let $\{\mathcal{T}_t\}_{t \geq 0}$ be a semigroup  of linear operators generated by $\mathcal A$ which has extension to a uniformly bounded holomorphic semigroup in a sector ${\boldsymbol \Delta}_\delta$. In this section we state elementary properties of the space $\Lambda^\beta_{\mathcal A}$ (see Definition \ref{def:Lambda}).

The following {  easily proved} proposition asserts that, as in the classical case (cf. \cite[Chapter V, Lemma 5]{Stein_singular}), the integer $m$ in  Definition \ref{def:Lambda} can be replaced by any integer $n>\beta$. 

\begin{proposition}\label{pro:any_m}
    Let $x^*\in X^*$. Fix  $\beta >0$. If $n>m>\beta$  are two integers, then the following two conditions 
    \begin{equation}\label{eq:condition_m_n}
        \Big\|\frac{d^m}{dt^m} \mathcal{T}_t^* x^*\Big\|_{X^*}\leq C_mt^{\beta-m}, \quad    \Big\|\frac{d^n}{dt^n}  \mathcal{T}_t^* x^*\Big\|_{X^*}\leq C_nt^{\beta-n},
    \end{equation}
    are equivalent. Moreover, there is  $C>1$ (which depends on $m$ and $n$ and it is independent of $x^*\in X^*$) such that the smallest constant $C_m$ and $C_n$ holding in the above inequalities satisfy: 
    $$ C^{-1}C_m\leq C_n\leq C C_m.$$
\end{proposition}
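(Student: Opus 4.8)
The plan is to prove the two implications separately, in both cases by writing one derivative as an average (or integral) of the other, using the semigroup law together with the uniform bound \eqref{eq:trivial}. The direction $n>m$ is the easy one: if $\|\frac{d^m}{dt^m}\mathcal{T}_t^*x^*\|_{X^*}\le C_m t^{\beta-m}$, then writing $\frac{d^n}{dt^n}\mathcal{T}_t^*x^* = \big(\frac{d^{n-m}}{dt^{n-m}}\mathcal{T}_{t/2}^*\big)\big(\frac{d^m}{ds^m}\mathcal{T}_s^*x^*\big)\big|_{s=t/2}$ via the factorization \eqref{eq:time_deriv}, the operator norm bound $\|\frac{d^{n-m}}{dt^{n-m}}\mathcal{T}_{t/2}^*\|_{X^*}\le C^{n-m}(t/2)^{-(n-m)}$ from \eqref{eq:trivial} gives $\|\frac{d^n}{dt^n}\mathcal{T}_t^*x^*\|_{X^*}\lesssim t^{-(n-m)}\cdot t^{\beta-m} = t^{\beta-n}$, which is the desired second inequality with a controlled constant.

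The substantive direction is $n<m$, i.e. deducing the estimate for the $m$-th derivative from the one for the $n$-th. Here I would integrate $m-n$ times, from $t$ up to $\infty$, using that $\frac{d}{ds}\mathcal{T}_s^*x^* = (\mathcal{A}\mathcal{T}_s)^*x^* \to 0$ in $X^*$ as $s\to\infty$ (this follows from \eqref{eq:trivial} with $n=1$: $\|\frac{d}{ds}\mathcal{T}_s^*x^*\|_{X^*}\le Cs^{-1}\|x^*\|_{X^*}\to 0$). Concretely, for $n<j\le m$ one writes
\begin{equation*}
\frac{d^{j-1}}{dt^{j-1}}\mathcal{T}_t^*x^* = -\int_t^\infty \frac{d^{j}}{ds^{j}}\mathcal{T}_s^*x^*\,ds,
\end{equation*}
provided $\frac{d^{j-1}}{ds^{j-1}}\mathcal{T}_s^*x^*\to 0$ as $s\to\infty$ and the integrand is integrable near $\infty$; both hold once we have a bound $\|\frac{d^{j}}{ds^{j}}\mathcal{T}_s^*x^*\|_{X^*}\le C_j s^{\beta-j}$ with $j>\beta$, since then $s^{\beta-j}$ is integrable at infinity and $s^{\beta-(j-1)}\to 0$. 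Iterating this identity downward from $j=m$ to $j=n+1$, and plugging in the hypothesis $\|\frac{d^n}{ds^n}\mathcal{T}_s^*x^*\|_{X^*}\le C_n s^{\beta-n}$ at the bottom to start the induction the other way: actually it is cleaner to go upward — assume the bound at level $n$ and integrate the identity $\frac{d^{n}}{dt^{n}}\mathcal{T}_t^*x^*=-\int_t^\infty\frac{d^{n+1}}{ds^{n+1}}\mathcal{T}_s^*x^*\,ds$ in reverse, i.e. differentiate; that is circular. So instead: from the level-$n$ bound, apply the already-established easy direction ($n\to m$ is an increase of order) — wait, that is exactly the $n>m$ case with roles swapped. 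Hence in fact the $n<m$ implication is subsumed by the easy argument applied with the smaller integer in the role of "$m$": given $\|\frac{d^n}{dt^n}\mathcal{T}_t^*x^*\|\le C_n t^{\beta-n}$ and wanting the bound for $\frac{d^m}{dt^m}$ with $m>n$, factor $\frac{d^m}{dt^m}\mathcal{T}_t^* = (\frac{d^{m-n}}{dt^{m-n}}\mathcal{T}_{t/2}^*)\circ(\frac{d^n}{ds^n}\mathcal{T}_s^*)|_{s=t/2}$ and use \eqref{eq:trivial} again.

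So both implications reduce to the same factorization-plus-\eqref{eq:trivial} argument, and the condition $n>\beta$, $m>\beta$ is never actually needed for this equivalence at fixed $x^*$ — it only matters for the normalization of the $\Lambda^\beta_{\mathcal{A}}$-norm via the smallest such integer. I would organize the write-up as: (i) recall \eqref{eq:time_deriv} and \eqref{eq:trivial}; (ii) prove the one-line factorization estimate in the general form $\|\frac{d^{a+b}}{dt^{a+b}}\mathcal{T}_t^*x^*\|_{X^*}\le C^a (t/2)^{-a}\|\frac{d^b}{ds^b}\mathcal{T}_s^*x^*\|_{X^*}|_{s=t/2}$; (iii) apply it with $(a,b)=(n-m,m)$ and $(a,b)=(m-n,n)$ to get the two inequalities with explicit constants $C_n\le C^{n-m}2^{n-m}C_m$ and $C_m\le C^{m-n}2^{m-n}C_n$, giving the claimed $C$. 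I do not anticipate a genuine obstacle here; the only point requiring a little care is making sure \eqref{eq:time_deriv} (stated for $\mathcal{T}_t$ and its adjoint) legitimately yields the semigroup-composition form of the higher-order adjoint derivatives, which it does by taking $t_1=t-s$, $t_2=\dots=t_{a}$ summing to $s$ appropriately — a routine bookkeeping step.
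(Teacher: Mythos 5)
Your first paragraph is correct: the implication from the $m$-th derivative bound to the $n$-th derivative bound (with $n>m$) follows from the factorization \eqref{eq:time_deriv} together with \eqref{eq:trivial}, and this is indeed the easy half. The problem is the other half. There you relabel the integers so that the hypothesis sits at the \emph{smaller} order and the conclusion at the \emph{larger} one, observe (correctly) that this is the same factorization argument, and conclude that both implications are ``subsumed'' by it. But the factorization $\frac{d^{a+b}}{dt^{a+b}}\mathcal{T}_t^*=(\mathcal{A}^{a}\mathcal{T}_{t/2})^{*}\frac{d^{b}}{ds^{b}}\mathcal{T}_s^{*}\big|_{s=t/2}$ only ever transfers a bound \emph{upward} in derivative order; in the proposition's notation the genuinely nontrivial implication is from $\bigl\|\frac{d^n}{dt^n}\mathcal{T}_t^*x^*\bigr\|_{X^*}\leq C_nt^{\beta-n}$ to the bound on the \emph{lower}-order derivative $\frac{d^m}{dt^m}\mathcal{T}_t^*x^*$, and no composition with bounded operators produces a lower-order derivative from a higher-order one. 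As written, you prove one implication twice and the other not at all.

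The correct route is exactly the integration argument you started and then abandoned: for $j=n,n-1,\dots,m+1$ write $\frac{d^{j-1}}{dt^{j-1}}\mathcal{T}_t^*x^*=-\int_t^\infty \frac{d^{j}}{ds^{j}}\mathcal{T}_s^*x^*\,ds$, which is legitimate because \eqref{eq:trivial} gives $\frac{d^{j-1}}{ds^{j-1}}\mathcal{T}_s^*x^*\to 0$ in $X^*$ as $s\to\infty$ (here $j-1\geq m\geq 1$), while the inductively known bound $O(s^{\beta-j})$ is integrable at infinity precisely because $j-1\geq m>\beta$; this yields $C_{j-1}\leq C_j/(j-1-\beta)$ and, after $n-m$ steps, $C_m\leq C\,C_n$. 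There is nothing circular about it: you descend from the known level $n$ to the target level $m$, one integration at a time. This also shows that your closing remark --- that the hypothesis $m>\beta$ is never needed --- is false: it is exactly what makes these integrals converge, and without it the equivalence fails. For instance, for the heat semigroup on $X=L^1(\mathbb{R})$ and $f(x)=\sin x\in L^\infty=X^*$ one has $e^{t\Delta}f=e^{-t}\sin x$, so $\bigl\|\frac{d^2}{dt^2}e^{t\Delta}f\bigr\|_{L^\infty}=e^{-t}\leq t^{3/2-2}$ for all $t>0$, yet $\bigl\|\frac{d}{dt}e^{t\Delta}f\bigr\|_{L^\infty}=e^{-t}\to 1$ as $t\to 0$, so the bound $Ct^{3/2-1}$ fails; thus $\beta=3/2$, $m=1$, $n=2$ contradicts your stronger claim.
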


\begin{remark}\label{rem:small_t}\normalfont 
    It is worth to emphasis that, as in the classical case, the  estimates \eqref{eq:condition_m_n} are of interest  only for $0<t\leq 1$, because, for $t >1$, thanks to \eqref{eq:trivial}, the following better estimates always hold, namely 
    $$ \Big\|\frac{d^n}{dt^n} \mathcal{T}_t^* x^*\Big\|_{X^*} \leq C^n t^{-n} \|x^*\|_{X^*}.$$
\end{remark}

Remark \ref{rem:small_t} together with Proposition \ref{pro:any_m} imply that 
\begin{equation}
    \label{eq:inclusion_trivial}
    \Lambda_{\mathcal{A}}^{\beta_1}\subseteq \Lambda_{\mathcal{A}}^{\beta_2} \quad \text{and} \quad \| x^*\|_{\Lambda^{\beta_2}_{\mathcal{A}}} \leq C_{\beta_1,\beta_2} \| x^*\|_{\Lambda^{\beta_1}_{\mathcal{A}}} \quad \text{for } 
    \beta_1\geq \beta_2>0. 
\end{equation}
\begin{lemma}
   \label{lem:converge} Suppose $\beta>0$. If $x^*\in \Lambda^\beta_{\mathcal A}$, then 
   \begin{equation}
       \label{eq:smal_derv}
       \Big\|\frac{d^n}{dt^n} \mathcal T_t^*x^*\Big\|_{X^*} \leq C_n \| x^*\|_{\Lambda^\beta_{\mathcal A}} \quad \text{for } \ n<\beta; 
   \end{equation}
   \begin{equation}\label{eq:strong_conv}
       \lim_{t\to 0} \| x^*-\mathcal T^*_tx^*\|_{X^*}=0.
   \end{equation}
\end{lemma}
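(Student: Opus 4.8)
The plan is to exploit the decay hypothesis \eqref{eq:alpha_decay} on the $m$-th derivative together with the elementary bounds \eqref{eq:trivial} to integrate "down" from large $t$ to small $t$. First I would prove \eqref{eq:smal_derv}. Fix an integer $n$ with $0 \le n < \beta$ and set $m = [\beta]+1$, so $m > \beta > n$ and $m - n \ge 1$. For $t > 0$ and any fixed $T > 0$ with $T \ge 1$, the Newton--Leibniz formula applied $m-n$ times gives
\[
\frac{d^n}{dt^n}\mathcal{T}_t^* x^* = \sum_{j=0}^{m-n-1} \frac{(T-t)^j}{j!}\,\frac{d^{n+j}}{ds^{n+j}}\mathcal{T}_s^* x^*\Big|_{s=T} + \frac{(-1)^{m-n}}{(m-n-1)!}\int_t^T (s-t)^{m-n-1}\,\frac{d^{m}}{ds^{m}}\mathcal{T}_s^* x^*\,ds.
\]
For the boundary terms at $s = T$ (with $T$ a fixed constant, say $T=1$) I would use \eqref{eq:trivial} to bound $\|\frac{d^{n+j}}{ds^{n+j}}\mathcal{T}_s^* x^*\|_{X^*} \le C^{n+j}\|x^*\|_{X^*}$, which is controlled by $\|x^*\|_{\Lambda^\beta_{\mathcal A}}$. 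For the integral term I would use hypothesis \eqref{eq:alpha_decay}, namely $\|\frac{d^m}{ds^m}\mathcal{T}_s^* x^*\|_{X^*} \le \|x^*\|_{\Lambda^\beta_{\mathcal A}}\, s^{\beta-m}$, so the integral is bounded by a constant times $\|x^*\|_{\Lambda^\beta_{\mathcal A}}\int_0^1 s^{m-n-1} s^{\beta-m}\,ds = \|x^*\|_{\Lambda^\beta_{\mathcal A}}\int_0^1 s^{\beta-n-1}\,ds$, which converges precisely because $\beta - n > 0$. This yields \eqref{eq:smal_derv} with a constant independent of $t$.

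Next I would prove the strong convergence \eqref{eq:strong_conv}. Here there is a subtlety: $\{\mathcal{T}_t^*\}$ need not be strongly continuous on all of $X^*$, so one cannot simply invoke strong continuity. Instead, apply the first part with $n = 1$, which is legitimate once $\beta > 1$; for $0 < \beta \le 1$ one first passes through the larger space using the inclusion \eqref{eq:inclusion_trivial} is the wrong direction, so instead I would argue directly. The cleanest route: for $0 < \beta$, write $x^* - \mathcal{T}_t^* x^* = -\int_0^t \frac{d}{ds}\mathcal{T}_s^* x^*\,ds$ interpreted as a weak-$*$ (Gelfand) integral, valid since $s \mapsto \langle \frac{d}{ds}\mathcal{T}_s^* x^*, x\rangle$ is continuous on $(0,\infty)$ and integrable near $0$. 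Then $\|x^* - \mathcal{T}_t^* x^*\|_{X^*} \le \int_0^t \|\frac{d}{ds}\mathcal{T}_s^* x^*\|_{X^*}\,ds$. If $\beta > 1$, the bound \eqref{eq:smal_derv} with $n=1$ gives $\|\frac{d}{ds}\mathcal{T}_s^* x^*\|_{X^*} \le C_1\|x^*\|_{\Lambda^\beta_{\mathcal A}}$, so the integral is $\le C_1 t \|x^*\|_{\Lambda^\beta_{\mathcal A}} \to 0$. If $0 < \beta \le 1$, I would instead iterate the Newton--Leibniz identity only $m-1$ times (writing $\frac{d}{ds}\mathcal{T}_s^* x^*$ in terms of $\frac{d^m}{ds^m}$ and boundary data at $s = t_0$ for a fixed $t_0$), producing $\|\frac{d}{ds}\mathcal{T}_s^* x^*\|_{X^*} \le C\|x^*\|_{\Lambda^\beta_{\mathcal A}}(1 + s^{\beta - 1})$; then $\int_0^t s^{\beta-1}\,ds = t^\beta/\beta \to 0$ since $\beta > 0$, giving the claim.

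The main obstacle I anticipate is the bookkeeping around the endpoint $s = t$ in the Taylor-with-remainder expansion and justifying that the derivatives extend continuously and that the weak-$*$ integral representation $x^* - \mathcal{T}_t^* x^* = -\int_0^t \frac{d}{ds}\mathcal{T}_s^* x^*\,ds$ holds in $X^*$ (not merely after pairing with fixed $x \in X$). This is handled by noting that the integrand is norm-continuous on any $[\varepsilon, t]$ by holomorphy of $s \mapsto \mathcal{T}_s^*$ away from $0$, and norm-integrable down to $0$ by the decay bound just established, so the $X^*$-valued Bochner integral exists and the fundamental theorem of calculus applies on $(0,\infty)$. Everything else is the routine power-counting in the integrals $\int_0^1 s^{\beta-n-1}\,ds$, which converge exactly under the stated constraints $n < \beta$ and $\beta > 0$.
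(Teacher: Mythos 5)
Your proposal is correct and follows essentially the same route as the paper: \eqref{eq:smal_derv} by integrating the $m$-th derivative bound (Taylor expansion with integral remainder at a fixed base point), and \eqref{eq:strong_conv} by showing $\int_0^t\big\|\tfrac{d}{ds}\mathcal T_s^*x^*\big\|_{X^*}\,ds\to 0$ and identifying the limit through the strong continuity of $\{\mathcal T_t\}_{t\geq 0}$ on $X$. The only difference is that the paper avoids your case split by using \eqref{eq:inclusion_trivial} to reduce at once to $0<\beta<1$ (contrary to your remark, the inclusion goes the right way for this reduction), which also sidesteps the small inaccuracy at $\beta=1$, where one integration of the second-derivative bound yields $C(1+\log(1/s))$ rather than $C(1+s^{\beta-1})$ --- still integrable near $0$, so nothing in your argument breaks.
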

\begin{proof} The proof of \eqref{eq:smal_derv} follows from integration of \eqref{eq:alpha_decay}. To prove \eqref{eq:strong_conv}, 
    there is no loss of generality if we assume that $0<\beta<1$. Let  $0<t_1<t_2<1$.  We write 
    \begin{equation*}
        \begin{split}
           \| \mathcal T_{t_2}x^*-\mathcal T^*_{t_1}x^*\|_{X^*} &= 
    \Big\| \int_{t_1}^{t_2} \frac{d}{ds} \mathcal T_s^* x^*\, ds\Big\|_{X^*} 
    \leq C\int_{t_1}^{t_2} s^{\beta-1}\, ds=C'(t_2^\beta -t_1^\beta). 
        \end{split}
    \end{equation*}
    Hence, $\mathcal T_t^*x^*$ satisfies the  Cauchy condition,  as $t\to 0$. Let $x_0^*=\lim_{t\to 0}\mathcal T_t^*x^*$ in the $\| \cdot\|_{X^*}$-norm. Then 
    \begin{equation*}
        \langle x_0^*,x\rangle =\lim_{t\to 0} \langle \mathcal T_t^*x^*,x\rangle = \lim_{t\to 0} \langle x^*, \mathcal T_tx\rangle =\langle x^*,x\rangle, 
    \end{equation*}
    because $\{\mathcal T_t\}_{t \geq 0}$ is strongly continuous. So $x_0^*=x^*$.
\end{proof}

\begin{proof}[Proof of Theorem \ref{theo:A_Lambda}] 
Suppose that $x^*\in X^*$ is such that $y^*:={\mathcal{A}}^*x^*\in \Lambda^\beta_{\mathcal A}$. Let $m>\beta+1$. Then $m-1> \beta$ and 
\begin{equation}\label{eq:yinLambda} \Big\| \frac{d^{m-1}}{dt^{m-1}} \mathcal{T}_t^*y^*\Big\|_{X^*}\leq t^{\beta -(m-1)} \|y^*\|_{\Lambda^\beta_{\mathcal A}}.
\end{equation} 
Since $\{\mathcal{T}_t\}_{t \geq 0}$ is a strongly continuous and uniformly bounded semigroup of linear operators, using \eqref{eq:time_deriv} and~\eqref{eq:strong_conv}, we get 
\begin{equation}
    \begin{split}
        \Big\| \frac{d^m}{dt^m} \mathcal{T}^*_tx^*\Big\|_{X^*} & \leq \sup_{s>0,\, \|x\|\leq 1} \Big|\Big\langle \frac{d^m}{dt^m} \mathcal{T}^*_tx^*, \mathcal{T}_sx\Big\rangle\Big|\\
        &=  \sup_{s>0,\, \|x\|\leq 1} |\langle x^*, {\mathcal{A}}\mathcal{T}_{t/m} ({\mathcal{A}} \mathcal{T}_{t/m})^{m-1} \mathcal{T}_sx\rangle|\\
        &= \sup_{s>0,\, \|x\|\leq 1} |\langle y^*, \mathcal{T}_{t/m} ({\mathcal{A}} \mathcal{T}_{t/m})^{m-1} \mathcal{T}_sx\rangle|\\
        &=\sup_{s>0,\, \|x\|\leq 1} \Big|\Big\langle \frac{d^{m-1}}{dt^{m-1}} \mathcal{T}_t^*y^*,   \mathcal{T}_sx\Big\rangle\Big| \leq C \|y^*\|_{\Lambda^\beta_{\mathcal A}} t^{(\beta+1)-m},
    \end{split}
\end{equation}
where in the last inequality we have used \eqref{eq:yinLambda}. 

We now turn to prove the converse implication. Suppose that $x^*\in \Lambda^{\beta+1}_{\mathcal{A}}$. Let $m$ be an integer, such that $1\leq m-1\leq \beta +1<m$. We start by proving that ${\mathcal{A}}^*x^*$ exists in the mild sense. Let $u(t):=\mathcal{T}_t^*x^*$, $v(t):=u'(t)=({\mathcal{A}}\mathcal{T}_t)^*x^*$. Then, by our assumption, 
\begin{equation}\label{eq:u(m)} 
\| v^{(m-1)}(t)\|_{X^*}= \| u^{(m)}(t)\|_{X^*} \leq  \|x^*\|_{\Lambda^{\beta+1}_{\mathcal A}} t^{(\beta+1)-m}.
\end{equation}
Consider the Taylor expansion of the function $v(t)$ around the point $t_0=1$: 
\begin{equation}\label{eq:Taylor_v}
v(t)=\sum_{\ell=0}^{m-2} \frac{1}{\ell !} v^{(\ell)} (1) (t-1)^\ell +\int_1^t \frac{(t-s)^{m-2}}{(m-2)!} v^{(m-1)} (s)\, ds . 
\end{equation} 
It follows from \eqref{eq:u(m)} that $v(t)$ converges in the $X^*$-norm to a vector $y^*\in X^*$, as $t$ tends to 0,  and 
$\|y^*\|_{X^*}\leq C \|x^*\|_{\Lambda^{\beta+1}_{\mathcal A}}$.   To see that $y^*={\mathcal{A}}^*x^*$, we write 
\begin{equation*}
    \begin{split}
        \langle y^*, \mathcal{T}_sx\rangle  = \lim_{t\to 0} \langle v(t),\mathcal{T}_sx\rangle 
        =\lim_{t\to 0} \langle ({\mathcal{A}}\mathcal{T}_t)^* x^* , \mathcal{T}_sx\rangle 
        = \lim_{t\to 0} \langle x^*, {\mathcal{A}}\mathcal{T}_t\mathcal{T}_sx\rangle = \langle x^*, {\mathcal{A}}\mathcal{T}_sx\rangle, 
    \end{split}
\end{equation*}
{where in the last equality we have used~\eqref{eq:dt=A} and the strong continuity of $\{\mathcal T_t\}_{t \geq 0}$. }

It suffices to verify that $y^*\in \Lambda_{\mathcal{A}}^{\beta}$. To this end, we recall  that $\beta<m-1$ and, for $t>0$, applying \eqref{eq:time_deriv}, we obtain   
\begin{equation*}
    \begin{split}
        \Big\| \frac{d^{m-1}}{dt^{m-1}} \mathcal T^*_t y^*\Big\|_{X^*} &=\lim_{s\to 0} \| (({\mathcal{A}}\mathcal{T}_{t/(m-1)})^*)^{m-1} v(s)\|_{X^*}   =\lim_{s\to 0} \| (({\mathcal{A}}\mathcal{T}_{t/(m-1)})^*)^{m-1} ({\mathcal{A}}\mathcal{T}_s)^* x^*\|_{X^*}\\
        &= \lim_{s\to 0} \Big\| \frac{d^{m}}{d\tau^m} \mathcal{T}_\tau x^*_{\big| \tau=t+s} \Big\|_{X^*} \leq  t^{(\beta+1)-m}\|x^*\|_{\Lambda^{\beta+1}_{\mathcal A}} = t^{\beta -(m-1)} \|x^*\|_{\Lambda^{\beta+1}_{\mathcal A}}. 
    \end{split}
\end{equation*}
\end{proof}

\subsection{Bessel  potentials} 

For $\gamma>0$, we define the potential operator 
\begin{equation}\label{eq:Bessel} \mathcal J^\gamma:=(I-{\mathcal{A}})^{-\gamma}=\Gamma (\gamma)^{-1} \int_0^\infty t^\gamma e^{-t} \mathcal{T}_t\,\frac{dt}{t} 
\end{equation}
and its conjugate operator 
\begin{equation}\label{eq:Bessel*} \mathcal J^{\gamma *}:=((I-{\mathcal{A}})^{-\gamma})^*= \Gamma (\gamma)^{-1} \int_0^\infty t^\gamma e^{-t} \mathcal{T}_t^*\,\frac{dt}{t}.
\end{equation}
Let us note that the integrals converge in the norm operator topology and define bounded operators on $X$ and $X^*$ respectively.  
Moreover, {for $\gamma_1,\gamma_2>0$,}
\begin{equation}\label{eq:composition} \mathcal J^{\gamma_1} \mathcal J^{\gamma_2}=\mathcal J^{\gamma_1+\gamma_2}, \quad \mathcal J^{\gamma_1\, *} \mathcal J^{\gamma_2\, *}=\mathcal J^{(\gamma_1+\gamma_2)\, *}.  
\end{equation}

\begin{proof}[Proof of Theorem \ref{theo:Bessel}] 

We start by proving that for any $\beta>0$, the operator $\mathcal J^{\gamma *}$ is  bounded  from $\Lambda_{\mathcal{A}}^{\beta} $ to $\Lambda_{\mathcal{A}}^{\beta+\gamma} $. Clearly, for $x^*\in\Lambda_{\mathcal{A}}^\beta$, we have 
\begin{equation}  \|\mathcal J^{\gamma\, *} x^*\|_{X^*}\leq C \|x^*\|_{X^*}\leq C' \| x^*\|_{\Lambda_{\mathcal{A}}^\beta}.
\end{equation}
To verify that $y^*:=J^{\gamma \, *} x^*$ belongs to $\Lambda^{\beta+\gamma}_{\mathcal{A}}$, and $\|y^*\|_{\Lambda_{\mathcal{A}}^{\beta+\gamma}}\leq C \| x^*\|_{\Lambda_{\mathcal{A}}^\beta}$,   we fix $n>\beta+\gamma$ and  $0<t\leq 1$. {Then, using \eqref{eq:time_deriv} and our assumption,  we obtain }  
   \begin{equation}
       \begin{split}
           \Big\| \frac{d^n}{dt^n} \mathcal{T}_t^*y^*\Big\|_{X^*} & = \Gamma(\gamma)^{-1}\Big\| \int_0^\infty s^{\gamma}e^{-s} (({\mathcal{A}}\mathcal{T}_{t/n})^{*})^n \mathcal{T}_s^*x^*\frac{ds}{s} \Big\|_{X^*}\\
           & = \Gamma(\gamma)^{-1}\Big\| \int_0^\infty s^{\gamma}e^{-s} (({\mathcal{A}}\mathcal{T}_{(t+s)/n})^{*})^n x^*\frac{ds}{s} \Big\|_{X^*}\\
           &\leq C \|x^*\|_{\Lambda^\beta_{\mathcal A}}\int_0^t s^{\gamma} e^{-s} t^{\beta -n} \frac{ds}{s} + 
           C\|x^*\|_{\Lambda^\beta_{\mathcal A}}\int_t^\infty  s^{\gamma} e^{-s} s^{\beta -n} \frac{ds} {s}\\
          & \leq C' \|x^*\|_{\Lambda^\beta_{\mathcal A}}t^{\beta +\gamma -n}. 
       \end{split}
   \end{equation} 
   {Thus we have shown that {$\mathcal J^{\gamma\, *}$} is a continuous linear transformation of $\Lambda^\beta_{\mathcal A}$ into  $\Lambda^{\beta+\gamma}_{\mathcal A}$.  } 
   
   According to the Banach closed graph theorem,  it remains to prove that the mapping 
   $ \mathcal J^{\gamma\, * } : \Lambda_{\mathcal{A}}^\beta \to \Lambda_{\mathcal{A}}^{\beta+\gamma} $  is injective and onto. Thanks to \eqref{eq:composition},  by standard functional analysis arguments, it suffices to prove this for $\gamma =1$, {cf. \cite[Chapter V, Section 4.4]{Stein_singular} for the proof in the case of the classical Lipschitz spaces.}  
   Since $R(1:{\mathcal{A}})(X)=\mathfrak D({\mathcal{A}})$, which is a dense space in $X$,  we conclude that $\mathcal J^{1\,*} = R(1:{\mathcal{A}})^*$ is injective. To verify that $R(1:{\mathcal{A}})^*$ is onto, consider  $x^*\in \Lambda_{\mathcal{A}}^{\beta+1}$. Then $y^*=x^*-{\mathcal{A}}^*x^*$ belongs to $\Lambda_{\mathcal{A}}^\beta$, where ${\mathcal{A}}^*x^*$ is understood in the mild sense, that is, $\langle y^*, \mathcal{T}_tx\rangle=\langle x^*, (I-{\mathcal{A}})\mathcal{T}_tx\rangle$.  Now,
   \begin{equation}\begin{split}
       \langle R(1:{\mathcal{A}})^* y^*, \mathcal{T}_tx\rangle &= \langle y^* , R(1:{\mathcal{A}})\mathcal{T}_tx\rangle = \langle y^* , \mathcal{T}_tR(1:{\mathcal{A}})x\rangle  \\&= \langle x^* , (I-{\mathcal{A}}) \mathcal{T}_t R(1: {\mathcal{A}})x\rangle = \langle x^*,\mathcal{T}_tx\rangle,
  \end{split} \end{equation} and, consequently, $R(1:{\mathcal{A}})^*y^*=x^*$. 
\end{proof}

{
\section{Subordinate semigroup} 

In this section, for a holomorphic uniformly bounded semigroup $\{\mathcal{T}_t\}_{t \geq 0}=\{e^{t{\mathcal{A}}}\}_{t \geq 0}$ on a Banach space $X$, we consider the  subordinate semigroup $\{{\mathcal{P}}_t\}_{t \geq 0}$ defined by \eqref{eq:P_sub}.

The integral in~\eqref{eq:P_sub} is convergent in the norm operator topology $\mathcal L(X)$ and it defines a strongly continuous uniformly bounded (in any sector ${\boldsymbol \Delta}_\delta$, $0<\delta<\pi/2$) holomorphic semigroup on $X$. Our aim is to prove Theorem \ref{theo:Lamba=Lambda}. The relations: 
\begin{equation}\label{eq:easier}
    \Lambda_{{\mathcal{A}}}^\beta\subseteq \Lambda_{-\sqrt{-{\mathcal{A}}}}^{2\beta}  \quad  \text{with } 
 \| x^*\|_{\Lambda_{-\sqrt{-{\mathcal{A}}}}^{2\beta}}\leq C \| x^*\|_{\Lambda_{\mathcal{A}}^\beta} \quad  \text{ for all }x^* \in \Lambda^{\beta}_{\mathcal A} 
\end{equation}
will be proved by utilizing \eqref{eq:P_sub}. While for the converse, namely for  
\begin{equation}\label{eq:noteasy}
     \Lambda_{-\sqrt{-{\mathcal{A}}}}^{2\beta}   \subseteq  \Lambda_{{\mathcal{A}}}^\beta \quad  \text{with } 
 C^{-1} \| x^*\|_{\Lambda_{\mathcal{A}}^\beta }
 \leq  \| x^*\|_{\Lambda_{-\sqrt{-{\mathcal{A}}}}^{2\beta} }  \quad \text{ for all }x^* \in \Lambda^{2\beta}_{-\sqrt{-\mathcal A}},  
 \end{equation}
we shall apply a holomorphic functional calculi. For the reader who is not familiar with the methods, we provide details in Section \ref{sec:holomorphic}.

\subsection{Inclusion $ \Lambda_{{\mathcal{A}}}^\beta\subseteq \Lambda_{-\sqrt{-{\mathcal{A}}}}^{2\beta} $}
We start with the following lemma. 

\begin{lemma}\label{lem:phi_sub}
   Suppose that  $(0,\infty)\ni t\mapsto \phi (t)$ is a $C^\infty$-bounded  function taking values in a Banach space such that for any $n \in \mathbb{N}$ there is $C_n>0$ such that for all $t>0$ we have
   \begin{equation}\label{eq:con_g} \|\phi^{(n)}(t)\|\leq C_nt^{-n}.  
   \end{equation}
   Then the function 
   $$ (0,\infty)\ni t\mapsto \psi (t):=\int_0^\infty e^{-u} \phi\Big(\frac{t^2}{4u}\Big)\frac{du}{\sqrt{u}}$$
   is $C^\infty$ and  satisfies 

 $$ \psi^{(2n+1)}(t)=(-1)^n\int_0^\infty e^{-u}\frac{t}{2u}\phi^{(n+1)}\Big(\frac{t^2}{4u}\Big)\, \frac{du}{\sqrt{u}},$$
   
   $$ \psi^{(2n)}(t)=(-1)^n\int_0^\infty e^{-u}\phi^{(n)}\Big(\frac{t^2}{4u}\Big)\, \frac{du}{\sqrt{u}}.$$
\end{lemma}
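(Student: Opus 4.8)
The plan is to prove both formulas at once by induction on $n$, after isolating the mechanics of the computation into two auxiliary families of (Banach-space valued) integrals. For $j\ge 0$ and $t>0$ put
\[
A_j(t):=\int_0^\infty e^{-u}\,\phi^{(j)}\!\Big(\frac{t^2}{4u}\Big)\,\frac{du}{\sqrt u},\qquad
B_j(t):=\int_0^\infty e^{-u}\,\frac{t}{2u}\,\phi^{(j)}\!\Big(\frac{t^2}{4u}\Big)\,\frac{du}{\sqrt u},
\]
so that $\psi=A_0$. The hypotheses give $\|\phi^{(j)}(s)\|\le C_j s^{-j}$ for $j\ge 1$ and $\|\phi(s)\|\le C_0$; substituting $s=t^2/(4u)$, the integrand of $A_j$ is, for fixed $t>0$, bounded in norm near $u=0$ by a constant times $e^{-u}u^{\,j-1/2}$ and near $u=\infty$ by $e^{-u}$ times a fixed power of $u$ (and likewise for $B_j$, with $u^{\,j-3/2}$ near $0$). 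Hence all these integrals converge absolutely; more importantly, the very same bounds, taken locally uniformly in $t$, will serve as dominating functions to justify differentiation under the integral sign below.

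First I would record the elementary identity $g'(u)=-\bigl(1+\tfrac1{2u}\bigr)g(u)$ for $g(u):=e^{-u}u^{-1/2}$, and then establish the two recursions
\[
A_j'(t)=B_{j+1}(t),\qquad B_{j+1}'(t)=-A_{j+1}(t)\qquad (j\ge 0).
\]
The first is a direct differentiation under the integral sign: $\tfrac{d}{dt}\phi^{(j)}(t^2/(4u))=\tfrac{t}{2u}\phi^{(j+1)}(t^2/(4u))$, and the resulting integrand is dominated locally uniformly in $t$ by an integrable function of $u$, by the growth bound applied with index $j+1$. For the second, differentiating the integrand of $B_{j+1}$ in $t$ produces $\tfrac1{2u}\phi^{(j+1)}(t^2/(4u))+\tfrac{t^2}{4u^2}\phi^{(j+2)}(t^2/(4u))$; in the second term I would use $\tfrac{t^2}{4u^2}\phi^{(j+2)}(t^2/(4u))=-\tfrac{d}{du}\bigl[\phi^{(j+1)}(t^2/(4u))\bigr]$ and integrate by parts in $u$ against $g$. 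The boundary contribution $\bigl[g(u)\phi^{(j+1)}(t^2/(4u))\bigr]_{u=0}^{u=\infty}$ vanishes: at $u=\infty$ because $e^{-u}$ dominates the polynomial growth of $\phi^{(j+1)}(t^2/(4u))$, and at $u=0$ because $\|g(u)\phi^{(j+1)}(t^2/(4u))\|\le C\,u^{\,j+1/2}\to 0$. What remains combines with the first term through $\tfrac1{2u}g(u)+g'(u)=-g(u)$, leaving precisely $-A_{j+1}(t)$.

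With the two recursions in hand the induction closes at once. For $n=0$, $\psi^{(0)}=A_0$ and $\psi^{(1)}=A_0'=B_1$. If $\psi^{(2n)}=(-1)^nA_n$, then $\psi^{(2n+1)}=(\psi^{(2n)})'=(-1)^nA_n'=(-1)^nB_{n+1}$ and $\psi^{(2n+2)}=(\psi^{(2n+1)})'=(-1)^nB_{n+1}'=(-1)^{n+1}A_{n+1}$, which is the statement for $n+1$; in particular $\psi\in C^\infty$. I expect the only real work to be bookkeeping rather than ideas: producing, once and for all, the locally-uniform-in-$t$ dominating functions that legitimize each differentiation under the integral sign and the single integration by parts, and checking the boundary terms vanish — all of which is forced by the decay $\|\phi^{(j)}(t^2/(4u))\|\lesssim (u/t^2)^{j}$ as $u\to0$ together with the factor $e^{-u}$ controlling large $u$.
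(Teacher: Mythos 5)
Your proof is correct and follows essentially the same route as the paper: the recursion $B_{j+1}'=-A_{j+1}$ is exactly the paper's computation of $\psi''$ from $\psi'$ (your identity $\tfrac{t^2}{4u^2}\phi^{(j+2)}(t^2/(4u))=-\tfrac{d}{du}[\phi^{(j+1)}(t^2/(4u))]$ is the paper's \eqref{eq:t=u}, and the integration by parts against $g(u)=e^{-u}u^{-1/2}$ is identical). Your explicit induction with the named integrals $A_j$, $B_j$ and the verification of the vanishing boundary terms merely makes precise the paper's closing remark that ``the proof for the higher order derivatives goes by iterating the argument.''
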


\begin{proof} 
Clearly, 
\begin{equation*}
    \psi'(t)=\int_0^\infty e^{-u} \frac{t}{2u} \phi'\Big(\frac{t^2}{4u}\Big) \frac{du}{\sqrt {u}}. 
\end{equation*}
  Let us note that for any smooth function $(0,\infty)\ni t \mapsto g(t)$, we have 
\begin{equation}\label{eq:t=u}
     \frac{d}{dt}[g(t^2/(4u))]=-\frac{2u}{t} \frac{d}{du}[g(t^2/(4u))].
\end{equation} 
Hence, 
\begin{equation*}
    \begin{split}
        \psi''(t)& =\int_0^\infty e^{-u} \frac{1}{2u} \phi'\Big(\frac{t^2}{4u}\Big) \frac{du}{\sqrt{u}} + \int_0^\infty e^{-u} \frac{t}{2u} \frac{d}{dt} \Big[\phi'\Big(\frac{t^2}{4u}\Big)\Big]\frac{du}{\sqrt{u}}\\
        &= \int_0^\infty e^{-u} \frac{1}{2u} \phi'\Big(\frac{t^2}{4u}\Big) \frac{du}{\sqrt{u}} - \int_0^\infty e^{-u}  \frac{d}{du} \Big[\phi'\Big(\frac{t^2}{4u}\Big)\Big]\frac{du}{\sqrt{u}},\\
    \end{split}
\end{equation*}
where in the last equation we have used \eqref{eq:t=u}. Integrating by parts and using \eqref{eq:con_g}, we obtain 
\begin{equation*}
    \begin{split}
        \psi''(t) 
        &= \int_0^\infty e^{-u} \frac{1}{2u} \phi'\Big(\frac{t^2}{4u}\Big) \frac{du}{\sqrt{u}} + \int_0^\infty   \frac{d}{du} \Big[ e^{-u} u^{-1/2}\Big]  \phi'\Big(\frac{t^2}{4u}\Big)\, du 
        =- \int_0^\infty e^{-u} \phi'\Big(\frac{t^2}{4u} \Big)\frac{du}{\sqrt{u}}. 
    \end{split}
\end{equation*} 
The proof for the  higher order derivatives goes by iterating  the argument presented  above. 
\end{proof}

\begin{proof}[Proof of the inclusion  $ \Lambda_{{\mathcal{A}}}^\beta\subseteq \Lambda_{-\sqrt{-{\mathcal{A}}}}^{2\beta} $]  Assume that $x^*\in \Lambda^\beta_{\mathcal{A}}$. Consider the functions  
$$\phi(t)={\Gamma(1/2)^{-1}}\mathcal{T}_t^*x^* \quad \text{and} \quad  \psi(t)={\mathcal{P}}_t^*x^*=\int_0^\infty e^{-u} \phi\Big(\frac{t^2}{u}\Big)\frac{du}{\sqrt{u}}$$ taking values in $X^*$. 
 If $n>\beta$ then, by Proposition~\ref{pro:any_m}, $\| \phi^{(n)}(s)\|\leq s^{\beta -n} \| x^*\|_{\Lambda_{\mathcal{A}}^\beta} $. 
Thus, applying Lemma \ref{lem:phi_sub}, we get 
 \begin{equation*} \begin{split} \| \psi^{(2n)}(t)\|_{X^*} & \leq \int_0^\infty e^{-u} \| \phi^{(n)}(t^2/4u)\|_{X^*} \frac{du}{\sqrt{u}}\\
 &\leq  \int_0^\infty e^{-u} \| x^*\|_{\Lambda_{\mathcal{A}}^\beta} \Big(\frac{t^2}{4u}\Big)^{\beta-n} \frac{du}{\sqrt{u}}\leq C t^{2\beta - 2n} \| x^*\|_{\Lambda^\beta_{\mathcal{A}}},   
\end{split} \end{equation*}
 which proves the inclusion and the second inequality in \eqref{eq:Lambda=Lambda}.  \end{proof} 
 }
\section{Holomorphic calculi}\label{sec:holomorphic}

This section is devoted for proving the inclusion $\Lambda_{-\sqrt{-{\mathcal{A}}}}^{2\beta}\subseteq \Lambda^\beta_{\mathcal{A}}$. To this end we utilize  holomorphic functional calculi for generators of semigroups. For the reader who is not familiar with the subject we provide all details. We refer to \cite{Haase} and references therein for more results. 

\subsection{Admissible holomorphic functions in $\mathbb{C}_{-}=\{z \in \mathbb{C}\::\: \text{\rm Re}(z)<0\}$}

\begin{definition}\label{def:admissible}\normalfont 
    We say that $f \in H_a^{\infty}$ {(space of admissible holomorphic functions)} if it satisfies the following conditions:
    \begin{enumerate}[(A)]
        \item{ $f$ is bounded and holomorphic on $\mathbb{C}_{-}=\{z \in \mathbb{C}\::\: {\rm Re}(z)<0\}$;}\label{bounded_holo}
        \item{for all $x \leq 0$ and $\frac{\pi}{2}<\theta<\pi$, we have
        \begin{equation}
            \int_0^\infty |f(x+re^{\pm i\theta})|\,dr<\infty.
        \end{equation}}\label{bounded_line}
    \end{enumerate}
\end{definition}

It is easy to check that $f_1,f_2 \in H^\infty_a$, where $f_1(z)=e^{z}$, $f_2(z)=e^{-\sqrt{-z}}$. {Here $\sqrt{z}$ denotes the branch of the square root defined on $\mathbb C\setminus \mathbb R_{-}$. }

Let $\frac{\pi}{2}<\theta<\pi$. We define the parameterized  path:

\begin{equation*}
    (-\infty,\infty) \ni t\mapsto \Gamma_\theta (t)=
    \begin{cases}
        |t|e^{-i\theta} \quad &\text{if } t\leq 0,\\
        |t|e^{i\theta} \quad &\text{if } t>0.
    \end{cases}
\end{equation*}

Then, we define the right and the left sides of $\Gamma_\theta$: 
\begin{equation}
    \Gamma_{\theta}^{+}:=\{\lambda+x\::\: \lambda \in \Gamma_\theta, \ x >0\},  \quad \Gamma_{\theta}^{-}:=\{\lambda+x\::\: \lambda \in \Gamma_\theta, \ x <0\}.
\end{equation}

{For $\varepsilon >0$ we define a modification $\Gamma_{\theta,\, \varepsilon}$ of the path $\Gamma_{\theta}$ by replacing its piece which corresponds to the two intervals for parameters $0\leq |t|\leq \varepsilon$ by the part of the circle $\varepsilon e^{i\omega}$, $\theta\leq |\omega|\leq \pi$, formally,
\begin{equation*}
    (-\infty,\infty) \ni t\mapsto \Gamma_{\theta,\varepsilon} (t)=
    \begin{cases}
        |t|e^{-i\theta} \quad &\text{if } t\leq -\varepsilon,\\
         \varepsilon e^{-it(\pi-\theta)/\varepsilon-i\pi} \quad &\text{if } -\varepsilon<t \leq \varepsilon,\\
        |t|e^{i\theta} \quad &\text{ if } t>\varepsilon.
    \end{cases}
\end{equation*}
 Similarly, 
\begin{equation}\label{eq:Gamma_pm}
    \Gamma_{\theta,\varepsilon}^{+}=\{\lambda+x\::\: \lambda \in \Gamma_{\theta,\varepsilon},  \ \ x >0, \}, \quad  \Gamma_{\theta,\varepsilon}^{-}=\{\lambda+x\::\: \lambda \in \Gamma_{\theta, \varepsilon},  \ x <0\}.
\end{equation}

 To unify our notation, we set $\Gamma_{\theta, \varepsilon}:=\Gamma_\theta$, if $\varepsilon =0$.

\begin{lemma}[cf. {\cite[Lemma 8.2]{Haase}}]\label{lem:basic_lemma}
    For $f \in H^\infty_a$ and $\lambda \not\in \Gamma_{ \theta, \varepsilon}$, $\varepsilon \geq 0$, $\pi/2<\theta<\pi$,  let 
    \begin{equation}\label{eq:f^tilde}
        \widetilde{f}(\lambda)=\frac{1}{2\pi i}\int_{\Gamma_{\theta,\varepsilon}}\frac{f(z)}{z-\lambda}\,dz.
    \end{equation}
    Then $\widetilde{f}$ is holomorphic on $\mathbb{C} \setminus \Gamma_{\theta,\varepsilon}$ and
    \begin{equation}
         \widetilde{f}(\lambda)=\begin{cases}
            f(\lambda) & \text{\rm if }\lambda \in \Gamma_{\theta,\varepsilon}^{-},\\
            0  & \text{\rm if }\lambda \in \Gamma_{\theta,\varepsilon}^{+}.
        \end{cases}
    \end{equation}
\end{lemma}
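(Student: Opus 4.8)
\textbf{Proof plan for Lemma~\ref{lem:basic_lemma}.}
The plan is to treat $\widetilde{f}$ as a Cauchy-type integral and exploit that $f$ itself is holomorphic on the region $\mathbb{C}_-\supseteq\Gamma_{\theta,\varepsilon}^-$. First I would check that the integral~\eqref{eq:f^tilde} is well-defined and that $\widetilde{f}$ is holomorphic off $\Gamma_{\theta,\varepsilon}$: for $\lambda\notin\Gamma_{\theta,\varepsilon}$ the denominator $z-\lambda$ is bounded away from $0$ on the contour, and condition~\eqref{bounded_line} of Definition~\ref{def:admissible} (together with boundedness near the origin, automatic from~\eqref{bounded_holo} or the arc piece when $\varepsilon>0$) gives absolute convergence of $\int_{\Gamma_{\theta,\varepsilon}}\frac{f(z)}{z-\lambda}\,dz$; holomorphy in $\lambda$ then follows by differentiating under the integral sign (Morera or dominated convergence), since $\frac{\partial}{\partial\lambda}\frac{1}{z-\lambda}=\frac{1}{(z-\lambda)^2}$ is again integrable locally uniformly in $\lambda$.

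Next, to evaluate $\widetilde{f}$ on $\Gamma_{\theta,\varepsilon}^+$: fix $\lambda$ in the right half-region. I would close the contour $\Gamma_{\theta,\varepsilon}$ on the left by a large arc $C_S^-$ lying inside $\mathbb{C}_-$ (say the part of the circle $|z|=S$ to the left of $\Gamma_{\theta,\varepsilon}$, for $S\to\infty$), together, when $\varepsilon>0$, with the observation that the inner arc of $\Gamma_{\theta,\varepsilon}$ already closes things near the origin. On the closed region bounded by $\Gamma_{\theta,\varepsilon}$ and $C_S^-$ the function $z\mapsto\frac{f(z)}{z-\lambda}$ is holomorphic (the region sits in $\mathbb{C}_-$ where $f$ is holomorphic, and $\lambda$ lies on the $+$ side, hence outside this region), so Cauchy's theorem gives that the integral over $\Gamma_{\theta,\varepsilon}$ equals $-$(integral over $C_S^-$). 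Letting $S\to\infty$, the arc contribution vanishes because $|f|$ is bounded on $\mathbb{C}_-$ by~\eqref{bounded_holo} while $|z-\lambda|^{-1}\lesssim S^{-1}$ and the arc length on the relevant angular sector is $O(S)$ — so one needs the boundedness of $f$ to kill the $O(1)$ that would otherwise survive; here one should be slightly careful that the opening angle of $C_S^-$ is strictly less than $2\pi$ so that a crude $\sup|f|\cdot\frac{1}{S}\cdot(\text{length})$ bound is $O(1)$, and then refine using that $f\to$ something integrable along rays, or simply note the angular measure is bounded and the $\frac1S$ decay suffices once we also use~\eqref{bounded_line} to get decay of $f$ along the rays forming the boundary. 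This yields $\widetilde{f}(\lambda)=0$ for $\lambda\in\Gamma_{\theta,\varepsilon}^+$.

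For $\lambda\in\Gamma_{\theta,\varepsilon}^-$, the same closed contour $\Gamma_{\theta,\varepsilon}\cup C_S^-$ now encircles the pole at $z=\lambda$ (with the correct orientation, $\Gamma_{\theta,\varepsilon}$ traversed so that the $-$ side is the interior), so the residue theorem gives $\frac{1}{2\pi i}\oint\frac{f(z)}{z-\lambda}\,dz=f(\lambda)$; the arc piece $C_S^-$ again contributes $0$ in the limit by the same estimate, leaving $\widetilde{f}(\lambda)=f(\lambda)$. Finally, holomorphy of $\widetilde{f}$ on each connected component of $\mathbb{C}\setminus\Gamma_{\theta,\varepsilon}$ together with these explicit values on $\Gamma_{\theta,\varepsilon}^\pm$ completes the proof. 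I expect the main technical obstacle to be the careful justification that the large-arc contribution vanishes: one must organize the estimate using both the boundedness of $f$ on $\mathbb{C}_-$ and its integrability along rays from~\eqref{bounded_line}, and pay attention to the geometry of the modified contour near the origin when $\varepsilon>0$ so that the inner arc does not introduce a spurious term. Everything else is a routine application of Cauchy's theorem and differentiation under the integral sign.
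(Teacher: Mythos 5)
Your overall strategy (Cauchy's theorem on a closed contour, residue at $z=\lambda$ on the minus side) is natural, but the step you yourself flag as the main obstacle is a genuine gap, and the remedies you sketch do not close it. If you cap the contour with the circular arc $C_S^-=\{Se^{i\omega}:\theta\le|\omega|\le\pi\}$, the integrand of $\widetilde f$ is only $O(\sup|f|/S)$ there while the arc has length $2(\pi-\theta)S$, so the arc contribution is $O(1)$, not $o(1)$. Condition~\eqref{bounded_line} cannot rescue this: it gives integrability of $f$ along the two rays $x+re^{\pm i\theta}$ only, and says nothing about the behaviour of $f$ on the interior of the sector $\theta\le|\arg z|\le\pi$ where the arc lives; boundedness from~\eqref{bounded_holo} gives exactly the $O(1)$ you already have. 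To make the arc term vanish you would need uniform decay of $f$ at infinity inside that sector, which is not part of the definition of $H^\infty_a$ (and even deducing pointwise decay along the rays from mere integrability fails for a general bounded holomorphic function). The same defect infects your residue computation on $\Gamma_{\theta,\varepsilon}^-$.

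The paper's proof is built precisely to avoid this. For $\lambda\in\Gamma_{\theta,\varepsilon}^+$ it does not close the contour with an arc at all: it works with the derivative $\widetilde f{\,}'(\lambda)=\frac{1}{2\pi i}\int_{\Gamma_{\theta,\varepsilon}}f(z)(z-\lambda)^{-2}\,dz$, whose extra power of $(z-\lambda)^{-1}$ makes the integral over the \emph{translated} contour $\Gamma_{\theta,\varepsilon}+x$ tend to $0$ as $x\to-\infty$ using only the boundedness of $f$ (the connecting segments between $\Gamma_{\theta,\varepsilon}$ and its translate have fixed length $|x|$ and contribute $O(|x|/R^2)\to0$ as $R\to\infty$). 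This shows $\widetilde f{\,}'\equiv0$, hence $\widetilde f$ is constant on $\Gamma_{\theta,\varepsilon}^+$, and the constant is identified as $0$ by letting $\lambda\to+\infty$ along the positive real axis, which is where condition~\eqref{bounded_line} is actually used. The case $\lambda\in\Gamma_{\theta,\varepsilon}^-$ then follows from the Cauchy integral formula on the region between $\Gamma_{\theta,\varepsilon}$ and a far-translated copy, the integral over the translate vanishing by the already-established $\Gamma^+$ statement~\eqref{eq:right_zero}. If you want to keep your outline, you must either add a hypothesis of decay of $f$ at infinity in the sector (which would exclude no function used later, but changes the lemma), or replace the large-arc closure by the translation-plus-derivative argument; as written, the proof does not go through.
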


\begin{proof} We provide the proof for $\varepsilon >0$. The proof for  $\varepsilon =0$ is obtained by taking $\varepsilon\to 0$.

    \textbf{Case $\lambda \in \Gamma_{\theta,\varepsilon}^{+}$}. { It follows from \eqref{eq:f^tilde} that } 
    \begin{align*}
        \widetilde{f'}(\lambda)=\frac{1}{2\pi i}\int_{\Gamma_{\theta, \varepsilon}}\frac{f(z)}{(z-\lambda)^2}\,dz.
    \end{align*}
    We will show that $\widetilde{f}'(\lambda)=0$. To this end we fix $\lambda\in \Gamma_{\theta,\varepsilon}^+$. For $x<0$ and $R>2\varepsilon $, let us define the closed path $\boldsymbol{\gamma}^{x,R}=\bigcup_{j=1}^{4}\boldsymbol{\gamma}_j^{x,R}$, oriented counterclockwise, which consists of two curves:  
    \begin{equation*}
        \boldsymbol{\gamma}_1^{x,R}=\{\Gamma_{\theta,\varepsilon}(t)\::\: t\in [-R,R]\}, \quad \boldsymbol{\gamma}_2^{x,R}=\boldsymbol{\gamma}_1{^{x,R}}+x, \end{equation*}
        and two line segments:         
    \begin{equation*}\begin{split}
        &\boldsymbol{\gamma}_3^{x,R}=\{(Re^{i \theta}+x)(1-s) +Re^{i\theta}s: s\in [0,1]\}, \\  &\boldsymbol{\gamma}_4^{x,R}= \{(Re^{-i \theta}+x)(1-s) +Re^{-i\theta}s: s\in [0,1]\}.
    \end{split} \end{equation*}
    By the fact that the function $z \mapsto f(z)(z-\lambda)^{-2}$ is holomorphic in a open neighborhood of the compact region bounded by  $\boldsymbol{\gamma}^{x,R}$,  we have  
    \begin{equation}\label{eq:full_holo}
        \oint_{\boldsymbol{\gamma}^{x,R}}\frac{f(z)}{(z-\lambda)^2}\,dz=\sum_{j=1}^{4} \int_{\boldsymbol{\gamma}_j^{x,R}}\frac{f(z)}{(z-\lambda)^2}\,dz=0.
    \end{equation}
    From  the condition~\eqref{bounded_holo} we conclude that for each (fixed) $x<0$, we have 
    \begin{equation}\label{eq:34_holo}
        \lim_{R \to \infty}\int_{\boldsymbol{\gamma}_j^{x,R}}\frac{f(z)}{(z-\lambda)^2}\,dz=0 \quad \text{ for } j=3,4, 
    \end{equation}
    and, consequently, by \eqref{eq:full_holo}, for each (fixed) $x<0$, one has  
    \begin{equation}\label{eq:sum2=0}
        \int_{\Gamma_{\theta,\varepsilon}} \frac{f(z)}{(z-\lambda)^2}\, dz - \int_{\Gamma_{\theta,\varepsilon} +x}\frac{f(z)}{(z-\lambda)^2}\, dz =0.
    \end{equation}
     Next, applying~\eqref{bounded_holo}, we obtain 
    \begin{equation}\label{eq:2_holo}
        \lim_{x \to -\infty}\int_{\Gamma_{\theta,\varepsilon}+x} \frac{f(z)}{(z-\lambda)^2}\,dz=0.
    \end{equation}
    Hence from \eqref{eq:sum2=0} and \eqref{eq:2_holo} we conclude that  $\widetilde{f}'(\lambda)=0$.
    Therefore, $\widetilde{f}$ is constant on  $\Gamma_{\theta,\varepsilon}^{+}$. To determine its value, thanks to the condition~\eqref{bounded_line}, we get
    \begin{equation*}
        \lim_{\lambda\in\mathbb R,\ \lambda  \to \infty}\int_{\Gamma_{\theta,\varepsilon}}\frac{f(z)}{z-\lambda}\,dz=0,
    \end{equation*}
    so $\widetilde f(\lambda)=0$ on $\Gamma_{\theta,\varepsilon} ^+$. 

    By the same argument we also conclude that 
    \begin{equation}\label{eq:right_zero}
        \frac{1}{2\pi i}\int_{\Gamma_{\theta,\varepsilon}+x}\frac{f(z)}{z-\lambda}\,dz=0 \quad \text{ for all }x<0 \quad { \text{and} \quad \lambda\in (\Gamma_{\theta,\varepsilon}+x)^+ }.
    \end{equation}
     \textbf{Case $\lambda \in \Gamma_{\theta,\varepsilon}^{-}$}. Let as consider $\boldsymbol{\gamma}^{x,R}$ as above {such that $\lambda$ stays inside the bounded region with the boundary $\boldsymbol{\gamma}^{x,R}$.} The Cauchy integral formula asserts that 
     \begin{equation}\label{eq:full_holo_1}
        f(\lambda)=\frac{1}{2\pi i} \oint_{\boldsymbol{\gamma}^{x,R}}\,\frac{f(z)}{z-\lambda}\,dz=\sum_{j=1}^{4} \frac{1}{2\pi i} \int_{\boldsymbol{\gamma}_j^{x,R}}\, \frac{f(z)}{z-\lambda}\,dz.
    \end{equation}
    From the condition~\eqref{bounded_holo}, we obtain 
    \begin{equation}\label{eq:34_holo_1}
        \lim_{R \to \infty}\int_{\boldsymbol{\gamma}_j^{x,R}}\, \frac{f(z)}{z-\lambda }\,dz=0 \quad \text{for} \quad j=3,4.
    \end{equation}
     Hence, taking $R \to \infty$ in~\eqref{eq:full_holo_1}, we obtain 
    \begin{equation}\label{eq:full_full}
        f(\lambda)=\frac{1}{2\pi i}\int_{\Gamma_{\theta,\varepsilon}}\frac{f(z)}{z-\lambda}\,dz-\frac{1}{2\pi i}\int_{\Gamma_{\theta,\varepsilon} +x}\, \frac{f(z)}{z-\lambda}\,dz.
    \end{equation}
    But, for $x<0$, $|x|$ large enough (i.e. $\lambda$ stays on the right side of the curve $\Gamma_{\theta,\varepsilon} +x$), { \eqref{eq:right_zero} gives }
    \begin{equation}\label{eq:int=0}
        \frac{1}{2\pi i}\int_{\Gamma_{\theta,\varepsilon}+x}\, \frac{f(z)}{z-\lambda}\,dz=0.
    \end{equation}
    Consequently,~\eqref{eq:full_full} { and \eqref{eq:int=0} lead   to $\widetilde f(\lambda)=f(\lambda)$ for $\lambda \in \Gamma_{\theta,\varepsilon}^-$. } 
\end{proof}

\subsection{Properties of resolvent}
{ In this subsection we assume that ${\mathcal{A}}$ is a closed operator on a Banach space such that 
 \begin{equation}\label{eq:res}
        \rho({\mathcal{A}}) \supseteq \Sigma_\delta \cup \{0\}=\{z \in \mathbb{C}\::\: |\text{arg}(z)|<\frac{\pi}{2}+\delta\} \cup \{0\}
    \end{equation}
    and there is $C>0$ such that for all $\lambda \in \Sigma_\delta$, $\lambda \neq 0$, we have
    \begin{equation}\label{eq:res_bound}
        \|R(\lambda:{\mathcal{A}})\| \leq \frac{C}{|\lambda|}.
    \end{equation}
Since $0\in \rho ({\mathcal{A}})$, there is $r>0$ such that $B(0,r)=\{z \in \mathbb{C}:|z|<r\}\subseteq \rho ({\mathcal{A}})$ and 
\begin{equation}\label{eq:reslovent}
    \sup_{z\in B(0,r)} \| R(z:{\mathcal{A}})\| <\infty. 
\end{equation}
\begin{lemma}\label{lem:resolvent_integral}
    Let $\pi<\theta<2\pi$ and { $\varepsilon\geq 0$ be such that $\Gamma_{\theta,\varepsilon} \subseteq \Sigma_\delta \cup B(0,r)$. Suppose  $z \in   \Gamma_{\theta,\varepsilon}^{+}$.} Then
    \begin{equation}\label{eq:resolvent_formula}
        R(z:{\mathcal{A}})=\frac{1}{2\pi i}\int_{\Gamma_{\theta,\varepsilon}}\frac{1}{z-\lambda}R(\lambda:{\mathcal{A}})\,d\lambda.
    \end{equation}
   {  Further, if $z_1,z_2\in \Gamma_{\theta,\varepsilon}^+$, then 
    \begin{equation}\label{eq:product_Cauchy}
        R(z_1:{\mathcal{A}})R(z_2:{\mathcal{A}})=\frac{1}{2\pi i} \int_{\Gamma_{\theta,\varepsilon}} \frac{1}{(z_1-\lambda)(z_2-\lambda) } R(\lambda:{\mathcal{A}})\, d\lambda.  
    \end{equation}} 

\end{lemma}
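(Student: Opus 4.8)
The plan is to deduce both \eqref{eq:resolvent_formula} and \eqref{eq:product_Cauchy} from Lemma \ref{lem:basic_lemma}, the scalar Cauchy-type formula already established, by inserting the resolvent as a ``vector-valued coefficient'' and exploiting the resolvent identity \eqref{eq:res_prop1}. First I would fix $z\in\Gamma_{\theta,\varepsilon}^+$ and observe that, since $\Gamma_{\theta,\varepsilon}\subseteq\Sigma_\delta\cup B(0,r)$, the bounds \eqref{eq:res_bound} and \eqref{eq:reslovent} guarantee $\|R(\lambda:{\mathcal{A}})\|\lesssim (1+|\lambda|)^{-1}$ along the path, so the integrand $(z-\lambda)^{-1}R(\lambda:{\mathcal{A}})$ is absolutely integrable over $\Gamma_{\theta,\varepsilon}$ (the factor $(z-\lambda)^{-1}$ is bounded on the path and decays like $|\lambda|^{-1}$ at infinity), hence the Bochner integral on the right-hand side of \eqref{eq:resolvent_formula} defines a bounded operator. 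The key algebraic step is the resolvent identity in the form
\begin{equation*}
  \frac{1}{z-\lambda}R(\lambda:{\mathcal{A}})=\frac{1}{z-\lambda}R(z:{\mathcal{A}})+R(z:{\mathcal{A}})R(\lambda:{\mathcal{A}}),
\end{equation*}
valid for $\lambda\in\Gamma_{\theta,\varepsilon}$ (note $z\ne\lambda$ since $z$ lies strictly to the right of the path). Integrating this identity over $\Gamma_{\theta,\varepsilon}$: the first term contributes $\frac{1}{2\pi i}\big(\int_{\Gamma_{\theta,\varepsilon}}\frac{dz}{z-\lambda}\big)\cdot$ wait—more precisely, $\frac{1}{2\pi i}\int_{\Gamma_{\theta,\varepsilon}}\frac{1}{z-\lambda}\,d\lambda\cdot R(z:{\mathcal{A}})$, and this scalar integral equals $\widetilde{1}(z)$ in the notation of Lemma \ref{lem:basic_lemma} applied to the function $f\equiv 1$... except $f\equiv 1\notin H^\infty_a$. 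I would instead compute that scalar integral directly by a contour-closing argument: $\frac{1}{2\pi i}\int_{\Gamma_{\theta,\varepsilon}}\frac{d\lambda}{z-\lambda}$ equals $-1$ when $z$ is to the right of the path (closing the contour to the right picks up the simple pole at $\lambda=z$ with the appropriate orientation; one checks the arcs at infinity vanish because the integrand is $O(|\lambda|^{-1})$, hmm, that is borderline — one must use that the two rays $|t|e^{\pm i\theta}$ with $\pi<\theta<2\pi$ together with the closing arc to the right enclose $z$, and the $O(R^{-1})$ decay on a bounded-angle arc is integrated against an arc of length $O(R)$, so one needs the slightly better cancellation coming from pairing the two rays). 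The cleaner route: apply Lemma \ref{lem:basic_lemma} to $f_\mu(\lambda)=(\mu-\lambda)^{-1}$ for a fixed $\mu$ far to the left, note $f_\mu\in H^\infty_a$, so $\widetilde{f_\mu}(z)=0$ for $z$ to the right; then use $R(\lambda:{\mathcal{A}})=$ (something) — actually the slickest is: for $\mu\in\Gamma_{\theta,\varepsilon}^-$ with $|\mu|$ large, $R(\mu:{\mathcal{A}})$ is defined, and the resolvent identity gives $R(\lambda:{\mathcal{A}})=(\mu-\lambda)^{-1}\big(I+(\mu-\lambda)R(\lambda:{\mathcal{A}})R(\mu:{\mathcal{A}})^{-1}\big)$ — this is getting circular.

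Let me restate the plan more carefully. The honest approach: (i) establish the scalar identity $\frac{1}{2\pi i}\int_{\Gamma_{\theta,\varepsilon}}\frac{d\lambda}{z-\lambda}=-1$ for $z$ strictly to the right of $\Gamma_{\theta,\varepsilon}$ and $=0$ for $z$ strictly to the left, by a direct residue computation with the closing contour $\boldsymbol\gamma^{x,R}$ already used in the proof of Lemma \ref{lem:basic_lemma} — there the arcs $\boldsymbol\gamma_3,\boldsymbol\gamma_4$ were shown to vanish using only the $O(R^{-1})$ pointwise bound together with the fact that the segments $\boldsymbol\gamma_3^{x,R},\boldsymbol\gamma_4^{x,R}$ connecting $Re^{\pm i\theta}+x$ to $Re^{\pm i\theta}$ have length $|x|$ independent of $R$, so the estimate is $O(|x|/R)\to 0$; this is exactly the mechanism already in place, so I would just cite/mimic it. Then (ii) plug the resolvent identity into the right-hand side of \eqref{eq:resolvent_formula}:
\begin{equation*}
  \frac{1}{2\pi i}\int_{\Gamma_{\theta,\varepsilon}}\frac{R(\lambda:{\mathcal{A}})}{z-\lambda}\,d\lambda
  =\Big(\frac{1}{2\pi i}\int_{\Gamma_{\theta,\varepsilon}}\frac{d\lambda}{z-\lambda}\Big)R(z:{\mathcal{A}})
  +R(z:{\mathcal{A}})\cdot\frac{1}{2\pi i}\int_{\Gamma_{\theta,\varepsilon}}R(\lambda:{\mathcal{A}})\,d\lambda,
\end{equation*}
and the second integral on the right vanishes because, by Cauchy's theorem, $\lambda\mapsto R(\lambda:{\mathcal{A}})$ is holomorphic to the left of $\Gamma_{\theta,\varepsilon}$ (in $\Sigma_\delta\cup B(0,r)$) with $O(|\lambda|^{-1})$ decay, so closing the contour to the left and sending the radius to infinity gives zero (again via the same vanishing-arcs estimate). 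Combined with step (i), the right-hand side is $(-1)R(z:{\mathcal{A}})\cdot(-1)=R(z:{\mathcal{A}})$, wait: step (i) gives $-1$ when $z$ is to the \emph{right}, but then I get $-R(z:{\mathcal{A}})$, so I must be careful about the orientation of $\Gamma_{\theta,\varepsilon}$ — its parametrization runs from $\infty e^{-i\theta}$ through the arc near $0$ to $\infty e^{i\theta}$ with $\pi<\theta<2\pi$, which means the path is traversed so that the left side $\Gamma_{\theta,\varepsilon}^-$ is ``inside'' in the counterclockwise sense; accordingly the residue computation in step (i) yields $+1$ (not $-1$) for $z$ to the right — I would simply fix the sign by tracking the orientation carefully, matching it against Lemma \ref{lem:basic_lemma} where $\widetilde f(\lambda)=f(\lambda)$ on $\Gamma_{\theta,\varepsilon}^-$ and $0$ on $\Gamma_{\theta,\varepsilon}^+$, which pins down the convention.

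For \eqref{eq:product_Cauchy}, I would apply the first resolvent identity \eqref{eq:res_prop1} to write $R(z_1:{\mathcal{A}})R(z_2:{\mathcal{A}})=\frac{1}{z_2-z_1}\big(R(z_1:{\mathcal{A}})-R(z_2:{\mathcal{A}})\big)$, then substitute \eqref{eq:resolvent_formula} for each of $R(z_1:{\mathcal{A}})$ and $R(z_2:{\mathcal{A}})$, and finally recognize $\frac{1}{z_2-z_1}\big(\frac{1}{z_1-\lambda}-\frac{1}{z_2-\lambda}\big)=\frac{1}{(z_1-\lambda)(z_2-\lambda)}$ by partial fractions; the case $z_1=z_2$ follows by continuity (both sides are holomorphic in $z_1,z_2$ on $\Gamma_{\theta,\varepsilon}^+$, the right-hand integral converging absolutely and locally uniformly). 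The main obstacle is the sign/orientation bookkeeping in step (i) together with making the ``vanishing arcs at infinity'' argument fully rigorous — it is genuinely borderline since the integrand decays only like $|\lambda|^{-1}$, and the resolution is precisely the device already exploited in the proof of Lemma \ref{lem:basic_lemma}, namely translating the path by $x<0$ and estimating the two connecting segments of fixed length $|x|$ rather than trying to close with a single large arc; everything else is routine manipulation of absolutely convergent Bochner integrals.
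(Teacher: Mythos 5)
There is a genuine gap in your argument for \eqref{eq:resolvent_formula}. Your plan splits the absolutely convergent integrand $\frac{1}{z-\lambda}R(\lambda:{\mathcal{A}})$, via the resolvent identity, into $\frac{1}{z-\lambda}R(z:{\mathcal{A}})+R(\lambda:{\mathcal{A}})R(z:{\mathcal{A}})$, and then evaluates the two resulting integrals separately as $\pm 1$ and $0$. Both evaluations are wrong. The scalar integral $\frac{1}{2\pi i}\int_{\Gamma_{\theta,\varepsilon}}\frac{d\lambda}{z-\lambda}$ is only conditionally convergent, and as a principal value it equals $1-\frac{\theta}{\pi}$, not $\pm1$: closing with the arc $\{Re^{i\phi}:|\phi|\le\theta\}$ and taking $R\to\infty$, the arc integral tends to $2i\theta\neq 0$ because the integrand decays only like $|\lambda|^{-1}$ on an arc of length $\sim 2\theta R$; the translation device from the proof of Lemma~\ref{lem:basic_lemma} does not rescue this, since the vanishing of $\int_{\Gamma_{\theta,\varepsilon}+x}$ as $x\to-\infty$ there relied on condition~(\ref{bounded_line}), which $f\equiv 1$ fails. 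Likewise your claim that $\int_{\Gamma_{\theta,\varepsilon}}R(\lambda:{\mathcal{A}})\,d\lambda=0$ ``because $R(\cdot:{\mathcal{A}})$ is holomorphic to the left of $\Gamma_{\theta,\varepsilon}$'' is false on two counts: the spectrum of ${\mathcal{A}}$ lies precisely in $\Gamma_{\theta,\varepsilon}^-$ (that is why the path is chosen inside $\Sigma_\delta\cup B(0,r)$, which covers the path and its right side), and even closing to the right, where $R(\cdot:{\mathcal{A}})$ \emph{is} holomorphic, the arc contributes $-2i\theta\cdot I$ in the limit (strongly, using $\lambda R(\lambda:{\mathcal{A}})\to I$), so the principal value is $\frac{\theta}{\pi}I$. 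You can test all of this on the scalar model $R(\lambda)=(\lambda+a)^{-1}$, $a>0$: the two pieces are $(1-\frac{\theta}{\pi})R(z)$ and $\frac{\theta}{\pi}R(z)$, which sum to $R(z)$ only because the two $\theta$-dependent errors cancel — a cancellation your write-up neither identifies nor proves.

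The paper's proof avoids all of this by \emph{not} splitting the integrand: $\frac{1}{z-\lambda}R(\lambda:{\mathcal{A}})$ is $O(|\lambda|^{-2})$ by \eqref{eq:res_bound}, so one closes $\Gamma_{\theta,\varepsilon}$ to the right by the arc of radius $R$, applies the Cauchy integral formula to the operator-valued function $R(\cdot:{\mathcal{A}})$, holomorphic in the enclosed region which contains $z$, and kills the arc by the estimate $O(R^{-2})\times O(R)=O(R^{-1})$. If you want to salvage your decomposition, you would have to compute both principal values honestly (including the strong limit $\lambda R(\lambda:{\mathcal{A}})\to I$ on the arc) and verify the cancellation, which is strictly more work. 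Your treatment of \eqref{eq:product_Cauchy} via the first resolvent identity and partial fractions is correct and is exactly the paper's argument, but it rests on \eqref{eq:resolvent_formula}, so the gap above must be closed first.
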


\begin{proof}
We start by proving ~\eqref{eq:resolvent_formula}. { Let $z\in\Gamma^{+}_{\theta, \varepsilon}$. } 
    Let us consider the following parameterized (closed) contour,  
    \begin{equation*}
    (-R,2R) \ni t\mapsto \Gamma_{\theta,\varepsilon,R} (t)=
    \begin{cases}
       \Gamma_{\theta,\varepsilon}(t) :=\boldsymbol{\gamma}_{R,1}(t)\quad &\text{if } -R<t \leq R
        ,\\
        Re^{-2it\theta/R+i3\theta}=: \boldsymbol{\gamma}_{R,2}(t) \quad &\text{if } R \leq t < 2R.
    \end{cases}
\end{equation*}
The contour $ \Gamma_{\theta,\varepsilon,R}$ is oriented  clockwise. Hence, taking $R$ large enough and using the  Cauchy formula, we get:
\begin{equation}\label{eq:closed_con}
\begin{split}
    R(z:{\mathcal{A}})&=-\frac{1}{2\pi i}\int_{\Gamma_{\theta, \varepsilon,R}}\frac{1}{\lambda -z}R(\lambda:{\mathcal{A}})\,d\lambda\\&=\frac{1}{2\pi i}\int_{\boldsymbol{\gamma}_{R,1}}\frac{1}{z-\lambda }R(\lambda:{\mathcal{A}})\,d\lambda+\frac{1}{2\pi i}\int_{\boldsymbol{\gamma}_{R,2}}\frac{1}{z-\lambda }R(\lambda:{\mathcal{A}})\,d\lambda.
\end{split}
\end{equation}
Let us note that thanks to~\eqref{eq:res_bound} we have
\begin{equation*}
\begin{split}
    \lim_{R \to \infty}\left|\frac{1}{2\pi i}\int_{\boldsymbol{\gamma}_{R,2}}\frac{1}{z-\lambda }R(\lambda:{\mathcal{A}})\,d\lambda\right| \leq \lim_{R \to \infty}\frac{C}{R}=0.
\end{split}
\end{equation*}
Moreover, by the definition of $\Gamma_{\theta,\varepsilon}$, we have
\begin{equation*}
    \lim_{R \to \infty}\frac{1}{2\pi i}\int_{\boldsymbol{\gamma}_{R,1}}\frac{1}{z-\lambda }R(\lambda:{\mathcal{A}})\,d\lambda=\frac{1}{2\pi i}\int_{\Gamma_{\theta,\varepsilon}}\frac{1}{z-\lambda}R(\lambda:{\mathcal{A}})\,d\lambda.
\end{equation*}
Therefore,~\eqref{eq:resolvent_formula} follows from~\eqref{eq:closed_con} by taking $R \to \infty$.

{ In order to prove \eqref{eq:product_Cauchy}, we use \eqref{eq:res_prop1} together with the   identity 
$$(z_1-\lambda)^{-1}(z_2-\lambda)^{-1} (z_2-z_1)=(z_1-\lambda)^{-1} - (z_2-\lambda)^{-1}$$ 
and apply the first part of the theorem. } 
\end{proof}

}

\subsection{Holomorphic calculi}

{Let ${\mathcal{A}}$ be a closed linear operator  satisfying \eqref{eq:res} and \eqref{eq:res_bound} Let $r>0$ be such that \eqref{eq:reslovent} holds. 
 For $f\in H_a^\infty$,  we put  }
\begin{equation}\label{eq:main_integral}
        f({\mathcal{A}}):=\frac{1}{2\pi i}\int_{\Gamma_{\theta,\varepsilon}}f(\lambda)R(\lambda:{\mathcal{A}})\,d\lambda, 
    \end{equation}
where $\Gamma_{\theta,\varepsilon}\subseteq \Sigma_{\delta}\cup B(0,r)$, $\pi/2<\theta<\pi/2+\delta$.  
Hence, by \eqref{bounded_line},  \eqref{eq:res_bound}, and \eqref{eq:reslovent}, the integral \eqref{eq:main_integral} converges absolutely {and defines a bounded operator on $X$}. 
The next proposition asserts that the integral does not depend on the path $\Gamma_{\theta,\varepsilon}$.

\begin{proposition}
    Suppose ${\mathcal{A}}$ satisfies \eqref{eq:res} and \eqref{eq:res_bound}. Let  and $\Gamma_{\theta_1,\varepsilon_1}, \Gamma_{\theta_2,\varepsilon_2}\subseteq \Sigma_\delta \cup B(0,r)$, $\varepsilon_1,\varepsilon_2\geq 0$.  Then 
    \begin{equation}
      \frac{1}{2\pi i}\int_{\Gamma_{\theta_1,\varepsilon_1}}f(\lambda)R(\lambda:{\mathcal{A}})\,d\lambda =    \frac{1}{2\pi i}\int_{\Gamma_{\theta_2,\varepsilon_2}}f(\lambda)R(\lambda:{\mathcal{A}})\,d\lambda.    
    \end{equation}
\end{proposition}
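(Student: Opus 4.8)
The plan is to show that two choices of contour give the same operator by interpolating between them with a deformation argument and applying Cauchy's theorem on the region enclosed. First I would reduce to a comparison between a contour with $\varepsilon>0$ and the limiting contour $\Gamma_\theta$ (i.e. $\varepsilon=0$): for fixed $\theta$, the two integrands differ only on the bounded piece near the origin, where both $f$ and $R(\lambda:\mathcal A)$ are bounded (using \eqref{bounded_holo} and \eqref{eq:reslovent}) and the arc length of the modification tends to $0$ as $\varepsilon\to 0$; hence $\frac{1}{2\pi i}\int_{\Gamma_{\theta,\varepsilon}}f(\lambda)R(\lambda:\mathcal A)\,d\lambda\to\frac{1}{2\pi i}\int_{\Gamma_\theta}f(\lambda)R(\lambda:\mathcal A)\,d\lambda$ in the operator norm. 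This lets me assume $\varepsilon_1=\varepsilon_2=0$ and only deal with two angles $\pi/2<\theta_1<\theta_2<\pi/2+\delta$.

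Next I would compare $\Gamma_{\theta_1}$ and $\Gamma_{\theta_2}$. For $R>0$, consider the closed contour $\boldsymbol{\gamma}_R$ built from the piece of $\Gamma_{\theta_1}$ between parameters $-R$ and $R$, the two arcs of radius $R$ joining $Re^{\pm i\theta_1}$ to $Re^{\pm i\theta_2}$, and the piece of $\Gamma_{\theta_2}$ run backwards between parameters $R$ and $-R$. Since $\theta_1,\theta_2\in(\pi/2,\pi/2+\delta)$, the compact region bounded by $\boldsymbol{\gamma}_R$ lies entirely in the sector $\{\lambda\neq0:|\arg(\pm\lambda)|\geq \pi-\theta_2\}$, which is contained in $\Sigma_\delta$; there $\lambda\mapsto f(\lambda)R(\lambda:\mathcal A)$ is holomorphic (the resolvent is holomorphic on $\rho(\mathcal A)\supseteq\Sigma_\delta$ by \eqref{eq:res}, and $f$ is holomorphic on $\mathbb C_-$ which contains that sector), so $\oint_{\boldsymbol{\gamma}_R}f(\lambda)R(\lambda:\mathcal A)\,d\lambda=0$ by the (vector-valued) Cauchy theorem. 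Taking $R\to\infty$, the two circular-arc contributions vanish: on these arcs $\|R(\lambda:\mathcal A)\|\leq C/|\lambda|=C/R$ by \eqref{eq:res_bound}, the length is $O(R)$, and on the line from $\theta_1$ to $\theta_2$ the factor $|f(\lambda)|$ is bounded by $\|f\|_\infty$, so the arc integral is $O(\|f\|_\infty)$; but in fact we need it to go to $0$, which follows because $f$ is bounded and holomorphic on $\mathbb C_-$ and therefore, by the decay built into \eqref{bounded_line} applied along rays, $|f(Re^{i\omega})|\to0$ as $R\to\infty$ uniformly for $\omega$ in the relevant compact range --- alternatively one dominates directly using $\int_0^\infty|f(re^{i\theta})|\,dr<\infty$ which forces a subsequence $R_n\to\infty$ along which the arc integrals vanish, and that suffices to identify the limits. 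Hence $\int_{\Gamma_{\theta_1}}f(\lambda)R(\lambda:\mathcal A)\,d\lambda=\int_{\Gamma_{\theta_2}}f(\lambda)R(\lambda:\mathcal A)\,d\lambda$.

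I would carry the steps out in this order: (i) the $\varepsilon\to0$ reduction; (ii) the closed-contour/Cauchy argument comparing two angles; (iii) assembling (i) and (ii) to get the general statement for arbitrary $\theta_1,\varepsilon_1,\theta_2,\varepsilon_2$. For (iii), given arbitrary admissible $\Gamma_{\theta_1,\varepsilon_1}$ and $\Gamma_{\theta_2,\varepsilon_2}$, pass each to its $\varepsilon=0$ version by (i), then pass $\Gamma_{\theta_1}$ to $\Gamma_{\theta_2}$ by (ii) (handling $\theta_1<\theta_2$ and $\theta_1>\theta_2$ symmetrically), and chain the equalities.

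The main obstacle will be making the vanishing of the circular-arc integrals as $R\to\infty$ rigorous: the crude bound $|f|\le\|f\|_\infty$ combined with arc length $O(R)$ and $\|R(\lambda:\mathcal A)\|\le C/R$ only gives boundedness, not decay, so one genuinely needs to exploit the integrability hypothesis \eqref{bounded_line} (or a Phragmén--Lindelöf type decay of a bounded holomorphic function on a sector) to conclude that the arc contributions vanish at least along a sequence $R_n\to\infty$; since the limits along $\Gamma_{\theta_i}$ exist independently of any subsequence, passing to such a sequence is harmless. Everything else is a routine application of the Cauchy integral theorem for Banach-space-valued holomorphic functions together with the resolvent bounds \eqref{eq:res_bound} and \eqref{eq:reslovent} already recorded above.
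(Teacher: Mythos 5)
Your strategy is genuinely different from the paper's. You deform the contour directly and invoke Cauchy's theorem on the region between $\Gamma_{\theta_1}$ and $\Gamma_{\theta_2}$, closed off by circular arcs of radius $R$. The paper instead never integrates $f(\lambda)R(\lambda:\mathcal{A})$ over any arc at infinity: it substitutes the reproducing formula \eqref{eq:resolvent_formula} of Lemma \ref{lem:resolvent_integral}, writing $R(\lambda:\mathcal{A})$ on the first contour as a Cauchy integral of the resolvent over the second, and then applies Fubini and Lemma \ref{lem:basic_lemma}. In that route every contour that is closed at infinity carries an integrand with genuine extra decay --- $O(R^{-2})$ from $(z-\lambda)^{-1}R(\lambda:\mathcal{A})$ in Lemma \ref{lem:resolvent_integral}, and fixed-length segments with integrand $O(R^{-1})$ in Lemma \ref{lem:basic_lemma} --- so the closing pieces vanish for trivial reasons. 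This is precisely the point at which your argument has a gap, and you have correctly located it yourself.

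The gap is that the vanishing of the arc integrals $\int_{\theta_1}^{\theta_2} f(Re^{i\omega})R(Re^{i\omega}:\mathcal{A})\,iRe^{i\omega}\,d\omega$ does not follow from conditions \eqref{bounded_holo} and \eqref{bounded_line} of Definition \ref{def:admissible}. With $\|R(\lambda:\mathcal{A})\|\le C/|\lambda|$ from \eqref{eq:res_bound}, the arc term is controlled by $C\int_{\theta_1}^{\theta_2}|f(Re^{i\omega})|\,d\omega$, and neither of your two proposed fixes closes this. Phragm\'en--Lindel\"of does not convert integrability of $|f|$ along rays into uniform decay on arcs; condition \eqref{bounded_line} only gives $\liminf_{r\to\infty}|f(re^{i\theta})|=0$ separately for each fixed $\theta$, with the exceptional radii depending on $\theta$. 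Your subsequence variant would need $\liminf_{R\to\infty}\int_{\theta_1}^{\theta_2}|f(Re^{i\omega})|\,d\omega=0$, which via Tonelli requires $\omega\mapsto\int_0^\infty|f(re^{i\omega})|\,dr$ to be \emph{integrable} over $[\theta_1,\theta_2]$; condition \eqref{bounded_line} asserts only its pointwise finiteness. (For the specific symbols the paper actually feeds into the calculus, such as $e^{z}$, $e^{-t\sqrt{-z}}$ and $(\sqrt{-\lambda})^n e^{\lambda+\sqrt{-\lambda}}$, one has exponential decay on the arcs and your argument goes through; but as a proof of the proposition for arbitrary $f\in H^\infty_a$ it is incomplete.) To repair it you would either have to strengthen the admissibility hypotheses, or do what the paper does and route the deformation through Lemma \ref{lem:resolvent_integral} and \eqref{eq:product_Cauchy} so that no $O(1)$ arc term ever appears. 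Your step (i) also needs a small supplement: the $O(\varepsilon)$ estimate shows $\int_{\Gamma_{\theta,\varepsilon}}\to\int_{\Gamma_\theta}$, but to \emph{replace} a fixed $\varepsilon_1>0$ by $0$ you must first note (by Cauchy's theorem on the small region inside $B(0,r)\cap\mathbb{C}_-$, using \eqref{eq:reslovent}) that the integral is independent of $\varepsilon>0$; this is routine.
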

\begin{proof}
    By holomorphicity, for $\Gamma_{\theta,\varepsilon}, \,  \Gamma_{\theta,\varepsilon'} \subseteq \Sigma_\delta\cup B(0,r)$:
     \begin{equation}
      \frac{1}{2\pi i}\int_{\Gamma_{\theta,\varepsilon}}f(\lambda)R(\lambda:{\mathcal{A}})\,d\lambda =    \frac{1}{2\pi i}\int_{\Gamma_{\theta,\varepsilon'}}f(\lambda)R(\lambda:{\mathcal{A}})\,d\lambda.    
    \end{equation}
    Consider $\pi/2<\theta_1<\theta_2<\pi/2+\delta$. Let $0\leq \varepsilon_1<\varepsilon_2$ be such that  $\Gamma_{\theta_1,\varepsilon_1}, \Gamma_{\theta_2,\varepsilon_2}\subseteq \Sigma_\delta \cup B(0,r)$. Then,  using \eqref{eq:resolvent_formula}, we write 
    \begin{equation}\label{eq:two_int}
    \begin{split}
         \frac{1}{2\pi i}\int_{\Gamma_{\theta_1,\varepsilon_1}}f(\lambda)R(\lambda:{\mathcal{A}})\,d\lambda & = \frac{1}{2\pi i}\int_{\Gamma_{\theta_1,\varepsilon_1}}f(\lambda)\, 
         \frac{1}{2\pi i} \int_{\Gamma_{\theta_2,\varepsilon_2}} \frac{1}{\lambda-z} R(z:{\mathcal{A}})\, dz\, d\lambda.  
   \end{split} \end{equation}
   Since the double integral on the right-side of \eqref{eq:two_int} 
   is absolutely convergent, we apply the Fubini theorem together with Lemma \ref{lem:basic_lemma}  and get the proposition.  
\end{proof}
\begin{proposition}[cf. {\cite[Theorems 8.3 and 9.6]{Haase}}]\label{prop:calcul} 
    Suppose $f,g\in H_a^\infty$. Then  
    \begin{equation}\label{eq:homomor}
        (f\cdot g)({\mathcal{A}})=f({\mathcal{A}})g({\mathcal{A}}). 
    \end{equation}
\end{proposition}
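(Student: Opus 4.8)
\textbf{Proof plan for Proposition \ref{prop:calcul} (multiplicativity of the holomorphic calculus).}

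The plan is to prove \eqref{eq:homomor} by representing $f(\mathcal A)$ and $g(\mathcal A)$ as contour integrals over \emph{two different} admissible paths, multiplying them, and using the resolvent identity to collapse the double integral. Concretely, since the preceding proposition shows that $f(\mathcal A)$ does not depend on the chosen path, I would fix two paths $\Gamma_1:=\Gamma_{\theta_1,\varepsilon_1}$ and $\Gamma_2:=\Gamma_{\theta_2,\varepsilon_2}$ inside $\Sigma_\delta\cup B(0,r)$ chosen so that $\Gamma_1$ lies entirely to the \emph{left} of $\Gamma_2$; that is, every point of $\Gamma_1$ sits in the region $\Gamma_2^{-}$ and every point of $\Gamma_2$ sits in $\Gamma_1^{+}$ (this is arranged by taking, say, $\theta_1$ slightly larger than $\theta_2$ and $\varepsilon_1$ larger than $\varepsilon_2$, or by translating one path leftwards as in Lemma \ref{lem:basic_lemma}). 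Then I write
\begin{equation*}
 f(\mathcal A)g(\mathcal A)=\frac{1}{(2\pi i)^2}\int_{\Gamma_1}\int_{\Gamma_2} f(\lambda)g(z)\, R(\lambda:\mathcal A)R(z:\mathcal A)\, dz\, d\lambda.
\end{equation*}

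Next I would apply the resolvent equation from \eqref{eq:res_prop1}, in the form
\begin{equation*}
 R(\lambda:\mathcal A)R(z:\mathcal A)=\frac{1}{z-\lambda}\bigl(R(\lambda:\mathcal A)-R(z:\mathcal A)\bigr),
\end{equation*}
valid since $\lambda\neq z$ on these disjoint paths. The double integral then splits into two pieces. In the first piece I integrate in $z$ first: $\frac{1}{2\pi i}\int_{\Gamma_2}\frac{g(z)}{z-\lambda}\,dz=\widetilde g(\lambda)$, and since $\lambda\in\Gamma_1\subseteq\Gamma_2^{-}$, Lemma \ref{lem:basic_lemma} gives $\widetilde g(\lambda)=g(\lambda)$; what remains is $\frac{1}{2\pi i}\int_{\Gamma_1} f(\lambda)g(\lambda)R(\lambda:\mathcal A)\,d\lambda=(f\cdot g)(\mathcal A)$, using once more path-independence and that $f\cdot g\in H_a^\infty$. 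In the second piece I integrate in $\lambda$ first: $\frac{1}{2\pi i}\int_{\Gamma_1}\frac{f(\lambda)}{z-\lambda}\,dz$ — wait, more precisely $-\frac{1}{2\pi i}\int_{\Gamma_1}\frac{f(\lambda)}{z-\lambda}\,d\lambda=-\widetilde f(z)$; since $z\in\Gamma_2\subseteq\Gamma_1^{+}$, Lemma \ref{lem:basic_lemma} yields $\widetilde f(z)=0$, so the entire second piece vanishes. Combining the two pieces gives $f(\mathcal A)g(\mathcal A)=(f\cdot g)(\mathcal A)$.

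The routine but necessary verifications I would carry out are: (i) the double integral is absolutely convergent, so Fubini applies — this uses \eqref{bounded_line} for the decay of $f$ and $g$ along the rays together with \eqref{eq:res_bound} and \eqref{eq:reslovent} for the uniform bound on the resolvent, plus the fact that $\frac{1}{|z-\lambda|}$ stays bounded because the two paths are kept at positive distance; (ii) after applying the resolvent identity, each of the two resulting iterated integrals is still absolutely convergent, again to justify the interchange of order; (iii) $f\cdot g$ indeed belongs to $H_a^\infty$, which is immediate from Definition \ref{def:admissible} since the product of two bounded holomorphic functions is bounded holomorphic and the $L^1$-integrability along rays is inherited (boundedness of one factor times integrability of the other).

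The main obstacle is bookkeeping rather than conceptual: one must choose the two contours with the correct relative position so that Lemma \ref{lem:basic_lemma} fires with the right value ($g(\lambda)$ on one side, $0$ on the other), and one must make sure that translating/deforming the paths to achieve this disjointness keeps them inside $\Sigma_\delta\cup B(0,r)$ and does not change the value of $f(\mathcal A)$ or $g(\mathcal A)$ — both guaranteed by the path-independence proposition proved just above. Once the geometry is set up correctly, the resolvent identity does all the work and the computation closes cleanly.
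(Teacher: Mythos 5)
Your argument is correct, and it reaches \eqref{eq:homomor} by running the computation in the opposite direction from the paper. You start from the product $f(\mathcal A)g(\mathcal A)$ written as a double integral over two nested contours, apply the resolvent equation \eqref{eq:res_prop1} directly to $R(\lambda:\mathcal A)R(z:\mathcal A)$, and then invoke Lemma \ref{lem:basic_lemma} twice — once to reproduce $g(\lambda)$ on the inner contour and once to kill the term involving $\widetilde f(z)$ for $z$ on the outer contour. The paper instead starts from $(f\cdot g)(\mathcal A)$ as a single integral over a third, leftmost contour, expands \emph{both} $f(\lambda)$ and $g(\lambda)$ as Cauchy-type integrals via Lemma \ref{lem:basic_lemma}, and then collapses the resulting triple integral using the pre-packaged identity \eqref{eq:product_Cauchy} of Lemma \ref{lem:resolvent_integral} (which encodes the same resolvent equation). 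The two routes are equivalent in substance; yours is the more classical two-contour Dunford-calculus argument and avoids \eqref{eq:product_Cauchy} altogether, at the price of having to split the double integral and check absolute convergence of each piece separately, whereas the paper's version needs only one application of Fubini to an absolutely convergent triple integral. One cosmetic slip: with the paper's convention $\widetilde f(z)=\frac{1}{2\pi i}\int_{\Gamma_1}\frac{f(\lambda)}{\lambda-z}\,d\lambda$, your second piece contains $+\widetilde f(z)$ rather than $-\widetilde f(z)$; this is immaterial since $\widetilde f(z)=0$ for $z\in\Gamma_1^{+}$ in either case.
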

\begin{proof}
{Clearly, if $f,g\in H^\infty_a$, then $f\cdot g\in H^\infty_a$.}     Fix $\pi/2<\theta_1<\theta_2<\theta<\pi $ and 
    $0\leq \varepsilon_1< \varepsilon_2<\varepsilon$, such that $\Gamma_{\theta,\varepsilon} \subseteq \Sigma_\delta\cup B(0,r)$. Then, using Lemma \ref{lem:basic_lemma}, we get  
    \begin{equation}
        \begin{split}
            (f\cdot g)({\mathcal{A}})&= \frac{1}{2\pi i} \int_{\Gamma_{\theta, \varepsilon} }
            f(\lambda)g(\lambda) R(\lambda:{\mathcal{A}})\, d\lambda\\
            & =  \frac{1}{2\pi i} \int_{\Gamma_{\theta, \varepsilon} }
           \frac{1}{2\pi i}  \int_{\Gamma_{\theta_1, \varepsilon_1} } \frac{f(z_1)}{z_1-\lambda} \, dz_1  \frac{1}{2\pi i}  \int_{\Gamma_{\theta_2, \varepsilon_2} } \frac{g(z_2)}{z_2-\lambda}\, dz_2\,  R(\lambda:{\mathcal{A}})\, d\lambda.\\
        \end{split}
    \end{equation}
Since the triple integral is absolutely convergent, we utilize the Fubini theorem together  with \eqref{eq:product_Cauchy} and obtain  \eqref{eq:homomor}. 
\end{proof}

{
\begin{remark}\normalfont From the facts that $\mathcal L(X)\ni B\mapsto B^*\in \mathcal L(X^*)$ is an isometric injection and the integral \eqref{eq:main_integral} converges absolutely, we deduce that 
\begin{equation}
    \label{main_itegral2} 
        f({\mathcal{A}})^*=\frac{1}{2\pi i}\int_{\Gamma_{\theta,\varepsilon}}f(\lambda)R(\lambda:{\mathcal{A}})^*\,d\lambda, 
\end{equation}  
{provided $\Gamma_{\theta,\varepsilon} \subseteq \Sigma_\delta\cup B(0,r)$ and $f\in H^\infty_a$.}
\end{remark}
}

\subsection{Subordinate semigroup and holomorphic calculi} Let us recall that for a uniformly bounded and strongly continuous semigroup $\{\mathcal{T}_t\}_{t \geq 0}$ its subordinate semigroup $\{\mathcal{P}_t\}_{t \geq 0}$ is defined by \eqref{eq:P_sub}. For the convince of the reader we provide a short proof that $\{\mathcal{P}_t\}_{t \geq 0}$ is obtained by the holomorphic calculi.

\begin{lemma}\label{lem:P_t_cauchy}
    Let $\{\mathcal{T}_t\}_{t \geq 0}$ be an analytic, uniformly bounded semigroup on a Banach  space $X$.  Let ${\mathcal{A}}$ denote its generator. Assume that $0 \in \rho({\mathcal{A}})$. There exists $\frac{\pi}{2}<\delta<\pi$ such that for all $\frac{\pi}{2}<\theta<\delta$ we have
    \begin{equation}\label{eq:P_t_holo}
        {\mathcal{P}}_t=\frac{1}{2\pi i}\int_{\Gamma_\theta}e^{-t\sqrt{-\lambda}}R(\lambda:{\mathcal{A}})\,d\lambda.
    \end{equation} 
\end{lemma}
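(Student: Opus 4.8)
The plan is to express the subordinate semigroup via the subordination formula \eqref{eq:P_sub} and then substitute the holomorphic calculus representation of the analytic semigroup $\{\mathcal T_s\}_{s\geq 0}$ itself. Since $0\in\rho(\mathcal A)$ and $\{\mathcal T_s\}$ is analytic and uniformly bounded in a sector, we may choose $\pi/2<\delta'<\pi/2+\delta$ so that $\Gamma_{\theta',\varepsilon}\subseteq\Sigma_\delta\cup B(0,r)$ for $\pi/2<\theta'<\delta'$, and write $\mathcal T_s=\frac{1}{2\pi i}\int_{\Gamma_{\theta',\varepsilon}}e^{s\lambda}R(\lambda:\mathcal A)\,d\lambda$ (this is the standard Cauchy/Dunford representation of an analytic semigroup, which follows from the same contour-shift arguments used to prove Lemma~\ref{lem:P_t_cauchy}'s analogue for $\{\mathcal T_t\}$; note $e^{s\lambda}$ decays on the rays $\Gamma_{\theta',\varepsilon}$ since $\operatorname{Re}\lambda<0$ away from the small arc, so the integral converges absolutely). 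Plugging this into \eqref{eq:P_sub} gives
$$
\mathcal P_t=\frac{1}{\sqrt\pi}\int_0^\infty e^{-u}\,\frac{1}{2\pi i}\int_{\Gamma_{\theta',\varepsilon}}e^{\frac{t^2}{4u}\lambda}R(\lambda:\mathcal A)\,d\lambda\,\frac{du}{\sqrt u}.
$$

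Next I would justify interchanging the order of integration by Fubini: on the arc the integrand is bounded and the $u$-integral $\int_0^\infty e^{-u}u^{-1/2}\,du=\sqrt\pi$ is finite, while on the rays $|e^{\frac{t^2}{4u}\lambda}|=e^{\frac{t^2}{4u}\operatorname{Re}\lambda}\leq 1$ and $\|R(\lambda:\mathcal A)\|\lesssim|\lambda|^{-1}$ is integrable along $\Gamma_{\theta',\varepsilon}$, so the double integral is absolutely convergent. Swapping, the inner $u$-integral becomes a classical Laplace-type transform: for $\operatorname{Re}\lambda<0$,
$$
\frac{1}{\sqrt\pi}\int_0^\infty e^{-u}\,e^{\frac{t^2}{4u}\lambda}\,\frac{du}{\sqrt u}=e^{-t\sqrt{-\lambda}},
$$
which is exactly the subordination identity underlying $f_2(z)=e^{-\sqrt{-z}}$; here $\sqrt{-\lambda}$ is the branch with positive real part, consistent with the convention fixed after Definition~\ref{def:admissible}. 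This yields
$$
\mathcal P_t=\frac{1}{2\pi i}\int_{\Gamma_{\theta',\varepsilon}}e^{-t\sqrt{-\lambda}}R(\lambda:\mathcal A)\,d\lambda,
$$
and a final contour-independence step (letting $\varepsilon\to0$, or invoking the path-independence proposition already proved for $H_a^\infty$ functions, with $f(\lambda)=e^{-t\sqrt{-\lambda}}\in H_a^\infty$) collapses $\Gamma_{\theta',\varepsilon}$ to $\Gamma_\theta$ for any $\pi/2<\theta<\delta'$, giving \eqref{eq:P_t_holo}.

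The main obstacle I anticipate is purely technical rather than conceptual: making the Fubini interchange fully rigorous near $u\to 0^+$, where the exponent $\frac{t^2}{4u}\lambda$ has large modulus. The saving grace is that $\operatorname{Re}\lambda\leq 0$ on the whole contour (strictly negative on the rays), so $|e^{\frac{t^2}{4u}\lambda}|\leq 1$ uniformly, and on the small arc near $0$ one only needs the crude bound that $R(\lambda:\mathcal A)$ is bounded there by \eqref{eq:reslovent} and the arc has finite length; combined with $\int_0^\infty e^{-u}u^{-1/2}\,du<\infty$ this gives an integrable majorant. A secondary point requiring care is the correct branch of $\sqrt{-\lambda}$ so that $e^{-t\sqrt{-\lambda}}$ both decays as $|\lambda|\to\infty$ along $\Gamma_\theta$ (ensuring convergence of \eqref{eq:P_t_holo}) and matches the elementary Laplace computation; this is handled by noting that for $\lambda\in\mathbb C\setminus\mathbb R_-$ we take $\operatorname{Re}\sqrt{-\lambda}>0$, exactly as needed.
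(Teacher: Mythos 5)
Your proposal follows essentially the same route as the paper: insert the Dunford--Cauchy representation $\mathcal T_s=\frac{1}{2\pi i}\int_{\Gamma_\theta}e^{s\lambda}R(\lambda:\mathcal A)\,d\lambda$ (the paper cites Pazy, Theorem 2.5.2) into the subordination formula \eqref{eq:P_sub}, justify Fubini, and evaluate the inner $u$-integral by the Laplace-type identity $\Gamma(1/2)^{-1}\int_0^\infty e^{-u}e^{\lambda t^2/(4u)}u^{-1/2}\,du=e^{-t\sqrt{-\lambda}}$. One small correction to your Fubini majorant: $\|R(\lambda:\mathcal A)\|\lesssim|\lambda|^{-1}$ alone is \emph{not} integrable along the infinite rays (it diverges logarithmically), so on the rays you must keep the factor $|e^{t^2\lambda/(4u)}|=e^{t^2|\lambda|\cos\theta/(4u)}$ with $\cos\theta<0$, which gives for each $u>0$ a bound of order $\log(1+u)$ that is then integrable against $e^{-u}u^{-1/2}\,du$.
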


\begin{proof}
   Let $\delta, r>0$ be such that \eqref{eq:res}, \eqref{eq:res_bound}, and \eqref{eq:reslovent} are satisfied. 
 Then ~\cite[Theorem 2.5.2]{Pazy}) asserts that 
    \begin{equation}\label{eq:T_path}
        \mathcal{T}_t=\frac{1}{2\pi i}\int_{\Gamma_\theta} e^{\lambda t}R(\lambda:{\mathcal{A}})\,d\lambda.
    \end{equation}
    Let us plug in~\eqref{eq:T_path} to~\eqref{eq:P_sub}. We obtain
    \begin{equation}\label{eq:double_integral}
        {\mathcal{P}}_t=\frac{1}{{\Gamma(1/2)}2\pi i}\int_{0}^{\infty}\int_{\Gamma_{\theta}} e^{-u}e^{\frac{t^2}{4u}\lambda}R(\lambda:{\mathcal{A}})\frac{1}{\sqrt{u}}\,d\lambda\,du.
    \end{equation}
    It is not difficult to check  that the double integral ~\eqref{eq:double_integral} is absolutely convergent. Hence, by the Fubini theorem,  
    we get
    \begin{align*}
        {\mathcal{P}}_t=\frac{1}{{\Gamma(1/2)}2\pi i}\int_{\Gamma_{\theta}}\int_{0}^{\infty} e^{-u}e^{-\frac{t^2}{4u}(-\lambda)}\frac{1}{\sqrt{u}}\,du\,  R(\lambda:{\mathcal{A}}) \,d\lambda=\frac{1}{2\pi i}\int_{\Gamma_\theta}e^{-t\sqrt{-\lambda}}R(\lambda:{\mathcal{A}})\,d\lambda. 
    \end{align*}
\end{proof}

\begin{corollary}\label{cor:cal-deriv}
    Under the assumptions of Lemma \ref{lem:P_t_cauchy}, for all non-negative  integers $n$ and $0\leq \varepsilon <\varepsilon_0$, $\varepsilon_0$ small enough, we have 
    \begin{equation}\label{eq:der_P_t_holo} \frac{d^n}{dt^n} {\mathcal{P}}_t=\frac{1}{2\pi i} \int_{\Gamma_{\theta,\varepsilon}} (-\sqrt{-\lambda})^n e^{ -t\sqrt{-\lambda}} R(\lambda:{\mathcal{A}})\, d\lambda.
    \end{equation}
\end{corollary}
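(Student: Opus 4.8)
The plan is to differentiate the formula~\eqref{eq:P_t_holo} from Lemma~\ref{lem:P_t_cauchy} under the integral sign $n$ times in the variable $t$. Formally, since $\frac{d}{dt}e^{-t\sqrt{-\lambda}} = -\sqrt{-\lambda}\, e^{-t\sqrt{-\lambda}}$, iterating gives the factor $(-\sqrt{-\lambda})^n$ in the integrand, which is exactly~\eqref{eq:der_P_t_holo}. The work is to justify the interchange of differentiation and integration, and then to explain why one may pass to the shifted contour $\Gamma_{\theta,\varepsilon}$ for small $\varepsilon\geq 0$.

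First I would fix $\pi/2<\theta<\delta$ and recall that $\Gamma_\theta\subseteq\Sigma_\delta$, so that on $\Gamma_\theta$ we have the resolvent bound $\|R(\lambda:{\mathcal{A}})\|\leq C/|\lambda|$ from~\eqref{eq:R_lambda}. Parametrizing $\lambda=|t|e^{\pm i\theta}$ on $\Gamma_\theta$, one computes $\mathrm{Re}(\sqrt{-\lambda})=|\lambda|^{1/2}\cos\big(\tfrac{\pi-\theta}{2}\big)\cdot(\pm 1$ sign bookkeeping$)$; since $\pi/2<\theta<\pi$, the angle $(\pi-\theta)/2$ lies in $(0,\pi/4)$, so $\mathrm{Re}(\sqrt{-\lambda})\geq c|\lambda|^{1/2}$ for some $c>0$ on $\Gamma_\theta$. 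Hence for each fixed $t>0$ the integrand of~\eqref{eq:der_P_t_holo} is dominated in norm by $C|\lambda|^{n/2}e^{-ct|\lambda|^{1/2}}|\lambda|^{-1}$, which is integrable along $\Gamma_\theta$, with a local uniform-in-$t$ majorant (on $t\in[\tau_0,\tau_1]\subset(0,\infty)$ replace $e^{-ct|\lambda|^{1/2}}$ by $e^{-c\tau_0|\lambda|^{1/2}}$ for large $|\lambda|$, and note the integrand stays bounded near $\lambda=0$ on $\Gamma_\theta$ away from the origin—here one uses that $\Gamma_\theta$ passes through $0$ but $|\lambda|^{n/2-1}$ is still locally integrable since $n\geq 0$ forces $n/2-1>-1$). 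Thus the standard theorem on differentiating parameter integrals of Banach-space-valued functions applies and yields~\eqref{eq:der_P_t_holo} with the contour $\Gamma_\theta=\Gamma_{\theta,0}$.

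Second, to obtain the statement for $0\leq\varepsilon<\varepsilon_0$, I would invoke a contour-deformation argument identical in spirit to the proof of the preceding Proposition: the integrand $\lambda\mapsto(-\sqrt{-\lambda})^n e^{-t\sqrt{-\lambda}}R(\lambda:{\mathcal{A}})$ is holomorphic on $\Sigma_\delta\cup B(0,r)$ (the resolvent is holomorphic there by~\eqref{eq:reslovent} and the branch of the square root is holomorphic off $\mathbb{R}_-$, while $\Gamma_{\theta,\varepsilon}$ for $\theta<\pi/2+\delta$ and small $\varepsilon$ stays in $\Sigma_\delta\cup B(0,r)$). Choosing $\varepsilon_0$ small enough that $\Gamma_{\theta,\varepsilon}\subseteq\Sigma_\delta\cup B(0,r)$ for all $0\leq\varepsilon<\varepsilon_0$, the curves $\Gamma_{\theta,0}$ and $\Gamma_{\theta,\varepsilon}$ are homotopic within the domain of holomorphicity, and the contributions from the connecting arcs at $|\lambda|\to\infty$ vanish by the exponential decay $e^{-ct|\lambda|^{1/2}}$ together with the resolvent bound; Cauchy's theorem then gives equality of the two integrals.

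The main obstacle is the behavior near $\lambda=0$: the factor $(-\sqrt{-\lambda})^n$ degenerates there but causes no divergence, whereas one must be careful that the passage from $\Gamma_{\theta,0}$ (which runs through the origin) to $\Gamma_{\theta,\varepsilon}$ (which detours around it along an arc of radius $\varepsilon$) is legitimate—this is exactly why the hypothesis $0\in\rho({\mathcal{A}})$ and the bound~\eqref{eq:reslovent} are needed, ensuring the integrand is genuinely holomorphic in a neighborhood of $0$ so that no residue is picked up. Everything else is a routine application of dominated convergence and Cauchy's theorem in the Banach-space-valued setting.
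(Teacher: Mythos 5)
Your proof is correct and is exactly the argument the paper intends (the corollary is stated without proof as an immediate consequence of Lemma \ref{lem:P_t_cauchy}): differentiate under the integral sign using the resolvent bound together with $\mathrm{Re}\sqrt{-\lambda}\geq c|\lambda|^{1/2}$ on $\Gamma_\theta$, then deform the contour near the origin, which is legitimate because $0\in\rho({\mathcal{A}})$. One small slip: your claim that ``$n\geq 0$ forces $n/2-1>-1$'' fails at $n=0$, where $|\lambda|^{-1}$ is not locally integrable; the correct justification near $\lambda=0$ is that \eqref{eq:reslovent} gives a \emph{bounded} resolvent on $B(0,r)$, so the integrand is $O(|\lambda|^{n/2})$ there for every $n\geq 0$, and the same observation removes the need to invoke $C/|\lambda|$ at the origin at all.
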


\begin{remark}
    The formulae \eqref{eq:P_t_holo}, \eqref{eq:T_path}, and \eqref{eq:der_P_t_holo} hold for $\mathcal{T}_t^*$ and ${\mathcal{P}}_t^*$ by replacing $R(\lambda:{\mathcal{A}})$ by $R(\lambda:{\mathcal{A}})^*$. 
\end{remark}
\begin{proposition}\label{prop:crucial}
    Assume that ${\mathcal{A}}$ is an infinitesimal generator of a uniformly bounded holomorphic semigroup {$\{\mathcal{T}_t\}_{t \geq 0}$} in a sector satisfying \eqref{eq:res} and \eqref{eq:res_bound}. Fix a positive integer $n$. Then there is a constant $C_n>0$, which depends only on $n$, $\delta$,  and the constant $C$ in \eqref{eq:res_bound}, such that for all $X^*\in X^*$,  if $\eta(t)=\mathcal{T}_t^*x^*$ and $\zeta (t)={\mathcal{P}}_t^*x^*$, then 
    \begin{equation}
        \| \eta^{(n)}(t)\|_{X^*} \leq C_n t^{-n/2} \| \zeta^{(n)}(\sqrt{t})\|_{X^*}. 
    \end{equation}
\end{proposition}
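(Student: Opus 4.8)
The plan is to express $\eta^{(n)}(t) = \frac{d^n}{dt^n}\mathcal{T}_t^* x^*$ as a contour integral via \eqref{eq:T_path} (with $R(\lambda:\mathcal{A})$ replaced by $R(\lambda:\mathcal{A})^*$ as noted in the Remark), and similarly write $\zeta^{(n)}(s) = \frac{d^n}{ds^n}\mathcal{P}_s^* x^*$ using \eqref{eq:der_P_t_holo}. Concretely, $\eta^{(n)}(t) = \frac{1}{2\pi i}\int_{\Gamma_\theta}\lambda^n e^{\lambda t} R(\lambda:\mathcal{A})^*\,d\lambda$ while $\zeta^{(n)}(s) = \frac{1}{2\pi i}\int_{\Gamma_\theta}(-\sqrt{-\lambda})^n e^{-s\sqrt{-\lambda}} R(\lambda:\mathcal{A})^*\,d\lambda$. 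The idea is to manufacture a holomorphic multiplier $h\in H_a^\infty$ so that, after applying the holomorphic calculus of Section \ref{sec:holomorphic}, one obtains $\eta^{(n)}(t)$ as $h(\mathcal{A})^*$ applied to $\zeta^{(n)}(\sqrt{t})$, with $\|h(\mathcal{A})^*\|_{\mathcal L(X^*)} \leq C_n t^{-n/2}$. Comparing the two integrands, the natural candidate is, up to constants, the function
$$
h_t(\lambda) = \frac{\lambda^n e^{\lambda t}}{(-\sqrt{-\lambda})^n e^{-\sqrt{t}\sqrt{-\lambda}}} = (-1)^n (-\lambda)^{n/2} e^{\lambda t + \sqrt{t}\sqrt{-\lambda}},
$$
but this is manifestly \emph{not} bounded on $\mathbb{C}_-$ (it grows like $e^{\lambda t}$ as $\mathrm{Re}\,\lambda\to +\infty$ along $\Gamma_\theta$... wait, on $\Gamma_\theta$ with $\theta$ near $\pi/2+\delta$, $\mathrm{Re}\,\lambda<0$ so $e^{\lambda t}$ decays, but the polynomial factor $(-\lambda)^{n/2}$ grows). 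So a direct quotient does not lie in $H_a^\infty$, and a little more care is needed.

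The correct route, I expect, is to \emph{not} divide by the full $\zeta^{(n)}$ integrand but rather to go the other way: write $\eta^{(n)}(t) = g_t(\mathcal{A})^* \big(\mathcal{P}_{\sqrt{t}}^{*}x^*\big)$ type identity is also awkward because $\zeta^{(n)}(\sqrt t)$, not $\mathcal{P}_{\sqrt t}^* x^*$, is what appears. So instead one should factor the \emph{$n$-th derivative integrand} of $\eta$ through the \emph{$n$-th derivative integrand} of $\zeta$ at time $\sqrt t$ by choosing the multiplier
$$
m_t(\lambda) = \frac{\lambda^n e^{\lambda t}}{(-\sqrt{-\lambda})^n e^{-\sqrt t \sqrt{-\lambda}}},
$$
and then \textbf{rescale}: substituting $\lambda = \mu/t$ turns $\lambda^n e^{\lambda t}$ into $t^{-n}\mu^n e^{\mu}$ and $(-\sqrt{-\lambda})^n = t^{-n/2}(-\sqrt{-\mu})^n$, $\sqrt t\sqrt{-\lambda} = \sqrt{-\mu}$, so the $t$-dependence collapses to the scalar $t^{-n/2}$ times a \emph{fixed} (dimensionless) function of $\mu$. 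The key point is that after this rescaling the remaining multiplier is $t$-independent and the only thing to check is that it, or rather the associated operator, is bounded; the factor $t^{-n/2}$ is exactly the claimed gain. First I would make the rescaling substitution $\lambda\mapsto \mu/t$ rigorous (the path $\Gamma_\theta$ is invariant under positive scaling, so the contour is unchanged), reducing to the case $t=1$ up to the explicit factor $t^{-n/2}$. Then I would exhibit $\eta^{(n)}(1)$ as $\Phi(\mathcal{A})^*$ applied to $\zeta^{(n)}(1)$ where $\Phi(\lambda) = \lambda^n e^{\lambda}\big/\big((-\sqrt{-\lambda})^n e^{-\sqrt{-\lambda}}\big) = (-1)^n (-\lambda)^{n/2} e^{\lambda + \sqrt{-\lambda}}$; since on $\mathbb{C}_-$ we have $\mathrm{Re}(\sqrt{-\lambda}) \geq 0$ and $\mathrm{Re}\,\lambda<0$, the exponential part is bounded, while the polynomial growth $(-\lambda)^{n/2}$ is tamed precisely because it will be absorbed against the contour integral defining $\zeta^{(n)}$ — more carefully, one should write $\eta^{(n)}(1) = \frac{1}{2\pi i}\int_{\Gamma_{\theta,\varepsilon}} \Phi(\lambda)\,(-\sqrt{-\lambda})^n e^{-\sqrt{-\lambda}} R(\lambda:\mathcal A)^* \, d\lambda$ and recognize that $\lambda\mapsto \Phi(\lambda)$ need not itself be in $H_a^\infty$ as long as $\lambda \mapsto \Phi(\lambda)(-\sqrt{-\lambda})^n e^{-\sqrt{-\lambda}} = \lambda^n e^\lambda$ is, which it is. Hence the honest statement is: $\eta^{(n)}(1)$ and $\zeta^{(n)}(1)$ are both of the form $\psi(\mathcal A)^* x^*$ for admissible $\psi$, and one bounds $\|\eta^{(n)}(1)\|_{X^*}$ directly against $\|x^*\|_{X^*}$ and separately against $\|\zeta^{(n)}(1)\|_{X^*}$ — the latter requires expressing $x^*$ back in terms of $\zeta^{(n)}(1)$, which is the subtle point.

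\textbf{The main obstacle.} The genuine difficulty is that one cannot simply "invert" the operator $\zeta^{(n)}(1) = (-\sqrt{-\mathcal A})^n e^{-\sqrt{-\mathcal A}} x^*$ to recover $x^*$ and then reapply $\eta^{(n)}(1) = \mathcal A^n e^{\mathcal A} x^*$, because $(-\sqrt{-\mathcal A})^n e^{-\sqrt{-\mathcal A}}$ is not boundedly invertible. The resolution is to split the contour $\Gamma_{\theta,\varepsilon}$ into a compact part near $0$ and a tail, or equivalently to use the semigroup property to write $\mathcal P_1^* = \mathcal P_{1/2}^*\mathcal P_{1/2}^*$: on one factor $\mathcal P_{1/2}^*$ we have the good bound $\|\,\cdot\,\|\leq C$, so it suffices to produce, via the holomorphic calculus, a \emph{bounded} operator $Q$ with $\mathcal A^n e^{\mathcal A} = Q \cdot (-\sqrt{-\mathcal A})^n e^{-\sqrt{-\mathcal A}}$; the required identity is $Q = \Phi(\mathcal A)$ with $\Phi(\lambda) = (-1)^n(-\lambda)^{n/2} e^{\lambda+\sqrt{-\lambda}}$, and one checks $\Phi \in H_a^\infty$ after all: on $\mathbb{C}_-$, writing $\lambda = \rho e^{i\varphi}$ with $\pi/2 < |\varphi| < \pi$, one has $\mathrm{Re}(\lambda + \sqrt{-\lambda}) = \rho\cos\varphi + \sqrt{\rho}\cos(\varphi/2 \mp \pi/2)$, and since $\cos\varphi < 0$ dominates the $\sqrt\rho$ term as $\rho\to\infty$ while near $\rho = 0$ the polynomial factor $(-\lambda)^{n/2}\to 0$, both conditions (A) and (B) of Definition \ref{def:admissible} hold. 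Then $\mathcal A^n e^{\mathcal A} = \Phi(\mathcal A)\cdot(-\sqrt{-\mathcal A})^n e^{-\sqrt{-\mathcal A}}$ follows from the multiplicativity \eqref{eq:homomor}, giving $\|\eta^{(n)}(1)\|_{X^*} \leq \|\Phi(\mathcal A)^*\|\,\|\zeta^{(n)}(1)\|_{X^*} \leq C_n \|\zeta^{(n)}(1)\|_{X^*}$, and the rescaling yields the general $t$ with the factor $t^{-n/2}$.
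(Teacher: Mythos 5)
Your final argument is exactly the paper's proof: the paper writes $\eta^{(n)}(t)=t^{-n/2}\,m(t\mathcal A)^{*}\zeta^{(n)}(\sqrt{t})$ with $m(\lambda)=(\sqrt{-\lambda})^{n}e^{\lambda+\sqrt{-\lambda}}$, which is your $\Phi$ up to the sign $(-1)^{n}$, and the substitution $\lambda\mapsto\mu/t$ is precisely how the factor $t^{-n/2}$ and the $t$-uniform operator bound are extracted there. One caveat: your verification that $\Phi\in H^{\infty}_{a}$ is not quite right — the term $\rho\cos\varphi$ does \emph{not} dominate $\sqrt{\rho}\,\cos(\varphi/2\mp\pi/2)$ uniformly as $\varphi\to\pi/2$, so $\Phi$ (like the paper's $m$) is actually unbounded near the imaginary axis and fails condition (A) of Definition \ref{def:admissible} as literally stated; the operator bound nevertheless survives because the contour $\Gamma_{\theta,\varepsilon}$ lies in the strict subsector $|\arg\lambda|\ge\theta>\pi/2$, where $\rho\cos\theta$ does dominate and the integral $\int_{\Gamma_{\theta,\varepsilon}}|\Phi(t\lambda)|\,\|R(\lambda:\mathcal A)\|\,|d\lambda|$ is bounded uniformly in $t$ — a point the paper also glosses over in asserting $m(t\cdot)\in H^{\infty}_{a}$.
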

\begin{proof} Using \eqref{eq:T_path},  the holomorphic calculus {(Proposition \ref{prop:calcul}), and Corollary \ref{cor:cal-deriv},} we have 
    \begin{equation}
        \begin{split}
            \eta^{(n)}(t) & =\frac{1}{2\pi i} \int_{\Gamma_\theta} \lambda^n e^{t\lambda}R(\lambda:{\mathcal{A}})^* x^*\, d\lambda\\
            &= (-1)^n\frac{1}{2\pi i} \int_{\Gamma_\theta} (-\sqrt{-\lambda})^n e^{t\lambda +\sqrt{t}{\sqrt{-\lambda}}}  (-\sqrt{-\lambda})^n e^{-t\sqrt{-\lambda}} R(\lambda:{\mathcal{A}})^* x^*\, d\lambda\\
            &=  t^{-n/2}  m(t{\mathcal{A}})^* \zeta^{(n)}(t), 
        \end{split}
    \end{equation}
    where $m(\lambda) = (\sqrt{-\lambda})^n e^{\lambda +\sqrt{-\lambda}}$. Observe that $m(t\cdot)\in H^\infty_a$ and $\| m(t{\mathcal{A}})\|\leq C_n$ with $C_n$   which depends on $n\geq 1$ and the constant $C$ in \eqref{eq:res_bound}, but independent of $t>0$. 
\end{proof}

\subsection{Proof of the inclusion $\Lambda_{-\sqrt{-{\mathcal{A}}}}^{2\beta}\subseteq \Lambda_{\mathcal{A}}^{\beta}$ } 

In this subsection we complete the proof of Theorem  \ref{theo:Lamba=Lambda}. We assume that ${\mathcal{A}}$ is an infinitesimal generator of a uniformly bounded holomorphic $c_0$-semigroup $\{\mathcal{T}_t\}_{t \geq 0}$ on a Banach space $X$. 
Thus $\lambda I-{\mathcal{A}}$ is an invertible operator for $\lambda \in \Sigma_\delta \setminus \{0\}$ 
and 
$$ \| R(\lambda:{\mathcal{A}})\|\leq \frac{C}{|\lambda|} \quad \text{\rm for } \lambda \in \Sigma_\delta \setminus \{0\}.$$
Let us remark that we do not assume that $0\in\rho ({\mathcal{A}})$.

Consider the approximation semigroups $\mathcal{T}_t^{\omega}=e^{-\omega t}\mathcal{T}_t$, $0<\omega<1$,  
generated by ${\mathcal{A}}_\omega= {\mathcal{A}}-\omega I $ and the associated Poisson semigroups 
$$ {\mathcal{P}}_t^{\omega} ={\Gamma (1/2)^{-1}} \int_0^\infty e^{-u} \mathcal{T}_{t^2/4u}^\omega \frac{du}{\sqrt{u}}.$$
Clearly, $0\in\rho ({\mathcal{A}}_\omega)$, $R(\lambda : {\mathcal{A}}_\omega)=R(\omega +\lambda: {\mathcal{A}})$ and assuming that $0<\delta<\pi/2$, we have 
\begin{equation}
    \|R(\lambda:{\mathcal{A}}_\omega)\|\leq \frac{C'}{|\lambda|} \quad \text{for } \lambda \in \Sigma_\delta\setminus \{0\}
\end{equation}
with $C'$ independent of $\omega \in (0,1)$. 

It is not difficult to prove that for each non-negative integer $n$, 
\begin{equation}\label{eq:limits}
    \lim_{\omega \to 0} \frac{d^n}{dt^n} \mathcal{T}^{\omega}_t = \frac{d^n}{dt^n}  \mathcal{T}_t, \quad \lim_{\omega \to 0}\frac{d^n}{dt^n} {\mathcal{P}}^{\omega}_t = \frac{d^n}{dt^n} {\mathcal{P}}_t 
\end{equation}
in the norm operator topology uniformly for $t$ being in any compact set  in $(0,\infty)$. 
{The same convergences hold in the norm operator topology of $X^*$ } for  $\mathcal{T}_t^{\omega *}$, $\mathcal{T}_t^*$, ${\mathcal{P}}_t^{\omega *}$, and ${\mathcal{P}}_t^*$. 

\begin{proof}[Proof of the inclusion  $\Lambda_{-\sqrt{-{\mathcal{A}}}}^{2\beta}\subseteq \Lambda_{\mathcal{A}}^{\beta}$ and the first inequality in \eqref{eq:Lambda=Lambda}] Suppose  $x^*\in \Lambda^{2\beta}_{-\sqrt{-{\mathcal{A}}}}$.  Fix a positive integer $n>2\beta$. Then, for all $t>0$,  
\begin{equation}\label{eq:xLambda}
    \Big\| \frac{d^n}{dt^n} {\mathcal{P}}_t^*x^*\Big\|_{X^*}\leq  t^{2\beta -n}\| x^*\|_{\Lambda_{-\sqrt{-{\mathcal{A}}}}^{2\beta}}.
\end{equation}
Set  $\eta_\omega (t)=\mathcal{T}_t^{\omega *} x^*$, ${\zeta_\omega (t)={\mathcal{P}}_t^{\omega *} x^*}$. 
Then 
\begin{equation}\label{eq:limits2}
    \lim_{\omega\to 0} \eta^{(n)}_\omega (t)=\frac{d^n}{dt^n} \mathcal{T}_t^*x^*, 
\end{equation}
\begin{equation}\label{eq:limits3}
    \lim_{\omega\to 0} \zeta^{(n)}_\omega (t)=\frac{d^n}{dt^n} {\mathcal{P}}_t^*x^*, 
\end{equation}
where the convergences are in the norm in $X^*$ and uniform in any interval $[a,b]\subset (0,\infty)$. 
Applying Proposition 
\ref{prop:crucial}, we have 
\begin{equation}\label{eq:eta_zeta}
    \|\eta_\omega^{(n)} \|_{X^*}\leq C_n{\|\zeta_\omega^{(n)} (\sqrt{t})\|_{X^*}}, \quad \omega \in (0,1), \ t>0.
\end{equation} 
Now, from \eqref{eq:limits2}    and \eqref{eq:eta_zeta}, we conclude that 
\begin{equation*}\begin{split}
    \Big\| \frac{d^n}{dt^n} \mathcal{T}_t^*x^*\Big\|_{X^*}&=\lim_{\omega\to 0} \Big\| \eta_\omega^{(n)} (t)\Big\|_{X^*}\leq \lim_{\omega\to 0} CC_n t^{-n/2} \| \zeta_\omega^{(n)} (\sqrt{t})\|_{X^*}\\
    &\leq CC_n t^{-n/2} \sqrt{t}^{2\beta-n} \| x^*\|_{\Lambda_{-\sqrt{-{\mathcal{A}}}}^{2\beta}}=CC_n t^{\beta-n}\| x^*\|_{\Lambda_{-\sqrt{-{\mathcal{A}}}}^{2\beta}}
\end{split} \end{equation*}
where in the last inequality we have used \eqref{eq:limits3} and \eqref{eq:xLambda}.  The required inclusion and the first inequality in \eqref{eq:Lambda=Lambda} is established. 
\end{proof}
\section{ $\Lambda^\beta_{\mathcal A}$-spaces associated with special forms of operators}\label{sec:A-special} 

Let $\{\mathcal{T}_t\}_{t \geq 0}$ be a uniformly bounded strongly continuous analytic semigroup on a Banach space $X$. Let us denote ${\frak D} =\{\mathcal{T}_tx: x\in X, \ t>0\}$   the space of test vectors,  which, by the strong continuity,  is dense in $X$.  We assume that its infinitesimal generator ${\mathcal{A}}$ has a special form, namely 
\begin{equation}\label{eq:A-form} {\mathcal{A}}\mathcal{T}_tx=\sum_{j=1}^n \epsilon_j \mathcal D_j\mathcal D_j\mathcal{T}_tx, \quad t>0, \ x\in X,
\end{equation}
where 
$\epsilon_j\in \mathbb C$, {$n \in \mathbb{N}$,} and {linear operators} $\mathcal D_j:\frak D\to \frak D$ satisfy 
\begin{equation}\label{eq:commute} \mathcal D_j\mathcal{T}_{t+s}x=\mathcal{T}_t\mathcal D_j\mathcal{T}_sx, \quad x\in X;
\end{equation}
\begin{equation}\label{eq:D_j_bounds} \| \mathcal D_j\mathcal{T}_tx\|\leq C_j t^{-1/2} \|x\|, \quad x\in X.
\end{equation}
Let us note that the conditions imply that the mappings $(0,\infty)\ni t\mapsto \mathcal D_j\mathcal{T}_t\in \mathcal L(X)$ are $C^\infty$, and 
$$ \frac{d}{dt} \mathcal D_j\mathcal{T}_t=\mathcal D_j\mathcal{T}_{t_1} {\mathcal{A}}\mathcal{T}_{t_2}= {\mathcal{A}}\mathcal{T}_{t_2}=\mathcal D_j\mathcal{T}_{t_1}, \quad t_1+t_2=t. $$
The same conclusions hold for $(\mathcal D_j\mathcal{T}_t)^*$, namely  for all $1 \leq j \leq n$ and all $t,s>0$, we have 
\begin{equation}
    (\mathcal D_j\mathcal{T}_{t+s})^* = \mathcal{T}_t^*(\mathcal D_j\mathcal{T}_s)^*, 
\end{equation}
\begin{equation}\label{eq:djbound}
    \| (\mathcal D_j\mathcal{T}_t)^*\|_{X^*}\leq C_j t^{-1/2},
\end{equation}
\begin{equation}
    \frac{d}{dt} (\mathcal D_j\mathcal{T}_t)^*= (\mathcal D_j\mathcal{T}_{t_1})^*({\mathcal{A}}\mathcal{T}_{t_2} )^*= ({\mathcal{A}}\mathcal{T}_{t_2} )^*(\mathcal D_j\mathcal{T}_{t_1})^*, \quad t_1+t_2=t.  
\end{equation}
For $x^*\in X^*$, we say that $\mathcal D_j^* x^*$ belongs to $X^*$ in the mild sense,  if there is $y_j^*\in X^*$  such that 
\begin{equation}\label{eq:weak-sense} \langle y_j^*,\mathcal{T}_tx\rangle = \langle x^* , \mathcal D_j\mathcal{T}_tx\rangle 
\end{equation}
for all $x\in X$ and $t>0$. Then we write $y^*_j=\mathcal D_j^*x^*$. 

{ \begin{example}
    On $\mathbb{R}^2$, let us consider the operator $\mathcal{A}=-(\partial_1)^4+(\partial_2)^2$. It is a generator of an analytic semigroup $\{\mathcal{T}_t\}_{t \geq 0}$ on $L^2(d\mathbf{x})$ such that  $\mathcal{T}_tf=f*k_t$, where $\hat{k_t}(\xi)=e^{-t\xi_1^4+t\xi_2^2}$, $\xi=(\xi_1,\xi_2)$. One can prove that the operator $\mathcal{A}$ and the semigroup $\{\mathcal{T}_t\}_{t \geq 0}$ fit the scope described above. Moreover, for $0<\beta<1$, one can check that $f \in \Lambda_{{\mathcal{A}}}^{\beta}$ if and only if there is a constant $C>0$ such that for all $\mathbf{x},h \in \mathbb{R}^2$, $h=(h_1,h_2)$ we have 
    \begin{align*}
        |f(\mathbf{x})-f(\mathbf{x}+h)| \leq C(|h_1|^{\beta}+|h_2|^{\beta/2}).
    \end{align*}
\end{example}}

Our aim is to prove the following theorem. 

\begin{theorem}\label{teo:A-special}
    Assume that $\beta>1/2$. Then $x^*\in\Lambda_{\mathcal{A}}^{\beta}$ if and only if $\mathcal D^*_jx^* \in \Lambda_{\mathcal{A}}^{\beta-1/2}$ for all $1 \leq j \leq n$, where the action of $\mathcal D^*_j$ on $x^*$ is understood in the mild sense. Moreover,  there is a constant $C>1$ such that for all $x^* \in X^*$, we have 
    $$ C^{-1}\|x^*\|_{\Lambda_{\mathcal{A}}^{\beta}} \leq \|x^*\|_{X^*}+\sum_{j=1}^n \| \mathcal D_j^*x^*\|_{\Lambda_{\mathcal{A}}^{\beta -1/2}} \leq C  \|x^*\|_{\Lambda_{\mathcal{A}}^{\beta}} .$$
\end{theorem}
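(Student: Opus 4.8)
\textbf{Proof plan for Theorem \ref{teo:A-special}.}

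The plan is to mimic the structure of the proof of Theorem \ref{theo:A_Lambda}, replacing the role of $\mathcal{A}^*$ (a "full derivative" dropping the order by $1$) with the role of each $\mathcal{D}_j^*$ (a "half derivative" dropping the order by $1/2$), and using the factorization \eqref{eq:A-form} to pass between them. I will prove the two implications separately.

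\emph{Direction 1: if $\mathcal{D}_j^*x^*\in\Lambda_{\mathcal{A}}^{\beta-1/2}$ for all $j$, then $x^*\in\Lambda_{\mathcal{A}}^{\beta}$.} Pick an integer $m>\beta+1/2$, so $m-1>\beta-1/2$. For $t>0$ split $t$ into $m$ equal pieces and use \eqref{eq:time_deriv} to write $\frac{d^m}{dt^m}\mathcal{T}_t^*x^* = (\mathcal{A}\mathcal{T}_{t/m})^*\big((\mathcal{A}\mathcal{T}_{t/m})^*\big)^{m-1}x^*$; then expand the first factor $(\mathcal{A}\mathcal{T}_{t/m})^*$ using the dual of \eqref{eq:A-form}, namely $(\mathcal{A}\mathcal{T}_s)^* = \sum_{j}\bar\epsilon_j (\mathcal{D}_j\mathcal{T}_{s/2})^*(\mathcal{D}_j\mathcal{T}_{s/2})^*$. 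Testing against $\mathcal{T}_s x$ as in the proof of Theorem \ref{theo:A_Lambda} and moving one factor $(\mathcal{D}_j\mathcal{T}_{\cdot})^*$ onto $x^*$ to produce $\mathcal{D}_j^*x^*$ (the other $(\mathcal{D}_j\mathcal{T}_{\cdot})^*$ factor contributes a gain $t^{-1/2}$ by \eqref{eq:djbound}), one obtains, using \eqref{eq:strong_conv} applied to $\mathcal D_j^* x^*$ to justify the limit,
\begin{equation*}
\Big\|\frac{d^m}{dt^m}\mathcal{T}_t^*x^*\Big\|_{X^*} \leq C\,t^{-1/2}\sum_{j=1}^n\Big\|\frac{d^{m-1}}{dt^{m-1}}\mathcal{T}_t^*(\mathcal{D}_j^*x^*)\Big\|_{X^*}\leq C'\,t^{-1/2}t^{(\beta-1/2)-(m-1)}\sum_{j=1}^n\|\mathcal{D}_j^*x^*\|_{\Lambda_{\mathcal{A}}^{\beta-1/2}} = C'\,t^{\beta-m}\sum_{j=1}^n\|\mathcal{D}_j^*x^*\|_{\Lambda_{\mathcal{A}}^{\beta-1/2}},
\end{equation*}
which together with the trivial $L^\infty$-bound gives $x^*\in\Lambda_{\mathcal{A}}^{\beta}$ with the desired norm estimate.

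\emph{Direction 2: if $x^*\in\Lambda_{\mathcal{A}}^{\beta}$, then $\mathcal{D}_j^*x^*$ exists in the mild sense and lies in $\Lambda_{\mathcal{A}}^{\beta-1/2}$.} Fix $j$ and set $u(t)=\mathcal{T}_t^*x^*$, $v_j(t)=(\mathcal{D}_j\mathcal{T}_t)^*x^*$. Using \eqref{eq:time_deriv}, the commutation $(\mathcal{D}_j\mathcal{T}_{t+s})^*=\mathcal{T}_t^*(\mathcal{D}_j\mathcal{T}_s)^*$, and \eqref{eq:A-form}, each derivative $v_j^{(\ell)}(t)$ factors as a product of $(\mathcal{D}_i\mathcal{T}_{\cdot})^*$ and $(\mathcal{A}\mathcal{T}_{\cdot})^*$ blocks; counting orders, $v_j^{(m-1)}(t)$ can be written as $(\mathcal D_i\mathcal T_{t/(2m-1)})^*$ composed with $m-1$ copies of $(\mathcal A\mathcal T_{\cdot})^*$ (after re-expanding one $\mathcal D_j$ pair), so that $\|v_j^{(m-1)}(t)\|_{X^*}\lesssim t^{-1/2}\|u^{(m-1)}(t)\|_{X^*}$ when $m-1>\beta$... more carefully, one shows $\|v_j^{(m-1)}(t)\|_{X^*}\le C\,t^{(\beta-1/2)-(m-1)}\|x^*\|_{\Lambda_{\mathcal{A}}^\beta}$ for $m$ chosen with $m-1>\beta-1/2$. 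Then, exactly as in Theorem \ref{theo:A_Lambda}, a Taylor expansion of $v_j$ around $t_0=1$ together with this decay estimate shows $v_j(t)$ converges in $X^*$ as $t\to0^+$ to some $y_j^*$ with $\|y_j^*\|_{X^*}\le C\|x^*\|_{\Lambda_{\mathcal{A}}^\beta}$; the identity $\langle v_j(t),\mathcal{T}_sx\rangle=\langle x^*,\mathcal{D}_j\mathcal{T}_t\mathcal{T}_sx\rangle$ and strong continuity identify $y_j^*=\mathcal{D}_j^*x^*$ in the mild sense. Finally, for $t>0$,
\begin{equation*}
\Big\|\frac{d^{m-1}}{dt^{m-1}}\mathcal{T}_t^*y_j^*\Big\|_{X^*} = \lim_{s\to0^+}\Big\|\big((\mathcal{A}\mathcal{T}_{t/(m-1)})^*\big)^{m-1}v_j(s)\Big\|_{X^*} = \lim_{s\to0^+}\|v_j^{(m-1)}(t+s)\|_{X^*}\leq C\,t^{(\beta-1/2)-(m-1)}\|x^*\|_{\Lambda_{\mathcal{A}}^\beta},
\end{equation*}
which shows $\mathcal{D}_j^*x^*\in\Lambda_{\mathcal{A}}^{\beta-1/2}$ and closes the norm equivalence.

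\emph{Main obstacle.} The delicate point is the bookkeeping of orders when expanding derivatives through the factorization \eqref{eq:A-form}: each application of $\frac{d}{dt}$ produces a factor $(\mathcal{A}\mathcal{T}_{\cdot})^*$, which equals a sum of \emph{two} $\mathcal{D}_i$-factors, so a naive count of "half-derivatives" must be reconciled with the integer number of genuine $t$-derivatives. One must be careful to always keep at least one unpaired $\mathcal{D}_j$ (for Direction 1, to produce $\mathcal{D}_j^*x^*$; for Direction 2, because $y_j^*$ itself is "$\mathcal{D}_j$ applied to $x^*$"), and to distribute the time-splitting so that every $(\mathcal{D}_i\mathcal{T}_{t_i})^*$ and $(\mathcal{A}\mathcal{T}_{t_i})^*$ block acts on a positive time increment, allowing \eqref{eq:djbound} and \eqref{eq:trivial} to be applied. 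The hypothesis $\beta>1/2$ is exactly what guarantees $\beta-1/2>0$ so that $\Lambda_{\mathcal{A}}^{\beta-1/2}$ is defined; and the strong-continuity input \eqref{eq:strong_conv}, now applied to the vectors $\mathcal{D}_j^*x^*$, is what lets the limits $s\to0^+$ be taken inside the norms.
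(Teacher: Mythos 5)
Your proposal is correct and follows essentially the same route as the paper: for one direction you single out one factor of $\mathcal{A}$, expand it via \eqref{eq:A-form}, move one $\mathcal D_j$ onto $x^*$ and absorb the other through \eqref{eq:djbound}; for the other you study $v_j(t)=(\mathcal D_j\mathcal T_t)^*x^*$, establish the derivative decay, pass to the limit $t\to0$ via a Taylor expansion at $t_0=1$ to construct $\mathcal D_j^*x^*$ in the mild sense, and then verify the $\Lambda_{\mathcal A}^{\beta-1/2}$ bound. Only cosmetic differences remain (the paper fixes a single $m>\beta$ and bounds $v_j^{(m)}$ rather than $v_j^{(m-1)}$, and the Banach-space dual requires no conjugation of the $\epsilon_j$), so nothing of substance is missing.
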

\begin{proof}
    The proof mimics that of Theorem \ref{theo:A_Lambda}. 
    Assume that $x^*\in \Lambda_{\mathcal{A}}^\beta$. Fix a positive integer $m>\beta$ and $1 \leq j \leq n$. Consider the $C^\infty$-function 
    $$(0,\infty)\ni t\mapsto v_j(t)=(\mathcal D_j \mathcal{T}_t)^*x^*\in X^*.$$
    Then 
    \begin{equation}\label{eq:der_v_j} \|v_j^{(m)}(t)\|_{X^*}\leq C\| x^*\|_{\Lambda_{\mathcal{A}}^\beta} t^{\beta-m-1/2}.\end{equation}
    We write the Taylor expansion of $v_j$ at $t_0=1$:
    $$ v_j(t)=\sum_{\ell=0}^{m-1} \frac{1}{\ell !} v_j^{\ell}(1)(t-1)^\ell +\int_1^t\frac{(t-s)^{m-1}}{(m-1)!} v_j^{(m)}(s)\, ds.$$
    It follows from \eqref{eq:der_v_j} that $v_j(t)$ converges in the $X^*$-norm,  as $t$ tends to 0,  to a vector denoted by  $y_j^*$.  To prove that $y_j^*\in \Lambda_{\mathcal{A}}^{\beta -1/2}$, we write 
    \begin{equation*}
    \begin{split}
        \Big\| \frac{d^m}{dt^m} \mathcal{T}_t^*y_j^*\Big\|_{X^*} & = \sup_{\|x\|=1} \Big|\Big\langle y_j^*, \frac{d^m}{dt^m} \mathcal{T}_t x\Big\rangle \Big| 
        =\sup_{\|x\|=1} \lim_{s\to 0} \Big|\Big\langle (\mathcal D_j\mathcal{T}_s)^* x^*,  \frac{d^m}{dt^m} \mathcal{T}_t x\Big\rangle \Big|\\
        &=\sup_{\|x\|=1} \lim_{s\to 0} |\langle x^* , {\mathcal{A}}^m \mathcal{T}_{t/2} \mathcal D_j\mathcal{T}_{t/2} \mathcal{T}_sx\rangle|\\
        &\leq \sup_{\|x \|=1} \lim\sup_{s\to 0} \| ({\mathcal{A}}^m\mathcal{T}_{t/2})^* x^*\|_{X^*} \| \mathcal D_j\mathcal{T}_{t/2} \mathcal{T}_sx\|\leq \|x^*\|_{\Lambda_{\mathcal{A}}^\beta} \Big(\frac{t}{2}\Big)^{\beta -m}\Big( \frac{t}{2} \Big)^{-1/2},  
    \end{split} \end{equation*}
where in the last inequality we have used~\eqref{eq:djbound}. Hence, the claim is proved since $m>\beta-1/2$. 

We now prove the converse implication. Suppose $y_j^*=\mathcal D^*_j x^*\in \Lambda_{\mathcal{A}}^{\beta-1/2}$, $j=1,2,...,n$. Fix $m>\beta +1/2$. Then 
\begin{equation*}
\begin{split}
    &\Big\|\frac{d^m}{dt^m} \mathcal{T}_t^* x^*\Big\|_{X^*} =\sup_{\|x\|=1} |\langle x^*, {\mathcal{A}}^m\mathcal{T}_tx\rangle| 
    = \sup_{\|x\|=1} |\langle x^*, \sum_{j=1}^n \epsilon_j \mathcal D_j \mathcal{T}_{t/2} {\mathcal{A}}^{m-1}  \mathcal D_j\mathcal{T}_{t/2}x\rangle|\\
    &\leq \sum_{j=1}^n \sup_{\|x\|=1} |\langle y_j^*, {\mathcal{A}}^{m-1} \mathcal{T}_{t/2} \mathcal D_j \mathcal{T}_{t/2}x\rangle| 
    \leq \sum_{j=1}^n \sup_{\|x\|=1} |\langle ({\mathcal{A}}^{m-1}\mathcal{T}_{t/2})^* y_j^*,  \mathcal D_j\mathcal{T}_{t/2}x\rangle| \\
    &\leq \sum_{j=1}^n \|y^*_j\|_{\Lambda_{\mathcal{A}}^{\beta-1}} 2^{m} t^{\beta -1/2 -(m-1)} C_j t^{-1/2}= \Big(2^m\sum_{j=1}^n C_j  \|y^*_j\|_{\Lambda_{\mathcal{A}}^{\beta-1}}\Big)t^{\beta -m}.
\end{split}\end{equation*}
\end{proof}
\section{Interpolation}\label{sec:interpolation}
{  For $0<\beta_0<\beta_1$, let us consider the interpolation couple $\{\Lambda_{\mathcal A}^{\beta_0}, \Lambda_{\mathcal A}^{\beta_1}\}$. Clearly, {by~\eqref{eq:inclusion_trivial},} $\Lambda_{\mathcal A}^{\beta_0} + \Lambda_{\mathcal A}^{\beta_1}=\Lambda_{\mathcal A}^{\beta_0}$. For $0<t<\infty$, set 
\begin{equation}
    K(t,x^*):=\inf_{x^*=x_0^*+x_1^*} (\|x_0^*\|_{\Lambda_{\mathcal A}^{\beta_0}} +t\|x_1^*\|_{\Lambda_{\mathcal A}^{\beta_1}}), \quad x_0^*\in \Lambda_{\mathcal A}^{\beta_0}, \ x_1^*\in \Lambda_{\mathcal A}^{\beta_1}. 
\end{equation}
Since $  \Lambda_{\mathcal A}^{\beta_1} \subseteq \Lambda_{\mathcal A}^{\beta_0}$, we conclude that {there is a constant $c \in (0,1]$ such that 
\begin{equation}\label{eq:t>1} 
 c\|x^*\|_{\Lambda^{\beta_0}_{\mathcal A} }\leq    K(t,x^*)\leq \| x^*\|_{\Lambda_{\mathcal A}^{\beta_0}} \quad \text{ for } t\geq 1 \ \text{ and }  x^*\in \Lambda_{\mathcal A}^{\beta_0}. 
\end{equation}  Indeed, there is $0<c\leq 1$ such that  if $x^*=x_0^*+x_1^*$, $x_0^*\in \Lambda_{\mathcal A}^{\beta_0}$, $x_1^*\in \Lambda_{\mathcal A}^{\beta_1}$, then 
\begin{equation*}\begin{split}
    \| x_0^*\|_{\Lambda_{\mathcal A}^{\beta_0}} + t  \| x_1^*\|_{\Lambda_{\mathcal A}^{\beta_1}} &\geq  \| x_0^*\|_{\Lambda_{\mathcal A}^{\beta_0}} + c \| x_1^*\|_{\Lambda_{\mathcal A}^{\beta_0}} \geq c \| x_0^*+x_1^*\|_{\Lambda^{\beta_0}_{\mathcal A}}=c \| x^*\|_{\Lambda^{\beta_0}_{\mathcal A}}, 
\end{split} \end{equation*}
which proves $K(t,x^*)\geq c \|x^*\|_{\Lambda^{\beta_0}_{\mathcal A}}$ for $t\geq 1$. } The second inequality in \eqref{eq:t>1} is obvious, by taking $x_0^*=x^*$ and $x_1^*=0$. 

For $0<\theta<1$, the interpolation intermediate space defined by the $K$-method of Peetre is given by 
\begin{equation}
    (\Lambda_{\mathcal A}^{\beta_0}, \Lambda_{\mathcal A}^{\beta_1})_\theta =(\Lambda_{\mathcal A}^{\beta_0}, \Lambda_{\mathcal A}^{\beta_1})_{\theta, \infty} =\{x^*\in \Lambda_{\mathcal A}^{\beta_0}:\sup_{t>0}t^{-\theta} K(t,x^*)=: \|x^*\|_\theta <\infty\},
\end{equation}
see e.g. \cite[Chapter III]{Butzer}, \cite[Section 1.3.3]{Triebel}. 

\begin{proof}[Proof of Theorem \ref{teo:interpolation}]
    We start by proving the relations 
    \begin{equation}\label{eq:first_inclusion}
        (\Lambda_{\mathcal A}^{\beta_0}, \Lambda_{\mathcal A}^{\beta_1})_\theta\subseteq \Lambda_{\mathcal A}^\beta, \quad \| x^*\|_{\Lambda_{\mathcal A}^\beta}\leq C \| x^*\|_\theta.  
    \end{equation}
    Let $x^* \in  (\Lambda_{\mathcal A}^{\beta_0}, \Lambda_{\mathcal A}^{\beta_1})_\theta$. Then 
    \begin{equation}\label{eq:inter1}
        \| x^*\|_{\theta} \geq K(1,x^*)\geq C\| x^*\|_{\Lambda_{\mathcal A}^{\beta_0}}\geq C'  \| x^*\|_{X^*}. 
    \end{equation}
Fix a positive integer $m>\beta_1$. It suffices to prove that there is $C>0$ such that for all $t>0$, we have 
\begin{equation}
    \| (\mathcal A\mathcal T_{t/m})^{*m} x^*\|_{X^*}\leq C t^{\beta -m} \| x^*\|_\theta. 
\end{equation}
To this end, we consider $K(t^{\beta_1-\beta_0}, x^*)$. By the definition of $K(t^{\beta_1-\beta_0}, x^*)$, there are $x_0^*\in \Lambda_{\mathcal A}^{\beta_0}$ and $x_1^*\in \Lambda_{\mathcal A}^{\beta_1}$ such that $x^*=x_0^*+x_1^*$ and 
\begin{equation}\label{eq:interp}
    t^{-(\beta_1-\beta_0)\theta} (\| x_0^*\|_{\Lambda_{\mathcal A} ^{\beta_0}} + t^{\beta_1-\beta_0} \| x_1^*\|_{\Lambda_{\mathcal A}^{\beta_1} })  \leq 2\| x^*\|_\theta. 
\end{equation}
Consequently, 
\begin{equation}\label{eq:inter2}
    \begin{split}
        \| (\mathcal A\mathcal T_{t/m})^{*m} x^*\|_{X^*} &\leq   \| (\mathcal A\mathcal T_{t/m})^{*m} x_0^*\|_{X^*}     +
        \| (\mathcal A\mathcal T_{t/m})^{*m} x_1^*\|_{X^*}  \\
        &\leq t^{\beta_0-m}\| x_0^*\|_{\Lambda_{\mathcal A}^{\beta_0}} + t^{\beta_1-m}\| x_1^*\|_{\Lambda_{\mathcal A}^{\beta_1}} \\
        &= t^{\beta -m} ( t^{-(\beta_1-\beta_0)\theta }\|x_0^*\|_{\Lambda_{\mathcal A}^{\beta_0} } + t^{(\beta_1-\beta_0)(1-\theta)} \|x_1^*\|_{\Lambda_{\mathcal A}^{\beta_1} })\\
        &\leq 2t^{\beta-m} \| x^*\|_\theta,
        \end{split}
\end{equation}
where in the last inequality we have used \eqref{eq:interp}. Thus \eqref{eq:first_inclusion} follows from \eqref{eq:inter1} and \eqref{eq:inter2}.

We now turn to prove the {inverse relations  to \eqref{eq:first_inclusion}.} Let $x^*\in \Lambda_{\mathcal A}^\beta$. Our first goal, for any fixed $0<t<1$, is to decompose 
\begin{equation}\label{eq:000}
    x^*=x_0^*+x_1^*, \quad t^{-\theta} 
    (\|x_0^{*}\|_{\Lambda_{\mathcal A}^{\beta_0} }
    +t\|x_1^*\|_{\Lambda_{\mathcal A}^{\beta_1}})\leq C\|   
 x^*\|_{\Lambda_{\mathcal A}^{\beta}} 
\end{equation}
with a constant $C>0$ independent of $x^* \in X^*$ and $0<t<1$. Set $v(s):=\mathcal T_s^*x^*$. Let $m$ be the smallest integer satisfying  $m\geq \beta$. Let $\tau=t^{1/(\beta_1-\beta_0)}$. Using the Taylor expansion of $v(s)$ at $\tau$, we get {

\begin{equation}\label{eq:x0x1}
     x^*=v(0)=\Big\{\sum_{\ell=0}^{m-1} \frac{1}{\ell !} v^{(\ell)} (\tau)(-\tau)^{\ell} \Big\}+ \Big\{\int_\tau^0 \frac{(-s)^{m-1}}{(m-1)!} v^{(m)}(s)\, ds\Big\}=:x_1^*+x_0^*.
\end{equation}
 }
We shall prove that 
\begin{equation}\label{eq:00}
     \| x_1^*\|_{\Lambda_{\mathcal A}^{\beta_1}}\leq C t^{-(1-\theta)}\|x^*\|_{\Lambda^{\beta}_{\mathcal A}} \quad  \text{\rm for } 0<t<1.
\end{equation}
For this purpose, it suffices to verify that  for $0\leq \ell\leq m-1$, one has 
\begin{equation}\label{eq:0}
    \| v^{(\ell)} (\tau)\tau^{\ell}\|_{\Lambda_{\mathcal A}^{\beta_1}}\leq C t^{-(1-\theta)}\|x^*\|_{\Lambda^{\beta}_{\mathcal A}} \quad  \text{\rm for } 0<t<1.
\end{equation}
First observe that, {by Lemma \ref{lem:converge}, }  
\begin{equation}\label{eq:1}
    \|  v^{(\ell)}(\tau)\|_{X^*}\leq C \|x^*\|_{\Lambda^{\beta}_{\mathcal A}}, \quad 0\leq \ell \leq m-1. 
\end{equation}
Fix an integer $n>\beta_1$. Then  by Proposition~\ref{pro:any_m}, for $0<s<1$, 
\begin{equation}\label{eq:2}
\begin{split}
\Big\| \frac{d^n}{ds^n} \mathcal T_s^* (v^{(\ell)}(\tau) )\tau^\ell \Big\|_{X^*} & \leq \| v^{(\ell +n)} (\tau +s)\tau^\ell \|_{X^*} \\
&\leq (\tau +s)^{\beta -\ell -n}\tau^{\ell} \| x^*\|_{\Lambda_{\mathcal A}^{\beta}} \\
&= (\tau+s)^{-(1-\theta)(\beta_1-\beta_0) + (\beta_1-n)-\ell}\tau^\ell  \| x^*\|_{\Lambda_{\mathcal A}^{\beta}}\\
&\leq s^{\beta_1-n} (\tau+s)^{-(1-\theta)(\beta_1-\beta_0)-\ell}\tau^\ell  \| x^*\|_{\Lambda_{\mathcal A}^{\beta}} \\
& \leq s^{\beta_1-n} \tau^{-(1-\theta)(\beta_1-\beta_0)}  \| x^*\|_{\Lambda_{\mathcal A}^{\beta}} \\
& = s^{\beta_1-n}  t^{-(1-\theta)} \| x^*\|_{\Lambda_{\mathcal A}^{\beta}}. \\
\end{split} \end{equation}
Now \eqref{eq:0} follows from \eqref{eq:1} and \eqref{eq:2}.  So,   \eqref{eq:00} is established. 

We now turn to examine $x_0^*$  defined in ~\eqref{eq:x0x1}. Fix $0\leq \varepsilon\leq \beta_0$ such that  $\beta-\varepsilon\notin \mathbb Z$ and 
$0<m-(\beta-\varepsilon) <1$ 
(if $\beta\notin \mathbb Z$, then we take $\varepsilon =0$). Then   $x^*\in \Lambda^{\beta-\varepsilon}_{\mathcal A}$. Consequently,  
\begin{equation}\label{eq:3}
    \begin{split}
        \|x_0^*\|_{X^*}&\leq C\Big(\int_0^\tau s^{m-1} s^{\beta -\varepsilon -m}\, ds \Big) \| x^*\|_{\Lambda^{\beta-\varepsilon}_{\mathcal A}} \leq C \tau^{\beta -\varepsilon }  \| x^*\|_{\Lambda^{\beta}_{\mathcal A}} \\
        &=C t^{(\beta-\varepsilon)/(\beta_1-\beta_0)} \| x^*\|_{\Lambda^\beta_{\mathcal A}} 
        \leq Ct^{\theta}  \| x^*\|_{\Lambda^\beta_{\mathcal A}},
    \end{split}
\end{equation}
provided $\varepsilon$ is small enough. 
Further, let $n$ be a fixed positive integer such that $n>\beta_0$. Then, by Proposition~\ref{pro:any_m}, 
\begin{equation}\label{eq:4} 
    \begin{split}
        \Big\| \frac{d^n}{d u^n} \mathcal T^*_u x_0^*\Big\|_{X^*} &\leq C \int_0^\tau s^{m-1} \|v^{(m+n)}(s+u)\|\, ds\\
        &\leq C\Big( \int_0^\tau s^{m-1} (s+u)^{\beta -m-n}\, ds\Big)\| x^*\|_{\Lambda_{\mathcal A}^{\beta}}\\
        &= C\Big(\int_0^\tau s^{m-1} (s+u)^{\theta(\beta_1-\beta_0)-m} (s+u)^{\beta_0-n}\, ds \Big)\| x^*\|_{\Lambda_{\mathcal A}^{\beta}}\\
        &\leq C' u^{\beta_0-n} \Big(\int_0^\tau s^{m-1} s^{\theta(\beta_1-\beta_0)-m}\, ds\Big) \| x^*\|_{\Lambda_{\mathcal A}^{\beta}}\\
        &\leq C'  u^{\beta_0-n} \tau^{\theta(\beta_1-{\beta_0})} \| x^*\|_{\Lambda_{\mathcal A}^{\beta}}= C  u^{\beta_0-n}t^\theta  \| x^*\|_{\Lambda_{\mathcal A}^{\beta}}. 
    \end{split}
\end{equation}
From \eqref{eq:3}, \eqref{eq:4}, and Proposition~\ref{pro:any_m}, we conclude that 
\begin{equation}\label{eq:5}
\| x_0^*\|_{\Lambda_{\mathcal A}^{\beta_0}}\leq Ct^\theta  \| x^*\|_{\Lambda_{\mathcal A}^{\beta}}, \quad 0<t<1.
\end{equation}
Combining together \eqref{eq:5} and \eqref{eq:00}, we obtain \eqref{eq:000}. 
Consequently,  we have proved that 
$$\sup_{0<t<1} t^{-\theta} K(t,x^*)\leq C\| x^*\|_{\Lambda^\beta_{\mathcal A}}.$$
{If $t\geq 1$, then using \eqref{eq:t>1}, we 
get $ \sup_{t\geq 1} t^{-\theta}K(t,x^*)\leq \| x^*\|_{\Lambda^{\beta_0}_{\mathcal A} } \leq C\| x^*\|_{\Lambda^{\beta}_{\mathcal A} }$. Consequently, 
$\| x^*\|_{\theta}=\sup_{t>0} t^{-\theta} K(t,x^*)\leq C \| x^*\|_{\Lambda^\beta_{\mathcal A}}$.}  
 \end{proof}
}

\part{Lipschitz spaces in the Dunkl setting}\label{part2}

\section{Preliminaries}

\subsection{Dunkl theory}

In this section we present basic facts concerning the theory of the Dunkl operators.   For more details we refer the reader to~\cite{Dunkl},~\cite{Roesle99},~\cite{Roesler3}, and~\cite{Roesler-Voit}. 

We consider the Euclidean space $\mathbb R^N$ with the scalar product $\langle \mathbf{x},\mathbf y\rangle=\sum_{j=1}^N x_jy_j
$, where $\mathbf x=(x_1,...,x_N)$, $\mathbf y=(y_1,...,y_N)$, and the norm $\| \mathbf x\|^2=\langle \mathbf x,\mathbf x\rangle$.

A {\it normalized root system}  in $\mathbb R^N$ is a finite set  $R\subset \mathbb R^N\setminus\{0\}$ such that $R \cap \alpha  \mathbb{R} = \{\pm \alpha\}$,  $\sigma_\alpha (R)=R$, and $\|\alpha\|=\sqrt{2}$ for all $\alpha\in R$, where $\sigma_\alpha$ are defined by 
\begin{equation}\label{reflection}\sigma_\alpha (\mathbf x)=\mathbf x-2\frac{\langle \mathbf x,\alpha\rangle}{\|\alpha\|^2} \alpha.
\end{equation}

The finite group $G$ generated by the reflections $\sigma_{\alpha}$, $\alpha \in R$, is called the {\it Coxeter group} ({\it reflection group}) of the root system.

A~{\textit{multiplicity function}} is a $G$-invariant function $k:R\to\mathbb C$ which will be fixed and $\geq 0$  throughout this paper.  

The associated measure $dw$ is defined by $dw(\mathbf x)=w(\mathbf x)\, d\mathbf x$, where 
 \begin{equation}\label{eq:measure}
w(\mathbf x)=\prod_{\alpha\in R}|\langle \mathbf x,\alpha\rangle|^{k(\alpha)}.
\end{equation}
Let $\mathbf{N}=N+\sum_{\alpha \in R}k(\alpha)$. Then, 
\begin{equation}\label{eq:t_ball} w(B(t\mathbf x, tr))=t^{\mathbf{N}}w(B(\mathbf x,r)) \ \ \text{\rm for all } \mathbf x\in\mathbb R^N, \ t,r>0.
\end{equation}
Observe that there is a constant $C>1$ such that for all $\mathbf{x} \in \mathbb{R}^N$ and $r>0$, we have
\begin{equation}\label{eq:balls_asymp}
C^{-1}w(B(\mathbf x,r))\leq  r^{N}\prod_{\alpha \in R} (|\langle \mathbf x,\alpha\rangle |+r)^{k(\alpha)}\leq C w(B(\mathbf x,r)),
\end{equation}
so $dw(\mathbf x)$ is doubling.

For $\xi \in \mathbb{R}^N$, the {\it Dunkl operators} $D_\xi$  are the following $k$-deformations of the directional derivatives $\partial_\xi$ by   difference operators:
\begin{equation}\label{eq:T_xi}
     D_\xi f(\mathbf x)= \partial_\xi f(\mathbf x) + \sum_{\alpha\in R} \frac{k(\alpha)}{2}\langle\alpha ,\xi\rangle\frac{f(\mathbf x)-f(\sigma_\alpha(\mathbf{x}))}{\langle \alpha,\mathbf x\rangle}.
\end{equation}
 We simply write $D_j$, if $\xi=e_j$, {where $\{e_j\}_{j=1}^N$ stands for the canonical basis in $\mathbb R^N$.}  

The Dunkl operators $D_{\xi}$, which were introduced in~\cite{Dunkl}, commute and are skew-symmetric with respect to the $G$-invariant measure $dw$. For multi-index $\boldsymbol{\beta}=(\beta_1,\beta_2,\ldots,\beta_N)\in \mathbb N_0^N$, we define
\begin{equation}\label{eq:iterated_der_ord}
    |\boldsymbol{\beta}|=\beta_1+\ldots +\beta_N, \ \partial^{\mathbf{0}}=I, \ \ \partial^{\boldsymbol{\beta}}=\partial_1^{\beta_1} \circ \ldots \circ \partial_N^{\beta_N}, \ \ D^{\mathbf{0}}=I, \ \ D^{\boldsymbol{\beta}}=D_{1}^{\beta_1} \circ \ldots \circ D_{N}^{\beta_N}
\end{equation}

 Let $f$ be a bounded measurable function. Fix a multi-index $\boldsymbol\beta$. We say that $D^{\boldsymbol\beta}f$ belongs to $L^\infty$ in the sense of distribution { $\mathcal S'_{\rm Dunkl}(\mathbb R^N)$}, if there is a bounded function $g$ such that 
\begin{equation}
    \label{def:distr} \int_{\mathbb{R}^N} g(\mathbf x)\varphi(\mathbf x)\, dw(\mathbf x)=(-1)^{|\boldsymbol\beta|}\int_{\mathbb{R}^N} f(\mathbf x)D^{\boldsymbol \beta} \varphi(\mathbf x)\, dw(\mathbf x) \quad \text{for all } \varphi \in\mathcal S(\mathbb R^N), 
\end{equation}
where $\mathcal S(\mathbb R^N)$ denote the class of Schwartz functions on $\mathbb R^N$. 

\subsection{Dunkl kernel}\label{sec:Dunkl_transform}
For any fixed $\mathbf y\in\mathbb R^N$, the {\it Dunkl kernel} $\mathbf{x} \longmapsto E(\mathbf x,\mathbf y)$ is a unique analytic solution to the system 
$$D_\xi f=\langle \xi ,\mathbf y\rangle f, \quad f(0)=1.$$
The function $E(\mathbf x,\mathbf y)$, which generalizes the exponential function $e^{\langle\mathbf x,\mathbf y\rangle}$, has a unique extension to a holomorphic  function $E(\mathbf z,\mathbf w)$ on $\mathbb C^N\times \mathbb C^N$. 

\subsection{Dunkl transform}
Let $f \in L^1(dw)$. The \textit{Dunkl transform }$\mathcal{F}f$ of $f$ is defined  by
\begin{equation}\label{eq:Dunkl_transform}
    \mathcal{F} f(\xi)=\mathbf{c}_k^{-1}\int_{\mathbb{R}^N}f(\mathbf{x})E(\mathbf{x},-i\xi)\, {dw}(\mathbf{x}), \text{ where } \mathbf{c}_k=\int_{\mathbb{R}^N}e^{-\frac{{\|}\mathbf{x}{\|}^2}2}\,{dw}(\mathbf{x}){>0}.
\end{equation}
The Dunkl transform is a generalization of the Fourier transform on $\mathbb{R}^N$. It was introduced in~\cite{D5} for $k \geq 0$ and further studied in~\cite{dJ1} in the more general context. It has a lot of properties analogous to the classical Fourier transform, e.g., 
\begin{equation}\label{eq:T_j_transform}
    \mathcal{F}(D_{j}f)(\xi)=i\xi_{j}\mathcal{F}f(\xi) \text{ for all }f \in \mathcal{S}(\mathbb{R^N}) \text{ and }j \in \{1,\ldots,N\}.
\end{equation}
Moreover, it was proved in~\cite[Corollary 2.7]{D5} (see also~\cite[Theorem 4.26]{dJ1}) that it extends uniquely  to  an isometry on $L^2(dw)$.
Further,  the following inversion formula holds (\cite[Theorem 4.20]{dJ1}): for all $f \in L^1(dw)$ such that $\mathcal{F}f \in L^1(dw)$ one has  
$$f(\mathbf{x})=(\mathcal{F})^2f(-\mathbf{x}) \text{ for almost all }\mathbf{x} \in \mathbb{R}^N\textup{.}$$

\subsection{Dunkl translations}
Suppose that $f \in \mathcal{S}(\mathbb{R}^N)$.  We define the \textit{Dunkl translation }$\tau_{\mathbf{x}}f$ of $f$ to be
\begin{equation}\label{eq:translation}
    \tau_{\mathbf{x}} f(-\mathbf{y})=\mathbf{c}_k^{-1} \int_{\mathbb{R}^N}{E}(i\xi,\mathbf{x})\,{E}(-i\xi,\mathbf{y})\,\mathcal{F}f(\xi)\,{dw}(\xi)=\mathcal{F}^{-1}(E(i\cdot,\mathbf{x})\mathcal{F}f)(-\mathbf{y}).
\end{equation}
The Dunkl translation was introduced in~\cite{R1998}. The definition can be extended to functions which are not necessary in $\mathcal{S}(\mathbb{R}^N)$. For instance, thanks to the Plancherel's theorem, one can define the Dunkl translation of $L^2(dw)$ function $f$ by
\begin{equation}\label{eq:translation_Fourier}
    \tau_{\mathbf{x}}f(-\mathbf{y})=\mathcal{F}^{-1}(E(i\cdot,\mathbf{x})\mathcal{F}f(\cdot))(-\mathbf{y})
\end{equation}
(see~\cite{R1998} and~\cite[Definition 3.1]{ThangaveluXu}). In particular, the operators $f \mapsto \tau_{\mathbf{x}}f$ are contractions on $L^2(dw)$. Here and subsequently, for a reasonable function $g(\mathbf{x})$, we  write $g(\mathbf x,\mathbf y):=\tau_{\mathbf x}g(-\mathbf y)$. Moreover, it follows from~\eqref{eq:T_j_transform} and~\eqref{eq:translation} that for  $\varphi \in \mathcal{S}(\mathbb{R}^N)$, 
\begin{equation}\label{eq:translations_commute_with_operators}
    D_{j,\mathbf{x}}\{\varphi(\mathbf{x},\mathbf{y})\}=(D_j\varphi)(\mathbf{x},\mathbf{y})=-D_{j,\mathbf y} \varphi (\mathbf x,\mathbf y), \quad \mathbf{x},\mathbf{y} \in \mathbb{R}^N,  \ j=1,2,\ldots,N. 
\end{equation}

 The following specific formula  for the Dunkl translations of (reasonable) radial functions $f({\mathbf{x}})=\tilde{f}({\|\mathbf{x}\|})$ was obtained by R\"osler \cite{Roesler2003}:
\begin{equation}\label{eq:translation-radial}
\tau_{\mathbf{x}}f(-\mathbf{y})=\int_{\mathbb{R}^N}{(\tilde{f}\circ A)}(\mathbf{x},\mathbf{y},\eta)\,d\mu_{\mathbf{x}}(\eta)\text{ for }\mathbf{x},\mathbf{y}\in\mathbb{R}^N.
\end{equation}
Here
\begin{equation*}
A(\mathbf{x},\mathbf{y},\eta)=\sqrt{{\|}\mathbf{x}{\|}^2+{\|}\mathbf{y}{\|}^2-2\langle \mathbf{y},\eta\rangle}=\sqrt{{\|}\mathbf{x}{\|}^2-{\|}\eta{\|}^2+{\|}\mathbf{y}-\eta{\|}^2}
\end{equation*}
and $\mu_{\mathbf{x}}$ is a probability measure, 
which is supported in the set $\operatorname{conv}\mathcal{O}(\mathbf{x})$,  where $\mathcal O(\mathbf x) =\{\sigma(\mathbf x): \sigma \in G\}$ is the orbit of $\mathbf x$.

Formula~\eqref{eq:translation-radial} implies that for all radial $f \in L^1(dw)$ and $\mathbf{x} \in \mathbb{R}^N$, we have
\begin{align}\label{eq:tau_L1}
    \|\tau_{\mathbf{x}}f\|_{L^1(dw)} \leq \|f\|_{L^1(dw)}. 
\end{align}

\subsection{Dunkl convolution}
Assume that $f,g \in L^2(dw)$. The \textit{generalized convolution} (or the \textit{Dunkl convolution}) $f*g$ is defined by the formula
\begin{equation}\label{eq:conv_transform}
    f*g(\mathbf{x})=\mathbf{c}_k\mathcal{F}^{-1}\big((\mathcal{F}f)(\mathcal{F}g)\big)(\mathbf{x}),
\end{equation}
and equivalently, by
\begin{equation}\label{eq:conv_translation}
    (f*g)(\mathbf{x})=\int_{\mathbb{R}^N}f(\mathbf{y})\,\tau_{\mathbf{x}}g(-\mathbf{y})\,{dw}(\mathbf{y})=\int_{\mathbb{R}^N}g(\mathbf{y})\,\tau_{\mathbf{x}}f(-\mathbf{y})\,{dw}(\mathbf{y}).
\end{equation}
Generalized convolution of $f,g \in \mathcal{S}(\mathbb{R}^N)$ was considered in~\cite{R1998} and~\cite{Trimeche}, the definition was extended to $f,g \in L^2(dw)$ in~\cite{ThangaveluXu}. 

It follows from ~\eqref{eq:tau_L1} that if $f\in L^1(dw)$ is radial, then for any $g\in L^p(dw)$, $1\leq p\leq\infty$,  one has 
\begin{equation}
    \label{eq:L1-Lp}
    \| g*f\|_{L^p(dw)}\leq \| f\|_{L^1(dw)}\| g\|_{L^p(dw)}. 
\end{equation}

\subsection{Dunkl Laplacian, Dunkl heat semigroup, and Dunkl heat kernel}
The \textit{Dunkl Laplacian} associated with $R$ and $k$  is the differential-difference operator $\Delta_k=\sum_{j=1}^N D_{j}^2$. It was introduced in~\cite{Dunkl}, where it was also proved that $\Delta_k$ acts on $C^2(\mathbb{R}^N)$ functions by
\begin{equation}\label{eq:laplace_formula}
    \Delta_k f(\mathbf x)=\Delta f(\mathbf x)+\sum_{\alpha\in R} k(\alpha) \delta_\alpha f(\mathbf x), \text{ where }\delta_\alpha f(\mathbf x)=\frac{\partial_\alpha f(\mathbf x)}{\langle \alpha , \mathbf x\rangle} -  \frac{f(\mathbf x)-f(\sigma_\alpha (\mathbf x))}{\langle \alpha, \mathbf x\rangle^2}.
\end{equation}
Here $\Delta=\sum_{j=1}^{N}\partial_j^2$. It follows from~\eqref{eq:T_j_transform} that for all $\xi \in \mathbb{R}^N$ and $f \in \mathcal{S}(\mathbb{R}^N)$, we have
\begin{equation}\label{eq:Laplacian_on_Fourier_side}
    \mathcal{F}(\Delta_{k}f)(\xi)=-\|\xi\|^2\mathcal{F}f(\xi).
\end{equation} 
The operator $(-\Delta_{k},\mathcal{S}(\mathbb{R}^N))$ in $L^2(dw)$ is densily defined and closable. Its closure generates a strongly continuous and positivity-preserving contraction semigroup on $L^2(dw)$, which is given by
 \begin{equation}\label{eq:heat_semigroup}
  H_t f(\mathbf x)=\int_{\mathbb R^N} h_t(\mathbf x,\mathbf y)f(\mathbf y)\, dw(\mathbf y), 
  \end{equation}
where 
 \begin{equation}\label{eq:heat_kernel} \ h_t(\mathbf x,\mathbf y)=h_t(\mathbf y,\mathbf x)={\mathbf c}_k^{-1} (2t)^{-\mathbf N\slash 2}
 e^{-(\|\mathbf x\|^2+\|
 \mathbf y\|^2)\slash (4t)}E\left(\frac{\mathbf x}{\sqrt{2t}},\frac{\mathbf y}{\sqrt{2t}}\right)
 \end{equation}
 is so called  the \textit{generalized heat kernel} (or the \textit{Dunkl heat kernel}), see~\cite{R1998} .  The integral kernels $h_t(\mathbf x,\mathbf y)$ are the generalized translations of the Schwartz-class functions:
 $$ h_t(\mathbf x,\mathbf y)=\tau_{\mathbf x}h_t(-\mathbf y), \quad h_t(\mathbf x)={\mathbf c}_k^{-1} (2t)^{-\mathbf N/2} e^{-\|\mathbf x\|^2/4t}. $$ 
  The semigroup $\{H_t\}_{t>0}$ on $L^2(dw)$ can be expressed by means of the Dunkl transform, that is,  
\begin{equation}\label{eq:heat_transform}
    \mathcal{F}(H_tf)(\xi){=\mathcal F(h_t*f)(\xi)=\mathbf c_k \mathcal F(h_t)(\xi)\mathcal Ff(\xi)}=e^{-t\|\xi\|^2}\mathcal{F}f(\xi),\quad f\in L^2(dw).
\end{equation} 
 Note that in the case $k \equiv 0$ the Dunkl heat kernel is the classical heat kernel. Formula~\eqref{eq:heat_semigroup} defines  contraction semigroups on the $L^p(dw)$-spaces, $1\leq p\leq  \infty$, which are  strongly continuous for $1\leq p<\infty$.  
 
 \subsection{Estimates for generalized translations of some functions}
 
 For the purpose of this work we  need  bounds for the Dunkl heat kernel and its derivatives. 
 For $\mathbf{x},\mathbf{y} \in \mathbb{R}^N$, let 
\begin{equation}\label{eq:distance_of_orbits}
    d(\mathbf x,\mathbf y)=\min_{\sigma\in G}\| \sigma(\mathbf x)-\mathbf y\|    
\end{equation}
be the \textit{distance of the orbit} of $\mathbf{x}$ to the orbit of $\mathbf{y}$.
 For $\mathbf{x},\mathbf{y} \in \mathbb{R}^N$ and $t,r>0$, we denote
\begin{equation}\label{eq:V}
    V(\mathbf{x},\mathbf{y},r):=\max\{w(B(\mathbf{x},r)),w(B(\mathbf{y},r))\}, \ \ \mathcal G_t(\mathbf x,\mathbf y)=\frac{1}{V(\mathbf x,\mathbf y,\sqrt{t})}e^{-\frac {d(\mathbf x,\mathbf y)^2}{t}}.
\end{equation} 
It follows by the standard arguments, using the $G$-invariance of $w$,  that there is a constant $C>0$ such that for all $\mathbf{x} \in \mathbb{R}^N$ and $t>0$ we have
\begin{equation}\label{eq:G_integrable}
    \int_{\mathbb{R}^N}\mathcal G_t(\mathbf x,\mathbf y)\,dw(\mathbf{y}) \leq C.
\end{equation} 
The following theorem was proved in~\cite{DH-atom}, see also~\cite[Theorem 4.1]{ADzH}.
For more detailed upper and lower bounds for $h_t(\mathbf x,\mathbf y)$ we refer the reader to~\cite{DH-heat}.

\begin{theorem}[Theorem 4.1, \texorpdfstring{~\cite{DH-atom}}{[DH]}]\label{teo:heat_new}   For every nonnegative integer $m$ and for all multi-indices $\boldsymbol{\alpha},\boldsymbol{\beta} \in \mathbb{N}_0^N$ there are constants $C_{m,\boldsymbol{\alpha},\boldsymbol{\beta}}, c>0$ such that
  \begin{equation}\label{eq:heat2} |\partial_t^m \partial_{\mathbf x}^{\boldsymbol{\alpha}}\partial_{\mathbf y}^{\boldsymbol{\beta}} h_t(\mathbf{x},\mathbf{y})|
  \leq C_{m,\boldsymbol{\alpha},\boldsymbol{\beta}} t^{-m-\frac{|\boldsymbol{\alpha}|}{2}-\frac{|\boldsymbol{\beta}|}{2}} \Big(1+\frac{\| \mathbf x-\mathbf y\|}{\sqrt{t}}\Big)^{-2} \mathcal G_{t\slash c} (\mathbf x,\mathbf y).
  \end{equation}
  Moreover, if $\|\mathbf y-\mathbf y'\|\leq \sqrt{t}$, then
  \begin{equation}\label{eq:heat3}
  \begin{split}|\partial_t^m  h_t(\mathbf{x},\mathbf{y}) & -
 \partial_t^m  h_t(\mathbf{x},\mathbf{y'})|  \leq C_{m} t^{-m} \frac{\|\mathbf y-\mathbf y'\|}{\sqrt{t}}\Big(1+\frac{\| \mathbf x-\mathbf y\|}{\sqrt{t}}\Big)^{-2} \mathcal G_{t\slash c} (\mathbf x,\mathbf y).
  \end{split}\end{equation}
\end{theorem}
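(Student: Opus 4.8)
The plan is to reduce everything to $t=1$ by scaling, to take the refined Gaussian upper bound for $h_t$ as the main analytic input, to bootstrap to the derivative estimates \eqref{eq:heat2}, and finally to deduce \eqref{eq:heat3} from \eqref{eq:heat2} by the mean value theorem. For the scaling step, since $h_t(\mathbf x)=t^{-\mathbf N/2}h_1(\mathbf x/\sqrt t)$ is radial and the Dunkl translation commutes with dilations, one has $h_t(\mathbf x,\mathbf y)=t^{-\mathbf N/2}h_1(\mathbf x/\sqrt t,\mathbf y/\sqrt t)$; combining this with $\partial_t h_t=\Delta_{k,\mathbf x}h_t$ and the homogeneity of $\Delta_k$ gives
\begin{equation*}
\partial_t^m\partial_{\mathbf x}^{\boldsymbol\alpha}\partial_{\mathbf y}^{\boldsymbol\beta}h_t(\mathbf x,\mathbf y)=t^{-\mathbf N/2-m-\frac{|\boldsymbol\alpha|}{2}-\frac{|\boldsymbol\beta|}{2}}\bigl(\partial_{\mathbf x}^{\boldsymbol\alpha}\partial_{\mathbf y}^{\boldsymbol\beta}\Delta_{k,\mathbf x}^m h_1\bigr)\Bigl(\tfrac{\mathbf x}{\sqrt t},\tfrac{\mathbf y}{\sqrt t}\Bigr),
\end{equation*}
and since $V(\mathbf x,\mathbf y,\sqrt t)=t^{\mathbf N/2}V(\mathbf x/\sqrt t,\mathbf y/\sqrt t,1)$, $d(\mathbf x,\mathbf y)=\sqrt t\,d(\mathbf x/\sqrt t,\mathbf y/\sqrt t)$, the right--hand side of \eqref{eq:heat2} transforms in the same way. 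Hence it suffices to prove, for an \emph{arbitrarily large} exponent $M$, that $|\partial_{\mathbf x}^{\boldsymbol\alpha}\partial_{\mathbf y}^{\boldsymbol\beta}\Delta_{k,\mathbf x}^m h_1(\mathbf x,\mathbf y)|\lesssim V(\mathbf x,\mathbf y,1)^{-1}(1+\|\mathbf x-\mathbf y\|)^{-M}e^{-c\,d(\mathbf x,\mathbf y)^2}$, and then read off \eqref{eq:heat2} with $M=2$.

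The case $m=0,\ \boldsymbol\alpha=\boldsymbol\beta=\mathbf 0$ is the refined Gaussian bound for $h_1$, which I would obtain from R\"osler's representation \eqref{eq:translation-radial} of the radial Gaussian, $h_1(\mathbf x,\mathbf y)=\mathbf c_k^{-1}2^{-\mathbf N/2}\int_{\mathbb R^N}e^{-A(\mathbf x,\mathbf y,\eta)^2/4}\,d\mu_{\mathbf x}(\eta)$, using the inequality $A(\mathbf x,\mathbf y,\eta)^2=\|\mathbf x\|^2-\|\eta\|^2+\|\mathbf y-\eta\|^2\ge d(\mathbf x,\mathbf y)^2$ (valid because $\eta\in\operatorname{conv}\mathcal O(\mathbf x)$ forces $\|\eta\|\le\|\mathbf x\|$) together with, crucially, the fine size estimates for R\"osler's intertwining measure $\mu_{\mathbf x}$: the bound for its mass on small balls is what replaces the crude constant $\mathbf c_k^{-1}2^{-\mathbf N/2}$ by the normalization $V(\mathbf x,\mathbf y,1)^{-1}$ and, together with \eqref{eq:balls_asymp}, also produces the polynomial factor $(1+\|\mathbf x-\mathbf y\|)^{-M}$; these facts are available in \cite{DH-heat} (cf.\ also \cite{ADzH}), and merely crude exponential bounds on the Dunkl kernel $E$ do \emph{not} see the $V^{-1}$ normalization. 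The same argument, applied to the radial components of $q(\cdot)h_1$ for a polynomial $q$, yields a bound of the same shape for $\tau_{\mathbf x}(qh_1)(-\mathbf y)$. For the time derivatives one uses that $z\mapsto h_z(\mathbf x,\mathbf y)$ extends holomorphically to a sector around $\mathbb R_+$ and satisfies there a Gaussian bound of the above form at scale $\sqrt{|z|}$ (with constants depending on the aperture); a Cauchy estimate on the circle $|z-t|=t/2$ then converts $\partial_t^m$ into the factor $t^{-m}$ times a kernel of the same type, so it remains to bound the pure spatial derivatives of $\Delta_{k,\mathbf x}^m h_1$.

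For the spatial derivatives I would pass from ordinary to Dunkl derivatives by iteratively inverting $\partial_j=D_j-\sum_{\alpha\in R}\tfrac{k(\alpha)}{2}\langle\alpha,e_j\rangle\langle\alpha,\mathbf x\rangle^{-1}(I-\sigma_\alpha)$, which rewrites $\partial_{\mathbf x}^{\boldsymbol\alpha}\Delta_{k,\mathbf x}^m h_1(\mathbf x,\mathbf y)$ as a finite combination of terms in which each $\partial_j$ has been replaced by a Dunkl derivative and a difference quotient across a root hyperplane; by \eqref{eq:translations_commute_with_operators} one has $D_{\mathbf x}^{\boldsymbol\gamma}\Delta_{k,\mathbf x}^m h_1(\mathbf x,\mathbf y)=\tau_{\mathbf x}(D^{\boldsymbol\gamma}\Delta_k^m h_1)(-\mathbf y)$, and a short computation with the explicit formula shows $D^{\boldsymbol\gamma}\Delta_k^m h_1=q_{\boldsymbol\gamma,m}h_1$ for a polynomial $q_{\boldsymbol\gamma,m}$, so the Dunkl--derivative pieces are covered by the zeroth--order estimate for $\tau_{\mathbf x}(qh_1)(-\mathbf y)$, the $G$--invariance of $w$ and of $d(\cdot,\cdot)$ absorbing the reflections and part of the freely chosen exponent $M$ absorbing the polynomial weights. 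The delicate point --- which I expect to be the main obstacle --- is that the difference quotients $\langle\alpha,\mathbf x\rangle^{-1}(I-\sigma_\alpha)$ are large precisely when $\mathbf y$ is near the reflection locus $\mathcal O(\mathbf x)$ and $\mathbf x$ is near $\alpha^{\perp}$, and there a term--by--term estimate loses one power of $(1+\|\mathbf x-\mathbf y\|)$; one must instead exploit cancellations among these terms (which are controlled, after one application of $I-\sigma_\alpha$, by a Lipschitz--type bound of the form \eqref{eq:heat3} itself, in an induction on the total order). Granting \eqref{eq:heat2}, estimate \eqref{eq:heat3} follows for $\|\mathbf y-\mathbf y'\|\le\sqrt t$ by writing $\partial_t^m h_t(\mathbf x,\mathbf y)-\partial_t^m h_t(\mathbf x,\mathbf y')=\int_0^1\nabla_{\mathbf y}\partial_t^m h_t\bigl(\mathbf x,\mathbf y'+s(\mathbf y-\mathbf y')\bigr)\cdot(\mathbf y-\mathbf y')\,ds$, applying \eqref{eq:heat2} with $|\boldsymbol\beta|=1$, and using that $\|\mathbf x-\cdot\|$, $d(\mathbf x,\cdot)$ and $V(\mathbf x,\cdot,\sqrt t)$ change only by bounded factors along a segment of length $\le\sqrt t$ (slow variation on $\sqrt t$--balls, by the doubling of $dw$ and the triangle inequality for $d$).
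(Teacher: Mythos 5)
First, note that the paper does not prove this statement at all: Theorem \ref{teo:heat_new} is imported verbatim from \cite[Theorem 4.1]{DH-atom}, so there is no internal argument to compare yours against; your proposal has to stand on its own.

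Much of your outline is sound: the scaling reduction to $t=1$ via $h_t(\mathbf x,\mathbf y)=t^{-\mathbf N/2}h_1(\mathbf x/\sqrt t,\mathbf y/\sqrt t)$ and the heat equation, the inequality $A(\mathbf x,\mathbf y,\eta)^2\ge d(\mathbf x,\mathbf y)^2$ on $\operatorname{supp}\mu_{\mathbf x}\subseteq\operatorname{conv}\mathcal O(\mathbf x)$, and the deduction of \eqref{eq:heat3} from \eqref{eq:heat2} with $|\boldsymbol\beta|=1$ by the mean value theorem and slow variation of $V$, $d$, and $\|\mathbf x-\cdot\|$ on $\sqrt t$-balls are all correct. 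The fatal step is your reduction to a bound of the form $V(\mathbf x,\mathbf y,1)^{-1}(1+\|\mathbf x-\mathbf y\|)^{-M}e^{-c\,d(\mathbf x,\mathbf y)^2}$ for an \emph{arbitrarily large} $M$: no such bound holds. Already in rank one ($N=1$, $R=\{\pm\alpha\}$, multiplicity $k>0$), taking $\mathbf y=-\mathbf x=\sigma_\alpha(\mathbf x)$ with $\|\mathbf x\|$ large gives $d(\mathbf x,\mathbf y)=0$, $V(\mathbf x,\mathbf y,1)\asymp\|\mathbf x\|^{2k}$, and the explicit formula \eqref{eq:heat_kernel} together with the asymptotics of the confluent hypergeometric function yields $h_1(\mathbf x,-\mathbf x)\asymp\|\mathbf x\|^{-2k-2}$; this matches \eqref{eq:heat2} exactly and shows the exponent $2$ in $(1+\|\mathbf x-\mathbf y\|/\sqrt t)^{-2}$ is sharp, so the Euclidean polynomial decay cannot be improved (compare Theorem \ref{teo:translacja}, where for general Schwartz functions only the first power is available). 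Since you explicitly spend "part of the freely chosen exponent $M$" to absorb polynomial weights and to pay for the loss of one power of $(1+\|\mathbf x-\mathbf y\|)$ at each inversion $\partial_j=D_j-\sum_\alpha\tfrac{k(\alpha)}{2}\langle\alpha,e_j\rangle\langle\alpha,\mathbf x\rangle^{-1}(I-\sigma_\alpha)$, this is not a cosmetic overstatement but a load-bearing false premise. The genuine difficulty you correctly identify --- controlling the difference quotients near the reflection hyperplanes without losing Euclidean decay, which requires cancellation rather than term-by-term estimates --- is precisely where the proof in \cite{DH-atom} does its work, and your proposal flags it as "the main obstacle" without supplying the argument. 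As it stands, the proposal establishes the Gaussian decay in $d(\mathbf x,\mathbf y)$ but not the two claimed refinements (the volume normalization $V(\mathbf x,\mathbf y,\sqrt t)^{-1}$ together with the factor $(1+\|\mathbf x-\mathbf y\|/\sqrt t)^{-2}$) for the derivatives.
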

 
 We have the following estimate for generalized translations of Schwartz class functions.
 
\begin{theorem}[Theorem 4.1 and Remark 4.2,\texorpdfstring{~\cite{DzH_nonradial}}{[DzH]}]\label{teo:translacja}
     Let $\varphi \in \mathcal{S}(\mathbb{R}^N)$ and $M>0$. There is a constant $C>0$ such that for all $\mathbf{x},\mathbf{y} \in \mathbb{R}^N$ and $t>0$, we have 
   \begin{equation}\label{eq:g_t}
       |\varphi_t(\mathbf{x},\mathbf{y})| \leq C\left(1+\frac{\|\mathbf{x}-\mathbf{y}\|}{t}\right)^{-1}\Big(1+\frac{d(\mathbf x,\mathbf y)}{t}\Big)^{-M} \frac{1}{w(B(\mathbf{x},t))}, 
   \end{equation}
   where $  \varphi_t(\mathbf x)=t^{-\mathbf N}\varphi(\mathbf x/t)$.  
\end{theorem}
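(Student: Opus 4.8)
The plan is to reduce \eqref{eq:g_t} by scaling to the case $t=1$ and then to produce the three features of the right-hand side --- the volume factor $w(B(\mathbf x,1))^{-1}$, the orbit-distance decay $(1+d(\mathbf x,\mathbf y))^{-M}$, and the extra Euclidean decay $(1+\|\mathbf x-\mathbf y\|)^{-1}$ --- by three separate mechanisms, combining them at the end. For the scaling step, a change of variables $\xi\mapsto\xi/t$ in \eqref{eq:translation}, together with $\mathcal F(\varphi_t)(\xi)=\mathcal F\varphi(t\xi)$ and the homogeneity of the Dunkl kernel and of $dw$, gives $\varphi_t(\mathbf x,\mathbf y)=t^{-\mathbf N}\varphi_1(\mathbf x/t,\mathbf y/t)$, where $\varphi_1(\mathbf a,\mathbf b)=\tau_{\mathbf a}\varphi(-\mathbf b)$; since $\|\mathbf x-\mathbf y\|/t=\|\mathbf x/t-\mathbf y/t\|$, $d(\mathbf x,\mathbf y)/t=d(\mathbf x/t,\mathbf y/t)$, and $w(B(\mathbf x,t))=t^{\mathbf N}w(B(\mathbf x/t,1))$ by \eqref{eq:t_ball}, it suffices to treat $t=1$ for an arbitrary $\varphi\in\mathcal S(\mathbb R^N)$.

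\textbf{Orbit-distance decay and the volume factor.} Decompose $\varphi=\sum_{\ell\ge0}\varphi^{(\ell)}$ with $\varphi^{(\ell)}$ supported in $B(\mathbf 0,2^{\ell+1})$ and all Schwartz seminorms of $\varphi^{(\ell)}$ bounded by $C_L\,2^{-\ell L}$ for every $L$. By the support theorem for Dunkl translations of compactly supported functions, $\tau_{\mathbf x}\varphi^{(\ell)}(-\mathbf y)=0$ unless $d(\mathbf x,\mathbf y)\le 2^{\ell+1}$, so for fixed $\mathbf x,\mathbf y$ only the indices $2^{\ell}\gtrsim d(\mathbf x,\mathbf y)$ contribute. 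For each such $\ell$ one estimates $\tau_{\mathbf x}\varphi^{(\ell)}(-\mathbf y)$ pointwise by writing $\varphi^{(\ell)}=-\int_0^\infty H_s\Delta_k\varphi^{(\ell)}\,ds$ and commuting $\tau_{\mathbf x}$ past the Dunkl-transform multipliers $H_s$ and $\Delta_k$, which reduces matters to estimating $H_s$ of $\tau_{\mathbf x}\Delta_k\varphi^{(\ell)}$. Here one uses: that $\tau_{\mathbf x}$ is an $L^2(dw)$-contraction; that $\tau_{\mathbf x}\Delta_k\varphi^{(\ell)}$ is supported where $d(\mathbf x,\cdot)\lesssim 2^{\ell}$ (a set of $w$-measure $\sim w(B(\mathbf x,2^{\ell}))$); the mean-zero identity $\int\Delta_k\varphi^{(\ell)}\,dw=0$ (which yields integrability of the $s$-integral near $s=\infty$); and, crucially, the Gaussian bounds of Theorem~\ref{teo:heat_new} for $h_s$, applied both through $\|h_s(\mathbf y,\cdot)\|_{L^2(dw)}\lesssim w(B(\mathbf y,\sqrt s))^{-1/2}$ and through the pointwise Gaussian decay $e^{-cd(\mathbf y,\cdot)^2/s}$ for the range $s\lesssim 2^{2\ell}$. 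A careful combination of these --- balancing the $L^2$ bound, the support localization, and the off-diagonal decay --- gives $|\tau_{\mathbf x}\varphi^{(\ell)}(-\mathbf y)|\lesssim 2^{-\ell L'}w(B(\mathbf x,1))^{-1}$ (the doubling inequality \eqref{eq:balls_asymp} lets one absorb the controlled ratio $w(B(\mathbf x,2^{\ell}))/w(B(\mathbf y,1))$ into the factor $2^{-\ell L'}$), and summing over $\ell$ with $2^{\ell}\gtrsim d(\mathbf x,\mathbf y)$ produces $|\tau_{\mathbf x}\varphi(-\mathbf y)|\lesssim (1+d(\mathbf x,\mathbf y))^{-M}\,w(B(\mathbf x,1))^{-1}$ for every $M$.

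\textbf{The extra Euclidean decay.} It remains to insert the factor $(1+\|\mathbf x-\mathbf y\|)^{-1}$, which is not available for radial functions and genuinely exploits the structure of $E$. Fix $1\le j\le N$. Using $x_jE(i\xi,\mathbf x)=-iD_j^{\xi}E(i\xi,\mathbf x)$, integrating by parts in $\xi$ (Dunkl operators are skew-symmetric for $dw$), and applying the Dunkl product rule to $D_j^{\xi}\big[E(-i\xi,\mathbf y)\mathcal F\varphi(\xi)\big]$ (which contributes $D_j^{\xi}E(-i\xi,\mathbf y)=-iy_jE(-i\xi,\mathbf y)$, a term $E(-i\xi,\mathbf y)\partial_j\mathcal F\varphi(\xi)$, and reflection-difference terms), one obtains
\begin{equation*}
    (x_j-y_j)\,\tau_{\mathbf x}\varphi(-\mathbf y)=i\,\tau_{\mathbf x}\psi_j(-\mathbf y)+\sum_{\alpha\in R}c_{\alpha,j}\,\tau_{\mathbf x}\widetilde\varphi_{j,\alpha}(-\sigma_\alpha\mathbf y),
\end{equation*}
where $\psi_j,\widetilde\varphi_{j,\alpha}\in\mathcal S(\mathbb R^N)$ have seminorms controlled by those of $\varphi$ (here one uses that the divided difference $\xi\mapsto(\mathcal F\varphi(\xi)-\mathcal F\varphi(\sigma_\alpha\xi))/\langle\alpha,\xi\rangle$ is again Schwartz, and the equivariance $E(-i\sigma_\alpha\xi,\mathbf y)=E(-i\xi,\sigma_\alpha\mathbf y)$). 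Applying the bound of the previous step to each term on the right (and recalling $d(\mathbf x,\sigma_\alpha\mathbf y)=d(\mathbf x,\mathbf y)$, $w(B(\mathbf x,1))$ unchanged) gives $|(x_j-y_j)\tau_{\mathbf x}\varphi(-\mathbf y)|\lesssim(1+d(\mathbf x,\mathbf y))^{-M}w(B(\mathbf x,1))^{-1}$; summing over $j$ and using $\|\mathbf x-\mathbf y\|\le\sum_j|x_j-y_j|$ yields the factor $\|\mathbf x-\mathbf y\|^{-1}$ when $\|\mathbf x-\mathbf y\|\ge1$, while for $\|\mathbf x-\mathbf y\|\le1$ the previous step already suffices. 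This completes \eqref{eq:g_t}.

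\textbf{Main obstacle.} The principal difficulty is that, for non-radial $\varphi$, the Dunkl translation $\tau_{\mathbf x}$ is neither given by a positive kernel nor bounded on $L^1(dw)$ or $L^\infty(dw)$, so the usual route via kernel estimates is unavailable: every pointwise bound must be routed through the $L^2(dw)$-boundedness of $\tau_{\mathbf x}$, the support theorem, and the Gaussian bounds for the \emph{radial} heat kernel. The most delicate point is to obtain the full power $w(B(\mathbf x,1))^{-1}$ in the volume factor rather than the $w(B(\mathbf x,1))^{-1/2}$ that a bare $L^2\times L^2$ estimate gives, which forces the simultaneous use of the support localization of $\tau_{\mathbf x}\Delta_k\varphi^{(\ell)}$, the off-diagonal Gaussian decay of $h_s$, and the mean-zero cancellation, with all constants kept uniform in $\mathbf y$.
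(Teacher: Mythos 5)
First, a point of order: the present paper does not prove Theorem~\ref{teo:translacja} at all --- it is imported verbatim from \cite{DzH_nonradial} (Theorem 4.1 and Remark 4.2 there), so there is no internal proof to compare yours with. Judging your proposal on its own terms: the architecture is sensible and recognizable. The scaling reduction to $t=1$ is correct; the dyadic decomposition of $\varphi$ combined with the support theorem for Dunkl translations is the right mechanism for the factor $(1+d(\mathbf x,\mathbf y)/t)^{-M}$; and producing the extra factor $(1+\|\mathbf x-\mathbf y\|/t)^{-1}$ by writing $x_jE(i\xi,\mathbf x)=-iD_j^{\xi}E(i\xi,\mathbf x)$, integrating by parts against $dw(\xi)$, and absorbing the reflection terms into further Schwartz functions is indeed how that factor is obtained for non-radial kernels.

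The genuine gap is in the central step, the pointwise bound $|\tau_{\mathbf x}\varphi^{(\ell)}(-\mathbf y)|\lesssim 2^{-\ell L}\,w(B(\mathbf x,1))^{-1}$, which you describe but do not establish. Two concrete problems. (i) The representation $\varphi^{(\ell)}=-\int_0^\infty H_s\Delta_k\varphi^{(\ell)}\,ds$ together with $|H_sg(\mathbf y)|\le\|h_s(\mathbf y,\cdot)\|_{L^2(dw)}\|g\|_{L^2(dw)}\lesssim w(B(\mathbf y,\sqrt s))^{-1/2}\|g\|_{L^2(dw)}$ is not integrable at $s=0$ once $\mathbf N\ge 4$, since by \eqref{eq:t_ball} all one can say uniformly is $w(B(\mathbf y,\sqrt s))\gtrsim s^{\mathbf N/2}$; one needs a high power of the resolvent $(I-\Delta_k)^{-n}$ with $n>\mathbf N/4$, or an honest treatment of the small-$s$ regime --- where a pointwise bound on $H_s\tau_{\mathbf x}\Delta_k\varphi^{(\ell)}$ requires pointwise control of $\tau_{\mathbf x}\Delta_k\varphi^{(\ell)}$, i.e.\ exactly the kind of estimate being proved, so the sketch is circular there. (ii) More seriously, the promised upgrade of the volume exponent from $-1/2$ to $-1$ is asserted (``a careful combination of these'') but never carried out, and the natural combination of the three listed ingredients does not close: bounding $|H_sg(\mathbf y)|\le\sup_{\mathbf z\in S}h_s(\mathbf y,\mathbf z)\cdot w(S)^{1/2}\cdot\|g\|_{L^2(dw)}$ with $S=\{\mathbf z: d(\mathbf x,\mathbf z)\lesssim 2^{\ell}\}$ gives, at $s\sim 1$ and after the doubling chains from \eqref{eq:balls_asymp}, a quantity of order $2^{C\ell}\,w(B(\mathbf x,1))^{-1/2}\,2^{-\ell L}$, and $w(B(\mathbf x,1))^{-1/2}$ exceeds the target $w(B(\mathbf x,1))^{-1}$ by the unbounded factor $w(B(\mathbf x,1))^{1/2}\sim\prod_{\alpha\in R}(1+|\langle\mathbf x,\alpha\rangle|)^{k(\alpha)/2}$. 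Closing precisely this deficit is the hard analytic content of \cite{DzH_nonradial}, and no mechanism for it is supplied. (The large-$s$ convergence via the mean-zero cancellation and \eqref{eq:heat3} is at least pointed at the right tool.) Since your third step feeds its output back into the second, the whole estimate \eqref{eq:g_t} remains unproved as written.
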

{The following corollary is a simple consequence of ~\eqref{eq:translations_commute_with_operators} and Theorem~\ref{teo:translacja}. 
\begin{corollary}
    \label{cor:simple_bound} For any non-negative integer $m$ and any multi-indices $\boldsymbol \beta, \boldsymbol \beta' \in \mathbb{N}_0^N$, and any $M>0$ there is a constant $C>0$ such that 
    \begin{equation}
         |\partial_t^m D_{\mathbf x}^{\boldsymbol \beta} D_{\mathbf y}^{ \boldsymbol \beta'} h_t(\mathbf x,\mathbf y)|\leq Ct^{-m-(|\boldsymbol \beta|+\boldsymbol \beta'|)/2}V(\mathbf x,\mathbf y,\sqrt{t})^{-1}\Big(1+\frac{\|\mathbf x-\mathbf y\|}{\sqrt{t}} \Big)^{-1} \Big(1+\frac{d(\mathbf x,\mathbf y)}{\sqrt{t}}\Big)^{-M}. 
    \end{equation} 
\end{corollary}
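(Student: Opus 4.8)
The plan is to rewrite $\partial_t^m D_{\mathbf x}^{\boldsymbol\beta}D_{\mathbf y}^{\boldsymbol\beta'}h_t(\mathbf x,\mathbf y)$ as a finite linear combination, with constant coefficients and an explicit power of $t$ in front, of generalized translations $\tau_{\mathbf x}\psi_{\sqrt t}(-\mathbf y)$ with $\psi\in\mathcal S(\mathbb R^N)$, and then to invoke Theorem~\ref{teo:translacja}. Recall that $h_t(\mathbf x,\mathbf y)=\tau_{\mathbf x}h_t(-\mathbf y)$, where $h_t(\mathbf x)=\mathbf c_k^{-1}(2t)^{-\mathbf N/2}e^{-\|\mathbf x\|^2/(4t)}\in\mathcal S(\mathbb R^N)$. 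Writing $\phi:=h_1$, we have the dilation identity $h_t(\mathbf x)=t^{-\mathbf N/2}\phi(\mathbf x/\sqrt t)$, and from $\mathcal F(h_t)(\xi)=\mathbf c_k^{-1}e^{-t\|\xi\|^2}$ (see \eqref{eq:heat_transform}) together with \eqref{eq:Laplacian_on_Fourier_side} one gets the pointwise heat equation $\partial_t h_t=\Delta_k h_t$, and hence $\partial_t^m h_t=\Delta_k^m h_t=\sum_{|\boldsymbol\delta|=2m}c_{\boldsymbol\delta}D^{\boldsymbol\delta}h_t$, since the Dunkl operators commute.

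First I would transfer all the Dunkl derivatives onto $h_t$. Applying \eqref{eq:translations_commute_with_operators} repeatedly (each $D^{\boldsymbol\eta}h_t$ is again a Schwartz function, since Dunkl operators map $\mathcal S(\mathbb R^N)$ to $\mathcal S(\mathbb R^N)$) and using that $D_{j,\mathbf x}$, $D_{j,\mathbf y}$ and $\partial_t$ all commute, one obtains
\[
\partial_t^m D_{\mathbf x}^{\boldsymbol\beta}D_{\mathbf y}^{\boldsymbol\beta'}h_t(\mathbf x,\mathbf y)=(-1)^{|\boldsymbol\beta'|}\sum_{|\boldsymbol\delta|=2m}c_{\boldsymbol\delta}\,\tau_{\mathbf x}\bigl(D^{\boldsymbol\delta+\boldsymbol\beta+\boldsymbol\beta'}h_t\bigr)(-\mathbf y).
\]
Next I would use the homogeneity of the Dunkl operators under dilations, namely $D_j[g(\cdot/\sqrt t)](\mathbf x)=t^{-1/2}(D_jg)(\mathbf x/\sqrt t)$, which gives, for every multi-index $\boldsymbol\eta$, $D^{\boldsymbol\eta}h_t=t^{-|\boldsymbol\eta|/2}(D^{\boldsymbol\eta}\phi)_{\sqrt t}$, where $g_s(\mathbf x)=s^{-\mathbf N}g(\mathbf x/s)$ and $D^{\boldsymbol\eta}\phi\in\mathcal S(\mathbb R^N)$. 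With $\boldsymbol\eta=\boldsymbol\delta+\boldsymbol\beta+\boldsymbol\beta'$ and $|\boldsymbol\delta|=2m$ this turns each summand into $c_{\boldsymbol\delta}(-1)^{|\boldsymbol\beta'|}t^{-m-(|\boldsymbol\beta|+|\boldsymbol\beta'|)/2}(\psi_{\boldsymbol\delta})_{\sqrt t}(\mathbf x,\mathbf y)$, where $\psi_{\boldsymbol\delta}:=D^{\boldsymbol\delta+\boldsymbol\beta+\boldsymbol\beta'}\phi\in\mathcal S(\mathbb R^N)$.

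Now, for a given $M>0$, Theorem~\ref{teo:translacja} applied to each of the finitely many functions $(\psi_{\boldsymbol\delta})_{\sqrt t}$ yields
\[
|\partial_t^m D_{\mathbf x}^{\boldsymbol\beta}D_{\mathbf y}^{\boldsymbol\beta'}h_t(\mathbf x,\mathbf y)|\le C\, t^{-m-(|\boldsymbol\beta|+|\boldsymbol\beta'|)/2}\Big(1+\frac{\|\mathbf x-\mathbf y\|}{\sqrt t}\Big)^{-1}\Big(1+\frac{d(\mathbf x,\mathbf y)}{\sqrt t}\Big)^{-M}\frac{1}{w(B(\mathbf x,\sqrt t))}.
\]
Finally, since $\|\mathbf x-\mathbf y\|$ and $d(\mathbf x,\mathbf y)$ are symmetric in $\mathbf x,\mathbf y$ and $h_t(\mathbf x,\mathbf y)=h_t(\mathbf y,\mathbf x)=\tau_{\mathbf y}h_t(-\mathbf x)$, running the same computation with the roles of $\mathbf x$ and $\mathbf y$ interchanged produces the same bound with $w(B(\mathbf y,\sqrt t))^{-1}$ in place of $w(B(\mathbf x,\sqrt t))^{-1}$; taking the smaller of the two right-hand sides replaces the ball factor by $V(\mathbf x,\mathbf y,\sqrt t)^{-1}=\min\{w(B(\mathbf x,\sqrt t))^{-1},w(B(\mathbf y,\sqrt t))^{-1}\}$ and proves the corollary. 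No step here is genuinely hard; the only points deserving a word of justification are the interchange of $\partial_t$ with $\tau_{\mathbf x}$ and with the $D_j$ (immediate from the Dunkl-transform descriptions and the rapid decay of $h_t$ and its derivatives) and the symmetrization needed to obtain $V(\mathbf x,\mathbf y,\sqrt t)$ rather than a ball centered only at $\mathbf x$.
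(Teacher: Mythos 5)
Your argument is correct and is exactly the route the paper intends: the paper gives no written proof beyond asserting that the corollary is "a simple consequence of \eqref{eq:translations_commute_with_operators} and Theorem~\ref{teo:translacja}", and your reduction via the heat equation $\partial_t^m h_t=\Delta_k^m h_t$, the transfer of Dunkl derivatives onto the kernel, the dilation identity, and the final symmetrization $h_t(\mathbf x,\mathbf y)=h_t(\mathbf y,\mathbf x)$ to upgrade $w(B(\mathbf x,\sqrt t))^{-1}$ to $V(\mathbf x,\mathbf y,\sqrt t)^{-1}$ supplies precisely the missing details.
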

}
\subsection{\texorpdfstring{$k$}{k}-Cauchy kernel and Dunkl Poisson semigroup}

Let $\mathbf{x},\mathbf{y} \in \mathbb{R}^N$ and $t>0$. We define the $k$--\textit{Cauchy kernel} $p_t(\mathbf x,\mathbf y)$ to be the integral kernel of the operator $P_t=e^{-t\sqrt{-\Delta_{k}}}$.
It is related with the Dunkl heat kernel by the {subordination formula}
\begin{equation}\label{eq:subordination}\index{subordination formula}
p_t(\mathbf x,\mathbf y)= \Gamma(1/2)^{-1}\int_0^{\infty} e^{-u}h_{t^2\slash (4u)}(\mathbf x,\mathbf y) \frac{du}{\sqrt{u}}.
\end{equation}
Clearly,

\begin{equation}\label{eq:symmetric}
p_t(\mathbf x,\mathbf y)=p_t(\mathbf y,\mathbf x) \text{ for all }\mathbf{x},\mathbf{y} \in \mathbb{R}^N \text{ and }t>0,
\end{equation} 
{\begin{equation}\label{eq:one}
    \int_{\mathbb{R}^N}p_t(\mathbf{x},\mathbf{y})\, dw(\mathbf{y})=1 \text{ for all }\mathbf{x} \in \mathbb{R}^N \text{ and }t>0.
\end{equation}}
The kernel $p_t(\mathbf x,\mathbf y)$ was introduced and
studied in~\cite{RV1998}.

\begin{theorem}[{\cite[Theorem 5.6]{RV1998}}]
Let $f$ be a bounded continuous function on $\mathbb{R}^N$. Then the function given by $u(\mathbf{x},t)=P_tf(\mathbf{x})$ is continuous and bounded. Moreover, it solves the Cauchy problem
\begin{equation}\label{eq:Poisson_PDE}
\begin{cases}
    \partial_{t}^2u(\mathbf{x},t)+\Delta_{k,\mathbf{x}}u(\mathbf{x},t)=0 \text{ on }\mathbb{R}^N \times (0,\infty),\\
    u(\mathbf{x},0)=f(\mathbf{x}) \text{ for all }\mathbf{x} \in \mathbb{R}^N.
\end{cases}
\end{equation}
\end{theorem}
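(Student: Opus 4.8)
The plan is to extract every assertion from the subordination formula~\eqref{eq:subordination}, the normalization~\eqref{eq:one}, and the heat-kernel estimates of Theorem~\ref{teo:heat_new}, the algebraic identity behind the equation being supplied by Lemma~\ref{lem:phi_sub}. Write $u(\mathbf x,t)=P_tf(\mathbf x)=\Gamma(1/2)^{-1}\int_0^\infty e^{-u}H_{t^2/(4u)}f(\mathbf x)\,\frac{du}{\sqrt u}$. Boundedness is immediate: the Dunkl heat semigroup is positivity-preserving, so $p_t\ge 0$, whence $|u(\mathbf x,t)|\le\|f\|_{L^\infty}\int_{\mathbb R^N}p_t(\mathbf x,\mathbf y)\,dw(\mathbf y)=\|f\|_{L^\infty}$ by~\eqref{eq:one}. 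For continuity on $\mathbb R^N\times(0,\infty)$ I would observe that $(\mathbf x,s)\mapsto h_s(\mathbf x,\mathbf y)$ is continuous (it is built from the analytic Dunkl kernel $E$) and that Theorem~\ref{teo:heat_new} furnishes, on any compact subset of $\mathbb R^N\times(0,\infty)$, a $dw(\mathbf y)$- and $du$-integrable majorant for $e^{-u}u^{-1/2}h_{t^2/(4u)}(\mathbf x,\mathbf y)$; two applications of dominated convergence (first in the subordination integral, then in $\int p_t(\mathbf x,\mathbf y)f(\mathbf y)\,dw(\mathbf y)$) give continuity of $u$, and the same estimates with $\mathbf x$-derivatives show $u(\cdot,t)$ is smooth for each $t>0$.

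The core step is the equation. Fix $\mathbf x$ and set $g(s):=H_sf(\mathbf x)=\int_{\mathbb R^N}h_s(\mathbf x,\mathbf y)f(\mathbf y)\,dw(\mathbf y)$. Differentiating under the integral and using $|\partial_s^n h_s(\mathbf x,\mathbf y)|\le C_ns^{-n}\mathcal G_{s/c}(\mathbf x,\mathbf y)$ together with~\eqref{eq:G_integrable}, one sees that $g\in C^\infty((0,\infty))$ is bounded with $|g^{(n)}(s)|\le C_ns^{-n}\|f\|_{L^\infty}$; moreover $g'(s)=\Delta_{k,\mathbf x}H_sf(\mathbf x)$ since $\partial_sh_s(\mathbf x,\mathbf y)=\Delta_{k,\mathbf x}h_s(\mathbf x,\mathbf y)$ and Corollary~\ref{cor:simple_bound} permits moving $\Delta_{k,\mathbf x}$ (acting via~\eqref{eq:laplace_formula}) through the integral. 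Thus $g$ satisfies the hypotheses of Lemma~\ref{lem:phi_sub}, and since $\Gamma(1/2)\,u(\mathbf x,t)=\int_0^\infty e^{-u}g(t^2/(4u))\,\frac{du}{\sqrt u}$, the case $n=1$ of the even-order formula in that lemma yields
\[
\partial_t^2u(\mathbf x,t)=-\Gamma(1/2)^{-1}\int_0^\infty e^{-u}\,g'\!\Big(\tfrac{t^2}{4u}\Big)\frac{du}{\sqrt u}=-\Gamma(1/2)^{-1}\int_0^\infty e^{-u}\,\big(\Delta_{k,\mathbf x}H_sf\big)(\mathbf x)\big|_{s=t^2/(4u)}\,\frac{du}{\sqrt u}.
\]
Moving $\Delta_{k,\mathbf x}$ out of this last integral (legitimate, again by the bounds of Corollary~\ref{cor:simple_bound}) identifies the right-hand side with $-\Delta_{k,\mathbf x}u(\mathbf x,t)$, so $\partial_t^2u+\Delta_{k,\mathbf x}u=0$ on $\mathbb R^N\times(0,\infty)$.

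For the initial condition, for each fixed $u>0$ one has $H_{t^2/(4u)}f(\mathbf x)\to f(\mathbf x)$ as $t\to 0^+$ (the Dunkl heat semigroup is an approximate identity on bounded continuous functions, cf.~\cite{R1998,RV1998}), while $|H_{t^2/(4u)}f(\mathbf x)|\le\|f\|_{L^\infty}$ and $\Gamma(1/2)^{-1}\int_0^\infty e^{-u}u^{-1/2}\,du=1$; dominated convergence in the subordination integral then gives $u(\mathbf x,t)\to f(\mathbf x)$, i.e.\ $u(\mathbf x,0)=f(\mathbf x)$. I expect the main obstacle to be the analytic bookkeeping concentrated in the equation step --- justifying the time differentiation under the subordination integral (cleanly delegated to Lemma~\ref{lem:phi_sub}) and the two passages of $\Delta_{k,\mathbf x}$ through the $dw(\mathbf y)$- and $du$-integrals --- all of which rest on the Gaussian-type derivative bounds of Theorem~\ref{teo:heat_new} and Corollary~\ref{cor:simple_bound}; the approximate-identity fact for the heat semigroup on $C_b(\mathbb R^N)$ is the one external ingredient I would invoke rather than reprove.
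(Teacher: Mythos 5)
Your argument is correct, but it is worth noting that the paper does not prove this statement at all: it is quoted verbatim from \cite[Theorem 5.6]{RV1998}, so there is no internal proof to compare against. What you have produced is a self-contained derivation from the paper's own toolkit, and it holds up: boundedness from positivity of $p_t$ and \eqref{eq:one}; continuity by dominated convergence against the majorants of Theorem~\ref{teo:heat_new}; and, for the equation, the observation that $\phi(s)=H_sf(\mathbf x)$ satisfies the hypotheses of Lemma~\ref{lem:phi_sub}, whose $n=1$ even-order formula gives $\partial_t^2u=-\int_0^\infty e^{-u}\phi'(t^2/(4u))\,\Gamma(1/2)^{-1}u^{-1/2}\,du=-\Delta_{k}u$ with the correct sign (consistent with $\partial_t^2P_t=(\sqrt{-\Delta_k})^2P_t=-\Delta_kP_t$). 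Three ingredients you use are not literally established in the paper and should be flagged as borrowed or checked: (i) the heat equation $\partial_sh_s(\mathbf x,\mathbf y)=\Delta_{k,\mathbf x}h_s(\mathbf x,\mathbf y)$, which is from \cite{R1998}; (ii) the interchange of $\Delta_{k,\mathbf x}$ with the $dw(\mathbf y)$- and $du$-integrals, where the difference part $\delta_\alpha$ of \eqref{eq:laplace_formula} needs the gradient bounds of Corollary~\ref{cor:simple_bound} to control the quotient $(h_s(\mathbf x,\mathbf y)-h_s(\sigma_\alpha(\mathbf x),\mathbf y))/\langle\alpha,\mathbf x\rangle$ near the hyperplanes $\langle\alpha,\mathbf x\rangle=0$; and (iii) the approximate-identity property $H_sf(\mathbf x)\to f(\mathbf x)$, which you cite externally but which in fact follows from the stated estimates, since $\int_{\|\mathbf y-\mathbf x\|\ge\delta}h_s(\mathbf x,\mathbf y)\,dw(\mathbf y)\lesssim (s/\delta^2)\int\mathcal G_{s/c}(\mathbf x,\mathbf y)\,dw(\mathbf y)\to0$ by \eqref{eq:heat2} and \eqref{eq:G_integrable}, together with $\int h_s(\mathbf x,\cdot)\,dw=1$. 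None of these is a gap; your route simply trades the citation for a short proof that the paper's own estimates already support.
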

The $k$-Cauchy kernel is also called the \textit{generalized Poisson kernel} (or the \textit{Dunkl-Poisson kernel}) by the analogy with the classical Poisson semigroup. It follows from~\eqref{eq:heat_transform} and~\eqref{eq:subordination} that
\begin{equation}\label{eq:Poisson_transform}
    \mathcal{F}(P_tf)(\xi)=e^{-t\|\xi\|}\mathcal{F}f(\xi),\quad f\in \mathcal{S}(\mathbb{R}^N), \ \ \xi \in \mathbb{R}^N, \ \ t>0.
\end{equation} 
 We shall use the following bounds of the integral kernel 
$p_t(\mathbf x,\mathbf y)$ of the Dunkl-Poisson semigroup.

\begin{proposition}[{\cite[Proposition 5.1]{ADzH}}]\label{Poiss_new}
For any non-negative integer \,$m$ and for any multi-index \,$\boldsymbol{\beta} \in \mathbb{N}_0^{N}$, there is a constant \,$C\hspace{-.5mm}\ge\hspace{-.5mm}0$ such that, for all \,$t>0$ and for all \,$\mathbf{x},\mathbf{y}\in\mathbb{R}^N$,
\begin{equation}\label{DtDyPoisson}
\bigl|\hspace{.25mm}\partial_t^m\partial_{\mathbf{y}}^{\boldsymbol{\beta}}\hspace{.25mm}p_t(\mathbf{x},\mathbf{y})\bigr|\le C\,p_t(\mathbf{x},\mathbf{y})\hspace{.25mm}\bigl(\hspace{.25mm}t\hspace{-.25mm}+d(\mathbf{x},\mathbf{y})\bigr)^{\hspace{-.5mm}-m-|\boldsymbol{\beta}|}\times\begin{cases}
\,1&\text{if \,}m\hspace{-.25mm}=\hspace{-.25mm}0\hspace{.25mm},\\
\,1+\frac{d(\mathbf{x},\mathbf{y})}t&\text{if \,}m\hspace{-.5mm}>\hspace{-.5mm}0\hspace{.25mm}.\\
\end{cases}\end{equation}
Moreover, for any non-negative integer \,$m$ and for all multi-indices $\boldsymbol{\alpha},\boldsymbol{\beta}$, there is a constant \,$C\hspace{-.5mm}\ge\hspace{-.5mm}0$ such that  for all \,$t>0$ and for all \,$\mathbf{x},\mathbf{y}\in\mathbb{R}^N$,
\begin{equation}\label{DtDxDyPoisson}
\bigl|\hspace{.25mm}\partial_t^m\partial_{\mathbf{x}}^{\boldsymbol{\alpha}}\partial_{\mathbf{y}}^{\boldsymbol{\beta}}p_t(\mathbf{x},\mathbf{y})\bigr|\le C\,t^{-m-|\boldsymbol{\alpha}|-|\boldsymbol{\beta}|}\,p_t(\mathbf{x},\mathbf{y})\,.
\end{equation}
\end{proposition}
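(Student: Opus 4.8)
The plan is to obtain both estimates from the subordination formula~\eqref{eq:subordination} combined with the pointwise bounds for the Dunkl heat kernel. From Theorem~\ref{teo:heat_new} one has, for every nonnegative integer $\ell$ and all multi-indices $\boldsymbol\alpha,\boldsymbol\beta$,
\begin{equation*}
  |\partial_s^{\ell}\partial_{\mathbf x}^{\boldsymbol\alpha}\partial_{\mathbf y}^{\boldsymbol\beta}h_s(\mathbf x,\mathbf y)|\le C\, s^{-\ell-\frac{|\boldsymbol\alpha|+|\boldsymbol\beta|}{2}}\Big(1+\frac{\|\mathbf x-\mathbf y\|}{\sqrt s}\Big)^{-2}\mathcal G_{s/c}(\mathbf x,\mathbf y),
\end{equation*}
while the matching Gaussian lower bound $h_s(\mathbf x,\mathbf y)\gtrsim \mathcal G_{c_0 s}(\mathbf x,\mathbf y)$, $c_0\ge1$, from~\cite{DH-heat}, fed back into~\eqref{eq:subordination}, gives $p_t(\mathbf x,\mathbf y)\gtrsim \int_0^\infty e^{-u}\mathcal G_{c_0t^2/(4u)}(\mathbf x,\mathbf y)\,\frac{du}{\sqrt u}$; together with the doubling of $w$ this also yields $\mathcal G_{r}(\mathbf x,\mathbf y)\le C\,h_{r/c_0}(\mathbf x,\mathbf y)$ and the dilation comparison $p_{at}(\mathbf x,\mathbf y)\le C_a\,p_t(\mathbf x,\mathbf y)$ for fixed $a>0$ (for $a\ge1$ this is immediate from~\eqref{eq:subordination} by rescaling $u$). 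These are the only external inputs, and we are free to shrink the constant $c$ above so that all dilation factors arising below are $\ge1$.

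I would establish~\eqref{DtDyPoisson} first, starting with $m=0$. Differentiating~\eqref{eq:subordination} under the integral and inserting the heat estimate with $s=t^2/(4u)$, I split $e^{-u}e^{-cd(\mathbf x,\mathbf y)^2/s}=\big(e^{-t^2/(8s)}e^{-cd(\mathbf x,\mathbf y)^2/(2s)}\big)\big(e^{-u/2}e^{-cd(\mathbf x,\mathbf y)^2/(2s)}\big)$ and pair the power $s^{-|\boldsymbol\beta|/2}$ with the first factor: by the elementary inequality $\lambda^{a}e^{-b\lambda}\le C_a b^{-a}$ (with $\lambda=1/s$) this is at most $C(t^2+d(\mathbf x,\mathbf y)^2)^{-|\boldsymbol\beta|/2}\le C(t+d(\mathbf x,\mathbf y))^{-|\boldsymbol\beta|}$, and the leftover integral is, after doubling adjustments of the constants inside $\mathcal G$, bounded by $C\int_0^\infty e^{-u/2}\mathcal G_{2s/c}(\mathbf x,\mathbf y)\frac{du}{\sqrt u}\le C\int_0^\infty e^{-u/2}h_{2s/(cc_0)}(\mathbf x,\mathbf y)\frac{du}{\sqrt u}\le C\,p_{t/\sqrt{cc_0}}(\mathbf x,\mathbf y)\le C\,p_t(\mathbf x,\mathbf y)$. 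For $m>0$ I would use the differentiation identities proved exactly as in Lemma~\ref{lem:phi_sub} — the hypothesis of that lemma is not literally met, but its computation (differentiation under the integral, the identity~\eqref{eq:t=u}, and one integration by parts) goes through verbatim for fixed $\mathbf x,\mathbf y$ and $t>0$, the integrals converging absolutely by the heat bounds — which write $\partial_t^{2n}\partial_{\mathbf y}^{\boldsymbol\beta}p_t(\mathbf x,\mathbf y)$ as $\pm\Gamma(1/2)^{-1}\int_0^\infty e^{-u}(\partial_s^{n}\partial_{\mathbf y}^{\boldsymbol\beta}h_s)|_{s=t^2/(4u)}\frac{du}{\sqrt u}$ and $\partial_t^{2n+1}\partial_{\mathbf y}^{\boldsymbol\beta}p_t(\mathbf x,\mathbf y)$ as $\pm\Gamma(1/2)^{-1}\int_0^\infty e^{-u}\tfrac{t}{2u}(\partial_s^{n+1}\partial_{\mathbf y}^{\boldsymbol\beta}h_s)|_{s=t^2/(4u)}\frac{du}{\sqrt u}$. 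Running the same splitting — pairing $s^{-n-|\boldsymbol\beta|/2}$, respectively $s^{-(n+1)-|\boldsymbol\beta|/2}$, with $e^{-t^2/(8s)}e^{-cd(\mathbf x,\mathbf y)^2/(2s)}$, and using $\tfrac{t}{2u}=\tfrac{2s}{t}$ in the odd case — produces $C(t+d(\mathbf x,\mathbf y))^{-m-|\boldsymbol\beta|}p_t(\mathbf x,\mathbf y)$ when $m$ is even and $C(t+d(\mathbf x,\mathbf y))^{-m-|\boldsymbol\beta|}\big(1+\tfrac{d(\mathbf x,\mathbf y)}{t}\big)p_t(\mathbf x,\mathbf y)$ when $m$ is odd; this gives~\eqref{DtDyPoisson}, which only asks for the latter (weaker) form once $m>0$.

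For~\eqref{DtDxDyPoisson} the same scheme applies with the full heat bound: writing $s^{-\ell-(|\boldsymbol\alpha|+|\boldsymbol\beta|)/2}=(4u)^{\ell+(|\boldsymbol\alpha|+|\boldsymbol\beta|)/2}t^{-2\ell-|\boldsymbol\alpha|-|\boldsymbol\beta|}$ and absorbing the power of $u$ into $e^{-u/2}$ extracts the pure power $t^{-m-|\boldsymbol\alpha|-|\boldsymbol\beta|}$, after which the leftover integral is bounded by $C\,p_t(\mathbf x,\mathbf y)$ as before — no splitting of the Gaussian is needed here, since the sharp dependence on $d(\mathbf x,\mathbf y)$ is not required. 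I expect the genuine work to lie in the Gaussian-integral bookkeeping in the final step of each case, namely checking that once the power factor has been pulled out the remaining integral truly reconstitutes a quantity $\lesssim p_t(\mathbf x,\mathbf y)$ (this is exactly where the lower heat bound of~\cite{DH-heat}, the doubling of $w$, and the dilation comparison for $p_t$ are used), together with the routine but slightly delicate justification of the Lemma~\ref{lem:phi_sub}-type differentiation formulas in the present non-uniformly-bounded setting.
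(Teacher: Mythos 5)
First, a point of comparison: the paper does not prove this proposition at all --- it is imported verbatim as \cite[Proposition 5.1]{ADzH} --- so there is no in-paper argument to measure yours against; the source \cite{ADzH} obtains \eqref{DtDyPoisson} not by subordination of heat-kernel bounds but from R\"osler's formula \eqref{eq:translation-radial}, writing $p_t(\mathbf x,\mathbf y)=c\int_{\mathbb R^N} t\,(t^2+A(\mathbf x,\mathbf y,\eta)^2)^{-(\mathbf N+1)/2}\,d\mu_{\mathbf x}(\eta)$ and comparing integrands pointwise in $\eta$ (using $\mu_{\mathbf x}\ge 0$ and $A\ge d(\mathbf x,\mathbf y)$ on $\operatorname{supp}\mu_{\mathbf x}$), after which \eqref{DtDxDyPoisson} follows from \eqref{DtDyPoisson}, the symmetry \eqref{eq:symmetric}, and the Chapman--Kolmogorov identity $p_t=\int p_{t/2}(\cdot,\mathbf z)p_{t/2}(\mathbf z,\cdot)\,dw(\mathbf z)$.

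Your route has a genuine gap at the ``reconstitution'' step. The lower bound you import, $h_s(\mathbf x,\mathbf y)\gtrsim \mathcal G_{c_0 s}(\mathbf x,\mathbf y)$, is false: the two-sided estimates of \cite{DH-heat} carry an extra factor $\le 1$ (reflecting decay in the \emph{Euclidean} distance $\|\mathbf x-\mathbf y\|$, visible already in the factor $(1+\|\mathbf x-\mathbf y\|/\sqrt s)^{-2}$ of \eqref{eq:heat2}), whereas $\mathcal G$ decays only in the \emph{orbit} distance $d(\mathbf x,\mathbf y)$. Take $\mathbf y=\sigma_\alpha(\mathbf x)$ with $\|\mathbf x-\sigma_\alpha(\mathbf x)\|\gg\sqrt s$: then $d(\mathbf x,\mathbf y)=0$, so $\mathcal G_{c_0 s}(\mathbf x,\mathbf y)\sim V(\mathbf x,\mathbf y,\sqrt s)^{-1}$, while \eqref{eq:heat2} forces $h_s(\mathbf x,\mathbf y)\lesssim V(\mathbf x,\mathbf y,\sqrt s)^{-1}\,s/\|\mathbf x-\mathbf y\|^2$. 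Consequently your comparison $\mathcal G_r\le C\,h_{r/c_0}$ fails, and the quantity you actually end up with, $\int_0^\infty e^{-u/2}\mathcal G_{c't^2/(4u)}(\mathbf x,\mathbf y)\,\frac{du}{\sqrt u}$, is of order $V(\mathbf x,\mathbf y,t)^{-1}$ at such points, which exceeds $p_t(\mathbf x,\mathbf y)$ by the unbounded factor $\|\mathbf x-\mathbf y\|^2/t^2$ in view of Proposition \ref{propo:Poisson}. The failure is structural, not a matter of constants: once you discard the factor $(1+\|\mathbf x-\mathbf y\|/\sqrt s)^{-2}$ the information needed to recover $p_t$ pointwise is gone; and even if you keep it, subordination of the \emph{upper} heat bound only reproduces the right-hand side of \eqref{eq:Poisson_new}, so concluding $\le C\,p_t(\mathbf x,\mathbf y)$ would still require a matching pointwise \emph{lower} bound for $p_t$ with the same envelope, which neither the paper nor your sketch supplies. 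The positivity built into R\"osler's representation is precisely what circumvents this in \cite{ADzH}. (Your differentiation formulas of Lemma \ref{lem:phi_sub} type, and the extraction of the powers of $t+d$ and $t$, are fine.)
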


\begin{proposition}[{\cite[Proposition 3.6]{DH-atom}}]\label{propo:Poisson} There is a constant $C>0$ such that for all $t>0$, $\mathbf{x},\mathbf{y} \in \mathbb{R}^N$, we have
\begin{equation}
\label{eq:Poisson_new}
 p_t(\mathbf x,\mathbf y)\leq  C\frac{t}{V(\mathbf x,\mathbf y, d(\mathbf x,\mathbf y)+t)}\cdot \frac{d(\mathbf x,\mathbf y)+t}{\| \mathbf x-\mathbf y\|^2+t^2}
\end{equation}
if $N \geq 2$. If $N=1$, then
\begin{equation}
\label{eq:Poisson_dim1_new}
 p_t(\mathbf x,\mathbf y)\leq C \frac{t}{V(\mathbf x,\mathbf y, d(\mathbf x,\mathbf y)+t)}\cdot \frac{d(\mathbf x,\mathbf y)+t}{\| \mathbf x-\mathbf y\|^2+t^2}\cdot \ln\Big( 1+\frac{\| \mathbf x-\mathbf y\|+t}{d(\mathbf x,\mathbf y)+t}   \Big).
\end{equation}
  \end{proposition}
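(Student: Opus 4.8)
The plan is to derive the estimates \eqref{eq:Poisson_new}--\eqref{eq:Poisson_dim1_new} from the subordination formula \eqref{eq:subordination} together with the Gaussian bounds for the Dunkl heat kernel of Theorem~\ref{teo:heat_new}. After the substitution $s=t^2/(4u)$, formula \eqref{eq:subordination} becomes
\[ p_t(\mathbf x,\mathbf y)=c\,t\int_0^\infty e^{-t^2/(4s)}\,h_s(\mathbf x,\mathbf y)\,\frac{ds}{s^{3/2}}, \]
which already exhibits the factor $t$. Into this I would insert the bound obtained from \eqref{eq:heat2} (with $m=0$, $\boldsymbol\alpha=\boldsymbol\beta=\mathbf 0$) and the doubling of $dw$, namely $h_s(\mathbf x,\mathbf y)\cleq V(\mathbf x,\mathbf y,\sqrt s)^{-1}\,(1+\|\mathbf x-\mathbf y\|/\sqrt s)^{-2}\,e^{-c\,d(\mathbf x,\mathbf y)^2/s}$. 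Abbreviating $D:=d(\mathbf x,\mathbf y)+t$ and $\rho:=\|\mathbf x-\mathbf y\|$ and absorbing both exponentials into $e^{-c'D^2/s}$ (using $t^2+d(\mathbf x,\mathbf y)^2\ge\frac12 D^2$), the matter reduces to estimating the scalar integral $t\int_0^\infty e^{-c'D^2/s}(1+\rho/\sqrt s)^{-2}V(\mathbf x,\mathbf y,\sqrt s)^{-1}s^{-3/2}\,ds$, which I would split at $s=D^2$.

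For the tail $s>D^2$ one has $\sqrt s>D$, and the decisive observation is that \eqref{eq:balls_asymp} forces $r\mapsto w(B(\mathbf z,r))$ to grow at least like $r^{N}$, whence $V(\mathbf x,\mathbf y,\sqrt s)\ge c\,(\sqrt s/D)^{N}V(\mathbf x,\mathbf y,D)$. Dropping the exponential and writing $(1+\rho/\sqrt s)^{-2}=s/(\sqrt s+\rho)^2$, the remaining elementary integral $D^{N}\int_{D^2}^\infty(\sqrt s+\rho)^{-2}s^{-(N+1)/2}\,ds$ is split at $s=\rho^2$ and evaluated directly; for $N\ge 2$ it is $\cleq D^{-N+1}(\rho^2+t^2)^{-1}$, giving precisely \eqref{eq:Poisson_new}, while for $N=1$ the range $D^2<s<\rho^2$ contributes the additional factor $\ln(e\rho/D)\sim\ln\!\big(1+\tfrac{\|\mathbf x-\mathbf y\|+t}{d(\mathbf x,\mathbf y)+t}\big)$ appearing in \eqref{eq:Poisson_dim1_new}. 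For the local part $s\le D^2$ one has $\sqrt s\le D$, so doubling gives $V(\mathbf x,\mathbf y,\sqrt s)^{-1}\cleq (D/\sqrt s)^{\mathbf{N}}V(\mathbf x,\mathbf y,D)^{-1}$; bounding $(1+\rho/\sqrt s)^{-2}\le s/(s+\rho^2)$ and substituting $s=D^2v$ reduces everything to the inequality $\int_0^1 e^{-c'/v}v^{-(\mathbf{N}+1)/2}(D^2v+\rho^2)^{-1}\,dv\cleq(\rho^2+t^2)^{-1}$, which I would verify by splitting $[0,1]$ at $v=\frac12$ and using $(D^2v+\rho^2)^{-1}\le\rho^{-2}$ when $\rho\ge t$, respectively $(D^2v+\rho^2)^{-1}\le(D^2v)^{-1}$ together with $D\ge t$ when $\rho<t$, the Gaussian factor $e^{-c'/v}$ guaranteeing integrability at $v=0$. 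Collecting the two contributions and using $\rho^2+t^2\sim(\rho+t)^2$ and $d(\mathbf x,\mathbf y)+t\le\|\mathbf x-\mathbf y\|+t$ finishes the proof.

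The main obstacle is the tail $s>D^2$ in the regime $\|\mathbf x-\mathbf y\|\gg d(\mathbf x,\mathbf y)$ — a genuinely Dunkl configuration, in which $\mathbf y$ lies close to a reflected copy $\sigma(\mathbf x)$ while $\mathbf x$ itself is far from $\mathbf y$. Using only the trivial monotonicity $V(\mathbf x,\mathbf y,\sqrt s)\ge V(\mathbf x,\mathbf y,D)$ one loses a factor of order $\|\mathbf x-\mathbf y\|/(d(\mathbf x,\mathbf y)+t)$ and recovers only $\|\mathbf x-\mathbf y\|^{-1}$ decay instead of the required $\|\mathbf x-\mathbf y\|^{-2}$; regaining the sharp power needs both the \emph{quadratic} (not merely linear) decay in $\|\mathbf x-\mathbf y\|/\sqrt s$ furnished by Theorem~\ref{teo:heat_new} and the $r^{N}$ lower bound on the growth of $w(B(\mathbf z,r))$ read off from \eqref{eq:balls_asymp}. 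This is also exactly where the dichotomy $N\ge 2$ versus $N=1$ enters: the exponent $(N+1)/2$ exceeds $1$ precisely when $N\ge 2$, and equals $1$ when $N=1$, which is what produces the logarithmic correction.
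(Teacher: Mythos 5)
Your argument is correct. Note first that the paper itself gives no proof of this proposition: it is imported verbatim from \cite[Proposition 3.6]{DH-atom}, so there is no in-paper proof to compare against. Your derivation is the natural (and, in the cited source, essentially the actual) route: subordination plus the Gaussian-type heat kernel bound of Theorem~\ref{teo:heat_new}. I checked the steps that carry the weight. The change of variables $s=t^2/(4u)$ does produce $c\,t\int_0^\infty e^{-t^2/(4s)}h_s(\mathbf x,\mathbf y)\,s^{-3/2}\,ds$; the lower volume growth $w(B(\mathbf z,r'))\geq c\,(r'/r)^N w(B(\mathbf z,r))$ for $r'\geq r$ does follow from \eqref{eq:balls_asymp} because every factor $(|\langle\mathbf z,\alpha\rangle|+r)^{k(\alpha)}$ is nondecreasing in $r$, and it passes to the maximum $V$; the tail integral $\int_D^\infty(\sigma+\rho)^{-2}\sigma^{-N}\,d\sigma$ gives $\rho^{-2}D^{-N+1}$ for $N\geq 2$ and the extra $\ln(\rho/D)$ for $N=1$ exactly as you say; and the local part closes because $D\geq t$ and $e^{-c'/v}$ kills the singularity $v^{-(\mathbf N+3)/2}$ at $v=0$. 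Two small points worth making explicit if you write this up: (i) in the regime $\rho\leq D$ you need $\rho^2+t^2\lesssim D^2$, which holds since both $\rho$ and $t$ are at most $D$ there, and (ii) for $N=1$ the claimed equivalence $\ln(e\rho/D)\sim\ln(1+(\rho+t)/D)$ uses $\|\mathbf x-\mathbf y\|\geq d(\mathbf x,\mathbf y)$, so that the logarithm on the right is always bounded below by $\ln 2$; this is also what lets the $\rho\leq D$ case absorb into the logarithmic bound. Your closing diagnosis of why the quadratic decay in $\|\mathbf x-\mathbf y\|/\sqrt s$ and the $r^N$ volume growth are both indispensable is accurate.
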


 Let us note that Proposition~\ref{propo:Poisson} implies that 
\begin{equation}
    \label{eq:Poisson_simple}
    p_t(\mathbf x,\mathbf y) \leq CV(\mathbf x,\mathbf y,t+d(\mathbf x,\mathbf y))^{-1} \Big(1+\frac{\|\mathbf x-\mathbf y\|}{t}\Big)^{-1}, \quad \mathbf x,\mathbf y\in\mathbb R^N, \ t>0. 
\end{equation}

Let us note that if $X=L^1(dw)$, then $p_t(\mathbf x,\mathbf y)$ is the integral kernel of the dual operator $P_t^*$ acting on $X^*=L^\infty$  {(see~\eqref{eq:symmetric}).}  Hence, we {sometimes} skip '$*$' and write $P_tf$ instead of $P_t^*f$ for $f\in L^\infty$. 

The following corollary of~\eqref{DtDxDyPoisson} will be used later on.

\begin{corollary}
    Let $\boldsymbol{\alpha}$ be a multi-index and let $m$ be a non-negative integer. There is a constant $C>0$ such that for all $f \in L^{\infty}$, $t>0$, and $\mathbf{x} \in \mathbb{R}^N$ we have:
    \begin{enumerate}[(a)]
        \item{the function 
        \begin{align*}
            { u(t,\mathbf x):}=P_tf(\mathbf x)=\int_{\mathbb{R}^N} p_t(\mathbf x,\mathbf y)f(\mathbf y)\, dw(\mathbf y)
        \end{align*}
is a $C^\infty$ as a function of the variables $(t,\mathbf x)\in (0,\infty)\times \mathbb R^N$;}
\item{
\begin{equation}
    \partial_t^m \partial_{\mathbf x}^{\boldsymbol{\alpha}}u(t,\mathbf x)=\int_{\mathbb{R}^N}  \Big\{\partial_t^m \partial_{\mathbf x}^{\boldsymbol{\alpha}} p_t(\mathbf x,\mathbf y)\Big\} f(\mathbf y)\, dw(\mathbf y); 
\end{equation}}
\item{\begin{equation}\label{eq:P_tfBounds}  | \partial_t^m \partial_{\mathbf x}^{\boldsymbol{\alpha}} u(t,\mathbf x)|\leq C_{m,\boldsymbol\alpha} t^{-m-|\boldsymbol{\alpha}|} \|f\|_{L^{\infty}}.  
\end{equation}}
    \end{enumerate}
\end{corollary}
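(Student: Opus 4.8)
The plan is to derive all three parts from the pointwise kernel estimate \eqref{DtDxDyPoisson} together with the normalization \eqref{eq:one}: parts (a) and (b) will come from a routine ``differentiation under the integral sign'' argument, and (c) is then immediate.

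For (a) and (b) I would proceed by induction on the total order $m+|\boldsymbol\alpha|$. Fix $(t_0,\mathbf x_0)\in(0,\infty)\times\mathbb R^N$ and a compact box $Q=[a,b]\times\overline{B(\mathbf x_0,\rho)}$ with $0<a<t_0<b$. Two facts about the kernel on $Q$ are needed. First, $(t,\mathbf x)\mapsto\partial_t^{m}\partial_{\mathbf x}^{\boldsymbol\alpha}p_t(\mathbf x,\mathbf y)$ is continuous on $Q$ for every $\mathbf y$; this follows from the subordination formula \eqref{eq:subordination}, the explicit expression \eqref{eq:heat_kernel} for $h_s$ (jointly smooth in $(s,\mathbf x,\mathbf y)$ since the Dunkl kernel $E$ is entire), and the bounds of Theorem~\ref{teo:heat_new}, which make the differentiated integrand in \eqref{eq:subordination} dominated, locally uniformly in $(t,\mathbf x,\mathbf y)$, by an $L^1(e^{-u}u^{-1/2}\,du)$ function, so $p_t(\mathbf x,\mathbf y)$ may be differentiated under the subordination integral to all orders and is jointly smooth on $(0,\infty)\times\mathbb R^N\times\mathbb R^N$. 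Second — and this is the point that requires a small computation — for every $(m,\boldsymbol\alpha)$ there is $\Phi=\Phi_{Q,m,\boldsymbol\alpha}\in L^1(dw)$ with
\begin{equation*}
\big|\partial_t^{m}\partial_{\mathbf x}^{\boldsymbol\alpha}p_t(\mathbf x,\mathbf y)\big|\le\Phi(\mathbf y)\qquad\text{for all }(t,\mathbf x)\in Q,\ \mathbf y\in\mathbb R^N.
\end{equation*}
Granting these, the standard theorem on parameter-dependent integrals gives that $\partial_t^{m}\partial_{\mathbf x}^{\boldsymbol\alpha}u(t,\mathbf x)$ exists on $Q$, is continuous, and is obtained by carrying the derivative under the integral defining $u$; running this for every $(m,\boldsymbol\alpha)$ and every $Q$ yields (a) and (b).

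To produce the majorant $\Phi$, I would use \eqref{DtDxDyPoisson} with $\boldsymbol\beta=\mathbf 0$ to reduce to $p_t$ itself: on $Q$ one has $|\partial_t^{m}\partial_{\mathbf x}^{\boldsymbol\alpha}p_t(\mathbf x,\mathbf y)|\le C\,t^{-m-|\boldsymbol\alpha|}p_t(\mathbf x,\mathbf y)\le C\,a^{-m-|\boldsymbol\alpha|}p_t(\mathbf x,\mathbf y)$, so it suffices to dominate $p_t(\mathbf x,\mathbf y)$ for $(t,\mathbf x)\in Q$ by an $L^1(dw)$ function of $\mathbf y$. For $\|\mathbf y\|\le 2(\|\mathbf x_0\|+\rho)+1$ this is clear, $p$ being continuous, hence bounded, on the compact set $Q\times\{\|\mathbf y\|\le 2(\|\mathbf x_0\|+\rho)+1\}$. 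For larger $\|\mathbf y\|$ one has $d(\mathbf x,\mathbf y)\ge\|\mathbf y\|-\|\mathbf x\|\ge\tfrac12\|\mathbf y\|$ and $\|\mathbf x-\mathbf y\|\ge\tfrac12\|\mathbf y\|$ for all $\mathbf x\in\overline{B(\mathbf x_0,\rho)}$ (as $G$ acts orthogonally), while $w(B(\mathbf y,\|\mathbf y\|/2))\gtrsim\|\mathbf y\|^{\mathbf N}$ by \eqref{eq:balls_asymp} together with $|\langle\mathbf y,\alpha\rangle|\le\sqrt2\,\|\mathbf y\|$; plugging these into \eqref{eq:Poisson_new} (resp.\ \eqref{eq:Poisson_dim1_new} for $N=1$, whose extra logarithmic factor stays bounded here) gives $p_t(\mathbf x,\mathbf y)\le C_Q\,\|\mathbf y\|^{-\mathbf N-1}$, and $\|\mathbf y\|^{-\mathbf N-1}\in L^1(dw)$ near infinity since $w(\mathbf y)\le C\,\|\mathbf y\|^{\mathbf N-N}$. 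This settles the majorant, hence (a) and (b). Finally (c) follows at once: by (b), then \eqref{DtDxDyPoisson} and \eqref{eq:one},
\begin{equation*}
\big|\partial_t^{m}\partial_{\mathbf x}^{\boldsymbol\alpha}u(t,\mathbf x)\big|\le\int_{\mathbb R^N}\big|\partial_t^{m}\partial_{\mathbf x}^{\boldsymbol\alpha}p_t(\mathbf x,\mathbf y)\big|\,|f(\mathbf y)|\,dw(\mathbf y)\le C\,t^{-m-|\boldsymbol\alpha|}\|f\|_{L^\infty}\int_{\mathbb R^N}p_t(\mathbf x,\mathbf y)\,dw(\mathbf y)=C\,t^{-m-|\boldsymbol\alpha|}\|f\|_{L^\infty}.
\end{equation*}
The only nonroutine step is the construction of the uniform $L^1(dw)$ majorant, i.e.\ extracting enough spatial decay of $p_t(\mathbf x,\mathbf y)$ from Proposition~\ref{propo:Poisson}; once that is in place the rest is bookkeeping with the theorem on differentiating under the integral.
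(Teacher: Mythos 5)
Your proposal is correct and follows exactly the route the paper intends: the paper states this as an unproved corollary of \eqref{DtDxDyPoisson}, and your argument is precisely that estimate (with $\boldsymbol\beta=\mathbf 0$) combined with the normalization \eqref{eq:one} for part (c), plus a routine differentiation-under-the-integral argument for (a) and (b). The construction of the uniform $L^1(dw)$ majorant from Proposition~\ref{propo:Poisson} and \eqref{eq:balls_asymp} is carried out correctly, so nothing further is needed.
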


 \section{Lipschitz spaces in the Dunkl setting}
\subsection{Preface} 

Consider the Dunkl heat semigroup $\{H_t\}_{t>0}$ acting on the Banach space $X=L^1(dw)$.  Theorem \ref{teo:heat_new} implies that $\frac{d}{dt} H_t$ is a bounded operator on $L^1(dw)$ and 
$$\Big\| \frac{d}{dt} H_t\Big\|_{L^1(dw)\to L^1(dw)}\leq C_1t^{-1}.$$ 
From a general theory of semigroups  (see e.g. Davies \cite{Davies}) we conclude that the semigroup $\{H_t\}_{t \geq 0}$ is holomorphic and uniformly bounded in a sector $\boldsymbol \Delta_\delta$ for some $\delta>0$.

Further, from  Corollary \ref{cor:simple_bound} and the fact that the Dunkl operators commute, we deduce that  \eqref{eq:A-form}--\eqref{eq:D_j_bounds} are satisfied for $\mathcal{T}_t=H_t$ and $\mathcal D_j=D_j$.

\begin{definition}
Consider $X=L^1(dw)$ and its dual $X^*=L^\infty=L^{\infty}(\mathbb R^N)$.    For $\beta>0$, we set $\Lambda_{k}^\beta=\Lambda^\beta_{-\sqrt{-\Delta_k}}$  and  $\|f\|_{\Lambda_k^\beta}=\| f\|_{\Lambda^\beta_{-\sqrt{-\Delta_k}}}$. 
\end{definition}

We are now in a position to state some results concerning $\Lambda^{\beta}_{k}$ which follows from the general approach described in  Part~\ref{part1} of the paper (see Theorems \ref{theo:Lamba=Lambda}, \ref{theo:Bessel}, \ref{teo:A-special}). 

\begin{theorem}\label{teo:Lip_Dunk_easy}
\begin{enumerate}[(a)]
    \item{$\Lambda_{k}^{2\beta}=\Lambda_{\Delta_k}^\beta$, $\beta>0$. }
    \item{For $\beta,\gamma>0$, the Dunkl Bessel potential \textup{(}see \eqref{eq:Bessel}\textup{)} 
  $f\mapsto ((I-\Delta_k)^{-\gamma/2})^*f=f* \mathcal J^{\gamma/2},$ 
 is an isomorphism  of $\Lambda_{k}^{\beta}$ onto $\Lambda_{k}^{\beta+\gamma}$. 
  Here  
 $$ \mathcal J^{\beta/2} (\mathbf x)={\Gamma (\beta/2)^{-1}}\int_0^\infty t^{\beta/2} e^{-t}h_t(\mathbf x )\frac{dt}{t}$$ 
 is a radial $L^1(dw)$-function. }\label{numitem:easy_b}
 \item{Suppose $\beta>1$ and $f\in L^\infty$.  Then $f\in \Lambda_{k}^\beta$ if and only if $D_j^*f\in \Lambda_{k}^{\beta-1}$. Moreover,  {there is a constant $C>1$ such that for all $f \in \Lambda_k^{\beta}$, we have}
 $$ C^{-1}\| f\|_{\Lambda_{k}^\beta} \leq \| f\|_{L^\infty} +\sum_{j=1}^N \| D^*_jf\|_{\Lambda_{k}^{\beta-1}} \leq C\| f\|_{\Lambda_{k}^\beta} .$$
 Here $D_j^*f$ is understood in the mild sense \textup{(}see \eqref{eq:weak-sense}\textup{)}. }\label{numitem:easy_c}
\end{enumerate}
\end{theorem}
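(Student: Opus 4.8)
All three assertions are obtained by specializing the abstract results of Part~\ref{part1} to the data $X=L^1(dw)$, $X^*=L^\infty$, $\mathcal T_t=H_t$, $\mathcal A=\Delta_k$. As recalled in the Preface, the Dunkl heat semigroup $\{H_t\}_{t\geq 0}$ is strongly continuous and uniformly bounded on $L^1(dw)$ and extends to a bounded holomorphic semigroup in some sector $\boldsymbol{\Delta}_\delta$; moreover, by Corollary~\ref{cor:simple_bound} and the commutativity of the Dunkl operators, the structural hypotheses \eqref{eq:A-form}--\eqref{eq:D_j_bounds} hold with $\mathcal D_j=D_j$ and $\epsilon_j=1$. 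Thus every standing assumption of Theorems~\ref{theo:Lamba=Lambda}, \ref{theo:Bessel} and \ref{teo:A-special} is satisfied, and the subordinate semigroup \eqref{eq:P_sub} built from $\{H_t\}$ is precisely the Dunkl--Poisson semigroup $\{P_t\}$, with generator $-\sqrt{-\Delta_k}$; in particular $\Lambda_k^\beta=\Lambda^\beta_{-\sqrt{-\Delta_k}}$ by definition. Part~(a) is then immediate from Theorem~\ref{theo:Lamba=Lambda} applied with $\mathcal A=\Delta_k$, which gives $\Lambda_{\Delta_k}^{\beta}=\Lambda^{2\beta}_{-\sqrt{-\Delta_k}}=\Lambda_k^{2\beta}$ with equivalence of the corresponding norms.

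For part~(b) the plan is to combine~(a) with Theorem~\ref{theo:Bessel}. By~(a), $\Lambda_k^{\beta}=\Lambda_{\Delta_k}^{\beta/2}$ and $\Lambda_k^{\beta+\gamma}=\Lambda_{\Delta_k}^{(\beta+\gamma)/2}$ with equivalent norms, so Theorem~\ref{theo:Bessel}, used with the parameters $\beta/2$ and $\gamma/2$, shows that $\bigl((I-\Delta_k)^{-\gamma/2}\bigr)^{*}$ is an isomorphism of $\Lambda_k^{\beta}$ onto $\Lambda_k^{\beta+\gamma}$. It then remains to identify this abstract operator with the announced convolution operator: starting from \eqref{eq:Bessel} and using $H_tf=f*h_t$, one computes $(I-\Delta_k)^{-\gamma/2}f=f*\mathcal J^{\gamma/2}$ with $\mathcal J^{\gamma/2}=\Gamma(\gamma/2)^{-1}\int_0^\infty t^{\gamma/2}e^{-t}h_t\,\tfrac{dt}{t}$. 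Since each $h_t$ is a radial $L^1(dw)$-function with $\int_{\mathbb R^N} h_t\,dw=1$, this integral converges in $L^1(dw)$, the limit $\mathcal J^{\gamma/2}$ is radial with $\|\mathcal J^{\gamma/2}\|_{L^1(dw)}\le 1$, and from the symmetry $h_t(\mathbf x,\mathbf y)=h_t(\mathbf y,\mathbf x)$ one gets that the dual operator acts on $L^\infty$ exactly by $f\mapsto f*\mathcal J^{\gamma/2}$, as claimed.

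For part~(c) I would invoke Theorem~\ref{teo:A-special} with $\mathcal A=\Delta_k$, $\mathcal D_j=D_j$, $\epsilon_j=1$. Since $\beta>1$ forces $\beta/2>1/2$, that theorem yields $f\in\Lambda_{\Delta_k}^{\beta/2}$ if and only if $D_j^*f\in\Lambda_{\Delta_k}^{(\beta-1)/2}$ for all $j$, with $D_j^*f$ understood in the mild sense \eqref{eq:weak-sense}, together with the two-sided norm estimate; translating through~(a) via $\Lambda_{\Delta_k}^{\beta/2}=\Lambda_k^{\beta}$ and $\Lambda_{\Delta_k}^{(\beta-1)/2}=\Lambda_k^{\beta-1}$ (with comparable norms) gives exactly the statement of~(c). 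I do not anticipate a genuine obstacle here: once the abstract framework of Part~\ref{part1} is matched to the Dunkl heat semigroup, all three assertions follow formally, the only step needing a short verification being the bookkeeping in~(b) that turns the operator-valued integral \eqref{eq:Bessel} into convolution with a bona fide radial $L^1(dw)$-kernel --- a routine consequence of $\int h_t\,dw=1$ and the radiality of $h_t$.
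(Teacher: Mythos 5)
Your proposal is correct and follows exactly the route the paper intends: the paper presents Theorem \ref{teo:Lip_Dunk_easy} as a direct specialization of Theorems \ref{theo:Lamba=Lambda}, \ref{theo:Bessel} and \ref{teo:A-special} to $X=L^1(dw)$, $\mathcal T_t=H_t$, $\mathcal A=\Delta_k$, with the hypotheses \eqref{eq:A-form}--\eqref{eq:D_j_bounds} checked in the Preface via Corollary \ref{cor:simple_bound}. Your additional bookkeeping for part (b) --- the halving of exponents through part (a) and the identification of $((I-\Delta_k)^{-\gamma/2})^*$ as convolution with the radial $L^1(dw)$-kernel $\mathcal J^{\gamma/2}$ using $\|h_t\|_{L^1(dw)}=1$ and the symmetry of $h_t(\mathbf x,\mathbf y)$ --- is exactly the routine verification the paper leaves implicit.
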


 A remark is in order. One can easily prove that the action $D_j^*f$ for the characterization  stated in part~\eqref{numitem:easy_c} of the theorem can be equivalently understood in the following distributional sense, namely $g_j=D_j^*f$ if and only if  
 $$ \int_{\mathbb R^N} g_j(\mathbf x)\mathbf \varphi (\mathbf x)\, dw(\mathbf x)=-\int_{\mathbf R^N} f(\mathbf x)D_j\varphi(\mathbf x)\, dw(\mathbf x) \quad 
 \text{ for all }\varphi\in C_c^\infty(\mathbb R^N).$$ 

\subsection{Case $0<\beta<1$ }
\begin{theorem}\label{teo:Lambda}
    For $0<\beta<1$, the spaces $\Lambda^\beta(\mathbb R^N)$ and $\Lambda_{k}^\beta$ coincide {and the corresponding norms $\| \cdot\|_{\Lambda^\beta(\mathbb R^N)}$ and  $\| \cdot\|_{\Lambda^\beta_k}$ are equivalent. }
\end{theorem}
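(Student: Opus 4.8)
The range $0<\beta<1$ corresponds to $m=1$ in Definition \ref{def:Lambda}, so $f\in\Lambda_k^\beta$ means $f\in L^\infty$ together with $\sup_{0<t\le 1}t^{1-\beta}\|\partial_tP_tf\|_{L^\infty}<\infty$ (the range $t>1$ being automatically controlled by $\|f\|_{L^\infty}$, cf. Remark \ref{rem:small_t}). I would prove the two inclusions separately, working directly with the Dunkl--Poisson semigroup and the kernel estimates of Propositions \ref{Poiss_new} and \ref{propo:Poisson}; one could alternatively pass to the Dunkl heat semigroup via Theorem \ref{teo:Lip_Dunk_easy}(a), where the Gaussian bounds of Theorem \ref{teo:heat_new} make the integrals slightly shorter, but I will follow the Poisson route since the definition is phrased through it.

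\emph{The inclusion $\Lambda^\beta(\mathbb R^N)\subseteq\Lambda_k^\beta$.} Given $f\in\Lambda^\beta(\mathbb R^N)$, use $\int p_t(\mathbf x,\mathbf y)\,dw(\mathbf y)\equiv 1$ (see \eqref{eq:one}) and differentiation under the integral sign to write $\partial_tP_tf(\mathbf x)=\int_{\mathbb R^N}\partial_tp_t(\mathbf x,\mathbf y)\,(f(\mathbf y)-f(\mathbf x))\,dw(\mathbf y)$. Since $|\partial_tp_t(\mathbf x,\mathbf y)|\le Ct^{-1}p_t(\mathbf x,\mathbf y)$ by Proposition \ref{Poiss_new} and $|f(\mathbf y)-f(\mathbf x)|\le\|f\|_{\Lambda^\beta(\mathbb R^N)}\|\mathbf x-\mathbf y\|^\beta$, the estimate reduces to the moment bound $\int_{\mathbb R^N}p_t(\mathbf x,\mathbf y)\|\mathbf x-\mathbf y\|^\beta\,dw(\mathbf y)\le Ct^\beta$. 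To prove it I would use the sharp pointwise bound \eqref{eq:Poisson_new}: bounding $\|\mathbf x-\mathbf y\|^\beta\le(\|\mathbf x-\mathbf y\|^2+t^2)^{\beta/2}$ and $\|\mathbf x-\mathbf y\|\ge d(\mathbf x,\mathbf y)$ gives $p_t(\mathbf x,\mathbf y)\|\mathbf x-\mathbf y\|^\beta\lesssim t\,(d(\mathbf x,\mathbf y)+t)^{\beta-1}w(B(\mathbf x,d(\mathbf x,\mathbf y)+t))^{-1}$; an annular decomposition in $d(\mathbf x,\mathbf y)$, using the doubling property \eqref{eq:balls_asymp} and the fact that $\{\mathbf y:d(\mathbf x,\mathbf y)\le\rho\}$ has $w$-measure $\lesssim w(B(\mathbf x,\rho))$ (by $G$-invariance of $w$), produces the geometric series $t^\beta\sum_{j\ge 0}2^{j(\beta-1)}\lesssim t^\beta$, convergent precisely because $\beta<1$ (the extra logarithm of \eqref{eq:Poisson_dim1_new} in dimension one is harmless). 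Hence $\|\partial_tP_tf\|_{L^\infty}\le C\|f\|_{\Lambda^\beta(\mathbb R^N)}t^{\beta-1}$.

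\emph{The inclusion $\Lambda_k^\beta\subseteq\Lambda^\beta(\mathbb R^N)$.} Let $f\in\Lambda_k^\beta$, say $\|\partial_tP_tf\|_{L^\infty}\le At^{\beta-1}$ for $0<t\le 1$. Since $P_tf\to f$ in $L^\infty$ as $t\to 0$ (Lemma \ref{lem:converge}), integrating the hypothesis yields $\|f-P_tf\|_{L^\infty}\le\tfrac A\beta t^\beta$. For $\mathbf x\ne\mathbf x'$ with $h:=\|\mathbf x-\mathbf x'\|\le 1$ (the case $h>1$ is trivial), take $t=h$ and split $f(\mathbf x)-f(\mathbf x')=(f(\mathbf x)-P_tf(\mathbf x))+(P_tf(\mathbf x)-P_tf(\mathbf x'))+(P_tf(\mathbf x')-f(\mathbf x'))$; the outer terms are $\le\tfrac A\beta h^\beta$ and the middle one is $\le h\,\|\nabla_{\mathbf x}P_tf\|_{L^\infty}$, so everything reduces to the gradient estimate $\|\nabla_{\mathbf x}P_tf\|_{L^\infty}\le C(\|f\|_{L^\infty}+A)\,t^{\beta-1}$ for $0<t\le 1$. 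For this, write $\psi(s)=\partial_sP_sf$; by the semigroup law $\partial_rP_rf=P_{r/2}\psi(r/2)$, hence $\nabla_{\mathbf x}\partial_rP_rf(\mathbf x)=\int\nabla_{\mathbf x}p_{r/2}(\mathbf x,\mathbf y)\,\psi(r/2)(\mathbf y)\,dw(\mathbf y)$, and since $\int|\nabla_{\mathbf x}p_{r/2}(\mathbf x,\mathbf y)|\,dw(\mathbf y)\le Cr^{-1}$ (from \eqref{DtDxDyPoisson} and \eqref{eq:one}) while $\|\psi(r/2)\|_{L^\infty}\le A(r/2)^{\beta-1}$, one gets $\|\nabla_{\mathbf x}\partial_rP_rf\|_{L^\infty}\le CA\,r^{\beta-2}$. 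Finally $\nabla_{\mathbf x}P_tf=\nabla_{\mathbf x}P_1f-\int_t^1\nabla_{\mathbf x}\partial_rP_rf\,dr$, and $\|\nabla_{\mathbf x}P_1f\|_{L^\infty}\le C\|f\|_{L^\infty}$ together with $\int_t^1 r^{\beta-2}\,dr\le\tfrac1{1-\beta}t^{\beta-1}$ (again using $\beta<1$) gives the desired bound.

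Combining the two inclusions yields the coincidence of the spaces together with the equivalence of norms. I do not expect a serious conceptual obstacle; the main technical work lies in the moment estimate $\int p_t(\mathbf x,\mathbf y)\|\mathbf x-\mathbf y\|^\beta\,dw(\mathbf y)\lesssim t^\beta$ of the first inclusion—where the sharp kernel bounds of Propositions \ref{Poiss_new} and \ref{propo:Poisson}, the doubling of $dw$, and the hypothesis $\beta<1$ all genuinely enter—and in carefully justifying the differentiations of $P_rf$ in $r$ and $\mathbf x$ under the integral sign, all of which are supplied by Proposition \ref{Poiss_new} and the corollary following Proposition \ref{propo:Poisson}.
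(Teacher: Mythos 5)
Your proposal is correct and follows essentially the same route as the paper: the first inclusion via the cancellation $\int \partial_t p_t(\mathbf x,\mathbf y)\,dw(\mathbf y)=0$ and the moment bound coming from Proposition \ref{propo:Poisson}, and the converse via $\|f-P_tf\|_{L^\infty}\lesssim t^\beta$ combined with spatial regularity of $P_tf$ at scale $t=\|\mathbf x-\mathbf x'\|$ obtained by factoring $\partial_r P_r=P_{r/2}Q_{r/2}$ and integrating from $t$ to $1$. Your three-term splitting plus gradient bound is just a repackaging of the paper's decomposition $\int_\varepsilon^\delta+\int_\delta^1+|P_1f(\mathbf x')-P_1f(\mathbf x)|$ and its Lemma \ref{lem:Lipschitz_P}, so the two arguments coincide in substance.
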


 We start by the lemma. 

 \begin{lemma}\label{lem:Lipschitz_P}
     There is a constant $C>0$ such that for all $f\in L^\infty$, $\mathbf x,\mathbf x'\in \mathbb R^N $, and $t>0$ we have 
     $$ |P_tf(\mathbf x)-P_tf(\mathbf x')|\leq C \min\Big(1,\frac{\|\mathbf x-\mathbf x'\|}{t}\Big)\| f\|_{L^\infty}.$$
 \end{lemma}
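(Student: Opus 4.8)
The plan is to combine two elementary bounds for $u(t,\mathbf x):=P_tf(\mathbf x)=\int_{\mathbb R^N}p_t(\mathbf x,\mathbf y)f(\mathbf y)\,dw(\mathbf y)$, splitting the argument according to whether $\|\mathbf x-\mathbf x'\|\geq t$ or $\|\mathbf x-\mathbf x'\|<t$.

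If $\|\mathbf x-\mathbf x'\|\geq t$, then $\min(1,\|\mathbf x-\mathbf x'\|/t)=1$, and it suffices to observe that $P_t$ is a contraction on $L^\infty$: indeed $p_t(\mathbf x,\mathbf y)\geq 0$ and, by~\eqref{eq:one}, $\int_{\mathbb R^N}p_t(\mathbf x,\mathbf y)\,dw(\mathbf y)=1$, so $|u(t,\mathbf x)-u(t,\mathbf x')|\leq 2\|u(t,\cdot)\|_{L^\infty}\leq 2\|f\|_{L^\infty}$, which is the desired bound in this regime.

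If $\|\mathbf x-\mathbf x'\|<t$, then $\min(1,\|\mathbf x-\mathbf x'\|/t)=\|\mathbf x-\mathbf x'\|/t$. Here I would use that $\mathbf x\mapsto u(t,\mathbf x)$ is $C^\infty$ (the corollary containing~\eqref{eq:P_tfBounds}) and write, along the segment $\mathbf x_s:=(1-s)\mathbf x'+s\mathbf x$, $s\in[0,1]$,
$$ u(t,\mathbf x)-u(t,\mathbf x')=\int_0^1\big\langle (\nabla_{\mathbf x}u)(t,\mathbf x_s),\,\mathbf x-\mathbf x'\big\rangle\,ds. $$
By~\eqref{eq:P_tfBounds} with $m=0$ and $|\boldsymbol{\alpha}|=1$, one has $|(\nabla_{\mathbf x}u)(t,\mathbf x_s)|\leq Ct^{-1}\|f\|_{L^\infty}$ uniformly in $s\in[0,1]$, hence $|u(t,\mathbf x)-u(t,\mathbf x')|\leq Ct^{-1}\|\mathbf x-\mathbf x'\|\,\|f\|_{L^\infty}$, which again matches the claim.

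Combining the two cases yields the lemma. I do not expect a genuine obstacle: the only inputs are the stochastic completeness~\eqref{eq:one} (for the trivial $L^\infty$ bound) and the first-order derivative estimate~\eqref{eq:P_tfBounds} for the Poisson integral — itself a consequence of Proposition~\ref{Poiss_new} — together with the one-dimensional mean value theorem along the segment joining $\mathbf x$ and $\mathbf x'$.
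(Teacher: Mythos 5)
Your proof is correct and follows essentially the same route as the paper: the trivial $L^\infty$ bound when $\|\mathbf x-\mathbf x'\|\geq t$, and the mean value theorem along the segment with the first-order gradient estimate when $\|\mathbf x-\mathbf x'\|<t$. The only cosmetic difference is that you differentiate $P_tf$ directly via~\eqref{eq:P_tfBounds}, while the paper estimates $\int_{\mathbb R^N}|p_t(\mathbf x,\mathbf y)-p_t(\mathbf x',\mathbf y)|\,dw(\mathbf y)$ at the kernel level using~\eqref{DtDxDyPoisson}; both reduce to the same gradient bound.
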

 \begin{proof} Thanks to~\eqref{eq:P_tfBounds} it suffices to consider $\|\mathbf x-\mathbf x'\|<t$. Then 
\begin{equation}
    \begin{split}\label{eq:Lip_int}
        \int_{\mathbb R^N} |p_t(\mathbf x,\mathbf y)-p_t(\mathbf x',\mathbf y)|\, dw(\mathbf y)&=\int_{\mathbb R^N} \Big|\int_0^1 \partial_s p_t(\mathbf x'+s(\mathbf x-\mathbf x'),\mathbf y)\, ds \Big|\, dw(\mathbf y)\\
        & \leq \int_{\mathbb R^N} \int_0^1  \|\mathbf x-\mathbf x'\| \| \nabla_{\mathbf{x}} p_t(\mathbf x'+s(\mathbf x-\mathbf x'),\mathbf y)\|\, ds \, dw(\mathbf y)\\
      &  \leq Ct^{-1}\|\mathbf x-\mathbf x'\|,
    \end{split}
\end{equation}
where in the last inequality we have used  ~\eqref{DtDxDyPoisson} (with $m=0$, $|\boldsymbol\alpha|=1$, and $\boldsymbol \beta=\boldsymbol 0$) together with ~\eqref{eq:Poisson_simple}. Now the lemma follows from ~\eqref{eq:Lip_int}.
\end{proof}

\begin{proof}[Proof of Theorem \ref{teo:Lambda}] 

For $\mathbf{x},\mathbf{y} \in \mathbb{R}^N$ and $t>0$, we set 
\begin{equation*}
   q_t(\mathbf x):=\frac{d}{dt} p_t(\mathbf x), \quad  q_t(\mathbf x,\mathbf y):=\frac{d}{dt} p_t(\mathbf x,\mathbf y).
\end{equation*}
 Then, by~\eqref{eq:symmetric} and~\eqref{eq:one},  $q_t(\mathbf x,\mathbf y)=q_t(\mathbf y,\mathbf x)$ and 
$$ Q_t^*f(\mathbf x):= \frac{d}{dt} P_t^*f(\mathbf x)=\int_{\mathbb R^N} q_t(\mathbf x,\mathbf y)f(\mathbf y)\, dw(\mathbf y),$$
\begin{equation}\label{eq:Fourier_qt}
    \quad   \int_{\mathbb{R}^N} q_t(\mathbf x,\mathbf y)\, dw(\mathbf y)=0.
\end{equation}

     Let $0<\beta<1$. Suppose $f\in \Lambda^\beta (\mathbb R^N)$. 
    Applying  ~\eqref{eq:Fourier_qt} together with \eqref{DtDxDyPoisson} combined with {Proposition~\ref{propo:Poisson}} with $m=1$ and $\boldsymbol \beta =\boldsymbol 0$, we have  
    \begin{equation*}
        \begin{split}
            \Big| \frac{d}{dt} P_t^*f(\mathbf x)\Big| & =\Big| \int_{\mathbb R^N} q_t(\mathbf x,\mathbf y) (f(\mathbf y)-f(\mathbf x))\, dw(\mathbf y)\Big|\\
            &\leq \|f\|_{\Lambda^\beta(\mathbb R^N)} \int_{\mathbb R^N}  |q_t(\mathbf x,\mathbf y)| \|\mathbf x-\mathbf y\|^\beta\, dw(\mathbf y)\\
            &\leq C \|f\|_{\Lambda^\beta(\mathbb R^N)} \int_{\mathbb R^N} t^{\beta -1}V(\mathbf x,\mathbf y, t+d(\mathbf x,\mathbf y))^{-1} \Big(1+\frac{\|\mathbf x-\mathbf y\|}{t}\Big)^{-1}  \frac{\|\mathbf x-\mathbf y\|^\beta }{t^\beta}\, dw(\mathbf y)\\
            &\leq C \|f\|_{\Lambda^\beta(\mathbb R^N)}  t^{\beta -1}.
        \end{split}
    \end{equation*}
    Hence, $\| f\|_{\Lambda^\beta_k}\leq C\| f\|_{\Lambda^\beta(\mathbb R^N)}$. 
    
The converse implication will be proven if we show that $\Lambda^\beta_k\subseteq \Lambda^\beta(\mathbb R^N)$ and there is a~constant $C>0$ such that for all $f \in \Lambda_k^\beta$ we have
\begin{equation}\label{eq:Lambda_f} \|f\|_{\Lambda^\beta(\mathbb R^N)}\leq C\| f\|_{\Lambda^\beta_k}. 
\end{equation}
       For this purpose, let us note that by Lemma \ref{lem:converge}, the functions $ P_t^*f$ (which are continuous on $\mathbb R^N$) converge uniformly to $f$, as $t$ tends to $0$.   Hence, $f$ is  function on $\mathbb R^N$. 
      We now turn to prove the Lipschitz regularity \eqref{eq:Lip_class} of $f$ and~\eqref{eq:Lambda_f}.  Fix   $\mathbf x,\mathbf x'\in \mathbb R^N$ such that  $0<\|\mathbf x-\mathbf x'\|\leq 1$. Set $\delta:=\|\mathbf x-\mathbf x'\|$. Then  
      \begin{equation}\label{eq:Lambda_split}
          \begin{split}
              |f(\mathbf x')-f(\mathbf x)|&=\big|\lim_{\varepsilon \to 0} (P^*_\varepsilon f(\mathbf x')-P^*_\varepsilon f(\mathbf x))\big|\\
              &{\leq \lim_{\varepsilon \to 0} \Big| \int_\varepsilon^1 \frac{d}{ds} \Big(P^*_sf(\mathbf x')-P^*_sf(\mathbf x)\Big)\, ds \Big|+\Big| P^*_1f(\mathbf x')-P^*_1 f(\mathbf x)\Big|}\\
              & \leq \lim_{\varepsilon\to 0} \int_\varepsilon^\delta \Big(\Big|\frac{d}{ds} P^*_sf(\mathbf x)\Big|\, + \Big|\frac{d}{ds} P^*_sf(\mathbf x')\Big|\Big)\, ds \\
              &\ \ +\int_\delta^1  \Big|  \frac{d}{ds} \big(P_sf(\mathbf x')
              -P^*_sf(\mathbf x)\big)\Big|\, ds
              + C\|\mathbf x-\mathbf x'\| \|f\|_{L^\infty}, 
          \end{split}
      \end{equation}
     where in the last inequality we have used Lemma~\ref{lem:Lipschitz_P}. {Since $f\in \Lambda_k^\beta$,} we have 
      \begin{equation}\label{eq:Lambda_1}
           \int_\varepsilon^\delta \Big(\Big|\frac{d}{ds} P^*_sf(\mathbf x)\Big| + \Big|\frac{d}{ds} P^*_sf(\mathbf x')\Big|\Big)\,ds\leq 2\| f\|_{\Lambda_k^\beta}\int_\varepsilon^\delta s^{\beta-1}\, ds \leq C \| f\|_{\Lambda_k^\beta}\|\mathbf x-\mathbf x'\|^\beta. 
      \end{equation}
Recall that, by the semigroup property,  $\frac{d}{ds}P^*_s=Q_s=P^*_{s/2}Q^*_{s/2}$. Hence, using Lemma~\ref{lem:Lipschitz_P} and the assumption $f\in\Lambda_k^\beta$, we get 
\begin{align*}
   & \int_{\delta}^1 \Big|\frac{d}{ds}(P^*_sf(\mathbf{x}')-P^*_sf(\mathbf{x}))\Big|\,ds=\int_\delta^1 \Big| P^*_{s/2}(Q^*_{s/2}f)(\mathbf{x}')-P^*_{s/2}(Q^*_{s/2}f)(\mathbf{x})\Big|\,ds \\&\leq C\int_\delta^1 \frac{\|\mathbf{x}'-\mathbf{x}\|}{s}\|Q^*_{s/2}f\|_{{L^\infty}}\,ds \leq C'\int_\delta^1 \frac{\|\mathbf{x}'-\mathbf{x}\|}{s}  s^{\beta-1} \| f\|_{\Lambda_k^\beta}\,ds \leq C''\|\mathbf{x}'-\mathbf{x}\|^\beta \| f\|_{\Lambda_k^\beta}.
\end{align*}
 If $\|\mathbf x-\mathbf x'\|>1$, then 
    $  |f(\mathbf x)-f(\mathbf x')|\leq 2\|f\|_{L^\infty}\leq 2\|\mathbf x-\mathbf x'\|^\beta \|f\|_{L^\infty}$. 
Thus the proof of the theorem is complete. 
\end{proof} 

\begin{remark}\normalfont
    {  For $0<\beta<1$ the homogeneous Lipschitz spaces 
$$ \dot\Lambda^{\beta} =\Big\{f:\mathbb R^N\to \mathbb C: \sup_{\mathbf x\ne\mathbf y} \frac{|f(\mathbf x)-f(\mathbf{y})|}{\|\mathbf x-\mathbf y\|^\beta}<\infty\Big\} $$
 associated with the Euclidean metric and the spaces 
 $$ \dot\Lambda^{\beta}_d =\Big\{f:\mathbb R^N\to \mathbb C: \sup_{\mathbf x\ne\mathbf y} \frac{|f(\mathbf x)-f(\mathbf{y})|}{d(\mathbf x,\mathbf y)^\beta}<\infty\Big\} $$ 
 related to the orbit distance $d(\mathbf x,\mathbf y)$ (see ~\eqref{eq:distance_of_orbits}) where studied in~\cite{Han_et_al}. One of the results of~\cite{Han_et_al} asserts that the space $\dot\Lambda^\beta$ is characterized by the condition 
 $$ \sup_{B}\frac{1}{\text{\rm diam} (B)^\beta} \Big(\frac{1}{w(B)}\int_B |f(\mathbf x)-f_B|^qdw(\mathbf x)\Big)^{1/q}<\infty $$
 for any/all  $1\leq q<\infty$, where the supremum is taken over all balls $B\subset \mathbb R^N$ and $f_B=w(B)^{-1}\int_B f\, dw$.  We also refer the reader to \cite{Han_et_al} for results which relates  the Triebel-Lizorkin spaces in the Dunkl setting and the commutators of Lipschitz functions $b$ with the Dunkl-Riesz transforms or the Dunkl-Riesz potentials $\Delta_k^{-\gamma}$.  }
\end{remark}

\section{Proof of Theorem \ref{teo:main_main}} 
\noindent 
The following theorem, which follows by  an iteration of \eqref{eq:step-less}, will be used in the proof of Theorem \ref{teo:main_main}.

\begin{theorem}[{\cite[Theorem 10]{Taibleson1}}]\label{teo:Lambda_it}
    Let $\beta>0$, $n\in\mathbb N_0$, $0\leq n<\beta$. Then $f\in \Lambda^\beta(\mathbb R^N)$    
    if and only if $f\in C^n(\mathbb R^N)$, $\partial^{\boldsymbol \gamma} f$  
    are bounded functions for $|\boldsymbol\gamma|\leq n$, and $\partial^{\boldsymbol\gamma} f\in \Lambda^{\beta-n}(\mathbb R^N)$ for all $\boldsymbol\gamma \in \mathbb{N}_0^N$ such that $|\boldsymbol\gamma|=n$.  Moreover, there is a constant $C>0$ such that for all $f \in \Lambda^\beta(\mathbb R^N)$ we have
    \begin{equation}
        \label{eq:equiv_norm3}
        C^{-1}\| f\|_{\Lambda^\gamma (\mathbb R^N)} \leq \sum_{|\boldsymbol\gamma|<n} \| \partial^{\boldsymbol\gamma} f \|_{L^\infty} +\sum_{|\boldsymbol\gamma|=n} \| \partial^{\boldsymbol\gamma} f \|_{\Lambda^{\beta-n}(\mathbb R^N)} \leq C\| f\|_{\Lambda^\gamma (\mathbb R^N)}. 
    \end{equation}
\end{theorem}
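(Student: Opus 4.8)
The plan is to prove Theorem~\ref{teo:Lambda_it} by induction on $n$, using the one-step identity \eqref{eq:step-less} as the inductive engine. The case $n=0$ is vacuous: the asserted condition is simply $f\in\Lambda^\beta(\mathbb R^N)$ and the norm inequality \eqref{eq:equiv_norm3} becomes an equality. So assume $n\ge 1$ and that the theorem is already known with $n-1$ in place of $n$ (for every admissible value of the smoothness parameter). Since $0\le n<\beta$ and $n\ge 1$ we have $\beta>1$, so \eqref{eq:step-less} applies and gives that $f\in\Lambda^\beta(\mathbb R^N)$ if and only if $f\in L^\infty$ and $\partial_j f\in\Lambda^{\beta-1}(\mathbb R^N)$ for $j=1,\dots,N$, with
$$\|f\|_{\Lambda^\beta(\mathbb R^N)}\sim \|f\|_{L^\infty}+\sum_{j=1}^N\|\partial_j f\|_{\Lambda^{\beta-1}(\mathbb R^N)}.$$

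For the inductive step, note that $0\le n-1<\beta-1$, so the inductive hypothesis applies to each $\partial_j f\in\Lambda^{\beta-1}(\mathbb R^N)$ with the parameters $n-1$ and $\beta-1$: it says that $\partial_j f\in C^{n-1}(\mathbb R^N)$, that $\partial^{\boldsymbol\gamma}\partial_j f$ is bounded for $|\boldsymbol\gamma|\le n-1$, and that $\partial^{\boldsymbol\gamma}\partial_j f\in\Lambda^{(\beta-1)-(n-1)}(\mathbb R^N)=\Lambda^{\beta-n}(\mathbb R^N)$ for $|\boldsymbol\gamma|=n-1$, again with equivalence of the corresponding norms. Composing the two equivalences and relabelling multi-indices via $\boldsymbol\delta=\boldsymbol\gamma+e_j$ yields exactly the desired characterization: every partial of $f$ of order $\le n$ is of the form $\partial^{\boldsymbol\gamma}\partial_j f$ with $|\boldsymbol\gamma|\le n-1$, hence exists and is continuous, and $f$ itself is continuous because $\Lambda^\beta(\mathbb R^N)\subset C(\mathbb R^N)$, so $f\in C^n(\mathbb R^N)$; boundedness of $\partial^{\boldsymbol\delta}f$ for $|\boldsymbol\delta|\le n$ is immediate; and $\partial^{\boldsymbol\delta}f\in\Lambda^{\beta-n}(\mathbb R^N)$ for $|\boldsymbol\delta|=n$ since each such $\partial^{\boldsymbol\delta}f$ equals some $\partial^{\boldsymbol\gamma}\partial_j f$ with $|\boldsymbol\gamma|=n-1$. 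Chaining the two norm equivalences gives \eqref{eq:equiv_norm3} with a constant $C$ depending only on $N$, $n$, and $\beta$, which closes the induction.

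I do not expect a genuine obstacle here; the only friction is bookkeeping. First, one must check that each application of \eqref{eq:step-less} is legitimate, i.e.\ that the parameter never drops to $1$ prematurely: at the $j$-th step the parameter is $\beta-j$ with $0\le j\le n-1$, and $\beta-j>n-j\ge 1$, so this is automatic. Second, one should reconcile the (a priori distributional) meaning of $\partial_j$ in \eqref{eq:step-less} with the classical derivatives in the statement; this is harmless since every function produced along the iteration already lies in some $\Lambda^{\beta-j}(\mathbb R^N)$ with $\beta-j>0$, hence is continuous — indeed $C^1$ once $\beta-j>1$ — so distributional and classical first derivatives coincide at each level. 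With these two remarks the induction goes through; alternatively, since the result is classical, one may simply invoke \cite[Theorem~10]{Taibleson1}.
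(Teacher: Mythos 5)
Your induction on $n$ via \eqref{eq:step-less} is exactly the iteration the paper itself indicates (it gives no proof beyond the remark that the theorem ``follows by an iteration of \eqref{eq:step-less}'' and the citation of \cite[Theorem 10]{Taibleson1}), and your bookkeeping — the parameter check $\beta-j>n-j\ge 1$, the relabelling $\boldsymbol\delta=\boldsymbol\gamma+e_j$, and the reconciliation of distributional with classical derivatives — is correct. No issues.
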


\begin{theorem}\label{teo:first_inclusion}
    Let $\beta>0$, $\beta \not\in \mathbb{N}$. Then $\Lambda^\beta (\mathbb R^N)\subseteq \Lambda^\beta_k$ {  and  there is a constant $C>0$ such that for all $f \in \Lambda^\beta(\mathbb R^N)$ we have $\| f\|_{\Lambda_k^\beta}\leq C \| f\|_{\Lambda^\beta(\mathbb R^N)}$.  }
\end{theorem}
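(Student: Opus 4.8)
The plan is to argue by induction on the integer $n\ge 0$ for which $n<\beta<n+1$. The base case $n=0$ (that is, $0<\beta<1$) is exactly Theorem~\ref{teo:Lambda}. So assume $n\ge 1$ and that $\Lambda^{\beta-1}(\mathbb R^N)\subseteq\Lambda^{\beta-1}_k$ with the corresponding norm estimate (note $\beta-1\in(n-1,n)$ is again not an integer). Let $f\in\Lambda^\beta(\mathbb R^N)$. By Theorem~\ref{teo:Lambda_it} we have $f\in C^n(\mathbb R^N)\subseteq C^1(\mathbb R^N)$ with bounded derivatives of order $\le n$, so each Dunkl derivative $D_jf$ is a well-defined bounded continuous function which, by the remark following Theorem~\ref{teo:Lip_Dunk_easy}, represents $D_j^*f$ in the mild sense (the sign convention is immaterial, since it affects neither membership in nor the norm of $\Lambda^{\beta-1}_k$). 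The crux is to establish
\[
\|D_jf\|_{\Lambda^{\beta-1}(\mathbb R^N)}\le C\,\|f\|_{\Lambda^\beta(\mathbb R^N)},\qquad j=1,\dots,N.
\]
Once this is known, the inductive hypothesis gives $D_j^*f\in\Lambda^{\beta-1}_k$ with $\|D_j^*f\|_{\Lambda^{\beta-1}_k}\le C\|f\|_{\Lambda^\beta(\mathbb R^N)}$, and then part~\eqref{numitem:easy_c} of Theorem~\ref{teo:Lip_Dunk_easy} (applicable since $\beta>1$) yields $f\in\Lambda^\beta_k$ together with $\|f\|_{\Lambda^\beta_k}\le C\bigl(\|f\|_{L^\infty}+\sum_j\|D_j^*f\|_{\Lambda^{\beta-1}_k}\bigr)\le C'\|f\|_{\Lambda^\beta(\mathbb R^N)}$, which closes the induction.

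To bound $D_jf$, decompose, using~\eqref{eq:T_xi}, $D_jf=\partial_jf+\sum_{\alpha\in R}\tfrac{k(\alpha)}2\langle\alpha,e_j\rangle\,T_\alpha f$ with $T_\alpha f(\mathbf x)=\dfrac{f(\mathbf x)-f(\sigma_\alpha\mathbf x)}{\langle\alpha,\mathbf x\rangle}$. The derivative term is handled by the classical relation~\eqref{eq:step-less}, which gives $\partial_jf\in\Lambda^{\beta-1}(\mathbb R^N)$ with norm controlled by $\|f\|_{\Lambda^\beta(\mathbb R^N)}$. For the difference term, since $\mathbf x-\sigma_\alpha\mathbf x=\langle\alpha,\mathbf x\rangle\,\alpha$ (because $\|\alpha\|^2=2$), the fundamental theorem of calculus applied to $r\mapsto f(A_r\mathbf x)$ with $A_r:=(1-r)\sigma_\alpha+rI$ gives, for every $\mathbf x\in\mathbb R^N$,
\[
T_\alpha f(\mathbf x)=\int_0^1(\partial_\alpha f)(A_r\mathbf x)\,dr,
\]
the apparent singularity on the reflecting hyperplane being removable because $f\in C^1$. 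Here $\partial_\alpha f=\sum_\ell\alpha_\ell\partial_\ell f\in\Lambda^{\beta-1}(\mathbb R^N)$, again by~\eqref{eq:step-less}, and each $A_r$ is a symmetric linear contraction: $\sigma_\alpha$ has eigenvalues $\pm1$, so $A_r$ has eigenvalues $1$ and $2r-1$, whence $\|A_r\|\le1$ for $r\in[0,1]$.

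The remaining ingredient is the elementary stability of the Euclidean Lipschitz spaces under linear changes of variables: if $g\in\Lambda^\gamma(\mathbb R^N)$, $\gamma>0$, and $A$ is a linear map of $\mathbb R^N$, then $g\circ A\in\Lambda^\gamma(\mathbb R^N)$ with $\|g\circ A\|_{\Lambda^\gamma(\mathbb R^N)}\le C_A\|g\|_{\Lambda^\gamma(\mathbb R^N)}$, the constant $C_A$ staying bounded on bounded sets of matrices. This is proved by induction on $\lceil\gamma\rceil$: for $0<\gamma\le1$ it is immediate from~\eqref{eq:Lip_class}, and for $\gamma>1$ one combines the chain rule $\partial_j(g\circ A)=\sum_\ell A_{\ell j}\,(\partial_\ell g)\circ A$ with~\eqref{eq:step-less} (equivalently Theorem~\ref{teo:Lambda_it}) and the inductive hypothesis applied to $\partial_\ell g\in\Lambda^{\gamma-1}(\mathbb R^N)$. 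Applying this with $g=\partial_\alpha f$ and $A=A_r$, using $\|A_r\|\le1$, and integrating the above representation of $T_\alpha f$ over $r\in[0,1]$ (legitimate since $\Lambda^{\beta-1}(\mathbb R^N)$ is a Banach space), we get $\|T_\alpha f\|_{\Lambda^{\beta-1}(\mathbb R^N)}\le C\|\partial_\alpha f\|_{\Lambda^{\beta-1}(\mathbb R^N)}\le C'\|f\|_{\Lambda^\beta(\mathbb R^N)}$; summing over $\alpha$ and adding the bound for $\partial_jf$ yields the desired estimate for $D_jf$.

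The step I expect to be the main obstacle is the division by $\langle\alpha,\mathbf x\rangle$ in $T_\alpha f$: a priori this quotient is defined only off the reflecting hyperplane, and one has to see both that it extends to a genuine element of $\Lambda^{\beta-1}(\mathbb R^N)$ and that exactly one degree of smoothness is lost. The integral representation for $T_\alpha f$ disposes of this at one stroke, reducing everything to the routine behaviour of the classical Lipschitz spaces under composition with linear maps.
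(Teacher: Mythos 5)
Your proposal is correct and follows essentially the same route as the paper: induction on $n$ with base case Theorem \ref{teo:Lambda}, reduction to $D_j^*f\in\Lambda^{\beta-1}_k$ via Theorem \ref{teo:Lip_Dunk_easy}\eqref{numitem:easy_c}, the integral representation $\frac{f(\mathbf x)-f(\sigma_\alpha\mathbf x)}{\langle\alpha,\mathbf x\rangle}=\int_0^1(\partial_\alpha f)(A_{s,\alpha}\mathbf x)\,ds$, and stability of the classical Lipschitz norm under the contractions $A_{s,\alpha}$. The only cosmetic difference is that you package the last step as a general lemma on $g\circ A$ proved by induction on $\lceil\gamma\rceil$ via \eqref{eq:step-less}, whereas the paper verifies the same bound directly through the characterization in Theorem \ref{teo:Lambda_it}.
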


\begin{proof}

Assume that $0\leq n<\beta<n+1$. The proof of the inclusion goes by induction on $n$. If $n=0$, the equality $\Lambda^\beta(\mathbb R^N)=\Lambda^\beta_k$ and {  the equivalence of the norms hold} thanks to Theorem~\ref{teo:Lambda}.  {  Suppose that $1\leq n<\beta<n+1$ and our induction  hypothesis  holds for $\beta-1<n$. Let } $f\in\Lambda^\beta (\mathbb R^N)$. 
Then $f\in C^n(\mathbb R^N)$ and $\partial_j f\in \Lambda^{\beta -1}(\mathbb R^N)$ for  $j=1,\ldots,N$. According to part~\eqref{numitem:easy_c} of  Theorem \ref{teo:Lip_Dunk_easy} combined with the the induction hypothesis, it suffices to prove that $D_j^*f \in \Lambda^{\beta-1}(\mathbb R^N)$ for $j=1,\ldots,N$. Since $f\in C^n(\mathbb R^N)$, $n\geq 1$, $D_j^*f=-D_jf$, where, by the definition of $D_j$, 
\begin{equation}
    \begin{split}
          D_jf(\mathbf{x})&=\partial_j f(\mathbf{x})+\sum_{\alpha\in R} \frac{k(\alpha)}{2} \alpha_j \frac{f(\mathbf{x})-f(\sigma_\alpha (\mathbf{x}))}{\langle \alpha,\mathbf{x}\rangle}\\
          &= \partial_j f(\mathbf{x}) + \sum_{\alpha\in R} \frac{k(\alpha)}{2}\alpha_j \int_0^1 \langle \alpha, \nabla f(\sigma_\alpha(\mathbf x)+s(\mathbf x-\sigma_\alpha(\mathbf x))\rangle \, ds.
    \end{split}
\end{equation}
Since $\partial_j f\in \Lambda^{\beta-1}(\mathbb R^N)$ {  and  $ \|\partial_j f\|_{\Lambda^{\beta-1}(\mathbb R^N)}\leq C\| f\|_{\Lambda^\beta(\mathbb R^N)}$,  }   it suffices to deal with the functions 
\begin{equation}\label{eq:g_alpha}
    \begin{split}
      f_{\alpha, \ell}(\mathbf x) :=  \int_0^1 ( \partial_\ell f)(\sigma_\alpha(\mathbf x)+s(\mathbf x-\sigma_\alpha(\mathbf x)) \, ds =\int_0^1 (\partial_\ell f)(A_{s,\alpha}(\mathbf x))\, ds,
    \end{split}
\end{equation}
for $\ell=1,2,...,N$, where $A_{s,\alpha}(\mathbf x):= \sigma_\alpha(\mathbf x)+s(\mathbf x-\sigma_\alpha(\mathbf x))$. 
Now the $C^{n-1}(\mathbb{R}^N)$-function $f_{\alpha,\ell} $ belongs to $\Lambda^{\beta-1}(\mathbb R^N)$ if and only if 
\begin{equation}\label{eq:norm_equivalence}
    \| f_{\alpha, \ell} \|_{\Lambda^{\beta-1}(\mathbb R^N)} \sim \sum_{|{\boldsymbol\gamma}|<n-1} \| \partial^{\boldsymbol\gamma} f_{\alpha,\ell}\|_{L^\infty} +\sum_{|{\boldsymbol\gamma}|=n-1}\| \partial^{\boldsymbol\gamma} f_{\alpha,\ell}\|_{\Lambda^{\beta -n}(\mathbb R^N)}<\infty. 
\end{equation}
 Note that the linear mappings $A_{s,\alpha}$ on the Euclidean space $\mathbb R^N$ satisfy 
$$ \|A_{s,\alpha}\|= 1, \quad 0\leq s\leq 1, \ \alpha \in R. $$
Thus, by the chain rule, we obtain
\begin{equation*}
    |\partial^{\boldsymbol\gamma} (\{ \partial_\ell f\} \circ A_{s,\alpha})(\mathbf x)|\leq C_{\boldsymbol{\gamma}} \| f\|_{C^n(\mathbb{R}^N)} \leq C'_{\boldsymbol{\gamma}} \|f\|_{\Lambda^{\beta}(\mathbb R^N)} \quad \text{for} \ |{\boldsymbol \gamma}| \leq n-1 
\end{equation*}
and 
\begin{equation*}
    |\partial^{\boldsymbol \gamma} ( \{ \partial_\ell f\} \circ A_{s,\alpha})(\mathbf x)- \partial^{\boldsymbol\gamma} (\{ \partial_\ell f\} \circ A_{s,\alpha})(\mathbf x')|\leq C \| f\|_{\Lambda^\beta(\mathbb R^N)}\|\mathbf x-\mathbf x'\|^{\beta-n}  \quad \text{for} \ |{\boldsymbol \gamma}|= n-1.
\end{equation*} 
Hence from~\eqref{eq:g_alpha}, we conclude  ~\eqref{eq:norm_equivalence}.  Consequently  $D_jf \in \Lambda^{\beta-1}(\mathbb R^N)$ {  and  $\|D_jf\|_{\Lambda^{\beta-1}(\mathbb R^N)}\leq C\| f\|_{\Lambda^\beta(\mathbb R^N)}$.} {Thus, by Theorem \ref{teo:Lip_Dunk_easy},  $f\in \Lambda_k^\beta$ and 
$\| f\|_{\Lambda_k^\beta}\leq C \| f\|_{\Lambda^\beta(\mathbb R^N)}$. }
\end{proof}

\begin{lemma}\label{lem:1}
    Let $\beta \in (0,1)$ and let $m$ be a positive integer. There are positive constants $C_{m,\beta}, \, C_{m,\beta}'$ such that for all $\boldsymbol{\alpha} \in \mathbb{N}^{N}_0$ such that $|\boldsymbol{\alpha}|=m$, $f \in \Lambda_{k}^{\beta}$,  $\mathbf{x},\mathbf{x}' \in \mathbb{R}$, and $t>0$ such that $\|\mathbf{x}-\mathbf{x}'\|<t$, we have
    \begin{equation}\label{eq:t_1_lem}
        \|\partial^{\boldsymbol{\alpha}}P_tf\|_{L^{\infty}} \leq C_{m,\beta} t^{\beta-m}\|f\|_{\Lambda^{\beta}_k},
    \end{equation}
    \begin{equation}\label{eq:t_2_lem}
        |\partial^{\boldsymbol{\alpha}}P_tf(\mathbf{x})-\partial^{\boldsymbol{\alpha}}P_tf(\mathbf{x}')| \leq C'_{m,\beta} \|\mathbf{x}-\mathbf{x}'\|t^{\beta-m-1}\|f\|_{\Lambda^{\beta}_k}.
    \end{equation}
\end{lemma}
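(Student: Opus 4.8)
The plan is to deduce both inequalities from the pointwise bounds for the Dunkl--Poisson kernel in Proposition~\ref{Poiss_new}, together with the integral estimate already carried out inside the proof of Theorem~\ref{teo:Lambda}, and then to pass from \eqref{eq:t_1_lem} to \eqref{eq:t_2_lem} by the mean value theorem. Since $0<\beta<1$, Theorem~\ref{teo:Lambda} identifies $\Lambda_k^\beta$ with $\Lambda^\beta(\mathbb R^N)$ with comparable norms, so throughout I may use $|f(\mathbf x)-f(\mathbf y)|\leq C\|f\|_{\Lambda_k^\beta}\|\mathbf x-\mathbf y\|^{\beta}$.

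To prove \eqref{eq:t_1_lem} I would first fix $\boldsymbol\alpha$ with $|\boldsymbol\alpha|=m\geq 1$. The corollary following Proposition~\ref{propo:Poisson} allows differentiation under the integral sign in $P_tf(\mathbf x)=\int_{\mathbb R^N}p_t(\mathbf x,\mathbf y)f(\mathbf y)\,dw(\mathbf y)$; applying it to $f\equiv 1$ and using \eqref{eq:one} gives $\int_{\mathbb R^N}\partial_{\mathbf x}^{\boldsymbol\alpha}p_t(\mathbf x,\mathbf y)\,dw(\mathbf y)=0$, whence
$$\partial_{\mathbf x}^{\boldsymbol\alpha}P_tf(\mathbf x)=\int_{\mathbb R^N}\partial_{\mathbf x}^{\boldsymbol\alpha}p_t(\mathbf x,\mathbf y)\,\bigl(f(\mathbf y)-f(\mathbf x)\bigr)\,dw(\mathbf y).$$
Combining the Lipschitz bound for $f$ with \eqref{DtDxDyPoisson} (taken with $m=0$, $\boldsymbol\beta=\mathbf 0$, so $|\partial_{\mathbf x}^{\boldsymbol\alpha}p_t(\mathbf x,\mathbf y)|\leq Ct^{-m}p_t(\mathbf x,\mathbf y)$) reduces \eqref{eq:t_1_lem} to the estimate
$$\int_{\mathbb R^N}p_t(\mathbf x,\mathbf y)\,\|\mathbf x-\mathbf y\|^{\beta}\,dw(\mathbf y)\leq C_\beta\,t^{\beta},\qquad 0<\beta<1,$$
which is exactly the computation performed inside the proof of Theorem~\ref{teo:Lambda}: one uses the sharp bound for $p_t$ from Proposition~\ref{propo:Poisson}, splits the integral into the region $\|\mathbf x-\mathbf y\|\leq t$ and the dyadic annuli $2^{j-1}t\leq\|\mathbf x-\mathbf y\|<2^jt$, and uses the $G$-invariance of $w$. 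The hypothesis $\beta<1$ is precisely what makes the resulting dyadic series summable.

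For \eqref{eq:t_2_lem} I would write, along the segment joining $\mathbf x'$ to $\mathbf x$,
$$\partial_{\mathbf x}^{\boldsymbol\alpha}P_tf(\mathbf x)-\partial_{\mathbf x}^{\boldsymbol\alpha}P_tf(\mathbf x')=\int_0^1(\mathbf x-\mathbf x')\cdot\bigl(\nabla_{\mathbf z}\partial_{\mathbf z}^{\boldsymbol\alpha}P_tf\bigr)\bigl(\mathbf x'+s(\mathbf x-\mathbf x')\bigr)\,ds,$$
so that $|\partial_{\mathbf x}^{\boldsymbol\alpha}P_tf(\mathbf x)-\partial_{\mathbf x}^{\boldsymbol\alpha}P_tf(\mathbf x')|\leq\|\mathbf x-\mathbf x'\|\sum_{j=1}^N\bigl\|\partial_{\mathbf z}^{\boldsymbol\alpha+e_j}P_tf\bigr\|_{L^\infty}$. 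Each term on the right is an $L^\infty$-norm of a spatial derivative of order $m+1$, so applying \eqref{eq:t_1_lem} with $m+1$ in place of $m$ bounds it by $C_{m+1,\beta}\,t^{\beta-m-1}\|f\|_{\Lambda_k^\beta}$, which is \eqref{eq:t_2_lem}. There is no circularity, since \eqref{eq:t_1_lem} is first established for every $m$ and only afterwards used with $m+1$; in fact the restriction $\|\mathbf x-\mathbf x'\|<t$ plays no role in this argument.

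The one point that requires genuine work is the displayed kernel integral $\int p_t(\mathbf x,\mathbf y)\|\mathbf x-\mathbf y\|^{\beta}\,dw(\mathbf y)\lesssim t^{\beta}$: the crude bound $p_t(\mathbf x,\mathbf y)\lesssim V(\mathbf x,\mathbf y,t+d(\mathbf x,\mathbf y))^{-1}$ from \eqref{eq:Poisson_simple} is too weak on far annuli, because $w(B(\mathbf x,2^jt))/w(B(\mathbf x,t))$ may grow like $2^{j\mathbf N}$; one must use the sharper decay factor $\frac{d(\mathbf x,\mathbf y)+t}{\|\mathbf x-\mathbf y\|^2+t^2}$ of Proposition~\ref{propo:Poisson} together with the observation that the part of a far annulus lying near the orbit of $\mathbf x$ has $w$-measure only $\lesssim w(B(\mathbf x,t))$. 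As noted, this estimate is already contained in the proof of Theorem~\ref{teo:Lambda}, so in the write-up I would invoke it there rather than repeat the annular decomposition.
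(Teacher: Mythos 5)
Your proof is correct, but for \eqref{eq:t_1_lem} it takes a genuinely different route from the paper's. The paper never passes through the classical H\"older condition at all: it reduces to $0<t<1$ via \eqref{eq:P_tfBounds}, writes $P_tf=P_1f-\int_t^1\partial_sP_sf\,ds$, factors the mixed derivative through the semigroup as $\partial^{\boldsymbol\alpha}\partial_sP_sf=\partial^{\boldsymbol\alpha}P_{s/2}\bigl(Q_{s/2}f\bigr)$, bounds the inner factor by $\|Q_{s/2}f\|_{L^\infty}\le Cs^{\beta-1}\|f\|_{\Lambda_k^\beta}$ straight from the definition of $\Lambda_k^\beta$, bounds the outer factor by the crude kernel bound $\int|\partial^{\boldsymbol\alpha}p_{s/2}(\mathbf x,\mathbf z)|\,dw(\mathbf z)\le Cs^{-m}$ coming from \eqref{eq:P_tfBounds}, and integrates $s^{\beta-1-m}$ over $[t,1]$ (convergent at $s=1$ since $\beta<1\le m$). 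That argument needs neither Theorem \ref{teo:Lambda}, nor the cancellation $\int\partial_{\mathbf x}^{\boldsymbol\alpha}p_t(\mathbf x,\mathbf y)\,dw(\mathbf y)=0$, nor the weighted estimate $\int p_t(\mathbf x,\mathbf y)\|\mathbf x-\mathbf y\|^{\beta}\,dw(\mathbf y)\lesssim t^{\beta}$ on which your whole reduction rests. Your route is the classical Poisson-integral one and is perfectly legitimate here (Theorem \ref{teo:Lambda} is proved before this lemma, the cancellation follows from part (b) of the corollary to Proposition \ref{propo:Poisson} applied to $f\equiv1$, and your dyadic-annuli-plus-orbit-measure argument for the weighted integral is sound), but it is the heavier path: the weighted kernel integral is only asserted in one line inside the proof of Theorem \ref{teo:Lambda} and is nowhere written out in full, so you would in effect have to supply that computation, whereas the paper's factorization trick gets by with the $L^1$-bound on $\partial^{\boldsymbol\alpha}p_s(\mathbf x,\cdot)$ alone. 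Conversely, your argument makes transparent that only the H\"older modulus of $f$ enters, which the semigroup factorization hides. For \eqref{eq:t_2_lem} both proofs coincide (mean value theorem plus \eqref{eq:t_1_lem} at order $m+1$), and you are right that the hypothesis $\|\mathbf x-\mathbf x'\|<t$ is never used.
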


\begin{proof}
    {In virtue of~\eqref{eq:P_tfBounds}, it } is enough to check~\eqref{eq:t_1_lem} for $0<t<1$. For $f \in \Lambda_k^\beta$ and $0<t<1$ we have 
    \begin{equation*}
        P_tf=P_1f-\int_t^1 \partial_s P_sf\,ds.
    \end{equation*}
By{~\eqref{eq:P_tfBounds}}, $\|\partial^{\boldsymbol{\alpha}}P_1f\|_{L^{\infty}} \leq C_{\boldsymbol\alpha}\|f\|_{L^\infty}$, so it remains to estimate
\begin{equation}\label{eq:dalpha}
    \partial^{\boldsymbol{\alpha}}\int_t^1 \partial_s P_sf\,ds=\int_t^1 \partial^{\boldsymbol{\alpha}}\partial_s P_sf\,ds.
\end{equation}
Further, 
\begin{equation}\label{eq:innner_outer}
    \partial^{\boldsymbol{\alpha}}\partial_s P_sf(\mathbf{x})=\int_{\mathbb{R}^N}\partial^{\boldsymbol{\alpha}}p_{s/2}(\mathbf{x},\mathbf{z})\int_{\mathbb{R}^N}q_{s/2}(\mathbf{z},\mathbf{y})f(\mathbf{y})\,dw(\mathbf{y})\,dw(\mathbf{z}).
\end{equation}
Since $f \in \Lambda_k^{\beta}$, there is $C>0$ such that for all $s>0$ and $\mathbf{z} \in \mathbb{R}^N$,  we have
\begin{equation}\label{eq:inner_1}
    \left|\int_{\mathbb{R}^N}q_{s/2}(\mathbf{z},\mathbf{y})f(\mathbf{y})\,dw(\mathbf{y}) \right| \leq C s^{\beta-1} \|f\|_{\Lambda_k^{\beta}}.
\end{equation}
From{~\eqref{eq:P_tfBounds}} we conclude that there is $C_{\boldsymbol\alpha}>0$ such that for all $s >0$ and $\mathbf{x} \in \mathbb{R}^N$,  we have 
\begin{equation}\label{eq:outer_1}
    \int_{\mathbb{R}^N}|\partial^{\boldsymbol{\alpha}}p_{s/2}(\mathbf{x},\mathbf{z})|\,dw(\mathbf{z}) \leq C_{\boldsymbol\alpha}s^{-m}.
\end{equation}
Combining~\eqref{eq:innner_outer},~\eqref{eq:inner_1}, and~\eqref{eq:outer_1} together with \eqref{eq:dalpha}, we get~\eqref{eq:t_1_lem}. Finally,~\eqref{eq:t_2_lem} is a consequence of~\eqref{eq:t_1_lem} and the mean value theorem. 
\end{proof}

\begin{lemma}\label{lem:3}
    Let $m \in \mathbb{N}$. There is a constant $C>0$ such that for all $\boldsymbol{\alpha} \in \mathbb{N}^{N}_0$ such that $|\boldsymbol{\alpha}|=m$, $f \in L^{\infty}$, $\mathbf{x},\mathbf{x}' \in \mathbb{R}$, and $t>0$ such that $\|\mathbf{x}-\mathbf{x}'\|<t$, we have
    \begin{equation}\label{eq:t_3_lem}
        |\partial^{\boldsymbol{\alpha}}P_tf(\mathbf{x})-\partial^{\boldsymbol{\alpha}}P_tf(\mathbf{x}')| \leq C \|\mathbf{x}-\mathbf{x}'\|t^{-m-1}\|f\|_{L^{\infty}}.
    \end{equation}
\end{lemma}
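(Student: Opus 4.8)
The plan is to deduce \eqref{eq:t_3_lem} directly from the pointwise derivative bounds for the Dunkl--Poisson integral recorded in \eqref{eq:P_tfBounds} (equivalently \eqref{DtDxDyPoisson}), with no use of any cancellation and, in contrast to Lemma~\ref{lem:1}, no Lipschitz hypothesis on $f$. First I would fix $f\in L^\infty$ and $t>0$ and put $g:=\partial^{\boldsymbol\alpha}P_tf$, which is a $C^\infty$ function of $\mathbf x$ by the Corollary on the smoothness of $P_tf$. For each $j\in\{1,\dots,N\}$, $\partial_j g=\partial^{\boldsymbol\alpha+e_j}P_tf$ is a spatial derivative of $P_tf$ of order $m+1$, so \eqref{eq:P_tfBounds}, applied with the multi-index $\boldsymbol\alpha+e_j$ and time-derivative order $0$, gives $\|\partial_j g\|_{L^\infty}\le C\,t^{-(m+1)}\|f\|_{L^\infty}$ with $C$ depending only on $N$ and $m$.

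Next I would integrate $g$ along the line segment joining $\mathbf x'$ to $\mathbf x$. Since $g$ is $C^1$,
\[
  g(\mathbf x)-g(\mathbf x')=\int_0^1\langle \nabla g(\mathbf x'+s(\mathbf x-\mathbf x')),\,\mathbf x-\mathbf x'\rangle\,ds ,
\]
so the Cauchy--Schwarz inequality together with the bound on $\|\nabla g\|_{L^\infty}$ yields
\[
  |g(\mathbf x)-g(\mathbf x')|\le \|\mathbf x-\mathbf x'\|\sup_{\mathbf z\in\mathbb R^N}\|\nabla g(\mathbf z)\|\le C\,\|\mathbf x-\mathbf x'\|\,t^{-m-1}\|f\|_{L^\infty},
\]
which is exactly \eqref{eq:t_3_lem}. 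In fact the restriction $\|\mathbf x-\mathbf x'\|<t$ plays no role in this argument; it is kept only because this is the regime in which the estimate is later used.

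I do not expect any genuine obstacle here: this lemma is simply the $L^\infty$-version of estimate \eqref{eq:t_2_lem} of Lemma~\ref{lem:1}, obtained by the same mean-value-theorem device, the sole input being the already established Poisson kernel bound \eqref{DtDxDyPoisson}. The only points requiring a little care are the bookkeeping of the differentiation order --- one invokes the $(m+1)$-st order spatial estimate rather than the $m$-th one --- and the remark, contained in the cited Corollary, that $\partial^{\boldsymbol\alpha}P_tf$ is genuinely differentiable in $\mathbf x$, so that the fundamental theorem of calculus may be applied under the integral sign.
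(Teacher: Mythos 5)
Your argument is correct and is exactly the paper's proof: the paper disposes of this lemma with the single sentence that it ``follows directly from the estimates~\eqref{eq:P_tfBounds}'', and your write-up (apply \eqref{eq:P_tfBounds} at order $m+1$ to bound $\nabla\partial^{\boldsymbol\alpha}P_tf$, then integrate along the segment) is precisely the intended elaboration. Your observation that the hypothesis $\|\mathbf x-\mathbf x'\|<t$ is not actually needed here is also accurate.
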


\begin{proof}
    The lemma follows directly from the estimates~\eqref{eq:P_tfBounds}.
\end{proof}

\begin{theorem}\label{teo:inclusion_2}
 Let $\gamma>1$, $\gamma \not\in \mathbb{N}$. We have $\Lambda_k^{\gamma} \subseteq \Lambda^{\gamma}(\mathbb R^N)$ {   and 
 \begin{equation}\label{eq:control1}
 \|g\|_{\Lambda^\gamma (\mathbb R^N)}\leq C_{\gamma} \| g\|_{\Lambda_k^\gamma}.
 \end{equation} }
\end{theorem}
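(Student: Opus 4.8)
The plan is to exploit the hypothesis $\gamma\notin\mathbb N$ by writing $\gamma=n+\beta$ with $n=[\gamma]\ge1$ and $0<\beta<1$, and then to represent $g$ as a Bessel potential of a function lying in the already understood range $(0,1)$. By the Corollary following Theorem~\ref{theo:Bessel}, applied to $\mathcal A=\Delta_k$ (equivalently, by Theorem~\ref{teo:Lip_Dunk_easy}\,(b) in its Poisson-semigroup form), the operator
\[
 \big((I+\sqrt{-\Delta_k})^{-n}\big)^{*}=\Gamma(n)^{-1}\int_0^\infty t^{n}e^{-t}\,P_t^{*}\,\frac{dt}{t}
\]
is an isomorphism of $\Lambda_k^{\beta}$ onto $\Lambda_k^{\beta+n}=\Lambda_k^{\gamma}$. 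Hence there is $h\in\Lambda_k^{\beta}$ with $\|h\|_{\Lambda_k^{\beta}}\le C_\gamma\|g\|_{\Lambda_k^{\gamma}}$ and
\[
 g=\Gamma(n)^{-1}\int_0^\infty t^{n}e^{-t}\,P_t^{*}h\,\frac{dt}{t},
\]
the identity holding as continuous functions since $g$ and each $P_t^{*}h$ have continuous representatives. Because $0<\beta<1$, Lemma~\ref{lem:1} applies to $h$ and supplies, for each multi-index $\boldsymbol\alpha$ with $|\boldsymbol\alpha|=m$, the bounds $\|\partial^{\boldsymbol\alpha}P_t^{*}h\|_{L^\infty}\le C\,t^{\beta-m}\|h\|_{\Lambda_k^{\beta}}$ on $(0,1]$ (and $\le C\,t^{-m}\|h\|_{L^\infty}$ on $[1,\infty)$ by~\eqref{eq:P_tfBounds}), together with the oscillation estimates~\eqref{eq:t_2_lem} and, through Lemma~\ref{lem:3}, \eqref{eq:t_3_lem}.

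First I would verify that $g\in C^{n}(\mathbb R^N)$ with bounded derivatives of order $\le n$. For $|\boldsymbol\alpha|=m\le n$ the integrand $t^{n}e^{-t}\partial^{\boldsymbol\alpha}P_t^{*}h\cdot t^{-1}$ is dominated near $t=0$ by $C\,t^{\,n-m+\beta-1}\|h\|_{\Lambda_k^{\beta}}$, which is integrable because $n-m+\beta>0$, while near $t=\infty$ the factor $e^{-t}$ ensures integrability; this legitimizes differentiation under the integral sign, so that $\partial^{\boldsymbol\alpha}g=\Gamma(n)^{-1}\int_0^\infty t^{n}e^{-t}\partial^{\boldsymbol\alpha}P_t^{*}h\,\frac{dt}{t}$ with $\|\partial^{\boldsymbol\alpha}g\|_{L^\infty}\le C_\gamma\|g\|_{\Lambda_k^{\gamma}}$, and the continuity of $\partial^{\boldsymbol\alpha}g$ follows from dominated convergence and the smoothness of $(t,\mathbf x)\mapsto P_t^{*}h(\mathbf x)$.

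It then remains to show $\partial^{\boldsymbol\alpha}g\in\Lambda^{\beta}(\mathbb R^N)$ for $|\boldsymbol\alpha|=n$, after which Theorem~\ref{teo:Lambda_it} (with its $n$ equal to $[\gamma]$ and its $\beta-n$ equal to our $\gamma-n=\beta$) delivers $g\in\Lambda^{\gamma}(\mathbb R^N)$ together with the norm bound~\eqref{eq:control1}. Fixing $\mathbf x,\mathbf x'$ with $\delta:=\|\mathbf x-\mathbf x'\|\le1$ and splitting $\int_0^\infty=\int_0^{\delta}+\int_{\delta}^{1}+\int_{1}^{\infty}$, I would bound the first piece by $2\|\partial^{\boldsymbol\alpha}P_t^{*}h\|_{L^\infty}\lesssim t^{\beta-n}$, which gives $\lesssim\int_0^{\delta}t^{\beta-1}\,dt\lesssim\delta^{\beta}$; the second by~\eqref{eq:t_2_lem} (licit since $\delta<t$), giving $\lesssim\delta\int_{\delta}^{1}t^{\beta-2}\,dt\lesssim\delta\cdot\delta^{\beta-1}=\delta^{\beta}$; and the third by~\eqref{eq:t_3_lem} together with $e^{-t}$, giving $\lesssim\delta\lesssim\delta^{\beta}$ (for $\delta>1$ one simply invokes the $L^\infty$ bound on $\partial^{\boldsymbol\alpha}g$). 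This is exactly Taibleson's three-regime splitting for the Poisson integral; the cleverness is the reduction to the range $0<\beta<1$ via the Bessel potential, and the main remaining work is the bookkeeping ensuring each regime contributes $\lesssim\|\mathbf x-\mathbf x'\|^{\beta}\|g\|_{\Lambda_k^{\gamma}}$. Apart from the abstract results of Part~\ref{part1}, the only Dunkl-specific inputs are the pointwise bounds for $p_t(\mathbf x,\mathbf y)$ already packaged into Lemmas~\ref{lem:1} and~\ref{lem:3}.
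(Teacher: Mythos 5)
Your proof is correct and follows essentially the same route as the paper: both reduce to the range $0<\beta<1$ via the Bessel-potential isomorphism from Part~\ref{part1}, represent $g$ as $\Gamma(\cdot)^{-1}\int_0^\infty t^{\cdot}e^{-t}P_t^*h\,\frac{dt}{t}$ with $h$ in a $\Lambda_k$-space of index in $(0,1)$, and then apply Lemmas~\ref{lem:1} and~\ref{lem:3} with the three-regime splitting $\int_0^{\delta}+\int_{\delta}^{1}+\int_{1}^{\infty}$ before invoking Theorem~\ref{teo:Lambda_it}. The only (immaterial) difference is your choice of exponents: you take the potential order to be the integer $n=[\gamma]$ with $h\in\Lambda_k^{\gamma-n}$, whereas the paper takes a non-integer $\gamma_2\in(m,m+1)$ with $\gamma_1=\gamma-\gamma_2\in(0,1)$; both choices make every regime contribute $\lesssim\|\mathbf x-\mathbf x'\|^{\gamma-m}$.
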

\begin{proof}
    Fix  $\gamma>1$, $\gamma\notin \mathbb N$.  Let $m$ be such that $m<\gamma<m+1$. We fix $\gamma_1,\gamma_2>0$ such that $\gamma_1 \in (0,1)$ and $m<\gamma_2<m+1$, {  $\gamma_1+\gamma_2=\gamma$. Consider}  $g \in \Lambda_k^{\gamma}$. Our goal is to  verify    \eqref{eq:control1}. For this purpose, thanks to Theorem \ref{teo:Lambda_it}, it is enough to show that 
    \begin{equation}\label{eq:J_goal_1}
         \sum_{|\boldsymbol \alpha|<m}\| \partial^{\boldsymbol\alpha} g\|_{L^\infty} + \sum_{|\boldsymbol \alpha|=m}\| \partial^{\boldsymbol\alpha} g\|_{\Lambda^{\gamma-m}(\mathbb R^N)} \leq C\|g\|_{\Lambda_k^{\gamma}}.
    \end{equation}
   {    Let us recall that $\mathcal J_k^{\gamma_2}$ is an isomorphism of the space $\Lambda_k^{\gamma_1}$ onto  $\Lambda_k^{\gamma}$ (see Theorem \ref{teo:Lip_Dunk_easy}).} 
     Let $f \in \Lambda_k^{\gamma_1}$ be such that $J^{\gamma_2}_kf=g$. {  Clearly, $\| g\|_{\Lambda_k^\beta}\sim \| f\|_{\Lambda_k^{\gamma_1}}$. } Hence, instead of~\eqref{eq:J_goal_1},  it is enough to verify that 
    \begin{equation}\label{eq:J_goal_2}
         \sum_{|\boldsymbol \alpha|<m}\| \partial^{\boldsymbol\alpha}\mathcal J^{\gamma_2}_kf\|_{L^\infty} + \sum_{|\boldsymbol \alpha|=m}\| \partial^{\boldsymbol\alpha} \mathcal J^{\gamma_2}_kf\|_{\Lambda^{\gamma-m}(\mathbb R^N)} \leq C\|f\|_{\Lambda_k^{\gamma_1}}.
    \end{equation}
    First, let us take $\boldsymbol{\alpha} \in \mathbb{N}_0^{N}$ such that $|\boldsymbol{\alpha}|{\leq} m$ and estimate $\| \partial^{\boldsymbol\alpha}J^{\gamma_2}_kf\|_{L^\infty}$. By Lemma~\ref{lem:1}, we get
    \begin{equation}\label{eq:I_444}
    \begin{split}
        &\| \partial^{\boldsymbol\alpha}\mathcal J^{\gamma_2}_kf\|_{L^\infty} \leq \Gamma(\gamma_2)^{-1}\int_{0}^{\infty}e^{-t}t^{\gamma_2}\|\partial^{\boldsymbol{\alpha}}P_tf\|_{L^{\infty}}\frac{dt}{t} \\&\leq C\|f\|_{\Lambda_k^{\gamma_1}}\int_{0}^{\infty}e^{-t}t^{\gamma_2}t^{\gamma_1-|\boldsymbol{\alpha}|}\frac{dt}{t} \leq C\|f\|_{\Lambda_k^{\gamma_1}}.
        \end{split}
    \end{equation}
    Then, let  $\boldsymbol{\alpha} \in \mathbb{N}_0^{N}$ be such that $|\boldsymbol{\alpha}|{  =}m$ and estimate $\| \partial^{\boldsymbol\alpha} \mathcal J^{\gamma_2}_kf\|_{\Lambda^{\gamma-m}}$. Since $\gamma-m \in (0,1)$, it is enough to {verify the following  Lipschitz condition:  
    \begin{equation}\label{eq:Lip_gamma-m}
        \Big|\partial^{\boldsymbol\alpha} \mathcal J^{\gamma_2}_kf(\mathbf{x})-\partial^{\boldsymbol\alpha} \mathcal J^{\gamma_2}_kf(\mathbf{x}')\Big|\leq C\| f\|_{\Lambda_k^{\gamma_1}}\| \mathbf x-\mathbf x'\|^{\gamma-m}.
    \end{equation}
     For $\mathbf{x},\mathbf{x}' \in \mathbb{R}^N$  such that $\| \mathbf x-\mathbf x'\|\leq 1$, we write }
    \begin{align*}
        &\Big|\partial^{\boldsymbol\alpha} \mathcal J^{\gamma_2}_kf(\mathbf{x})-\partial^{\boldsymbol\alpha} \mathcal J^{\gamma_2}_kf(\mathbf{x}')\Big| \leq \Gamma(\gamma_2)^{-1}\int_0^{\infty}e^{-t} t ^{\gamma_2}\Big|\partial^{\boldsymbol{\alpha}}P_tf(\mathbf{x})-\partial^{\boldsymbol{\alpha}}P_tf(\mathbf{x}')\Big|  \frac{dt}{t}\\& \leq C\int_0^{\|\mathbf{x}-\mathbf{x}'\|}\cdots +C\int_{\|\mathbf{x}-\mathbf{x}'\|}^{1}\cdots +C\int_1^\infty=:I_1+I_2+I_3.
    \end{align*}
    From Lemma~\ref{lem:1} we get
    \begin{equation}\label{eq:I_111}
        \begin{split}
        I_1 &\leq C\int_{0}^{\|\mathbf{x}-\mathbf{x}'\|}e^{-t}t^{\gamma_2}\|\partial^{\boldsymbol{\alpha}}P_tf\|_{L^{\infty}}\frac{dt}{t} \leq  C\|f\|_{\Lambda_k^{\gamma_1}}\int_{0}^{\|\mathbf{x}-\mathbf{x}'\|}e^{-t}t^{\gamma_2}t^{\gamma_1-m}\frac{dt}{t}\\& \leq C\|f\|_{\Lambda_k^{\gamma_1}}\|\mathbf{x}-\mathbf{x}'\|^{\gamma -m }.
        \end{split}
    \end{equation}
    Further, using  \eqref{eq:t_2_lem}, we obtain
    \begin{equation}\label{eq:I_222}
        \begin{split}
            I_2 &\leq C\|\mathbf{x}-\mathbf{x}'\|\|f\|_{\Lambda_k^{\gamma_1}}\int_{\|\mathbf{x}-\mathbf{x}'\|}^{1}e^{-t}t^{\gamma_2}t^{\gamma_1-m-1}\frac{dt}{t} \\&\leq C\|\mathbf{x}-\mathbf{x}'\|\|f\|_{\Lambda_k^{\gamma_1}}\|\mathbf{x}-\mathbf{x}'\|^{\gamma-m-1}=C\|\mathbf{x}-\mathbf{x}'\|^{\gamma-m}\|f\|_{\Lambda_k^{\gamma_1}}.
        \end{split}
    \end{equation}
    Finally, utilizing Lemma~\ref{lem:3}, we have
    \begin{equation}\label{eq:I_333}
        \begin{split}
            I_3 &\leq C\|\mathbf{x}-\mathbf{x}'\|\|f\|_{L^{\infty}}\int_{1}^{\infty}e^{-t}t^{\gamma_2}t^{-m-1}\frac{dt}{t} \\& = C'\|\mathbf{x}-\mathbf{x}'\|\|f\|_{\Lambda_k^{\gamma_1}}\leq C\|\mathbf{x}-\mathbf{x}'\|^{\gamma-m}\|f\|_{\Lambda_k^{\gamma_1}}.
        \end{split}
    \end{equation}
    Collecting~\eqref{eq:I_111},~\eqref{eq:I_222}, and~\eqref{eq:I_333}, we get \eqref{eq:Lip_gamma-m},  which, together with~\eqref{eq:I_444}, proves~\eqref{eq:J_goal_2}.
\end{proof}

{  
\begin{proof}[Completing the proof of Theorem \ref{teo:main_main}]

 Thanks to Theorems \ref{teo:first_inclusion} and \ref{teo:inclusion_2},  it remains to prove the theorem for $\beta$ being any positive integer. To this end, we consider the identity operator and apply Theorems \ref{teo:first_inclusion} and  \ref{teo:inclusion_2} together with  the interpolation  Theorem \ref{teo:interpolation}. 
   
\end{proof}

}

\end{document}